\documentclass[a4paper,12pt]{amsart}
\usepackage{amssymb, amsmath, graphicx}
\usepackage[curve]{xypic}
\usepackage{enumerate}
\usepackage{tikz-cd}  
\usepackage[indexonlyfirst,sort=use]{glossaries}
\usepackage{url}
\DeclareRobustCommand{\gobblefour}[4]{}
\newcommand*{\SkipTocEntry}{\addtocontents{toc}{\gobblefour}}

\providecommand{\abs}[1]{\lvert#1\rvert}

\providecommand{\id}{\textnormal{id}}

\providecommand{\Ker}{\textnormal{Ker}}
\providecommand{\IIm}{\textnormal{Im}}

\providecommand{\Hom}{\textnormal{Hom}}

\providecommand{\Ker}{\textnormal{Ker}}

\providecommand{\Int}{\textnormal{int}}
\providecommand{\IInt}{\textnormal{INT}}
\providecommand{\ch}{\textnormal{ch}}
\providecommand{\PD}{\textnormal{PD}}
\providecommand{\hatPD}{\widehat{\textnormal{PD}}}
\providecommand{\hatL}{\widehat{\textnormal{L}}}
\providecommand{\cpt}{\textnormal{cpt}}

\providecommand{\chr}{\textnormal{char}}

\providecommand{\Oo}{\mathcal{O}}

\providecommand{\vol}{\textnormal{vol}}

\providecommand{\fl}{\textnormal{fl}}

\providecommand{\Z}{\mathbb{Z}}

\providecommand{\R}{\mathbb{R}}
\providecommand{\C}{\mathbb{C}}
\providecommand{\Ii}{\mathbb{I}}

\providecommand{\h}{\mathfrak{h}}

\providecommand{\cl}{\textnormal{cl}}

\providecommand{\dR}{\textnormal{dR}}
\providecommand{\Td}{\textnormal{Td}}

\providecommand{\cov}{\textnormal{cov}}

\providecommand{\Tau}{\mathcal{T}}
\providecommand{\ScT}{\textnormal{T}}
\providecommand{\Thom}{\textnormal{Th}}
\providecommand{\hatcap}{\,\hat{\cap}\,}
\providecommand{\hatcup}{\,\hat{\cup}\,}
\providecommand{\hattimes}{\,\hat{\times}\,}
\providecommand{\hatslant}{\,\hat{/}\,}

\providecommand{\BM}{\textnormal{BM}}
\providecommand{\vcpt}{\textnormal{vcpt}}
\providecommand{\vncpt}{\textnormal{vncpt}}

\providecommand{\pt}{\textnormal{pt}}

\tikzset{
	curvarr/.style={
		to path={ -- ([xshift=2ex]\tikztostart.east)
			|- (#1) [near end]\tikztonodes
			-| ([xshift=-2ex]\tikztotarget.west)
			-- (\tikztotarget)}
	}
}

\makeatletter
\newcommand\tint{\mathop{\mathpalette\tb@int{t}}\!\int}
\newcommand\bint{\mathop{\mathpalette\tb@int{b}}\!\int}
\newcommand\tb@int[2]{%
  \sbox\z@{$\m@th#1\int$}%
  \if#2t%
    \rlap{\hbox to\wd\z@{%
      \hfil
      \vrule width .35em height \dimexpr\ht\z@+1.4pt\relax depth -\dimexpr\ht\z@+1pt\relax
      \kern.05em 
    }}
  \else
    \rlap{\hbox to\wd\z@{%
      \vrule width .35em height -\dimexpr\dp\z@+1pt\relax depth \dimexpr\dp\z@+1.4pt\relax
      \hfil
    }}
  \fi
}
\makeatother

\allowdisplaybreaks

\begin{document}

\newtheorem{Theorem}{Theorem}[section]
\newtheorem{Lemma}[Theorem]{Lemma}
\newtheorem{Prop}[Theorem]{Proposition}
\newtheorem{Corollary}[Theorem]{Corollary}
\newtheorem{ThmDef}[Theorem]{Theorem - Definition}

\theoremstyle{definition}
\newtheorem{Rmk}[Theorem]{Remark}
\newtheorem*{Rmk*}{Remark}
\newtheorem{Rmks}[Theorem]{Remarks}
\newtheorem{Rmks*}[Theorem]{Remarks}
\newtheorem{Def}[Theorem]{Definition}
\newtheorem{Def2}[Theorem]{Definition}
\newtheorem{Not}[Theorem]{Notation}

\AtEndEnvironment{Def}{\hfill$\diamondsuit$}
\AtEndEnvironment{Rmk}{\hfill$\diamondsuit$}


\title{Differential Homology}
\author{Fabio Ferrari Ruffino and Gabriel Longatto Clemente}
\address{Departamento de Matem\'atica - Universidade Federal de S\~ao Carlos - Rod.\ Washington Lu\'is, Km 235 - C.P.\ 676 - 13565-905 S\~ao Carlos, SP, Brasil}
\email{ferrariruffino@ufscar.com, glclemente@estudante.ufscar.br}
\thanks{This study was financed, in part, by the S\~ao Paulo Research Foundation (FAPESP), Brazil, Process Number 2022/00676-3.}

\begin{abstract}
We first construct the differential refinement of singular homology, verifying that it satisfies the axioms dual to their cohomological counterparts. Then, we define the differential cap product, leading to Poincar\'e duality. The essential uniqueness of both differential homology and the cap product is proven. Moreover, we construct the relative, non-compact, and local coefficient versions, so that we can state Poincar\'e and Lefschetz dualities for every smooth manifold with our without boundary. Afterwards, we develop the analogous refinement of any rationally-even homology theory, with the same properties. We also define a ``differentiation map'' for fibre bundles, dual to the integration map in cohomology. We conclude by sketching some possible applications of this theory in mathematical physics.
\end{abstract}

\maketitle


\tableofcontents


\section*{Introduction}

The notion of differential refinement of a cohomology theory is widely studied in the mathematical literature, and its application in mathematical physics are well-known. The whole theory began by introducing the concept of \emph{Cheeger-Simons character} in \cite{CS}, with the aim of finding obstructions to conformal embeddings of Riemannian manifolds into a Euclidean space. Then, other equivalent models were constructed or recognised, a remarkable one being the Deligne cohomology (see \cite{Brylinski}). For this reason, it was natural to inquire about the essential uniqueness of the differential theory under suitable axioms, which was proven by Simons and Sullivan in \cite{SS}.

Beyond singular cohomology, various authors dealt with the differential extension of K-theory and its physical applications (for instance, see \cite{BS3}, \cite{BS2}, \cite{FL}, \cite{SS2}). This suggests the possibility of refining every cohomology theory to the differential setting. Hopkins and Singer achieved this result in the seminal paper \cite{HS} by starting from any theory represented by a spectrum. Their model has been completed in \cite{Upmeier} by describing in detail the $S^{1}$-integration and the multiplicative structure. The issue of uniqueness then arose in this general setting as well, and it was settled by Bunke and Schick in \cite{BS} under reasonable hypotheses.

In the present paper, we develop the dual theory in the homological framework. By using currents instead of forms, we first define differential singular homology. We state the axioms dual to the corresponding cohomological ones, and we verify that they are satisfied. Then, we construct the differential cap product, which allows to state and prove the refined Poincar\'e duality. The problem of uniqueness is solved both for the differential refinement and the cap product by adapting the techniques of \cite{SS}. Furthermore, we construct the analogous differential extension for relative, non-compact, and local coefficient homologies. In this way, we obtain the necessary tools to state the various versions of Poincar\'e and Lefschetz dualities, even without compactness or orientability assumptions.

Afterwards, we generalise the previous constructions to any rationally-even homology theory represented by a spectrum, achieving analogous results in this general context. Moreover, we dualise the integration map (or Gysin map) for fibre bundles, and we call ``differentiation map'' the dual version. We conclude by sketching how differential homology can be used in mathematical physics to describe charge sources and the minimal coupling (or Wess-Zumino action) without relying on a fixed representative cycle.

The paper is organised as follows. In section \ref{SecDiffHom}, we construct differential singular homology. In section \ref{ProdDualitySec}, we define the cap product and deal with Poincar\'e duality; we also define the external and slant products. Section \ref{SecUnique} is dedicated to the issue of uniqueness, and in section \ref{CapCSSec} we relate differential homology to the holonomy map in cohomology (equivalently, the evaluation of a differential character on a cycle). The relative, non-compact, and local coefficient versions are built in section \ref{RelNonCptLocCSec}, leading to the corresponding dualities. In section \ref{GenHomSec}, we construct the differential refinement of a generalised homology theory, reaching analogous results in this setting. Section \ref{GysinMapHomSec} is dedicated to dualise the integration map and to the problem of uniqueness. We conclude with section \ref{FurthPer}, in which we summarise why this theory may be interesting in mathematical physics and indicate some possible future developments.


\section{Differential Singular Homology}\label{SecDiffHom}

\SkipTocEntry \subsection{Notation on Currents}

We denote by $X$ a smooth manifold (with or without boundary) of dimension $n$ and by $\Omega^{k}(X)$ the space of smooth forms of degree $k$ on $X$. We define the space of compactly-supported currents of degree $k$ on $X$, which we denote by $\Tau^{k}(X)$, as the dual of $\Omega^{n-k}(X)$.\footnote{The reader can find the basic theory of currents in \cite{GH}, chapter 3, section 1.} Moreover, $d \colon \Tau^{k}(X) \to \Tau^{k+1}(X)$ denotes the exterior differential, that is, given $T^{k} \in \Tau^{k}(X)$ and $\omega^{n-k-1} \in \Omega^{n-k-1}(X)$:
\begin{equation}\label{ExtDiffCurrents}
	dT^{k}(\omega^{n-k-1}) := (-1)^{k+1}T^{k}(d\omega^{n-k-1})
\end{equation}
Since we work in the homological context, we set $\Tau_{k}(X) := \Tau^{n-k}(X)$---that is, $\Tau_{k}(X)$ is the dual of $\Omega^{k}(X)$---and we define
\begin{eqnarray}
	\partial \colon \, \Tau_{k}(X) & \to & \Tau_{k-1}(X) \label{DefBoundaryCurr} \\
	T_{k} & \mapsto & (-1)^{n-k+1} dT_{k}. \nonumber
\end{eqnarray}
It follows that
\begin{equation}\label{BoundaryDifferential}
	(\partial T_{k})(\omega^{k-1}) \overset{\eqref{DefBoundaryCurr}}= (-1)^{n-k+1} (dT_{k})(\omega^{k-1}) \overset{\eqref{ExtDiffCurrents}}= T_{k}(d\omega^{k-1}).
\end{equation}
We denote by $C_{k}(X)$, $Z_{k}(X)$, and $B_{k}(X)$ respectively the groups of \emph{smooth} singular chains, cycles, and boundaries on $X$ of degree $k$ with coefficients in $\Z$. The group $H_{k}(X) := Z_{k}(X)/B_{k}(X)$ is naturally isomorphic to the singular homology group of $X$ of degree $k$. When $G$ is the coefficient group, we write $C_{k}(X; G)$, $Z_{k}(X; G)$, etc.\ Moreover, we use the analogous notation with upper indices in the cohomological setting.

Given a real chain $\gamma_{k} \in C_{k}(X; \R)$, we denote by $T_{\gamma_{k}} \in \Tau_{k}(X)$ the current $\omega^{k} \mapsto \int_{\gamma_{k}} \omega^{k}$. We obtain the abelian group morphism
\begin{eqnarray*}
	\ScT_{k} \colon \, C_{k}(X; \R) & \to & \Tau_{k}(X) \\
	\gamma_{k} & \mapsto & T_{\gamma_{k}}
\end{eqnarray*}
whose kernel is formed by \emph{thin} chains. We have:
\begin{align*}
	(\partial T_{\gamma_{k}})(\omega^{k-1}) & \overset{\eqref{BoundaryDifferential}}= T_{\gamma_{k}}(d\omega^{k-1}) = \int_{\gamma_{k}} d\omega^{k-1} = \int_{\partial \gamma_{k}} \omega^{k-1} = T_{\partial \gamma_{k}}(\omega^{k-1})
\end{align*}
Therefore:
\begin{equation}\label{BoudaryCurrents}
	\partial T_{\gamma_{k}} = T_{\partial \gamma_{k}}
\end{equation}
Projecting the morphism $\ScT_{k}$ to homology, we obtain the isomorphism
\begin{equation}\label{BarPsiIso}
	\bar{\ScT}_{k} \colon H_{k}(X; \R) \overset{\!\simeq}\longrightarrow H\Tau_{k}(X).
\end{equation}
Given $T_{k} \in \Tau_{k}(X)$ and $\eta^{h} \in \Omega^{h}(X)$, we define $T_{k} \wedge \eta^{h} \in \Tau_{k-h}(X)$ by $(T_{k} \wedge \eta^{h})(\omega^{k-h}) := T_{k}(\eta^{h} \wedge \omega^{k-h})$. For future reference, we compute the corresponding boundary:
\begin{align}
	\partial (T_{k} \wedge \eta^{h}) & \overset{\eqref{DefBoundaryCurr}}= (-1)^{n-k+h+1}d(T_{k} \wedge \eta^{h}) \nonumber \\
	& = (-1)^{n-k+h+1}\bigl(dT_{k} \wedge \eta^{h} + (-1)^{n-k} T_{k} \wedge d\eta^{h}\bigr) \nonumber \\
	& = (-1)^{h}\bigl(\partial T_{k} \wedge \eta^{h} - T_{k} \wedge d\eta^{h}\bigr) \label{BoudaryWedge}
\end{align}
This formula is analogous to the one for $\partial(\gamma_{k} \cap \eta^{h})$. Moreover, we define \emph{integral} currents as follows:
\begin{equation}\label{DefIntCurr}
	\Tau_{k}^{\Int}(X) := \bigl\{ T_{\gamma_{k}} + \partial T'_{k+1}: \gamma_{k} \in Z_{k}(X), T'_{k+1} \in \Tau_{k+1}(X) \bigr\} \subset \Tau_{k}(X)
\end{equation}
This means that an integral current is a representative of the real image of an integral homology class. Equivalently, a current $T_{k} \in \Tau_{k}(X)$ is integral if and only if $T_{k}(\omega^{k}) \in \Z$ for every integral form $\omega^{k}$.

\SkipTocEntry \subsection{Definition of Differential Homology}

\begin{Def}\label{DefDiffHom} The \emph{differential singular homology group} of degree $k$ on $X$, denoted by $\hat{H}_{k}(X)$, is the quotient of $Z_{k}(X) \times \Tau_{k+1}(X)$ by the subgroup whose elements are of the form
\begin{equation}\label{EqRelDiffHom}
	(\partial \Gamma_{k+1}, - T_{\Gamma_{k+1}} - \partial T'_{k+2})
\end{equation}
with $\Gamma_{k+1} \in C_{k+1}(X)$ and $T'_{k+2} \in \Tau_{k+2}(X)$.
\end{Def}
This means that an element of $\hat{H}_{k}(X)$ is a class $[\gamma_{k}, T_{k+1}]$, where $[\partial \Gamma_{k+1}, 0] = [0, T_{\Gamma_{k+1}}]$ and $[0, \partial T'_{k+2}] = 0$. In particular, $\gamma_{k}$ is relevant only up to boundaries of thin chains and $T_{k+1}$ is relevant only up to exact currents. Furthermore, given a smooth map $f \colon X \to Y$, the natural pushforward $f_{*} \colon \hat{H}_{k}(X) \to \hat{H}_{k}(Y)$ is defined by $f_{*}[\gamma_{k}, T_{k+1}] := [f_{*}\gamma_{k}, f_{*}T_{k+1}]$, where $(f_{*}T_{k+1})(\omega^{k+1}) := T_{k+1}(f^{*}\omega^{k+1})$.

\SkipTocEntry \subsection{Natural Morphisms}

We define the following natural transformations:
\begin{align}
	& I \colon \hat{H}_{k}(X) \to H_{k}(X), & & [\gamma_{k}, T_{k+1}] \mapsto [\gamma_{k}] \label{DefI} \\
	& R \colon \hat{H}_{k}(X) \to \Tau_{k}(X), & & [\gamma_{k}, T_{k+1}] \mapsto T_{\gamma_{k}} + \partial T_{k+1} \label{DefR} \\
	& a \colon \Tau_{k+1}(X) \to \hat{H}_{k}(X), & & T_{k+1} \mapsto [0, T_{k+1}] \label{DefA}
\end{align}
Let us check that they are well-defined, that is, they vanish on trivial classes, whose representatives are of the form \eqref{EqRelDiffHom}. We have:
\begin{align*}
	& I\bigl[\partial \Gamma_{k+1}, - T_{\Gamma_{k+1}} - \partial T'_{k+2}\bigr] = [\partial \Gamma_{k+1}] = 0 \\
	& R\bigl[\partial \Gamma_{k+1}, - T_{\Gamma_{k+1}} - \partial T'_{k+2}\bigr] = T_{\partial \Gamma_{k+1}} - \partial T_{\Gamma_{k+1}} \overset{\eqref{BoudaryCurrents}}= 0
\end{align*}
Nothing needs to be checked about $a$. We also have the following transformation, that will be proved to be injective:
\begin{eqnarray}
	\iota \colon \, H_{k+1}(X; \R/\Z) & \to & \hat{H}_{k}(X) \label{MorfIota} \\
	{} [\hspace{1pt}\Gamma_{k+1}] & \mapsto & \bigl[-\partial \tilde{\Gamma}_{k+1}, T_{\tilde{\Gamma}_{k+1}} \bigr] \nonumber
\end{eqnarray}
where $\tilde{\Gamma}_{k+1} \in C_{k+1}(X; \R)$ is any lift of $\Gamma_{k+1} \in Z_{k+1}(X; \R/\Z)$. In fact, if we choose another lift $\tilde{\Gamma}_{k+1}' = \tilde{\Gamma}_{k+1} + \Theta_{k+1}$, where $\Theta_{k+1} \in C_{k+1}(X)$, then the contribution of $\Theta_{k+1}$ is $[-\partial \Theta_{k+1}, T_{\Theta_{k+1}}]$, that vanishes because it is of the form \eqref{EqRelDiffHom} (since $\Theta_{k+1}$ is integral, while $\tilde{\Gamma}_{k+1}$ is real). By fixing $\Gamma_{k+1}$, this shows that the result does not depend on the chosen lift. Lastly, if we fix another representative $\Gamma'_{k+1} = \Gamma_{k+1} + \partial \Xi_{k+2}$, then we choose the lift $\tilde{\Gamma}'_{k+1} = \tilde{\Gamma}_{k+1} + \partial \tilde{\Xi}_{k+2}$. The contribution of $\partial \tilde{\Xi}_{k+2}$ is $[0, T_{\partial\tilde{\Xi}_{k+2}}] = [0, \partial T_{\partial\tilde{\Xi}_{k+2}}] = 0$.

\begin{Prop}\label{PropAxioms} The following square is well-defined and commutative:
\begin{equation}\label{CommSquare}
	\xymatrix{
	\hat{H}_{k}(X) \ar@{->>}[rr]^{I} \ar@{->>}[d]_(.45){R} & & H_{k}(X) \ar[d]^(.475){- \otimes 1} \\
	\Tau_{k}^{\Int}(X) \ar[rr]^{\bar{\ScT}_{k}^{\,-1}} & & H_{k}(X; \R)
}\end{equation}
The isomorphism $\bar{\ScT}_{k}$ was defined in \eqref{BarPsiIso} and we are applying implicitly the projection from $\Tau_{k}^{\Int}(X)$ to currential homology. Moreover:
\begin{equation}\label{ThreeFormulas}
	R \circ a = \partial \hspace{50pt} I \circ \iota = -\beta \hspace{50pt} \iota \circ \pi_{\R/\Z} = a \circ \bar{\ScT}_{k}
\end{equation}
where $\beta \colon H_{k+1}(X; \R/\Z) \to H_{k}(X)$ and $\pi_{\R/\Z} \colon H_{k}(X; \R) \to H_{k}(X; \R/\Z)$ are parts of the long exact sequence induced by $0 \to \Z \to \R \to \R/\Z \to 0$. Lastly, the following sequences are exact:
\begin{align}
	& \xymatrix{0 \ar[r] & \Tau_{k+1}^{\Int}(X) \ar[r] & \Tau_{k+1}(X) \ar[r]^{a} & \hat{H}_{k}(X) \ar[r]^{I} & H_{k}(X) \ar[r] & 0} \label{ExSeq1} \\
	& \xymatrix{0 \ar[r] & H_{k+1}(X; \R/\Z) \ar[r]^(.6){\iota} & \hat{H}_{k}(X) \ar[r]^(.45){R} & \Tau_{k}^{\Int}(X) \ar[r] & 0} \label{ExSeq2}
\end{align}
\end{Prop}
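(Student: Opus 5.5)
The plan is to verify everything directly from Definition \ref{DefDiffHom}, using the isomorphism \eqref{BarPsiIso} (de Rham's theorem for currents) as the only nontrivial input.

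\textbf{The square.} Since $R[\gamma_{k},T_{k+1}]=T_{\gamma_{k}}+\partial T_{k+1}$ with $\gamma_{k}$ an integral cycle, this current is integral by \eqref{DefIntCurr}. So $R$ lands in $\Tau^{\Int}_{k}(X)$, and it is onto by the very form of that definition. Its currential homology class is $\bar{\ScT}_{k}([\gamma_{k}]\otimes 1)$, which gives commutativity. Surjectivity of $I$ is clear, since any $\gamma_{k}\in Z_{k}(X)$ gives $[\gamma_{k},0]$.

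\textbf{The three formulas.} First, $R\circ a=\partial$ is immediate. Second, $I\iota[\Gamma_{k+1}]=[-\partial\tilde\Gamma_{k+1}]=-\beta[\Gamma_{k+1}]$ by the standard description of the Bockstein. Third, for a real cycle $\rho_{k+1}$ we take the lift $\tilde\Gamma_{k+1}=\rho_{k+1}$ itself. Then $\iota\pi_{\R/\Z}[\rho_{k+1}]=[0,T_{\rho_{k+1}}]=a(T_{\rho_{k+1}})$. Here $a$ is applied to a cycle representative, so the formula should be read as $a$ descending to $H\Tau_{k+1}$ on closed currents. This is legitimate because $a$ kills $\partial\Tau_{k+2}$.

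\textbf{Exactness of \eqref{ExSeq1}.} Clearly $I\circ a=0$. If $I[\gamma_{k},T_{k+1}]=0$, write $\gamma_{k}=\partial\Gamma_{k+1}$. Subtracting the trivial element $(\partial\Gamma_{k+1},-T_{\Gamma_{k+1}})$ gives $[\gamma_{k},T_{k+1}]=a(T_{k+1}+T_{\Gamma_{k+1}})$.

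For the kernel of $a$, suppose $[0,T_{k+1}]=0$. Then $(0,T_{k+1})=(\partial\Gamma_{k+1},-T_{\Gamma_{k+1}}-\partial T')$. Hence $\Gamma_{k+1}$ is an integral cycle and $T_{k+1}=T_{-\Gamma_{k+1}}+\partial(-T')$ is integral. Conversely, every integral current $T_{\gamma_{k+1}}+\partial T'$ with $\gamma_{k+1}\in Z_{k+1}(X)$ has the form \eqref{EqRelDiffHom} with $\Gamma_{k+1}=-\gamma_{k+1}$. So $\Ker a=\Tau^{\Int}_{k+1}(X)$.

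\textbf{Exactness of \eqref{ExSeq2}.} We have $R\iota[\Gamma]=-T_{\partial\tilde\Gamma}+\partial T_{\tilde\Gamma}=0$ by \eqref{BoudaryCurrents}, and surjectivity of $R$ was shown above.

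For $\Ker R\subset\IIm\iota$, let $T_{\gamma_{k}}+\partial T_{k+1}=0$. Then $T_{\gamma_{k}}$ is exact, so $[\gamma_{k}]\otimes1=0$ by \eqref{BarPsiIso}. Since the torsion-free part vanishes, $[\gamma_{k}]$ is torsion in $H_{k}(X)$, and we can write $\gamma_{k}=-\partial\tilde\Gamma_{k+1}$ with $\tilde\Gamma_{k+1}$ a real chain. Concretely, if $m\gamma_{k}=\partial\Lambda$, take $\tilde\Gamma_{k+1}=-\Lambda/m$; alternatively, use that $\gamma_{k}$ bounds over $\R$. Then $\tilde\Gamma_{k+1}$ mod $\Z$ is a $\R/\Z$-cycle, and
\[
[\gamma_{k},T_{k+1}]-\iota[\tilde\Gamma_{k+1}]=a\bigl(T_{k+1}-T_{\tilde\Gamma_{k+1}}\bigr),
\]
where the current $S:=T_{k+1}-T_{\tilde\Gamma_{k+1}}$ is closed: $\partial S=-T_{\gamma_{k}}+T_{\gamma_{k}}=0$. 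By \eqref{BarPsiIso}, $S=T_{\rho}+\partial T''$ for some real cycle $\rho$. Therefore $a(S)=\iota\pi_{\R/\Z}[\rho]$ by the third formula, and we conclude $[\gamma_{k},T_{k+1}]\in\IIm\iota$.

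\textbf{Injectivity of $\iota$ (the main obstacle).} Suppose $[-\partial\tilde\Gamma,T_{\tilde\Gamma}]=0$, i.e.\ $-\partial\tilde\Gamma=\partial\Lambda$ with $\Lambda$ integral and $T_{\tilde\Gamma}=-T_{\Lambda}-\partial T'$. Then $\sigma:=\tilde\Gamma+\Lambda$ is a real cycle with $T_{\sigma}=-\partial T'$ exact. By \eqref{BarPsiIso} the class $[\sigma]$ vanishes in $H_{k+1}(X;\R)$, so $\sigma=\partial\tilde\Xi$. Hence $\Gamma=\pi_{\R/\Z}(\sigma)$ is a boundary mod $\Z$, and $[\Gamma]=0$.

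The delicate point is exactly the passage from currents back to chains: an equality of currents only determines a chain up to thin chains. We handle this by always passing through homology via the isomorphism $\bar{\ScT}$, never comparing chains directly.
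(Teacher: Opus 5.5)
Your proof is correct and follows the paper's argument essentially step by step. You verify the square, the three formulas and both exact sequences directly from Definition \ref{DefDiffHom}, with the isomorphism \eqref{BarPsiIso} as the only nontrivial input. The one difference is in the injectivity of $\iota$: the paper first uses $I \circ \iota = -\beta$ and the Bockstein sequence to choose a cycle lift, whereas you absorb the integral chain $\Lambda$ from the relation into $\tilde{\Gamma}_{k+1}$ to get a real cycle directly, which is a harmless streamlining.
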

\begin{proof} About the square \eqref{CommSquare}, the upper-right path is $[\gamma_{k}, T_{k+1}] \mapsto [\gamma_{k}] \mapsto [\gamma_{k}] \otimes 1$ and the left-lower path is $[\gamma_{k}, T_{k+1}] \mapsto T_{\gamma_{k}} + \partial T_{k+1} \mapsto \bar{\ScT}_{k}^{\,-1}[T_{\gamma_{k}}] = [\gamma_{k}] \otimes 1$. The morphism $I$ is surjective, since, given $[\gamma_{k}] \in H_{k}(X)$, we have $[\gamma_{k}] = I[\gamma_{k}, 0]$. The surjectivity of $R$ immediately follows from definitions \eqref{DefR} and \eqref{DefIntCurr}.

About \eqref{ThreeFormulas}, we have $R \circ a(T_{k+1}) = R[0, T_{k+1}] = \partial T_{k+1}$, and the other two formulas immediately follow from definition \eqref{MorfIota}.

About \eqref{ExSeq1}, we have $a(T_{k+1}) = [0, T_{k+1}] = 0$ if and only if $(0, T_{k+1})$ is of the form \eqref{EqRelDiffHom}, which means that there exist $\Gamma_{k+1}$ and $T'_{k+2}$ such that $(0, T_{k+1}) = (\partial \Gamma_{k+1}, -T_{\Gamma_{k+1}} - \partial T'_{k+2})$. Equivalently, there exist an integral cycle $\Gamma_{k+1}$ and a current $T'_{k+2}$ such that $T_{k+1} = -T_{\Gamma_{k+1}} - \partial T'_{k+2}$, that is, $T_{k+1}$ is integral by definition \eqref{DefIntCurr}. Moreover, $I \circ a(T_{k+1}) = I[0, T_{k+1}] = [0] = 0$. Conversely, if $I[\gamma_{k}, T_{k+1}] = [\gamma_{k}] = 0$, then $\gamma_{k} = \partial \Gamma_{k+1}$, thus $[\gamma_{k}, T_{k+1}] = [\partial \Gamma_{k+1}, T_{k+1}] = [0, T_{\Gamma_{k+1}} + T_{k+1}] = a(T_{\Gamma_{k+1}} + T_{k+1})$. Lastly, we already proved the surjectivity of $I$.

About \eqref{ExSeq2}, we first prove that $\iota$ is injective. If $\iota[\Gamma_{k+1}] = 0$, then the second formula in \eqref{ThreeFormulas} implies that $[\Gamma_{k+1}]$ lifts to a real homology class, meaning that we can choose a lift $\tilde{\Gamma}_{k+1} \in Z_{k+1}(X; \R)$ that is a cycle. Thus, our hypothesis becomes $\iota[\Gamma_{k+1}] = [0, T_{\tilde{\Gamma}_{k+1}}] = 0$. It follows from \eqref{EqRelDiffHom} that $(0,  T_{\tilde{\Gamma}_{k+1}}) = (\partial \tilde{\Theta}_{k+1}, T_{\tilde{\Theta}_{k+1}} + \partial T'_{k+2})$. Therefore, $\tilde{\Theta}_{k+1}$ is an integral cycle and $T_{\tilde{\Gamma}_{k+1}} = T_{\tilde{\Theta}_{k+1}} + \partial T''_{k+2}$. The isomorphism \eqref{BarPsiIso} implies that there exists $\tilde{\Xi}_{k+2} \in C_{k+2}(X; \R)$ such that $\tilde{\Gamma}_{k+1} = \tilde{\Theta}_{k+1} + \partial \tilde{\Xi}_{k+2}$. Projecting to $\R/\Z$, we obtain $\Gamma_{k+1} = \partial \Xi_{k+2}$, that is, $[\Gamma_{k+1}] = 0$. This proves injectivity. It immediately follows from definitions \eqref{MorfIota} and \eqref{DefR} that $R \circ \iota = 0$. Conversely, let us suppose that $R[\gamma_{k}, T_{k+1}] = 0$, which means that $T_{\gamma_{k}} + \partial T_{k+1} = 0$. This implies that $T_{\gamma_{k}}$ is exact, thus $\gamma_{k}$ is exact as a real cycle. We set $\gamma_{k} = \partial \tilde{\Gamma}_{k+1}$. Projecting to $\R/\Z$, we get $\Gamma_{k+1}$, which is a cycle because $\partial \tilde{\Gamma}_{k+1} = \gamma_{k}$ is integral. Since $\partial T_{k+1} = -T_{\gamma_{k}} = -T_{\partial \tilde{\Gamma}_{k+1}} = -\partial T_{\tilde{\Gamma}_{k+1}}$, we have $T_{k+1} = -T_{\tilde{\Gamma}_{k+1}} + T'_{k+1}$ with $T'_{k+1}$ closed. Hence, there exists a real cycle $\tilde{\Theta}_{k+1}$ such that $T'_{k+1} = -T_{\tilde{\Theta}_{k+1}} + \partial T''_{k+2}$. It follows that $[\gamma_{k}, T_{k+1}] = [\partial \tilde{(\Gamma}_{k+1} + \tilde{\Theta}_{k+1}), -T_{\tilde{\Gamma}_{k+1} + \tilde{\Theta}_{k+1}}] = -\iota(\Gamma_{k+1} + \Theta_{k+1})$, as required. This concludes the proof, since we already showed that $R$ is surjective.
\end{proof}

\begin{Corollary}\label{CorExSeq3} The following sequence is exact:
\begin{equation}\label{ExSeq3}
	\xymatrix{0 \ar[r] & \frac{H_{k+1}(X; \R)}{H_{k+1}(X)} \ar[r]^(.52){a \circ \bar{\ScT}_{k}} & \hat{H}_{k}(X) \ar[r]^(.34){(I, \, R)} & H_{k}(X) \oplus \Tau_{k}^{\Int}(X)
}\end{equation}
The morphism $a \circ \bar{\ScT}_{k}$ could be replaced by $\iota \circ \pi_{\R/\Z}$ because of \eqref{ThreeFormulas}, and the image of $(I, R)$ is formed by the pairs $(\alpha, T)$ such that $\alpha \otimes 1 = \bar{\ScT}_{k}^{-1}[T]$. 
\end{Corollary}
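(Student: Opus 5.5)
The plan is to read the corollary off the two exact sequences \eqref{ExSeq1} and \eqref{ExSeq2} and the square \eqref{CommSquare} of Proposition \ref{PropAxioms}. Throughout, the map $a \circ \bar{\ScT}_{k}$ is understood as follows: a real class $[\tilde{\Gamma}_{k+1}]$ is sent to $a(T_{\tilde{\Gamma}_{k+1}}) = [0, T_{\tilde{\Gamma}_{k+1}}]$, where $\tilde{\Gamma}_{k+1}$ is a real cycle representative.

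\textbf{Well-definedness and injectivity of $a \circ \bar{\ScT}_{k}$.} Changing $\tilde{\Gamma}_{k+1}$ by a real boundary changes $T_{\tilde{\Gamma}_{k+1}}$ by an exact current, so the image under $a$ is unchanged. If the class lies in the image of $H_{k+1}(X)$, then $T_{\tilde{\Gamma}_{k+1}}$ is integral by \eqref{DefIntCurr}, so $a$ kills it by \eqref{ExSeq1}. Hence the map descends to the quotient $H_{k+1}(X;\R)/H_{k+1}(X)$. Conversely, suppose $a(T_{\tilde{\Gamma}_{k+1}}) = 0$. By \eqref{ExSeq1} we have $T_{\tilde{\Gamma}_{k+1}} = T_{\Theta_{k+1}} + \partial T'$ with $\Theta_{k+1}$ an integral cycle. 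The isomorphism \eqref{BarPsiIso} then gives $[\tilde{\Gamma}_{k+1}] = [\Theta_{k+1}] \otimes 1$, which is zero in the quotient. This proves injectivity. Since $\iota \circ \pi_{\R/\Z} = a \circ \bar{\ScT}_{k}$ by \eqref{ThreeFormulas}, the replacement remark in the statement follows at once.

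\textbf{Exactness at $\hat{H}_{k}(X)$.} The composition vanishes: $I \circ a = 0$ by \eqref{ExSeq1}, and $R(a(T_{\tilde{\Gamma}_{k+1}})) = \partial T_{\tilde{\Gamma}_{k+1}} = T_{\partial\tilde{\Gamma}_{k+1}} = 0$ by \eqref{ThreeFormulas} and \eqref{BoudaryCurrents}. For the converse, let $x$ satisfy $I(x) = 0$ and $R(x) = 0$. From $I(x) = 0$ and \eqref{ExSeq1}, $x = a(T_{k+1})$ for some current $T_{k+1}$. Then $0 = R(x) = \partial T_{k+1}$, so $T_{k+1}$ is closed. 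By \eqref{BarPsiIso}, $T_{k+1} = T_{\tilde{\Gamma}_{k+1}} + \partial T''$ for some real cycle $\tilde{\Gamma}_{k+1}$. Since $a(\partial T'') = 0$, we get $x = a \circ \bar{\ScT}_{k}[\tilde{\Gamma}_{k+1}]$.

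\textbf{Image of $(I,R)$.} The square \eqref{CommSquare} shows that every pair in the image satisfies $\alpha \otimes 1 = \bar{\ScT}_{k}^{-1}[T]$. Conversely, take such a pair with $\alpha = [\gamma_{k}]$. Then $T - T_{\gamma_{k}}$ is closed and real-homologically trivial, hence exact, say $T - T_{\gamma_{k}} = \partial T_{k+1}$. The element $[\gamma_{k}, T_{k+1}]$ then maps to $(\alpha, T)$.

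The only step needing care is the interplay between real cycles and currents, namely passing from closed or exact currents to cycle representatives. This is handled entirely by \eqref{BarPsiIso}, so I expect no real obstacle.
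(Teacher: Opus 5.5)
Your argument is correct. It is essentially the deduction the paper intends: the corollary is stated without a separate proof, as an immediate consequence of Proposition \ref{PropAxioms}, and your chase through \eqref{ExSeq1}, the square \eqref{CommSquare}, the isomorphism \eqref{BarPsiIso}, and \eqref{ThreeFormulas} (for the replacement by $\iota \circ \pi_{\R/\Z}$) carries out exactly that, including well-definedness on the quotient and the converse description of the image of $(I,R)$.
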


Putting altogether, we obtain the following commutative hexagon with exact diagonals, dual to the cohomological one: \pagebreak
\begin{equation}\label{CommHex}
	\resizebox{0.8\textwidth}{!}{
	\xymatrix{
	& \frac{\Tau_{\bullet+1}(X)}{\Tau^{\Int}_{\bullet+1}(X)} \ar[rr]^{\partial} \ar@{^(->}[dr]^{a} & & \Tau^{\Int}_{\bullet}(X) \ar[dr]^{\bar{\ScT}^{-1}} \\
	\frac{H_{\bullet+1}(X; \R)}{H_{\bullet+1}(X)} \ar@{^(->}[ur]^{\bar{\ScT}} \ar@{^(->}[dr]^{\pi_{\R/\Z}} & & \hat{H}_{\bullet}(X) \ar@{->>}[ur]^{R} \ar@{->>}[dr]^{I} & & H_{\bullet}(X; \R) \\
	& H_{\bullet+1}(X; \R/\Z) \ar@{^(->}[ur]^{\iota} \ar[rr]^{-\beta} & & H_{\bullet}(X) \ar[ur]^{- \otimes 1}
}}
\end{equation}
The next corollary will be useful later on.
\begin{Corollary}\label{LemmaH} The following assertions hold true for any $n$-dimensional manifold $X$.
\begin{itemize}
	\item[(i)] The morphism $I \colon \hat{H}_{n}(X) \to H_{n}(X)$ is an isomorphism, which means that a top-degree differential homology class is completely determined by its underlying topological class.
	\item[(ii)] The morphism $\iota \colon H_{0}(X; \R/\Z) \to \hat{H}_{-1}(X)$ is an isomorphism, so $\hat{H}_{-1}(X) \simeq \R/\Z$ provided that $X$ is connected.
\end{itemize}
\end{Corollary}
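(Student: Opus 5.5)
Both parts follow from the exact sequences \eqref{ExSeq1} and \eqref{ExSeq2} of Proposition \ref{PropAxioms}, together with the degree bounds on currents. Recall that $\Tau_{k}(X)$ is the dual of $\Omega^{k}(X)$, so $\Tau_{k}(X) = 0$ whenever $k > n$ or $k < 0$, since then $\Omega^{k}(X) = 0$.

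For (i), we apply \eqref{ExSeq1} with $k = n$:
\[
0 \to \Tau_{n+1}^{\Int}(X) \to \Tau_{n+1}(X) \overset{a}\to \hat{H}_{n}(X) \overset{I}\to H_{n}(X) \to 0.
\]
Because $\Tau_{n+1}(X) = 0$, the image of $a$ is trivial. Exactness at $\hat{H}_{n}(X)$ then makes $I$ injective, and it was already proved to be surjective, so $I$ is an isomorphism. As a concrete check, every class $[\gamma_{n}, T_{n+1}]$ has $T_{n+1} = 0$ and is determined by $\gamma_{n}$ up to the relation $(\partial\Gamma_{n+1}, -T_{\Gamma_{n+1}}) = (\partial \Gamma_{n+1}, 0)$. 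Here $T_{\Gamma_{n+1}}$ vanishes because $\Omega^{n+1}(X) = 0$, so every $(n+1)$-chain is thin.

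For (ii), we apply \eqref{ExSeq2} with $k = -1$:
\[
0 \to H_{0}(X; \R/\Z) \overset{\iota}\to \hat{H}_{-1}(X) \overset{R}\to \Tau_{-1}^{\Int}(X) \to 0.
\]
Since $\Tau_{-1}(X) = 0$, its subgroup $\Tau_{-1}^{\Int}(X)$ vanishes, so $\iota$ is surjective as well as injective.

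One point needs care: the definitions and Proposition \ref{PropAxioms} must make sense in degree $k = -1$. We use $Z_{-1}(X) = C_{-1}(X) = 0$, so $\hat{H}_{-1}(X)$ is the quotient of $\Tau_{0}(X)$ by currents of the form $-T_{\Gamma_{0}} - \partial T'_{1}$, where $\Gamma_{0}$ is a $0$-chain. These are exactly the integral $0$-currents. We check that the proof of Proposition \ref{PropAxioms} goes through verbatim in this degree, with boundary maps into degree $-1$ equal to zero. Alternatively, one can verify directly that $\Tau_{0}(X)/\Tau_{0}^{\Int}(X) \simeq H_{0}(X;\R)/H_{0}(X) \simeq H_{0}(X;\R/\Z)$, using that every $0$-current is closed and that \eqref{BarPsiIso} holds in degree $0$.

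Finally, if $X$ is connected, then $H_{0}(X; \R/\Z) \simeq \R/\Z$, which gives $\hat{H}_{-1}(X) \simeq \R/\Z$. The only real obstacle is making sure the degree-$(-1)$ conventions are consistent with the statements used; everything else is immediate.
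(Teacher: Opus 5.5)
Your proof is correct and matches how the paper obtains this corollary: read it off from \eqref{ExSeq1} with $k = n$ and from \eqref{ExSeq2} with $k = -1$, using that $\Tau_{n+1}(X) = \Tau_{-1}(X) = 0$ because $\Omega^{n+1}(X) = \Omega^{-1}(X) = 0$. Your check that Proposition \ref{PropAxioms} still holds in degree $-1$, where $\hat{H}_{-1}(X) = \Tau_{0}(X)/\Tau_{0}^{\Int}(X)$, adds care that the paper leaves implicit.
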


\section{Products and Poincar\'e Duality}\label{ProdDualitySec}

\SkipTocEntry \subsection{Differential Cap Product}

We describe differential singular cohomology as in \cite{HS}, section 2.3. The group of $k$-cocycles $\hat{Z}^{k}(X)$ is formed by the triples $(\mu^{k}, c^{k-1}, \omega^{k}) \in Z^{k}(X) \times C^{k-1}(X; \R) \times \Omega^{k}_{\Int}(X)$ such that
\begin{equation}\label{FormC}
	\omega^{k} - \mu^{k} = \delta c^{k-1}.
\end{equation}
Here, we implicitly applied the embeddings $\Omega^{k}(X) \hookrightarrow C^{k}(X; \R)$ and $C^{k}(X) \subset C^{k}(X; \R)$. The group of $k$-coboundaries $\hat{B}^{k}(X)$ is the image of
\begin{eqnarray}
	\hat{\delta} \colon \, C^{k-1}(X) \times C^{k-2}(X; \R) & \to & \hat{Z}^{k}(X) \label{DefHatDelta} \\
	(\nu^{k-1}, d^{k-2}) & \mapsto & (\delta \nu^{k-1}, -\nu^{k-1} - \delta d^{k-2}, 0). \nonumber
\end{eqnarray}
The differential cohomology groups are then defined as usual:
\begin{equation}\label{DefDiffCoh}
	\hat{H}^{k}(X) := \hat{Z}^{k}(X)/\hat{B}^{k}(X)
\end{equation}
These groups are naturally isomorphic to the ones of differential characters of the corresponding degree on $X$, since $[\mu^{k}, c^{k-1}, \omega^{k}]$ can be identified with $\xi^{k} \colon Z_{k-1}(X) \to \R/\Z$ such that $\xi^{k}(\gamma_{k-1}) = c^{k-1}(\gamma_{k-1}) \mod \Z$. Coherently, we set $I[\mu^{k}, c^{k-1}, \omega^{k}] := [\mu^{k}]$, $R[\mu^{k}, c^{k-1}, \omega^{k}] := \omega^{k}$, and $a(\eta^{k-1}) := [0, \eta^{k-1}, d\eta^{k-1}]$ for any $\eta^{k-1} \in \Omega^{k-1}(X)$. Moreover, given $[\varphi] \in H^{k-1}(X; \R/\Z)$, we choose any lift $\tilde{\varphi} \in C^{k-1}(X; \R)$ and set $\iota[\varphi] := [-\delta \tilde{\varphi}, \tilde{\varphi}, 0]$.

We recall that the topological cap product satisfies the relation
\begin{equation}\label{BoundaryCap}
	\partial(\gamma_{l} \,\cap\, \varphi^{s}) = (-1)^{s}(\partial \gamma_{l} \,\cap\, \varphi^{s} - \gamma_{l} \,\cap\, \delta \varphi^{s}).
\end{equation}
Furthermore, by setting $\Omega_{h}(X) := \Omega^{-h}(X)$ with boundary $\partial_{h} := (-1)^{h-1}d^{-h}$, we define the chain complex $C_{\bullet}(X; \R) \otimes_{\R} \Omega_{\star}(X)$ with boundary
\begin{align}
	\partial(\gamma_{k} \otimes \omega_{h}) &:= (-1)^{h}\partial \gamma_{k} \otimes \omega_{h} + \gamma_{k} \otimes \partial \omega_{h} \nonumber \\
	& \phantom{:} = (-1)^{h}(\partial \gamma_{k} \otimes \omega_{h} - \gamma_{k} \otimes d\omega_{h}). \label{BoundaryTensor}
\end{align}
The two maps $C_{\bullet}(X; \R) \otimes_{\R} \Omega_{\star}(X) \to \Tau_{\bullet+\star}(X)$ defined respectively by $\gamma_{k} \otimes \omega_{h} \mapsto T_{\gamma_{k}} \wedge \omega_{h}$ and $\gamma_{k} \otimes \omega_{h} \mapsto T_{\gamma_{k} \cap \omega_{h}}$ are chain maps; the explicit computation is shown in appendix \ref{SecChMaps}. They are chain homotopic in an essentially unique way by a standard argument in homological algebra. We thus fix any $B \colon C_{\bullet}(X; \R) \otimes_{\R} \Omega_{\star}(X) \to \Tau_{\bullet+\star+1}(X)$ such that
\begin{equation}\label{HomotopyCapWedge}
	T_{\gamma} \wedge \omega - T_{\gamma \cap \omega} = (B\partial + \partial B)(\gamma \otimes \omega).
\end{equation}

\begin{Def2} We define the \emph{differential cap product}
	\[\hatcap \colon \hat{H}_{l}(X) \times \hat{H}^{s}(X) \to \hat{H}_{l-s}(X)
\]
as follows:
\begin{flalign}\label{DiffCapP}
	& & & \phantom{X} [\gamma_{l}, T_{l+1}] \hatcap [\mu^{s}, c^{s-1}, \omega^{s}] := \bigl[\gamma_{l} \cap \mu^{s}, (-1)^{s}(T_{l+1} \wedge \omega^{s} + T_{\gamma_{l} \cap c^{s-1}}) + B(\gamma_{l} \otimes \omega^{s})\bigr] & & \diamondsuit
\end{flalign}
\end{Def2}
We have to show that \eqref{DiffCapP} is well-defined. It is clearly bi-additive on representatives, hence we just have to verify that, if one of the two classes vanishes, then their product vanishes as well. The reader can find the details in appendix \ref{AppProdWD}, and the proof of the following proposition in appendix \ref{AppProof}.

\begin{Prop}\label{AxiomsDiffCap} The differential cap product \eqref{DiffCapP} satisfies the following properties:
\begin{enumerate}
	\item it is bi-additive;
	\item it is compatible with the natural transformations $I$, $R$, $a$, and $\iota$ as follows:
	\begin{align*}
		& I(\lambda \hatcap \xi) = I(\lambda) \cap I(\xi) & & R(\lambda \hatcap \xi) = R(\lambda) \wedge R(\xi) \\
		& a(T) \hatcap \xi = (-1)^{s}a(T \wedge R(\xi)) & & \lambda \hatcap a(\omega) = (-1)^{s}a(R(\lambda) \wedge \omega) \\
		& \iota(\alpha) \hatcap \xi = (-1)^{s}\iota(\alpha \cap I(\xi)) & & \lambda \hatcap \iota(\rho) = (-1)^{s}\iota(I(\lambda) \cap \rho)
	\end{align*}
	\item denoting by $\hatcup\!$ the usual product in differential cohomology, we have
	\[\lambda \hatcap (\xi \hatcup \chi) = (\lambda \hatcap \xi) \hatcap \chi;
\]
	\item it is functorial, that is, given a smooth function $f \colon X \to Y$, we have
	\[f_{*}(\lambda \hatcap f^{*}\xi) = (f_{*}\lambda) \hatcap \xi.
\]
\end{enumerate}
\end{Prop}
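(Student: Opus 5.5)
Each of the four properties will be checked directly on representatives, using the formula \eqref{DiffCapP}, the identities \eqref{BoudaryWedge}, \eqref{BoundaryCap}, \eqref{HomotopyCapWedge}, and the relations defining $\hat{H}_{\bullet}(X)$. Well-definedness has already been settled. We fix $\lambda = [\gamma_{l}, T_{l+1}]$ and $\xi = [\mu^{s}, c^{s-1}, \omega^{s}]$.

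\textbf{(1) Bi-additivity.} This is immediate, because every term of \eqref{DiffCapP} is additive in each entry: $\cap$, $\wedge$, $T_{(\cdot)}$ and $B$.

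\textbf{(2) Compatibility with $I$, $R$, $a$, $\iota$.}
\begin{itemize}
\item For $I$: the chain component of the product is $\gamma_{l}\cap\mu^{s}$.
\item For $R$: we compute
\[
T_{\gamma\cap\mu} + (-1)^{s}\partial(T\wedge\omega) + (-1)^{s}\partial T_{\gamma\cap c} + \partial B(\gamma\otimes\omega).
\]
Expand $\partial(T\wedge\omega)$ with \eqref{BoudaryWedge}, which gives $\partial T\wedge\omega$ since $d\omega=0$. Expand $\partial T_{\gamma\cap c}$ with \eqref{BoundaryCap} and use $\delta c = \omega - \mu$. Replace $\partial B(\gamma\otimes\omega)$ by $T_{\gamma}\wedge\omega - T_{\gamma\cap\omega} - B\partial(\gamma\otimes\omega)$ via \eqref{HomotopyCapWedge}, where $\partial(\gamma\otimes\omega) = (-1)^{s}\partial\gamma\otimes\omega$ with $\partial\gamma=0$. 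After cancellation this should leave $(T_{\gamma}+\partial T)\wedge\omega$. The signs must be tracked carefully; if a residual sign appears, the convention for $R$ on forms will decide it.
\item For $a$: take $\gamma=0$ to get $a(T)\hatcap\xi = [0,(-1)^{s}T\wedge\omega]$ directly. For $\lambda\hatcap a(\eta)$, use the triple $(0,\eta,d\eta)$; the product becomes $[0,(-1)^{s}(T\wedge d\eta + T_{\gamma\cap\eta}) + B(\gamma\otimes d\eta)]$. Then rewrite $T_{\gamma\cap\eta}$ as $T_{\gamma}\wedge\eta$ plus a homotopy correction, again via \eqref{HomotopyCapWedge} with $\partial(\gamma\otimes\eta)$ involving $\gamma\otimes d\eta$. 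Likewise rewrite $T\wedge d\eta$ as $\partial T\wedge\eta$ plus an exact current, using \eqref{BoudaryWedge}. Exact currents are zero in the class.
\item For $\iota$: use the lifts $(-\partial\tilde\Gamma, T_{\tilde\Gamma})$, respectively $(-\delta\tilde\varphi,\tilde\varphi,0)$. In the first case, move $B(-\partial\tilde\Gamma\otimes\omega)$ and $T_{\tilde\Gamma}\wedge\omega$ into $T_{\tilde\Gamma\cap\omega}$ using \eqref{HomotopyCapWedge}. Then absorb $T_{\tilde\Gamma\cap(\omega-\mu)}=T_{\tilde\Gamma\cap\delta c}$ through the relation $[\partial\Gamma,0]=[0,T_{\Gamma}]$ applied to real chains whose boundaries are integral, until the class has the form $\iota$ of $\tilde\Gamma\cap\mu$. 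The second case is simpler, since $\omega=0$ and $B$ drops out.
\end{itemize}

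\textbf{(3) Associativity with $\hatcup$.} This is the main obstacle, because the cup product of Hopkins--Singer cocycles involves a cup-$1$/homotopy term $h(\omega\wedge\eta - \omega\cup\eta)$. The homotopies $B$ for the two sides are not obviously related.

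My first attempt will avoid explicit chain-level comparison. I will show that the difference
\[
D(\lambda,\xi,\chi) = \lambda\hatcap(\xi\hatcup\chi) - (\lambda\hatcap\xi)\hatcap\chi
\]
satisfies three properties. First, $I(D)=0$ and $R(D)=0$ by (2) and associativity of the topological and current-level products. Hence $D$ lies in the image of $a\circ\bar\ScT$, i.e.\ in $H_{l-s-t+1}(X;\R)/H(X)$, by Corollary \ref{CorExSeq3}. Second, $D$ is natural and trilinear. Third, $D$ is determined by choices of homotopies $B$ that are canonical up to chain homotopy. I then reduce to checking $D$ on representatives where one factor is flat or of the form $a(\cdot)$, using the formulas in (2).

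If this does not close, I will fall back on a direct computation. It uses the fact that any two choices of chain homotopy between the same pair of chain maps differ by a boundary plus a map of the form $\partial K - K\partial$. This lets me show the mismatch is $\partial$ of a current plus $T_{\Gamma}$ of an integral chain.

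\textbf{(4) Functoriality.} This follows from $f_{*}(\gamma\cap f^{*}\mu)=f_{*}\gamma\cap\mu$, $f_{*}(T\wedge f^{*}\omega)=f_{*}T\wedge\omega$, and $f_{*}T_{\gamma}=T_{f_{*}\gamma}$. We also need naturality of $B$ up to chain homotopy, namely
\[
f_{*}B(\gamma\otimes f^{*}\omega)-B(f_{*}\gamma\otimes\omega)=(\partial K + K\partial)(\gamma\otimes\omega).
\]
On cycles this contributes only an exact current plus $K$ applied to $\partial\gamma\otimes\omega$, which vanishes since $\partial\gamma=0$ and $d\omega=0$.
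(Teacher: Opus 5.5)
\textbf{Items (1), (2), (4).} These follow the paper's route. For (1) and (2) you compute directly on representatives with \eqref{BoudaryWedge}, \eqref{BoundaryCap}, \eqref{HomotopyCapWedge} and \eqref{FormC}. For (4) you treat $f_{*}B(\gamma\otimes f^{*}\omega)$ and $B(f_{*}\gamma\otimes\omega)$ as two homotopies between the same chain maps.

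Two harmless corrections:
\begin{itemize}
\item The $R$-computation cancels exactly, so there is no residual sign to settle.
\item Two homotopies between the same chain maps differ by something of the form $\partial K - K\partial$, not $\partial K + K\partial$. On cycles this changes nothing.
\end{itemize}

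One step in the first $\iota$-identity is actually wrong. You absorb terms via $[\partial\Gamma,0]=[0,T_{\Gamma}]$ applied to real chains with integral boundary. That relation holds only for integral $\Gamma$; for a real lift one has $[\partial\tilde\Gamma,-T_{\tilde\Gamma}]=-\iota[\Gamma]$, so using it would force $\iota=0$. It is also unnecessary:
\begin{itemize}
\item By \eqref{BoundaryCap}, $\partial\tilde\Gamma\cap c^{s-1}=(-1)^{s-1}\partial(\tilde\Gamma\cap c^{s-1})+\tilde\Gamma\cap(\omega^{s}-\mu^{s})$, so the first term only contributes an exact current.
\item The term $T_{\tilde\Gamma\cap\omega^{s}}$ cancels against the $B$-term.
\item What remains, $(-1)^{s}[-\partial(\tilde\Gamma\cap\mu^{s}),T_{\tilde\Gamma\cap\mu^{s}}]$, is $(-1)^{s}\iota[\Gamma\cap\mu^{s}]$ by definition \eqref{MorfIota}.
\end{itemize}

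\textbf{The gap is in (3), the substantive part of the proposition.}

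Your first strategy correctly places $D$ in $\Ker I\cap\Ker R\cong H_{l-s-t+1}(X;\R)/H_{l-s-t+1}(X)$. Via (2), $D$ vanishes once one factor lies in the image of $a$ or of $\iota$. This only shows that $D$ factors through $(I\lambda,I\xi,I\chi)$ and vanishes when one of these is torsion. A class whose integral class is non-torsion is never a sum of a flat class and an $a$-class, because $I\circ\iota=-\beta$ is torsion-valued. The target is divisible, so trilinearity yields nothing further. You would need a separate proof that the induced natural trilinear map $H_{l}\times H^{s}\times H^{t}\to H_{l-s-t+1}(\,\cdot\,;\R)/H_{l-s-t+1}(\,\cdot\,)$ vanishes, for instance a universal-example argument as in Section \ref{SecUnique}. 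Good neighbourhoods do not help, since $l-s-t+1\le l$. None is given.

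Your fallback names the right tool, but the mismatch cannot be fed into uniqueness of homotopies as it stands. It contains terms built from the cochains $\mu^{s},c^{s-1}$, namely $T_{\gamma\cap c}\wedge\eta$ versus $T_{\gamma\cap(c\cup\eta)}$, and $B((\gamma\cap\mu)\otimes\eta)$. These do not come from maps defined on $C_{\bullet}(X;\R)\otimes\Omega_{\star}(X)\otimes\Omega_{\#}(X)$.

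The missing step is the paper's:
\begin{itemize}
\item Apply \eqref{HomotopyCapWedge} to $(\gamma\cap c^{s-1})\otimes\eta^{t}$, and use \eqref{BoundaryCap} together with $\delta c=\omega-\mu$. This gives $(-1)^{s+t}(T_{\gamma\cap c}\wedge\eta-T_{(\gamma\cap c)\cap\eta})=B((\gamma\cap(\omega-\mu))\otimes\eta)+\partial(\cdots)$.
\item This cancels the $\mu$-dependent term and reduces the comparison to \eqref{FirstTermToC2} versus \eqref{SecondTermToC2}, which depend only on $(\gamma,\omega,\eta)$.
\item Then verify that both are chain homotopies on the triple tensor complex between $\gamma\otimes\omega\otimes\eta\mapsto T_{\gamma}\wedge\omega\wedge\eta$ and $\gamma\otimes\omega\otimes\eta\mapsto T_{(\gamma\cap\omega)\cap\eta}$. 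For the first one this uses the cup-product homotopy $\omega\wedge\eta-\omega\cup\eta=(B'\delta+\delta B')(\omega\otimes\eta)$.
\end{itemize}
Only then does essential uniqueness give equality up to an exact current on cycles. Without identifying these two homotopies and checking the homotopy identities, (3) is not proved.
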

We will show in section \ref{SecUnique} that these properties, stated as axioms, uniquely characterise the cap product. 

\SkipTocEntry \subsection{Differential Poincar\'e Duality}\label{DiffPDSec}

We suppose $X$ compact, oriented, and without boundary. As above, we set $n := \dim(X)$. In this case, a form $\eta^{k} \in \Omega^{k}(X)$ induces the (compactly-supported) current $T_{\eta^{k}} \in \Tau^{k}(X) = \Tau_{n-k}(X)$ defined by $T_{\eta^{k}}(\omega^{n-k}) := \int_{X} \eta^{k} \wedge \omega^{n-k}$. We get the embedding:
\begin{eqnarray}
	\ScT^{k} \colon \, \Omega^{k}(X) & \hookrightarrow & \Tau^{k}(X) \label{EmbJK} \\
	\eta^{k} & \mapsto & T_{\eta^{k}} \nonumber
\end{eqnarray}
Since $X$ has no boundary, we have
\begin{align*}
	dT_{\eta^{k}}(\omega^{n-k-1}) &\overset{\eqref{ExtDiffCurrents}}= (-1)^{k+1}T_{\eta^{k}}(d\omega^{n-k-1}) = (-1)^{k+1}\int_{X} \eta^{k} \wedge d\omega^{n-k-1} \\
	&\,= \int_{X} d\eta^{k} \wedge \omega^{n-k-1} = T_{d\eta^{k}}(\omega^{n-k-1}),
\end{align*}
that is, $dT_{\eta^{k}} = T_{d\eta^{k}}$. Projecting \eqref{EmbJK} to cohomology, we obtain the isomorphism
\begin{equation}\label{BarJIso}
	\bar{\ScT}^{k} \colon H^{k}_{\dR}(X) \overset{\!\simeq}\longrightarrow H\Tau^{k}(X).
\end{equation}
Since $\Tau^{k}(X) = \Tau_{n-k}(X)$, by composing \eqref{BarJIso} with \eqref{BarPsiIso} we get $\bar{\ScT}_{n-k}^{\,-1} \circ \bar{\ScT}^{k} \colon H^{k}_{\dR}(X) \overset{\!\simeq}\longrightarrow H_{n-k}(X; \R)$, which coincides with Poincar\'e Duality in real (co)homology through the de-Rham isomorphism $H^{k}_{\dR}(X) \simeq H^{k}(X; \R)$.

Because of corollary \ref{LemmaH}-(i), the fundamental class $[X] \in H_{n}(X)$ can be identified with $I^{-1}[X] \in \hat{H}_{n}(X)$. Concretely, if $\gamma_{n} \in Z_{n}(X)$ represents $[X]$, then $I^{-1}[X] = [\gamma_{n}, 0]$. From now on we denote $I^{-1}[X]$ simply by $[X]$. We define the \emph{differential Poincar\'e duality} as follows:
\begin{eqnarray}
	\hatPD \colon \, \hat{H}^{k}(X) & \to & \hat{H}_{n-k}(X) \label{DiffPD} \\
	\xi & \mapsto & [X] \hatcap \xi \nonumber
\end{eqnarray}

\begin{Prop}\label{ProofPD} The natural morphism \eqref{DiffPD} is injective and its image is formed by the elements of $\hat{H}_{n-k}(X)$ whose curvature is a form.
\end{Prop}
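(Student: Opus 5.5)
The plan is a five-lemma style diagram chase comparing the exact sequences in differential cohomology with those of Proposition \ref{PropAxioms}. On the cohomology side we have the analogous sequence
\[0 \to H^{k-1}(X;\R/\Z) \overset{\iota}\longrightarrow \hat{H}^{k}(X) \overset{R}\longrightarrow \Omega^{k}_{\Int}(X) \to 0.\]
On the homology side, \eqref{ExSeq2} in degree $n-k$ reads
\[0 \to H_{n-k+1}(X;\R/\Z) \to \hat{H}_{n-k}(X) \to \Tau^{\Int}_{n-k}(X) \to 0.\]
We map the first sequence to the second by $\hatPD$ in the middle, by topological Poincar\'e duality $[X]\cap -$ with $\R/\Z$ coefficients on the left (up to the sign $(-1)^{k-1}$), and by $\ScT^{k}$ on the right.

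\textbf{Commutativity.} This follows from Proposition \ref{AxiomsDiffCap}(2). First, $R([X]\hatcap\xi)=R[X]\wedge R(\xi)$. Since $[X]=[\gamma_n,0]$, we have $R[X]=T_{\gamma_n}$, the integration current on $X$, so $T_{\gamma_n}\wedge\omega=T_{\omega}$ (with the orientation conventions making $T_{\gamma_n}(\eta)=\int_X\eta$). Second, $[X]\hatcap\iota(\rho)=(-1)^{k-1}\iota([X]\cap\rho)$. I will also check that $\ScT^{k}$ maps $\Omega^{k}_{\Int}(X)$ into $\Tau^{\Int}_{n-k}(X)$. This holds because $T_{\omega}$ is closed and its class corresponds under \eqref{BarPsiIso} to the real Poincar\'e dual of $[\omega]$, which is the image of an integral class since $[\omega]$ is integral and topological Poincar\'e duality preserves integrality.

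\textbf{Injectivity.} The left map is an isomorphism by classical Poincar\'e duality with $\R/\Z$ coefficients ($X$ closed and oriented), and $\ScT^{k}$ is injective by \eqref{EmbJK}. A diagram chase then gives injectivity of $\hatPD$: if $\hatPD(\xi)=0$, then $T_{R(\xi)}=0$, so $R(\xi)=0$, hence $\xi=\iota(\rho)$, and then $[X]\cap\rho=0$ forces $\rho=0$.

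\textbf{Image.} By commutativity, the image of $\hatPD$ lies in $\{\lambda : R(\lambda)\in \ScT^{k}(\Omega^k(X))\}$. Conversely, suppose $R(\lambda)=T_{\omega}$ with $\lambda$ in the target. This step is the main obstacle, since the statement says only that the curvature is a form, not a priori an integral form. I must show $\omega\in\Omega^k_{\Int}(X)$. Because $R(\lambda)$ is an integral current, $T_\omega$ is closed, and $dT_\omega=T_{d\omega}$ gives $d\omega=0$. Its class lies in the image of $H_{n-k}(X)$, and via \eqref{BarJIso} composed with \eqref{BarPsiIso} (which is real Poincar\'e duality) together with the fact that integral Poincar\'e duality is an isomorphism compatible with $-\otimes 1$, $[\omega]$ is the image of an integral class. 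So $\omega$ is integral.

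By surjectivity of $R$ in the cohomological sequence, choose $\xi$ with $R(\xi)=\omega$. Then $R(\lambda-\hatPD(\xi))=0$, so by \eqref{ExSeq2} $\lambda-\hatPD(\xi)=\iota(\alpha)$ for some $\alpha\in H_{n-k+1}(X;\R/\Z)$. Surjectivity of the left vertical map writes $\alpha=(-1)^{k-1}[X]\cap\rho$, hence $\iota(\alpha)=\hatPD(\iota\rho)$ and $\lambda=\hatPD(\xi+\iota\rho)$. The only delicate points are the sign conventions in identifying $R[X]\wedge\omega$ with $T_\omega$ and the integrality argument just described.
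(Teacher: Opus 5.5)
Your proposal is correct and is essentially the paper's proof: the same morphism between the $\iota$--$R$ short exact sequences (signed Poincar\'e duality with $\R/\Z$ coefficients on the left, $\ScT^{k}$ on the right), the same chase for injectivity, and the same ``lift via surjectivity of $R$, then correct by $\iota$'' argument for the image; your explicit check that a curvature which is a form is automatically closed and integral fills in a step the paper leaves implicit. The only slip is a sign: Proposition \ref{AxiomsDiffCap}(2) gives $[X]\hatcap\iota(\rho)=(-1)^{k}\iota([X]\cap\rho)$, because the sign is governed by the degree $k$ of $\iota(\rho)$ rather than by the degree of $\rho$, but this does not affect the argument.
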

\begin{proof} The following diagram is commutative:
\begin{equation}\label{DiagPD2}
	\xymatrix{
	0 \ar[r] & H^{k-1}(X; \R/\Z) \ar[r]^(.6){\iota} \ar[d]^(.46){(-1)^{k}\PD}_(.46){\simeq} & \hat{H}^{k}(X) \ar[r]^{R} \ar[d]^(.43){\hatPD} & \Omega^{k}_{\Int}(X) \ar[r] \ar@{^(->}[d]^(.45){\ScT^{k}} & 0 \\
	0 \ar[r] & H_{n-k+1}(X; \R/\Z) \ar[r]^(.6){\iota} & \hat{H}_{n-k}(X) \ar[r]^{R} & \Tau_{n-k}^{\Int}(X) \ar[r] & 0
}\end{equation}
In fact, about the left square, $\hatPD(\iota(\rho^{k-1})) = [X] \hatcap \iota(\rho^{k-1}) = (-1)^{k}\iota([X] \cap \rho^{k-1}) = (-1)^{k}\iota(\PD(\rho^{k-1}))$. About the right square, we first observe that, if $[X] = [\gamma_{n}, 0]$, then $R[X] = T_{\gamma_{n}}$. Therefore,
	\[R[X](\eta^{n}) = \int_{\gamma_{n}} \eta^{n} = \int_{X} \eta^{n} = T_{1}(\eta^{n})
\]
where $1 \in \Omega^{0}(X)$ is the unit constant 0-form, so that $R[X] = T_{1} = \ScT^{0}(1)$. Hence, $R(\hatPD(\xi)) = R([X] \hatcap \xi) = T_{1} \wedge R(\xi) = T_{R(\xi)} = \ScT^{k}(R(\xi))$. The four lemma implies that $\hatPD$ is injective.

The commutativity of the right square of \eqref{DiagPD2} ensures that $R(\hatPD(\xi))$ belongs to the image of $\ScT^{k}$ for every $\xi$, that is, the curvature of $\hatPD(\xi)$ is a form. Conversely, let us fix $\lambda \in \hat{H}_{n-k}(X)$ such that $R(\lambda) = T_{\omega^{k}}$. Because of the surjectivity of $R$, there exists $\xi' \in \hat{H}^{k}(X)$ such that $R(\xi') = \omega^{k}$. Therefore, $R(\hatPD(\xi')) = \ScT^{k}(\omega^{k}) = T_{\omega^{k}}$ by commutativity. It follows that there exists $\alpha \in H_{n-k+1}(X; \R/\Z)$ such that $\lambda - \hatPD(\xi') = \iota(\alpha)$. By setting $\rho := (-1)^{k}\PD^{-1}(\alpha)$ and $\xi := \xi' + \iota(\rho)$, we obtain $\hatPD(\xi) = \hatPD(\xi') + \iota(\alpha) = \lambda$.
\end{proof}

\begin{Rmk*} In the previous proof, instead of using diagram \eqref{DiagPD2} to show injectivity, one could alternatively start from:
\begin{equation}\label{DiagPD1}
	\xymatrix{
	0 \ar[r] & \frac{\Omega^{k-1}(X)}{\Omega^{k-1}_{\Int}(X)} \ar[r]^{a} \ar@{^(->}[d]^(.45){(-1)^{k}\ScT^{k-1}} & \hat{H}^{k}(X) \ar[r]^{I} \ar[d]^(.43){\hatPD} & H^{k}(X) \ar[r] \ar[d]^(.46){\PD}_(.46){\simeq} & 0 \\
	0 \ar[r] & \frac{\Tau_{n-k+1}(X)}{\Tau_{n-k+1}^{\Int}(X)} \ar[r]^{a} & \hat{H}_{n-k}(X) \ar[r]^{I} & H_{n-k}(X) \ar[r] & 0
}\end{equation}
\end{Rmk*}

In order to turn \eqref{DiffPD} into an isomorphism, we have two possibilities: (i) restricting $\hat{H}_{\bullet}(X)$ to classes whose curvature is a form; (ii) extending $\hat{H}^{\bullet}(X)$ to classes whose curvature is a current. The first case is straightforward: we define the \emph{restricted differential homology groups} as $\check{H}_{k}(X) := R^{-1}(\IIm\,\ScT^{n-k})$ and, by restricting the codomain of the injective map \eqref{DiffPD} to its image, we obtain the isomorphism $\hatPD \colon \, \hat{H}^{k}(X) \to \check{H}_{n-k}(X)$. The second case, based on the next definition, is more meaningful.
\begin{Def}\label{DefRedHom} The \emph{extended differential cohomology group} of degree $k$ on $X$, denoted by $\check{H}^{k}(X)$, is the quotient of $\hat{H}^{k}(X) \times \Tau^{k-1}(X)$ by the subgroup whose elements are of the form
\begin{equation}\label{EqRelDiffHomCurrents}
	\bigl(a(\omega^{k-1}), -T_{\omega^{k-1}} - dT^{k-2}\bigr)
\end{equation}
with $\omega^{k-1} \in \Omega^{k-1}(X)$ and $T^{k-2} \in \Tau^{k-2}(X)$.
\end{Def}
This means that $[a(\omega^{k-1}), 0] = [0, T_{\omega^{k-1}}]$ and $[0, dT^{k+2}] = 0$. In the setting of restricted homology, the natural transformations $I$, $R$, and $a$ are well-defined and satisfy the same axioms replacing currents with forms. In the framework of extended cohomology, we define
\begin{align*}
	& I \colon \check{H}^{k}(X) \to H^{k}(X), & & [\xi^{k}, T^{k-1}] \mapsto I(\xi^{k}) \\
	& R \colon \check{H}^{k}(X) \to \Tau^{k}(X), & & [\xi^{k}, T^{k-1}] \mapsto R(\xi^{k}) + dT^{k-1} \\
	& a \colon \Tau^{k-1}(X) \to \check{H}_{k}(X), & & T^{k-1} \mapsto [0, T^{k-1}]
\end{align*}
Again, these transformations satisfy all of the usual axioms replacing forms with currents, and we have the natural embedding $\hat{H}^{k}(X) \hookrightarrow \check{H}^{k}(X)$, $\xi^{k} \mapsto [\xi^{k}, 0]$, which is compatible with $I$, $R$, and $a$.

Given a smooth map $f \colon X \to Y$, we can neither restrict the pushforward $f_{*} \colon \hat{H}_{k}(X) \to \hat{H}_{k}(Y)$ nor extend the pull-back $f^{*} \colon \hat{H}^{k}(Y) \to \hat{H}^{k}(X)$, since we neither have a natural pushforward of forms nor a natural pull-back of currents. Nevertheless, functoriality holds with respect to the Gysin map, as shown in section \ref{GysinMapHomSec} below (for any generalised homology theory). Moreover, we have the cap product
	\[\hat{\cap} \colon \check{H}_{l}(X) \times \check{H}^{s}(X) \to \hat{H}_{l-s}(X)
\]
defined by
\begin{equation}\label{DiffCapP2}
	\lambda \hatcap [\xi, T] := \lambda \hatcap \xi + (-1)^{s}a(R(\lambda) \wedge T).
\end{equation}
Both \eqref{DiffCapP} and \eqref{DiffCapP2} restrict to a product from $\check{H}_{l}(X) \times \hat{H}^{s}(X)$ to $\check{H}_{l-s}(X)$. However, we have no product between $\hat{H}_{l}(X)$ and $\check{H}^{s}(X)$ since we cannot multiply currents.

The fundamental class $[X]$ belongs to reduced homology since its curvature is the unit 0-form. Hence, we can use \eqref{DiffCapP2} to define $\hatPD \colon \check{H}^{k}(X) \overset{\!\simeq}\longrightarrow \hat{H}_{n-k}(X)$. It is an isomorphism by the five lemma, since diagrams \eqref{DiagPD2} and \eqref{DiagPD1} become respectively
\begin{equation}\label{DiagPD2B}
	\xymatrix{
	0 \ar[r] & H^{k-1}(X; \R/\Z) \ar[r]^(.6){\iota} \ar[d]^(.46){(-1)^{k}\PD}_(.46){\simeq} & \hat{H}^{k}(X) \ar[r]^{R} \ar[d]^(.43){\hatPD} & \Tau^{k}_{\Int}(X) \ar[r] \ar@{=}[d] & 0 \\
	0 \ar[r] & H_{n-k+1}(X; \R/\Z) \ar[r]^(.6){\iota} & \hat{H}_{n-k}(X) \ar[r]^{R} & \Tau_{n-k}^{\Int}(X) \ar[r] & 0
}\end{equation}
and
\begin{equation}\label{DiagPD1B}
	\xymatrix{
	0 \ar[r] & \frac{\Tau^{k-1}(X)}{\Tau^{k-1}_{\Int}(X)} \ar[r]^{a} \ar@{=}[d] & \hat{H}^{k}(X) \ar[r]^{I} \ar[d]^(.43){\hatPD} & H^{k}(X) \ar[r] \ar[d]^(.46){\PD}_(.46){\simeq} & 0 \\
	0 \ar[r] & \frac{\Tau_{n-k+1}(X)}{\Tau_{n-k+1}^{\Int}(X)} \ar[r]^{a} & \hat{H}_{n-k}(X) \ar[r]^{I} & H_{n-k}(X) \ar[r] & 0.
}\end{equation}

\SkipTocEntry \subsection{Duality in Low Cohomological Degree}\label{LowDimSec}

A differential cohomology class of degree one or two has a quite natural geometric interpretation. Here we provide a description of its dual homology class with a similar language, deferring proofs and generalization to any degree to a future paper.

\vspace{3pt} \emph{Degree 2.} A geometric model for the group $\hat{H}^{2}(X)$ consists of Hermitian line bundles with connection up to isomorphism. Therefore, fixing a line bundle $L$ with connection $\nabla$, we aim to describe the dual class $\widehat{\PD}[L, \nabla] \in \hat{H}_{n-2}(X)$.\footnote{We thank Prof.\ Dirk T\"oben for rising this question.} We fix a global section $s \colon X \to L$ transversal to the zero-section. The corresponding zero-locus $Z := s^{-1}(0)$ is a 2-codimensional submanifold of $X$. Denoting by $[Z] \in H_{n-2}(Z)$ its fundamental class, $i \colon Z \hookrightarrow X$ the inclusion, and $[L] \in H^{2}(X)$ the first Chern class of $L$, the \emph{Gauss-Bonnet formula}
	\[\PD[L] = i_{*}[Z]
\]
holds.\footnote{For instance, see \cite{GH}, p.\ 413, Gauss-Bonnet Formula II with $k = r = 1$.} In $X \setminus Z$, the connection $\nabla$ and the non-vanishing section $s$ induce the 1-form $A$ defined by $\nabla_{V}s = 2\pi i A(V)s$ for any section $V$ of the tangent bundle. The form $A$ induces the current $\bar{A} \in \Tau_{n-1}(X)$ defined by $\bar{A}(\omega) := \int_{X \setminus Z} A \wedge \omega$. By choosing any representative $Z$ of the fundamental class, we obtain the \emph{differential Gauss-Bonnet Formula} for line bundles:
\begin{equation}\label{DiffGB}
	\hatPD[L, \nabla] = [i_{*}Z, \bar{A}]
\end{equation}
We set $\delta_{Z} := T_{i_{*}Z} \in \Tau_{n-2}(X)$, that is, $\delta_{Z}(\omega) = \int_{Z} \omega$. The main issue for the proof consists in showing that $d\bar{A} = F - \delta_{Z}$, so that $R[i_{*}Z, \bar{A}] = \delta_{Z} + d\bar{A} = F = R[L, \nabla]$. The compatibility with $I$ and $a$ is straightforward, hence the result follows from the argument of \cite[Fact 1.2]{SS}.

\vspace{3pt} \emph{Degree 1.} An element of $\hat{H}^{1}(X)$ can be identified with a smooth function $f \colon X \to S^{1}$. With this language, $I(f) := [f]$ is the corresponding homotopy class, which is also a cohomology class because $S^{1}$ is the classifying space for $H^{1}$. It follows that $I(f) = 0$ if and only if $f$ admits a global logarithm, that is, there exists $\phi \colon X \to \R$ such that $f = \exp \circ \, \phi$ where $\exp \colon \R \to S^{1} \subset \C$, $t \mapsto e^{2\pi i t}$. We also set $R(f) := \frac{1}{2\pi i} f^{-1}df$ and $a(\phi) := \exp \circ \, \phi$. Let us fix a regular value $z_{0} \in S^{1}$ of $f \colon X \to S^{1}$. The $z_{0}$-locus $Z := f^{-1}(z_{0})$ is a 1-codimensional submanifold of $X$ and we have:
	\[\PD[f] = i_{*}[Z]
\]
The class $[f]$ is trivial in the complement of $Z$, hence there exists $\phi \colon X \setminus Z \to \R$ such that $f = \exp \circ \, \phi$. The 0-form $\phi$ induces the current $\bar{\phi} \in \Tau_{n}(X)$ defined by $\bar{\phi}(\omega) := \int_{X \setminus Z} \phi \, \omega$. Again, by choosing any representative $Z$ of the fundamental class, we obtain:
\begin{equation}\label{DiffGBFunc}
	\hatPD(f) = [i_{*}Z, \bar{\phi}]
\end{equation}
The proof is similar to the previous case.

\vspace{3pt} \emph{Other degrees.} We believe there exists an analogous interpretation for any degree $k$ between $3$ and $n$ by using the language of abelian $(k-2)$-gerbes with connection. In the extreme cases $k = 0$ and $k = n+1$, the duality coincides with the underlying topological one, with coefficients respectively in $\Z$ and $\R/\Z$, because of corollary \ref{LemmaH} and its cohomological counterpart.

\SkipTocEntry \subsection{External Products and $S^{1}$-Differentiation}

We recall that the Eilenberg-Zilber map $EZ \colon C_{\bullet}(X) \times C_{\star}(Y) \to C_{\bullet+\star}(X \times Y)$ is a chain map, therefore it induces the exterior product $H_{\bullet}(X) \times H_{\star}(Y) \to H_{\bullet+\star}(X \times Y)$. One can define the analogous product of currents $\Tau_{\bullet}(X) \times \Tau_{\star}(Y) \to \Tau_{\bullet+\star}(X \times Y)$ as follows: given $T_{l} \in \Tau_{l}(X)$, $T_{s} \in \Tau_{s}(Y)$, $\omega^{k} \in \Omega^{k}(X)$, and $\omega^{h} \in \Omega^{h}(Y)$, we set $\omega^{k} \times \omega^{h} := \pi_{X}^{*}\omega^{k} \wedge \pi_{Y}^{*}\omega^{h}$ and
	\[(T_{l} \times T_{s})(\omega^{k} \times \omega^{h}) := \begin{cases} T_{l}(\omega^{k}) \cdot T_{s}(\omega^{h}) & \textnormal{if } $l = k$ \textnormal{ and } $s = h$ \\ 0 & \textnormal{otherwise}. \end{cases}
\]
Since the subspace of split forms is dense in $\Omega^{l+s}(X \times Y)$, the current $T_{l} \times T_{s}$ is uniquely defined by continuity.

\begin{Def2} The \emph{differential external product}
	\[\hat{\times} \colon \hat{H}_{l}(X) \times \hat{H}_{s}(Y) \to \hat{H}_{l+s}(X \times Y)
\]
is defined as follows:
\begin{flalign}
	& & \hspace{10pt} [\gamma_{l}, T_{l+1}] \hattimes [\gamma_{s}, T_{s+1}] := \bigl[\gamma_{l} \times \gamma_{s}, (-1)^{l}T_{\gamma_{l}} \times T_{s+1} + T_{l+1} \times T_{\gamma_{s}} + T_{l+1} \times \partial T_{s+1} \bigr] & & & \diamondsuit \label{DiffExtPr}
\end{flalign}
\end{Def2}

We also have the slant product $/ \colon H_{\bullet+\star}(X \times Y) \times H_{\bullet}(Y) \to C_{\star}(X)$ defined by $\alpha / \rho := (\pi_{X})_{*}(\alpha \cap \pi_{Y}^{*}\rho)$. Similarly, $/ \colon \Tau_{\bullet+\star}(X \times Y) \times \Omega_{\bullet}(Y) \to \Tau_{\star}(X)$ can be defined by $T/\omega := (\pi_{X})_{*}(T \wedge \pi_{Y}^{*}\omega)$, that is, $(T/\omega)(\eta) := (T \wedge \pi_{Y}^{*}\omega)(\pi_{X}^{*}\eta) = T(\eta \times \omega)$.

\begin{Def} The \emph{differential slant product}
	\[\hatslant \colon \hat{H}_{l}(X \times Y) \times \hat{H}^{s}(Y) \to \hat{H}_{l-s}(X)
\]
is defined by $\lambda \hatslant \xi := (\pi_{X})_{*}(\lambda \hatcap \pi_{Y}^{*}\xi)$.
\end{Def}

The proof of the following proposition is left to the reader.

\begin{Prop}\label{PropExtSlant} The differential external and slant products satisfy the following axioms:
\begin{enumerate}
	\item they are bi-additive;
	\item they are compatible with the natural transformations $I$, $R$, $a$, and $\iota$ as follows:
	\begin{align*}
		& I(\lambda \hattimes \mu) = I(\lambda) \times I(\mu) & & I(\lambda \hatslant \xi) = I(\lambda) / I(\xi) \\
		& R(\lambda \hattimes \mu) = R(\lambda) \times R(\mu) & & R(\lambda \hatslant \xi) = R(\lambda) / R(\xi) \\
		& a(T) \hattimes \mu = a(T \times R(\mu)) & & a(T) \hatslant \mu = a(T / R(\mu)) \\
		& \lambda \hattimes a(\omega) = (-1)^{l}a(R(\lambda) \times \omega) & & \lambda \hatslant a(\omega) = (-1)^{l}a(R(\lambda) / \omega) \\
		& \iota(\alpha) \hattimes \mu = \iota(\alpha \times I(\mu)) & & \iota(\alpha) \hatslant \mu = \iota(\alpha / I(\mu)) \\
		& \lambda \hattimes \iota(\rho) = (-1)^{l}\iota(I(\lambda) \times \rho) & & \lambda \hatslant \iota(\rho) = (-1)^{l}\iota(I(\lambda) / \rho)
	\end{align*}
	\item the external product is associative and compatible with cohomology, and the slant product is mixed-associative with cohomology, that is:
	\begin{align*}
		& (\lambda \hattimes \mu) \hatcap (\xi \hattimes \chi) = (\lambda \hatcap \xi) \hattimes (\mu \hatcap \chi) & & (\lambda \hatcap \pi_{Y}^{*}\xi) \hatslant \chi = \lambda \hatslant (\xi \hatcup \chi)
	\end{align*}
	\item they are functorial, that is, given smooth functions $f \colon X \to X'$ and $g \colon Y \to Y'$, we have
	\begin{align*}
		& (f, g)_{*}(\lambda \hattimes \mu) = f_{*}\lambda \hattimes g_{*}\mu & & f_{*}(\lambda \hatslant g^{*}\mu) = (f, g)_{*}\lambda \hatslant \mu.
	\end{align*}
\end{enumerate}
\end{Prop}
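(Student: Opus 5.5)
The external product \eqref{DiffExtPr} is handled first, directly from its definition. The slant product is then derived from it. Since $\lambda \hatslant \xi := (\pi_{X})_{*}(\lambda \hatcap \pi_{Y}^{*}\xi)$, almost every slant statement follows formally from Proposition \ref{AxiomsDiffCap}, from the naturality of $I$, $R$, $a$, $\iota$ under pushforward and pull-back, and from the topological and currential identities $\alpha/\rho = (\pi_{X})_{*}(\alpha \cap \pi_{Y}^{*}\rho)$ and $T/\omega = (\pi_{X})_{*}(T \wedge \pi_{Y}^{*}\omega)$.

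\textbf{External product.} The first task is well-definedness. Bi-additivity on representatives is clear, so it suffices to show that the product vanishes when one factor is a relation \eqref{EqRelDiffHom}. Take $(\partial \Gamma_{l+1}, -T_{\Gamma_{l+1}} - \partial T'_{l+2})$ in the first slot. Using $EZ(\partial\Gamma \times \gamma_{s}) = \partial(\Gamma \times \gamma_{s}) \pm \Gamma \times \partial\gamma_{s}$ with $\partial \gamma_{s} = 0$, and $T_{\Gamma \times \gamma} = T_{\Gamma} \times T_{\gamma}$, the product becomes $[\partial(\Gamma \times \gamma_{s}), \ldots]$. This is rewritten as $[0, T_{\Gamma\times\gamma_{s}} + \ldots]$. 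The current part should then reduce to a boundary, via the Leibniz rule $\partial(T \times T') = \partial T \times T' + (-1)^{\deg T} T \times \partial T'$, which I would check first on split forms and extend by density. The second slot is treated symmetrically, and there the sign $(-1)^{l}$ is exactly what is needed.

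Next come the compatibilities in (2). For $I$ the identity is immediate. For $R$ one computes
\[
(T_{\gamma_{l}} + \partial T_{l+1}) \times (T_{\gamma_{s}} + \partial T_{s+1}) = T_{\gamma_{l}\times\gamma_{s}} + \partial\bigl((-1)^{l}T_{\gamma_{l}} \times T_{s+1} + T_{l+1}\times T_{\gamma_{s}} + T_{l+1}\times\partial T_{s+1}\bigr),
\]
again by the Leibniz rule. For $a$, one has $a(T) \hattimes [\gamma, T'] = [0, T\times T_{\gamma} + T \times \partial T'] = a(T \times R(\mu))$; the second $a$-formula is analogous, where modifying by exact currents is allowed. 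For $\iota$, one chooses real lifts and uses that a product of a real lift with an integral cycle is a real lift of $\alpha \times I(\mu)$. Associativity and functoriality are direct from associativity and naturality of $EZ$ and of $\times$ on currents, using $(f,g)_{*}(T\times T') = f_{*}T \times g_{*}T'$.

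\textbf{Slant product.} Well-definedness is automatic from the definition. The compatibilities follow from Proposition \ref{AxiomsDiffCap}(2) composed with $(\pi_{X})_{*}$. Mixed associativity is
\[
(\lambda \hatcap \pi_{Y}^{*}\xi)\hatslant\chi = (\pi_{X})_{*}\bigl((\lambda \hatcap \pi_{Y}^{*}\xi) \hatcap \pi_{Y}^{*}\chi\bigr) = (\pi_{X})_{*}\bigl(\lambda \hatcap \pi_{Y}^{*}(\xi\hatcup\chi)\bigr)
\]
by Proposition \ref{AxiomsDiffCap}(3) and multiplicativity of pull-back. Functoriality uses Proposition \ref{AxiomsDiffCap}(4) applied to $(f,g)$, together with $\pi_{X'}\circ(f,g) = f\circ\pi_{X}$ and $\pi_{Y'}\circ(f,g)=g\circ\pi_{Y}$. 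A little care is needed here, because the stated formula pulls back along $g$ while pushing forward along $f$; I would first write $(f,\id)$ and $(\id,g)$ separately.

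\textbf{Expected obstacle.} The delicate step is the compatibility $(\lambda \hattimes \mu) \hatcap (\xi \hattimes \chi) = (\lambda \hatcap \xi) \hattimes (\mu \hatcap \chi)$. The cap product involves the auxiliary homotopy $B$ from \eqref{HomotopyCapWedge}, so the two sides agree only up to terms built from $B$ on $X$, $Y$ and $X\times Y$. I would avoid a cochain computation and use a uniqueness argument instead. Both sides are bi-additive in each variable and compatible with $I$, $R$, $a$, $\iota$, so their difference, for fixed three arguments, lands in the image of $a \circ \bar{\ScT}$ and depends only on topological data. The step most likely to fail is making this vanish in general. The plan is to apply the argument of \cite[Fact 1.2]{SS}, reducing to universal or flat cases where it vanishes. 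If that fails, I would instead choose $B$ on $X\times Y$ compatibly with the external product of homotopies, which is possible by acyclic models, and compute directly.
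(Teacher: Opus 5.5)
The paper leaves this proof to the reader, so your plan has to stand on its own. The external-product half does. From $(\partial T)(\omega)=T(d\omega)$ one gets $\partial(T\times T')=\partial T\times T'+(-1)^{\deg T}\,T\times\partial T'$, with homological degree. With it, the vanishing on relations in either slot, your formula for $R$, both $a$-identities, both $\iota$-identities, functoriality and even strict associativity check out as you describe.

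The slant statements in (1), (3) and (4) also follow formally from Proposition~\ref{AxiomsDiffCap}. Functoriality needs no splitting into $(f,\mathrm{id})$ and $(\mathrm{id},g)$, since $\pi_Y^{*}g^{*}\mu=(f,g)^{*}\pi_{Y'}^{*}\mu$ and $f\circ\pi_X=\pi_{X'}\circ(f,g)$. But look at what the same derivation gives in (2). Because $a(T)\hatcap\xi=(-1)^{s}a(T\wedge R(\xi))$, $\lambda\hatcap a(\omega)=(-1)^{s}a(R(\lambda)\wedge\omega)$, and likewise for $\iota$, pushing forward along $\pi_X$ yields $a(T)\hatslant\mu=(-1)^{s}a(T/R(\mu))$, $\lambda\hatslant a(\omega)=(-1)^{s}a(R(\lambda)/\omega)$, $\iota(\alpha)\hatslant\mu=(-1)^{s}\iota(\alpha/I(\mu))$ and $\lambda\hatslant\iota(\rho)=(-1)^{s}\iota(I(\lambda)/\rho)$. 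Here $s$ is the degree of the cohomology class. These are not the printed signs (none, respectively $(-1)^{l}$), so you are proving a sign-corrected statement and should say so.

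The genuine gap is the first identity in (3), and it has two parts.

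First, the two sides do not have the same $I$ and $R$. Take $\lambda\in\hat{H}_{l}(X)$, $\xi\in\hat{H}^{s}(X)$, $\chi\in\hat{H}^{t}(Y)$ and $\xi\hattimes\chi:=\pi_X^{*}\xi\hatcup\pi_Y^{*}\chi$. Evaluating on split forms gives $(R(\lambda)\times R(\mu))\wedge(R(\xi)\times R(\chi))=(-1)^{t(l-s)}(R(\lambda)\wedge R(\xi))\times(R(\mu)\wedge R(\chi))$. The same Koszul sign appears for $I$ with the conventions of \eqref{BoundaryCap}. On $S^{1}\times S^{1}$, with $\lambda=\mu=[S^{1}]$, $\xi=1$ and $\chi\in\hat{H}^{1}(S^{1})$ of nonzero curvature, the two sides have opposite nonzero curvatures. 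So the printed identity is false and must carry $(-1)^{t(l-s)}$; only then does your difference lie in $\Ker I\cap\Ker R$.

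Second, even after this correction, the vanishing of that difference $D$ is the whole content, and your primary route does not give it. The Simons--Sullivan $K(\Z,k)$ argument handles natural transformations out of $H^{k}(\,\cdot\,;\Z)$. Your $D$ is covariant in $\lambda,\mu$ and only satisfies $D(f_{*}\lambda,g_{*}\mu,\xi,\chi)=(f,g)_{*}D(\lambda,\mu,f^{*}\xi,g^{*}\chi)$. Its target $H_{l+m-s-t+1}(X\times Y;\R)/H_{l+m-s-t+1}(X\times Y)$ also does not vanish for degree reasons: it is $\R/\Z$ in the example, and good neighbourhoods of the cycles do not kill it once $s+t\geq 1$.

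Your fallback is the right idea, but it is exactly the missing proof. One way to carry it out:
\begin{enumerate}
\item By Proposition~\ref{AxiomsDiffCap}(3) it suffices to show $(\lambda\hattimes\mu)\hatcap\pi_X^{*}\xi=(\lambda\hatcap\xi)\hattimes\mu$ and $(\lambda\hattimes\mu)\hatcap\pi_Y^{*}\chi=(-1)^{tl}\lambda\hattimes(\mu\hatcap\chi)$.
\item Use representatives $[\nu,c,\omega]$ with $\nu,c$ normalized; every class has one. Then $(\gamma\times\gamma')\cap\pi_X^{*}\varphi=(\gamma\cap\varphi)\times\gamma'$ and, for $\varphi$ of degree $t$ on $Y$, $(\gamma\times\gamma')\cap\pi_Y^{*}\varphi=(-1)^{tl}\gamma\times(\gamma'\cap\varphi)$ hold exactly on chains.
\item After using $\delta c=\omega-\nu$, \eqref{BoundaryCap}, \eqref{HomotopyCapWedge} and the Leibniz rule, everything cancels modulo exact currents except $B_{X\times Y}((\gamma\times\gamma')\otimes\pi_X^{*}\omega)-B_X(\gamma\otimes\omega)\times T_{\gamma'}$ (respectively its analogue with $T_{\gamma}\times B_Y(\gamma'\otimes\omega)$).
\item These are two homotopies between the same pair of chain maps, evaluated on a cycle. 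Their difference is therefore exact by essential uniqueness, as in the paper's proof of Proposition~\ref{AxiomsDiffCap}(3).
\end{enumerate}
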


In cohomology, we have the integration map over $S^{1}$. In homology, as an application of the external product just defined, we have the following dual map, that we call \emph{$S^{1}$-differentiation}:
\begin{eqnarray}
	\partial_{S^{1}} \colon \hat{H}_{\bullet}(X) & \to & \hat{H}_{\bullet+1}(S^{1} \times X) \label{DiffS1} \\
	\lambda & \mapsto & [S^{1}] \hattimes \lambda \nonumber
\end{eqnarray}
We define the underlying topological map with coefficients in $\Z$ or $\R/\Z$ in the analogous way, and the one on currents as follows: $\partial_{S^{1}} \colon \Tau_{\bullet}(X) \to \Tau_{\bullet+1}(S^{1} \times X)$, $T \mapsto T \circ \int_{S^{1}}$.

\begin{Prop}\label{PropS1Diff} The following properties of $S^{1}$-differentiation hold:
\begin{enumerate}
	\item it commutes with the transformations $I$, $R$, $a$, and $\iota$;
	\item considering the map $t \colon S^{1} \to S^{1}$, $e^{i\theta} \mapsto e^{-i\theta}$, we have $(t \times \id_{X})_{*} \circ \partial_{S^{1}} = -\partial_{S^{1}}$;
	\item considering the projection $\pi \colon S^{1} \times X \to X$, we have $\pi_{*} \circ \partial_{S^{1}} = 0$;
	\item for every $\lambda \in \hat{H}_{\bullet}(X)$ and $\xi \in \hat{H}^{\star}(S^{1} \times X)$:
	\begin{equation}\label{IntDer}
		\textstyle \lambda \hatcap \int_{S^{1}} \xi = \pi_{*}(\partial_{S^{1}}\lambda \hatcap \xi)
	\end{equation}
	\item if $X$ is oriented of dimension $n$, then the following diagrams commute:
	\[\resizebox{0.9\textwidth}{!}{\xymatrix{
	\hat{H}^{\bullet}(S^{1} \times X) \ar[rr]^{\hatPD} & & \hat{H}_{n-\bullet+1}(S^{1} \times X) \\
	\hat{H}^{\bullet}(X) \ar[rr]^{\hatPD} \ar[u]^{\pi^{*}} & & \hat{H}_{n-\bullet}(X) \ar[u]_{\partial_{S^{1}}}
} \qquad\quad \xymatrix{
	\hat{H}^{\bullet+1}(S^{1} \times X) \ar[rr]^{\hatPD} \ar[d]_{\int_{S^{1}}} & & \hat{H}_{n-\bullet}(S^{1} \times X) \ar[d]^{\pi_{*}} \\
	\hat{H}^{\bullet}(X) \ar[rr]^{\hatPD} & & \hat{H}_{n-\bullet}(X)
}}\]
\end{enumerate}
\end{Prop}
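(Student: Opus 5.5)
Since $\partial_{S^{1}}$ is defined as $\lambda \mapsto [S^{1}] \hattimes \lambda$, where $[S^{1}] = I^{-1}[S^{1}] \in \hat{H}_{1}(S^{1})$ (corollary \ref{LemmaH}-(i)), I will deduce every item from proposition \ref{PropExtSlant} and proposition \ref{AxiomsDiffCap}. I fix a representative $[\sigma_{1}, 0]$ of $[S^{1}]$, with $\sigma_{1}$ a smooth integral cycle representing the fundamental class. The identity $R[S^{1}] = T_{\sigma_{1}} = T_{1}$, already used in the proof of proposition \ref{ProofPD}, gives the basic input.

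\emph{Item (1).} I compose the compatibility formulas of proposition \ref{PropExtSlant}-(2) with $\lambda_{0} = [S^{1}]$. Since $I[S^{1}]$ is the fundamental class, $I$ and $\iota$ commute with the topological $S^{1}$-differentiation, taking $l = 1$ in the last line. For $R$, the current $T_{1} \times T$ evaluated on $\eta \times \omega$ equals $\int_{S^{1}}\eta \cdot T(\omega)$, which is $T \circ \int_{S^{1}}$ on split forms. Both sides are continuous, so they agree everywhere. The formula for $a$ works the same way, up to the sign $(-1)^{l} = -1$ coming from $\lambda \hattimes a(\omega)$. I expect to have to absorb this sign into the convention for $\partial_{S^{1}}$ on currents, for instance by ordering the factors appropriately. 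This sign bookkeeping is the only delicate point in (1).

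\emph{Items (2) and (3).} I use functoriality from proposition \ref{PropExtSlant}-(4): $(t \times \id_{X})_{*}([S^{1}] \hattimes \lambda) = t_{*}[S^{1}] \hattimes \lambda$. Since $I$ is injective in top degree, $t_{*}[S^{1}] = I^{-1}(t_{*}[S^{1}]_{\textnormal{top}}) = -[S^{1}]$, and bi-additivity gives (2). For (3), $\pi_{*}([S^{1}] \hattimes \lambda) = c_{*}[S^{1}] \hattimes \lambda$ with $c \colon S^{1} \to \pt$. Here $c_{*}[S^{1}] \in \hat{H}_{1}(\pt)$, and this group vanishes by the sequence \eqref{ExSeq1}: $H_{1}(\pt) = 0$ and $\Tau_{2}(\pt) = 0$.

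\emph{Item (4).} This is the main obstacle, because $\int_{S^{1}}$ in differential cohomology is not among the axioms listed so far. My plan is to use the slant product. I would first show that $\pi_{*}((\lambda_{0} \hattimes \lambda) \hatcap \xi)$ equals $\lambda \hatcap (\xi \hatslant \lambda_{0})$ in a suitable sense. More concretely, I would verify directly on cocycle representatives that the cohomological fibre integration is given by the same chain-level formula as capping with $\sigma_{1}$ in the $S^{1}$ direction, i.e.\ $\int_{S^{1}}(\mu, c, \omega) = (\mu/\sigma_{1}, \pm c/\sigma_{1}, \int_{S^{1}}\omega)$, which is the Hopkins--Singer model. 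With this, \eqref{IntDer} reduces to comparing \eqref{DiffCapP} and \eqref{DiffExtPr} at chain level. The chain homotopy $B$ term is what complicates this comparison. To control it, I would choose $B$ compatibly with products, or argue that the difference of the two sides is natural in $\xi$, vanishes after $I$ and $R$, and is therefore of the form $a(\cdot)$ or $\iota(\cdot)$ of a flat term. That flat term can then be checked topologically using the classical identity $\alpha \cap \int_{S^{1}}\rho = \pi_{*}(([S^{1}] \times \alpha) \cap \rho)$.

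\emph{Item (5).} For the first square, I combine $[S^{1} \times X] = [S^{1}] \hattimes [X]$ (from (1) and injectivity of $I$ in top degree) with proposition \ref{PropExtSlant}-(3):
\[
([S^{1}] \hattimes [X]) \hatcap \pi^{*}\xi = ([S^{1}] \hattimes [X]) \hatcap (1 \hattimes \xi) = [S^{1}] \hattimes ([X] \hatcap \xi).
\]
The second square is (4) applied with $\lambda = [X]$, since $\partial_{S^{1}}[X] = [S^{1} \times X]$.
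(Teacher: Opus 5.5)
\textbf{The gap: item (4).} The proof of item (4) is incomplete, and so is the second square of (5), which you deduce from it. Your primary plan matches the paper: define $\int_{S^{1}}[\mu, c, \omega] := [\int_{S^{1}}\mu, -\int_{S^{1}}c, \int_{S^{1}}\omega]$ by slanting with $u_{1}$, then compare \eqref{DiffCapP} with \eqref{DiffExtPr} at chain level. But you stop exactly where the comparison has to be closed. For $\lambda \in \hat{H}_{k}(X)$ and $\xi \in \hat{H}^{s+1}(S^{1} \times X)$, three facts are needed.
\begin{enumerate}
\item The currential identity $T \wedge \int_{S^{1}} \eta = \pi_{*}(\partial_{S^{1}}T \wedge \eta)$.
\item The chain identity $\gamma \cap \int_{S^{1}}\varphi = \pi_{*}((u_{1} \times \gamma) \cap \varphi)$ modulo degenerate (hence thin) simplices. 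This makes the cycle entries agree up to boundaries of thin chains, and the $T_{\gamma \cap c}$ entries agree as currents.
\item For the $B$-term: $\gamma \otimes \omega \mapsto B(\gamma \otimes \int_{S^{1}}\omega)$ and $\gamma \otimes \omega \mapsto \pi_{*}B((u_{1} \times \gamma) \otimes \omega)$ are two chain homotopies on $C_{\bullet}(X; \R) \otimes \Omega_{\star}(S^{1} \times X)$. By the first two facts, both connect the same pair of chain maps, $\gamma \otimes \omega \mapsto T_{\gamma} \wedge \int_{S^{1}}\omega$ and $\gamma \otimes \omega \mapsto T_{\gamma \cap \int_{S^{1}}\omega}$. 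Hence they differ by an exact current on the cycle $\gamma_{k} \otimes \omega^{s+1}$. This is the same device as in the proof of functoriality in proposition \ref{AxiomsDiffCap}.
\end{enumerate}
``Choosing $B$ compatibly with products'' does not replace the third fact. $B$ is fixed separately on $X$ and on $S^{1} \times X$, and changing it is harmless only because of this same uniqueness argument.

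Your fallback fails at its last step. Knowing that $I$ and $R$ kill the difference $D$ of the two sides, corollary \ref{CorExSeq3} only places $D$ in $a \circ \bar{\ScT}\bigl(H_{k-s+1}(X; \R)/H_{k-s+1}(X)\bigr)$. The classical identity $\alpha \cap \int_{S^{1}}\rho = \pi_{*}(([S^{1}] \times \alpha) \cap \rho)$ says nothing about this torus. To kill it you would first have to show that $D$ depends only on $(I\lambda, I\xi)$, via compatibility of both sides with $a$ in each slot. You would then need a universal-example argument as in section \ref{SecUnique}. Neither step is in the proposal.

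\textbf{The other items are fine.} For (2) and (3) your argument is cleaner than the paper's chain-level computation. The paper uses $t_{*}\int_{S^{1}} = -\int_{S^{1}}$, and for (3) that $\pi_{*}(\int_{S^{1}} \times T) = 0$ while $\pi_{*}(u_{1} \times \gamma_{k})$ bounds degenerate simplices. You instead get both from functoriality of $\hattimes$ together with $\hat{H}_{1}(S^{1}) \simeq H_{1}(S^{1})$ and $\hat{H}_{1}(\pt) = 0$. Your (5) coincides with the paper's.

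In (1), the sign you notice is real. It matches \eqref{IntS1Formula}, which gives $\partial_{S^{1}} \circ a = -a \circ \partial_{S^{1}}$ for $\partial_{S^{1}}T = T \circ \int_{S^{1}}$. However, the same factor $(-1)^{l} = -1$ sits in the $\iota$-formula you cite, so $\iota$ anticommutes too; your claim that it commutes is inconsistent with that formula. Neither sign can be absorbed into a sign convention on currents, because with $T \circ \int_{S^{1}}$ the map $R$ already commutes exactly. Both signs disappear if $[S^{1}]$ is put on the right, $\lambda \hattimes [S^{1}]$, as the paper does in section \ref{GenHomSec}. With the left-hand convention, (1) holds for $a$ and $\iota$ only up to sign, a point the paper's proof also passes over.
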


The proof can be found in appendix \ref{AppProofS1}. Furthermore, the map \eqref{DiffS1} will be extended later on to differentiation with respect to an oriented fibre bundle, dual to the corresponding integration (see section \ref{GysinMapHomSec}).

\section{Uniqueness}\label{SecUnique}

Let us show that differential homology and the cap product are completely characterised by propositions \ref{PropAxioms} and \ref{AxiomsDiffCap} stated as axioms.

\SkipTocEntry \subsection{Uniqueness of Differential Homology}

We denote by $\hat{H}'_{\bullet}$ any other model of differential singular homology, endowed with the corresponding transformations $I'$, $R'$, and $a'$. Given a class $\lambda_{k} \in \hat{H}'_{k}(X)$, we fix any cycle $\gamma_{k}$ representing $I'(\lambda_{k}) \in H_{k}(X)$. Following \cite[Fact 2.1]{SS}, we choose a \emph{good} neighbourhood $U$ of the image of $\gamma_{k}$, that is, a neighbourhood with $H_{h}(U) = 0$ for every $h > k$. Because of the exact sequence \eqref{ExSeq3}, the intersection between the kernels of $I'$ and $R'$ in $U$ is isomorphic to $H_{k+1}(U; \R)/H_{k+1}(U) = 0$. Hence, there exists a unique class $\hat{\gamma}_{k} \in H'_{k}(U)$ such that $I'(\hat{\gamma}_{k}) = [\gamma_{k}]_{U}$ and $R'(\hat{\gamma}_{k}) = T_{\gamma_{k}}$. Denoting by $i \colon U \hookrightarrow X$ the inclusion, we have $I(i_{*}\hat{\gamma}_{k}) = i_{*}[\gamma_{k}]_{U} = [\gamma_{k}] = I(\lambda_{k})$. Thus, there exists $T_{k+1} \in \Tau_{k+1}(X)$, unique up to integral currents, such that
\begin{equation}\label{FormulaLambdaK}
	\lambda_{k} = i_{*}\hat{\gamma}_{k} + a'(T_{k+1}).
\end{equation}
We obtain the following morphism, which will be proved to be bijective and natural:
\begin{eqnarray}
	\phi \colon \hat{H}'_{k}(X) & \overset{\!\simeq}\longrightarrow & \hat{H}_{k}(X) \label{IsoUniqueness} \\
	\lambda_{k} & \mapsto & [\gamma_{k}, T_{k+1}] \nonumber
\end{eqnarray}
Let us show that \eqref{IsoUniqueness} does not depend on the choices of $\gamma_{k}$ and $U$. With respect to $U$, if $U'$ is another choice, then it is easy to see that the result does not change by fixing a good neighbourhood $U'' \subset U \cap U'$, which is allowed by \cite[Fact 2.1]{SS}. Let us now suppose that $I'(\lambda_{k}) = [\gamma_{k}] = [\gamma'_{k}]$. We fix a good neighbourhood $U$ of $\gamma_{k} - \gamma'_{k}$. In $U$, the cycle $\gamma_{k} - \gamma'_{k}$ is homologous to a pseudo-manifold $P_{k}$ by \cite[Fact 2.2]{SS}. We set
\begin{equation}\label{FirstEqPseudoM}
	\gamma_{k} - \gamma'_{k} = P_{k} + \partial \Theta_{k+1}
\end{equation}
in $U$, so that $\hat{\gamma}_{k} - \hat{\gamma}'_{k} = \hat{P}_{k} + a(T_{\Theta_{k+1}})$, where $a(T_{\Theta_{k+1}})$ is the unique class in the kernel of $I'$ with curvature $T_{\partial \Theta_{k+1}}$. Therefore:
\begin{equation}\label{SecondEqPseudoM}
	i_{*}\hat{\gamma}_{k} - i_{*}\hat{\gamma}'_{k} = i_{*}\hat{P}_{k} + a(T_{\Theta_{k+1}})
\end{equation}
Since $\gamma_{k} - \gamma'_{k}$ is a boundary, $P_{k}$ is a boundary as well. By \cite[Fact 2.3]{SS}, there exists a good neighbourhood $V$ of $P_{k}$ is which $P_{k}$ bounds. We set $P_{k} = \partial \Gamma_{k+1}$ in $V$, so that $\hat{P}_{k} = a(T_{\Gamma_{k+1}})$, again because this is the unique class in the kernel of $I'$ with curvature $T_{\partial \Gamma_{k+1}}$. Since the choice of a good neighbourhood of $P_{k}$ is immaterial, by using $V$ instead of $U$ in formula \eqref{SecondEqPseudoM} we obtain
\begin{equation}\label{ThirdEqPseudoM}
	i_{*}\hat{\gamma}_{k} - i_{*}\hat{\gamma}'_{k} = a(T_{\Gamma_{k+1} + \Theta_{k+1}}).
\end{equation}
Formula \eqref{FormulaLambdaK}, with $\gamma'_{k}$ instead of $\gamma_{k}$, becomes $\lambda_{k} = i_{*}\hat{\gamma}'_{k} + a'(T'_{k+1})$ for a suitable $T'_{k+1}$. It follows that $i_{*}\hat{\gamma}_{k} - i_{*}\hat{\gamma}'_{k} = a'(T_{k+1} - T'_{k+1})$. Formula \eqref{ThirdEqPseudoM} implies
\begin{equation}\label{EqTBoundary}
	a'(T'_{k+1}) = a'(T_{k+1} + T_{\Gamma_{k+1} + \Theta_{k+1}}).
\end{equation}
By definition \eqref{IsoUniqueness}, we have both $\phi(\lambda_{k}) = [\gamma_{k}, T_{k+1}]$ and $\phi(\lambda_{k}) = [\gamma'_{k}, T'_{k+1}]$. Coherently:
\begin{align*}
	[\gamma'_{k}, T'_{k+1}] & \overset{\eqref{FirstEqPseudoM}, \eqref{EqTBoundary}}= [\gamma_{k} - P_{k} - \partial \Theta_{k+1}, T_{k+1} + T_{\Gamma_{k+1} + \Theta_{k+1}}] \\
	&\hspace{12pt} = \hspace{11pt} [\gamma_{k} - \partial(\Gamma_{k+1} - \Theta_{k+1}), T_{k+1} + T_{\Gamma_{k+1} + \Theta_{k+1}}] = [\gamma_{k}, T_{k+1}].
\end{align*}
This show that \eqref{IsoUniqueness} is well-defined as a function. About additivity, we fix $\lambda_{k}, \mu_{k} \in H'_{k}(X)$ and we set $I'(\lambda_{k}) = [\gamma_{k}]$ and $I'(\mu_{k}) = [\sigma_{k}]$. We fix a good neighbourhood $U$ of $\gamma_{k} + \sigma_{k}$, so that $\lambda_{k} = i_{*}\hat{\gamma}_{k} + a(T_{k+1})$ and $\mu_{k} = i_{*}\hat{\sigma}_{k} + a(U_{k+1})$ for suitable $T_{k+1}$ and $U_{k+1}$. It follows that $\lambda_{k} + \mu_{k} = i_{*}(\hat{\gamma}_{k} + \hat{\sigma}_{k}) + a(T_{k+1} + U_{k+1})$. Trivially $\widehat{\gamma_{k} + \sigma_{k}} = \hat{\gamma}_{k} + \hat{\sigma}_{k}$, thus $T_{k+1} + U_{k+1}$ fits in formula \eqref{FormulaLambdaK} about $\lambda_{k} + \mu_{k}$. Hence, definition \eqref{IsoUniqueness} implies $\phi(\lambda_{k} + \mu_{k}) = [\gamma_{k} + \sigma_{k}, T_{k+1} + U_{k+1}] = \phi(\lambda_{k}) + \phi(\sigma_{k})$.

It easily follows from the construction of $\phi$ that it commutes with the natural transformations $I$, $R$, and $a$. Therefore, the sequence \eqref{ExSeq1} and the five lemma imply that it is an isomorphism. About naturality, given a smooth function $f \colon X \to Y$ and a class $\lambda_{k} \in \hat{H}'_{k}(X)$ satisfying $I'(\lambda_{k}) = [\gamma_{k}]$, we fix two good neighbourhoods $U$ of $f_{*}\gamma_{k}$ and $V \subset f^{-1}U$ of $\gamma_{k}$. Considering the restriction $f \colon V \to U$, we have $f_{*}\hat{\gamma}_{k} = \widehat{f_{*}\gamma_{k}}$. Moreover, $\lambda_{k} = i_{*}\hat{\gamma}_{k} + a(T_{k+1})$ implies
	\[f_{*}\lambda_{k} = f_{*}i_{*}\hat{\gamma}_{k} + a(f_{*}T_{k+1}) = j_{*}f_{*}\hat{\gamma}_{k} + a(f_{*}T_{k+1}) = j_{*}\widehat{f_{*}\gamma_{k}} + a(f_{*}T_{k+1})
\]
where $j \colon V \hookrightarrow Y$ is the inclusion. Hence, $a(f_{*}T_{k+1})$ fits in formula \eqref{FormulaLambdaK} about $f_{*}\lambda_{k}$, so that $\phi(f_{*}\lambda_{k}) = [f_{*}\gamma_{k}, f_{*}T_{k+1}] = f_{*}\phi(\lambda_{k})$.

About uniqueness, let $\phi' \colon \hat{H}'_{k}(X) \to \hat{H}_{k}(X)$ be any natural morphism commuting with $I$, $R$, and $a$. Using the notation of formula \eqref{FormulaLambdaK}, we have $\phi'(\lambda_{k}) = \phi'(i_{*}\hat{\gamma_{k}} + a'(T_{k+1})) = i_{*}\phi'(\hat{\gamma}_{k}) + a(T_{k+1})$. The class $\phi'(\hat{\gamma}_{k})$ is the unique one satisfying $I(\phi'(\hat{\gamma}_{k})) = [\gamma_{k}]_{U}$ and $R(\phi'(\hat{\gamma}_{k})) = T_{\gamma_{k}}$, so that $\phi'(\hat{\gamma}_{k}) = [\gamma_{k}, 0]_{U}$. Thence, $\phi'(\lambda_{k}) = i_{*}[\gamma_{k}, 0] + a(T_{k+1}) = [\gamma_{k}, T_{k+1}] = \phi(\lambda_{k})$.

Lastly, we show that $\phi$ commutes with the natural transformation $\iota$. Since the latter was not used neither in the construction of $\phi$ nor in the proof of its uniqueness, this means that $\iota$ is completely determined by $I$, $R$, and $a$. Indeed, let us apply formula \eqref{FormulaLambdaK} to $\lambda_{k} = \iota'([\Gamma_{k+1}])$ where $\Gamma_{k+1} \in Z_{k+1}(X; \R/\Z)$. We lift $\Gamma_{k+1}$ to $\tilde{\Gamma}_{k+1} \in C_{k+1}(X; \R)$, so that $\gamma_{k} := -\partial \tilde{\Gamma}_{k+1} \in Z_{k}(X)$ and $I'(\lambda_{k}) = [\gamma_{k}]$. The class $[\gamma_{k}]$ is torsion, and we call $r$ its order. According to \cite[Fact 2.3]{SS}, by adding a suitable integral chain to $\tilde{\Gamma}_{k+1}$, we are allowed to suppose that $\gamma_{k}$ is the fundamental cycle of an embedded pseudo-manifold. Also, we can assume without loss of generality that $\dim(X) \geqslant k+2$.\footnote{When $\dim(X) = k+1$, we have two possibilities: if $X$ is compact and orientable, then $H_{k}(X)$ has no torsion, hence $\iota$ is completely determined by $a$; otherwise, $H_{k+1}(X; \R) = 0$, therefore $\iota$ is completely determined by $I$ and $R$.} Then, the same proof of \cite[Fact 2.3]{SS} shows that there exists a good neighbourhood $U$ of $\gamma_{k}$ in which $r\gamma_{k}$ bounds, thus we set $r\gamma_{k} = \partial \Theta_{k+1}$ in $U$. We have
	\[\textstyle \hat{\gamma}_{k} = \iota'([-\frac{1}{r}\Theta_{k+1}]_{\R/\Z, U}) + a'(T_{\frac{1}{r}\Theta_{k+1}}),
\]
since $I'(\hat{\gamma}_{k}) = [\partial(\frac{1}{r}\Theta_{k+1})]_{U} = [\gamma_{k}]_{U}$ and $R'(\hat{\gamma_{k}}) = 0 + \partial T_{\frac{1}{r}\Theta_{k+1}} = T_{\gamma_{k}}$. Hence, the naturality of $\iota$ implies $i_{*}\hat{\gamma}_{k} = \iota'([-\frac{1}{r}\Theta_{k+1}]_{\R/\Z}) + a'(T_{\frac{1}{r}\Theta_{k+1}})$. Formula \eqref{FormulaLambdaK} then becomes
	\[\textstyle \iota'([\Gamma_{k+1}]) = \iota'([-\frac{1}{r}\Theta_{k+1}]_{\R/\Z}) + a'(T_{\frac{1}{r}\Theta_{k+1}} + T_{k+1})
\]
which is equivalent to $\iota'([\tilde{\Gamma}_{k+1}+\frac{1}{r}\Theta_{k+1}]_{\R/\Z}) = a'(T_{\frac{1}{r}\Theta_{k+1}} + T_{k+1})$. Since $\tilde{\Gamma}_{k+1} + \frac{1}{r}\Theta_{k+1}$ is a real cycle, we obtain $a'(T_{\tilde{\Gamma}_{k+1}+\frac{1}{r}\Theta_{k+1}}) = a'(T_{\frac{1}{r}\Theta_{k+1}} + T_{k+1})$, that reduces to $a'(T_{\tilde{\Gamma}_{k+1}}) = a'(T_{k+1})$. Summarizing, in formula \eqref{IsoUniqueness}, we have $\gamma_{k} = -\partial \tilde{\Gamma}_{k+1}$ and $a'(T_{k+1}) = a'(T_{\tilde{\Gamma}_{k+1}})$. Thence:
	\[\phi(\iota'([\Gamma_{k+1}])) = [-\partial \tilde{\Gamma}_{k+1}, T_{\tilde{\Gamma}_{k+1}}] = \iota([\Gamma_{k+1}])
\]

\SkipTocEntry \subsection{Uniqueness of the Cap Product}

Let us suppose that $\hatcap'$ is another cap product. Because of the compatibility with $R$, for every $\lambda_{l} \in \hat{H}_{l}(X)$ and $\xi^{s} \in \hat{H}^{s}(X)$, there exists $\rho_{l-s+1} \in H_{l-s+1}(X; \R/\Z)$ satisfying
	\[\lambda_{l} \hatcap \xi^{s} - \lambda_{l} \hatcap' \xi^{s} = \iota(\rho_{l-s+1}).
\]
Moreover, because of the compatibility with $a$, the class $\rho_{l-s+1}$ only depends on $I(\lambda_{l})$ and $I(\xi^{s})$, thus we obtain the natural morphism
	\[\Delta \colon H_{\bullet}(X) \times H^{\star}(X) \to H_{\bullet-\star+1}(X; \R/\Z).
\]
The thesis consists in showing that $\Delta$ necessarily vanishes. We recall that a cohomology class $[f] \in H^{s}(X)$ admits a representative $f$ whose image is contained in a finite skeleton of the Eilenberg-MacLance space $K(\Z, s)$. By embedding this skeleton in a Euclidean space and forming a regular neighbourhood, the codomain of $f$ is replaced by a smooth manifold and $f$ can be assumed to be smooth. In this way, we can use the Eilenberg-MacLance spaces as if they belonged to the category of smooth manifolds.\footnote{For instance, see \cite[Fact 1.1]{SS}.} We call $\theta^{s}$ the fundamental class of $K(\Z, s)$, so that $[f] = f^{*}\theta^{s}$. We also recall that homology can be expressed as follows:\footnote{For instance, see \cite[$\mathcal{x}$8.33]{Switzer}.}
	\[H_{l}(X) = \varinjlim \pi_{l+q} \bigl(X_{+} \wedge K(\Z, q)\bigr)
\]
Here $X_{+}$ denotes the union of $X$ and a separated basepoint. This means that a class $[f] \in H_{l}(X)$ is represented by a function $f \colon S^{l+q} \to X_{+} \wedge K(\Z, q)$, and $[f] = f_{*}1_{l+q}/\theta^{q}$ where $1_{l+q}$ is the canonical generator of $H_{l+q}(S^{l+q})$. The space $X_{+} \wedge K(\Z, q)$ is a finite-dimensional countable CW-complex, hence it can be replaced by a smooth manifold as above. By naturality, 
\begin{align*}
	\Delta([f], [g]) &= \Delta(f_{*}1_{l+q}/\theta^{q}, g^{*}\theta^{s}) = \Delta(f_{*}1_{l+q}, \theta^{q} \cdot g^{*}\theta^{s}) = f_{*}\Delta(1_{l+q}, f^{*}(\theta^{q} \cdot g^{*}\theta^{s}))
\end{align*}
for every $[f] \in H_{l}(X)$ and $[g] \in H^{s}(X)$, where $\theta^{q} \cdot g^{*}\theta^{s}$ is an instance of the product $\tilde{H}^{q}(K(\Z, q)) \times H^{s}(X) \to \tilde{H}^{q+s}(X_{+} \wedge K(\Z, q))$. The last term shows that the morphism $\Delta$ is completely determined by the particular cases
	\[\Delta \colon H_{l+q}(S^{l+q}) \times H^{s+q}(S^{l+q}) \to H_{l-s+1}(S^{l+q}; \R/\Z).
\]
The group $H_{s+q}(S^{l+q})$ is non-vanishing only in the cases $s+q = 0$ and $s = l$. By choosing $q > 1$ in the direct limit, the former case is excluded, and the latter leads to $H_{1}(S^{l+q}; \R/\Z) = 0$.

\begin{Rmk*} The arguments of uniqueness become more natural by extending the theory to stratifolds, as in \cite{BB} within the cohomological framework, because the bordism theory of stratifolds is naturally isomorphic to singular homology. We will develop this viewpoint in a future paper.
\end{Rmk*}

\section{Cap Product and Cheeger-Simons Characters}\label{CapCSSec}

We recall that there exists the natural pairing:
\begin{eqnarray}
	Z_{k}(X) \times \hat{H}^{k+1}(X) & \to & \R/\Z \label{Holonomy} \\
	(\gamma_{k}, \xi^{k+1}) & \mapsto & \xi^{k+1}(\gamma_{k}) \nonumber
\end{eqnarray}
If we represent $\hat{H}^{\bullet}(X)$ through the model of differential characters, this pairing is part of the definition; in general, it is the holonomy of $\xi^{k+1}$ on $\gamma_{k}$. Furthermore, corollary \ref{LemmaH} implies:
\begin{equation}\label{CapHolonomy}
	\hatcap \colon \hat{H}_{k}(X) \times \hat{H}^{k+1}(X) \to \R/\Z
\end{equation}
Definition \ref{DefDiffHom} suggests the following natural transformation:
\begin{eqnarray}
	b \colon Z_{k}(X) & \to & \hat{H}_{k}(X) \label{DefTransfB} \\
	\gamma_{k} & \mapsto & (-1)^{k+1}[\gamma_{k}, 0] \nonumber
\end{eqnarray}
The next proposition shows that this transformation links formulas \eqref{Holonomy} and \eqref{CapHolonomy}, so that holonomy can be computed on a differential homology class. This may be interesting for applications in mathematical physics, since it allows to describe a charge source without fixing a representative cycle.
\begin{Prop}\label{PropCapXi} The natural transformation \eqref{DefTransfB} is the unique one satisfying
\begin{equation}\label{AxiomB}
	b(\gamma_{k}) \hatcap \xi^{k+1} = \xi^{k+1}(\gamma_{k})
\end{equation}
for every $\xi^{k+1} \in \hat{H}^{k+1}(X)$.
\end{Prop}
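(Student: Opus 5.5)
The plan has two parts. First, a direct computation with definition \eqref{DiffCapP} shows that \eqref{DefTransfB} satisfies \eqref{AxiomB}. Second, a nondegeneracy argument built on the compatibilities of Proposition \ref{AxiomsDiffCap} gives uniqueness.

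\emph{Existence.} Fix $\gamma_{k} \in Z_{k}(X)$ and $\xi^{k+1} = [\mu^{k+1}, c^{k}, \omega^{k+1}]$. With $l = k$ and $s = k+1$, definition \eqref{DiffCapP} gives
\[
	b(\gamma_{k}) \hatcap \xi^{k+1} = (-1)^{k+1}\bigl[\gamma_{k} \cap \mu^{k+1}, (-1)^{k+1}T_{\gamma_{k} \cap c^{k}} + B(\gamma_{k} \otimes \omega^{k+1})\bigr].
\]
The chain $\gamma_{k} \cap \mu^{k+1}$ has degree $-1$, so it vanishes, and the class equals $a\bigl(T_{\gamma_{k} \cap c^{k}} + (-1)^{k+1}B(\gamma_{k} \otimes \omega^{k+1})\bigr) \in \hat{H}_{-1}(X)$.

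I then make the identification $\hat{H}_{-1}(X) \simeq \R/\Z$ of Corollary \ref{LemmaH}-(ii) explicit: on the image of $a$ it is $T_{0} \mapsto T_{0}(1) \bmod \Z$. This is well defined because $(\partial T')(1) = T'(d1) = 0$ and $T_{\gamma_{0}}(1) \in \Z$. Since $T_{\gamma_{k} \cap c^{k}}(1) = c^{k}(\gamma_{k})$, which is $\xi^{k+1}(\gamma_{k}) \bmod \Z$, it remains to show $B(\gamma_{k} \otimes \omega^{k+1}) = 0$.

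For this I would choose the homotopy $B$ of \eqref{HomotopyCapWedge} natural, constructed simplexwise by acyclic models. Then $B(\sigma \otimes \omega) = \sigma_{*}B(\iota_{\Delta^{k}} \otimes \sigma^{*}\omega)$, and $\sigma^{*}\omega^{k+1} = 0$ on the $k$-simplex. If one prefers not to fix such a $B$, the class is independent of $B$ by well-definedness, and one can reduce to this choice. Checking that this choice of $B$ is legitimate is routine but should be spelled out.

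\emph{Uniqueness.} Let $b'$ be another natural transformation satisfying \eqref{AxiomB}, and set $\lambda := b(\gamma_{k}) - b'(\gamma_{k})$. By bi-additivity, $\lambda \hatcap \xi = 0$ for every $\xi \in \hat{H}^{k+1}(X)$.

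First take $\xi = a(\omega^{k})$. Proposition \ref{AxiomsDiffCap} gives $\pm a(R(\lambda) \wedge \omega^{k}) = 0$, that is, $R(\lambda)(\omega^{k}) \in \Z$ for all $\omega^{k}$. Rescaling $\omega^{k}$ by real numbers forces $R(\lambda) = 0$. By \eqref{ExSeq2} we can then write $\lambda = \iota(\alpha)$ with $\alpha \in H_{k+1}(X; \R/\Z)$.

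Next use $\iota(\alpha) \hatcap \xi = \pm\iota(\alpha \cap I(\xi))$ together with the surjectivity of $I$. This reduces the problem to showing that the Kronecker pairing $H_{k+1}(X; \R/\Z) \times H^{k+1}(X) \to \R/\Z$ has trivial left kernel.

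\emph{Main obstacle.} I expect this nondegeneracy to be the hard step. For spaces of finite type it follows from the universal coefficient theorems. One splits $H_{k+1}(X; \R/\Z) = H_{k+1}(X) \otimes \R/\Z \oplus \textnormal{Tor}(H_{k}(X), \R/\Z)$. The first summand pairs with $\Hom(H_{k+1}(X), \Z)$, and the torsion summand pairs with $\textnormal{Ext}(H_{k}(X), \Z)$ via the linking pairing; both pairings are nondegenerate.

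To reduce to finite type, I would use naturality of $b$ and $b'$. Choose a relatively compact neighbourhood $U$ of the image of $\gamma_{k}$ with finitely generated homology. By naturality, $\lambda = i_{*}\lambda_{U}$, where $\lambda_{U} := b_{U}(\gamma_{k}) - b'_{U}(\gamma_{k})$. Axiom \eqref{AxiomB} holds on $U$ as well, so the argument above shows $\lambda_{U} = 0$, hence $\lambda = 0$.
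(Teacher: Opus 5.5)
Your proposal is correct. For uniqueness it uses the other exact sequence of the hexagon than the paper does.

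\textbf{Uniqueness: the two routes.} The paper writes $b'(\gamma_{k})=(-1)^{k+1}[\gamma'_{k},T'_{k+1}]$ and proceeds in two steps.
\begin{itemize}
\item It tests against flat characters, using $H^{k}(X;\R/\Z)\simeq\Hom(H_{k}(X),\R/\Z)$ (which separates points for every $X$), to get $[\gamma'_{k}]=[\gamma_{k}]$.
\item It then tests against all characters to show the leftover current has integer periods on integral forms, and concludes that this current is integral.
\end{itemize}
You instead kill $R(\lambda)$ with the characters $a(\omega^{k})$, land in $\IIm\iota$, and reduce to nondegeneracy of the Kronecker pairing $H_{k+1}(X;\R/\Z)\times H^{k+1}(X)\to\R/\Z$.

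Both final steps need finite type, so your reduction to a finite-type neighbourhood via naturality is essential, not cosmetic. Take $H_{k+1}(X)\cong\Z[1/2]$, for instance $k=0$ and $X$ the complement of a dyadic solenoid in $S^{3}$. Then every integral $(k+1)$-form is exact. Let $\phi$ be any homomorphism from $Z_{k}(X)$ to closed currents with some non-integral value. Then $b_{X}+a\circ\phi$ satisfies \eqref{AxiomB} on $X$ but is not equal to $b_{X}$. So the paper's argument, which never uses naturality, only covers finite-type $X$; yours covers all manifolds.

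\textbf{Existence.} Your computation is the paper's. Your reason for $B(\gamma_{k}\otimes\omega^{k+1})=0$ is better than the paper's degree remark. You choose $B$ natural, built on the models $\Delta^{k}$, so the vanishing follows from $\sigma^{*}\omega^{k+1}=0$ on $\Delta^{k}$. The paper's degree argument does not work as stated, because $B$ raises degree and lands in $\Tau_{0}(X)\neq 0$.

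You should drop the fallback ``the class is independent of $B$ by well-definedness''.
\begin{itemize}
\item Well-definedness concerns independence of representatives for a fixed $B$, not independence of $B$.
\item Two non-natural homotopies can differ by a degree-one chain map that is nonzero on classes $[\gamma_{k}]\otimes[\omega^{k+1}]\in H_{-1}\bigl(C_{\bullet}(X;\R)\otimes\Omega_{\star}(X)\bigr)$.
\item Such a difference shifts $b(\gamma_{k})\hatcap\xi$ by an arbitrary real constant.
\end{itemize}
Only natural choices of $B$ are interchangeable. This is also what the paper's ``essential uniqueness'' of $B$, and its functoriality proof, tacitly require.
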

\begin{proof} We have
\begin{align*}
	b(\gamma_{k}) \hatcap \xi^{k+1} & \overset{\eqref{DefTransfB}}= (-1)^{k+1}[\gamma_{k}, 0] \hatcap [\mu^{k+1}, c^{k}, \omega^{k+1}] \\
	&\overset{\eqref{DiffCapP}}= [0, T_{\gamma_{k} \cap c^{k}}] = [0, T_{c^{k}(\gamma_{k})}] \overset{\eqref{MorfIota}}= \iota([c^{k}(\gamma_{k})]_{\R/\Z})
\end{align*}
In the second equality, $\gamma_{k} \cap \mu^{k+1}$ and $\gamma_{k} \otimes \omega^{k+1}$ vanish since they are elements of degree $-1$. The last term coincides with $\xi^{k+1}(\gamma_{k})$ because $[\mu^{k+1}, c^{k}, \omega^{k+1}]$ corresponds to $\gamma_{k} \mapsto [c^{k}(\gamma_{k})]_{\R/\Z}$ as a differential character.

About uniqueness, let us suppose that $b' \colon Z_{k}(X) \to \hat{H}_{k}(X)$ is another transformation satisfying \eqref{AxiomB}. We have $b'(\gamma_{k}) = (-1)^{k+1}[\gamma'_{k}, T'_{k+1}]$ for suitable $\gamma'_{k}$ and $T'_{k}$. Then:
	\[b'(\gamma_{k}) \hatcap \xi^{k+1} = \bigl(b(\gamma'_{k}) + (-1)^{k+1}a(T'_{k+1})\bigr) \hatcap \xi^{k+1} = \xi^{k+1}(\gamma'_{k}) + a\bigl(T'_{k+1} \wedge R(\xi^{k+1})\bigr)
\]
By hypothesis, $b'(\gamma_{k}) \hatcap \xi^{k+1} = \xi^{k+1}(\gamma_{k})$. Thus, for any flat character $\xi^{k+1}$, we have $\xi^{k+1}(\gamma'_{k}) = \xi^{k+1}(\gamma_{k})$. The group of flat characters is $H^{k}(X; \R/\Z) \simeq \Hom(H_{k}(X); \R/\Z)$, and morphisms to $\R/\Z$ separate elements of an abelian group, so $[\gamma'_{k}] = [\gamma_{k}]$. Hence, $b'(\gamma_{k}) = (-1)^{k+1}[\gamma_{k}, T''_{k+1}]$ for a suitable $T''_{k+1}$, and $a(T''_{k+1} \wedge R(\xi^{k+1})) = 0$ for any character $\xi^{k+1}$. We have $T''_{k+1} \wedge R(\xi^{k+1}) = T''_{k+1}(R(\xi^{k+1}))$ where $R(\xi^{k+1})$ is any integral form. It follows that $T''_{k+1}(\omega^{k+1}) \in \Z$ for every integral form $\omega^{k+1}$, that is, $T''_{k+1}$ is an integral current. Thus, $b'(\gamma_{k}) = (-1)^{k+1}[\gamma_{k}, 0] = b(\gamma_{k})$.
\end{proof}

Formula \eqref{AxiomB} characterises the transformation $b$ axiomatically. It immediately implies that, if $b(\gamma_{k}) = 0$, then the value of any differential character in $\gamma_{k}$ is zero. The following proposition shows that the converse holds as well. This means that $b(\gamma_{k}) = b(\gamma'_{k})$ if and only if $\gamma_{k}$ and $\gamma'_{k}$ induce the same holonomy map.

\begin{Prop}\label{LemmaZeroChar} The following conditions are equivalent for every cycle $\gamma_{k} \in Z_{k}(X)$:
\begin{enumerate}
	\item $\xi^{k+1}(\gamma_{k}) = 0$ for every differential character $\xi^{k+1}$;
	\item there exists $\Gamma_{k+1} \in C_{k+1}(X)$ such that $\gamma_{k} = \partial \Gamma_{k+1}$ and $T_{\Gamma_{k+1}}$ is an integral current;
	\item $b(\gamma_{k}) = 0$.
\end{enumerate}
If these conditions hold, then $\gamma_{k}$ is a thin chain.
\end{Prop}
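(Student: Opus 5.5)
I will prove the cycle of implications (3) $\Rightarrow$ (1) $\Rightarrow$ (2) $\Rightarrow$ (3), and then deduce thinness from (2).

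\emph{(3) $\Rightarrow$ (1).} This is immediate from proposition \ref{PropCapXi}. If $b(\gamma_{k}) = 0$, then $\xi^{k+1}(\gamma_{k}) = b(\gamma_{k}) \hatcap \xi^{k+1} = 0$ for every $\xi^{k+1}$.

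\emph{(2) $\Rightarrow$ (3).} Suppose $\gamma_{k} = \partial \Gamma_{k+1}$ with $T_{\Gamma_{k+1}}$ integral. By definition \ref{DefDiffHom}, $[\partial \Gamma_{k+1}, 0] = [0, T_{\Gamma_{k+1}}] = a(T_{\Gamma_{k+1}})$. This vanishes by the exactness of \eqref{ExSeq1}, since $T_{\Gamma_{k+1}} \in \Tau^{\Int}_{k+1}(X)$. Hence $b(\gamma_{k}) = 0$.

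\emph{(1) $\Rightarrow$ (2).} This is the substantive step, and I expect it to be the main obstacle.
\begin{enumerate}
	\item First I use flat characters. Condition (1) applied to all $\xi \in \iota(H^{k}(X; \R/\Z)) \simeq \Hom(H_{k}(X), \R/\Z)$ gives $[\gamma_{k}] = 0$, exactly as in the uniqueness part of proposition \ref{PropCapXi}. So $\gamma_{k} = \partial \Gamma'_{k+1}$ for some integral chain $\Gamma'_{k+1}$.
	\item Next, for a general character $\xi = [\mu, c, \omega]$, I compute the holonomy on a boundary. Using \eqref{FormC},
	\[\xi(\partial \Gamma') = c(\partial \Gamma') = \delta c(\Gamma') = \omega(\Gamma') - \mu(\Gamma') \equiv \textstyle\int_{\Gamma'} \omega \mod \Z.\]
	Condition (1) therefore says that $T_{\Gamma'_{k+1}}(\omega) \in \Z$ for every $\omega$ that is the curvature of some character, that is, for every integral form $\omega \in \Omega^{k+1}_{\Int}(X)$.
	\item Finally, I invoke the characterisation stated after \eqref{DefIntCurr}: a current is integral if and only if it takes integral values on integral forms. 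This gives $T_{\Gamma'_{k+1}} \in \Tau^{\Int}_{k+1}(X)$, so (2) holds with $\Gamma_{k+1} := \Gamma'_{k+1}$.
\end{enumerate}
The one point needing care is the equality $\omega(\Gamma') = \int_{\Gamma'} \omega$ under the embedding $\Omega \hookrightarrow C(X; \R)$, which is just the definition of that embedding. The heavy lifting is contained in the quoted equivalence for integral currents. If that equivalence were not available, I would prove it via the dual description $H\Tau_{k+1} \simeq H_{k+1}(X; \R)$ together with the fact that closed currents vanishing on exact forms and integral on integral closed forms represent integral classes. I would avoid that route if possible and cite it.

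\emph{Thinness.} Assume (2), and write $T_{\Gamma_{k+1}} = T_{\sigma_{k+1}} + \partial T'_{k+2}$ with $\sigma_{k+1}$ an integral cycle. Applying \eqref{BoudaryCurrents} and $\partial^{2} = 0$ gives
\[T_{\gamma_{k}} = T_{\partial \Gamma_{k+1}} = \partial T_{\Gamma_{k+1}} = \partial T_{\sigma_{k+1}} + \partial\partial T'_{k+2} = T_{\partial \sigma_{k+1}} = 0.\]
So $\gamma_{k}$ lies in the kernel of $\ScT_{k}$, which means it is thin.
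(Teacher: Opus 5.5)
Your proof is correct and follows the paper's argument for the three implications essentially verbatim: flat characters force $[\gamma_{k}] = 0$, the holonomy on a boundary shows that $T_{\Gamma_{k+1}}$ takes integer values on integral forms, and $[\partial \Gamma_{k+1}, 0] = a(T_{\Gamma_{k+1}}) = 0$. The only variation is the thinness step: you use that integral currents are closed by definition \eqref{DefIntCurr}, so $T_{\gamma_{k}} = \partial T_{\Gamma_{k+1}} = 0$, whereas the paper notes that $T_{\Gamma_{k+1}}(d\eta^{k}) = \int_{\gamma_{k}} \eta^{k} \in \Z$ for every $\eta^{k}$ and concludes by scaling $\eta^{k}$; both arguments are equally short and valid.
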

\begin{proof} (1) $\Rightarrow$ (2) If $\gamma_{k}$ is not a boundary, then there exists a flat character $\xi^{k+1} \in H^{k}(X; \R/\Z) \simeq \Hom(H_{k}(X); \R/\Z)$ such that $\xi^{k+1}[\gamma_{k}] \neq 0$. The hypothesis then implies $\gamma_{k} = \partial \Gamma_{k+1}$ for a suitable chain $\Gamma_{k+1}$, so that
	\[\textstyle 0 = \xi^{k+1}(\gamma_{k}) = \bigl[\int_{\Gamma_{k+1}} R(\xi^{k+1})\bigr]_{\R/\Z} = [T_{\Gamma_{k+1}}(R(\xi^{k+1}))]_{\R/\Z}
\]
for any character $\xi^{k+1}$. This means that $T_{\Gamma_{k+1}}(\omega^{k+1}) \in \Z$ for every integral form $\omega^{k+1}$, that is, $\Gamma_{k+1}$ is integral.

\vspace{3pt} (2) $\Rightarrow$ (3) We have $(-1)^{k+1} b(\gamma_{k}) = [\partial \Gamma_{k+1}, 0] = [0, T_{\Gamma_{k+1}}] = 0$ since $T_{\Gamma_{k+1}}$ is integral.

\vspace{3pt} (3) $\Rightarrow$ (1) Obvious from \eqref{AxiomB}.

\vspace{3pt} Assuming (2), since an exact form is integral, $T_{k+1}(d\eta^{k}) \in \Z$ for every $\eta^{k}$. This is equivalent to $\int_{\gamma_{k}} \eta^{k} \in \Z$ for every $\eta^{k}$, which happens if and only if $\int_{\gamma_{k}} \eta^{k} = 0$. Hence, $\gamma_{k}$ is thin.
\end{proof}

\begin{Rmk} Let $Y \subset X$ be an embedded oriented submanifold of dimension $k$. If $\gamma_{k}$ and $\gamma'_{k}$ are two representative cycles of the fundamental class $[Y]$, then $b(\gamma_{k}) = b(\gamma'_{k})$. Indeed, we have $\gamma_{k} - \gamma'_{k} = \partial \Gamma_{k+1}$ for a suitable chain $\Gamma_{k+1}$, and $T_{\Gamma_{k+1}} = 0$ since it is a $(k+1)$-current on a $k$-dimensional manifold. Hence, item (2) of proposition \ref{LemmaZeroChar} implies $b(\gamma_{k} - \gamma'_{k}) = 0$. It follows that, with a little abuse of notation, the class $b(Y) \in \hat{H}_{k}(X)$ is well-defined. Proposition \ref{LemmaZeroChar} implies $b(Y) \neq 0$, since a representative of $[Y]$ cannot be thin; similarly, if $Z$ is an embedded oriented submanifold of dimension $k$ different from $Y$, then the difference between any two representatives of $[Y]$ and $[Z]$ is not thin, hence $b(Y) \neq b(Z)$. It follows that the function $Y \mapsto b(Y)$ identifies the set of embedded oriented submanifolds of $X$ with a subset of $\hat{H}_{\bullet}(X)$.
\end{Rmk}

For completeness, we show that the transformation $b$ can be characterised axiomatically within the homological side (that is, without relying on the cap product), and fits in a long exact sequence analogous to \eqref{ExSeq1}. We define \emph{chain-integral currents} as follows:
\begin{equation}\label{DefINTCurr}
	\Tau_{k}^{\IInt}(X) := \bigl\{ T_{\gamma_{k}} + \partial T'_{k+1}: \gamma_{k} \in C_{k}(X), T'_{k+1} \in \Tau_{k+1}(X) \bigr\} \subset \Tau_{k}
\end{equation}
The difference with respect to integral currents, defined in \eqref{DefIntCurr}, is that $\gamma_{k}$ is allowed to be a chain, not necessarily a cycle. We obtain the following natural transformation:
\begin{eqnarray}
	J \colon \hspace{5pt} \hat{H}_{k}(X) & \to & \Tau_{k}(X)/\Tau_{k}^{\IInt}(X) \label{DefTransfJ} \\
	\lbrack \gamma_{k}, T_{k+1} \rbrack & \mapsto & [T_{k+1}] \nonumber
\end{eqnarray}
Furthermore, we use the following notation: $N_{k}(X)$ is the subgroup of $C_{k}(X)$ formed by chains satisfying condition (2) of proposition \ref{LemmaZeroChar}; $\pi \colon Z_{\bullet}(X) \to H_{\bullet}(X)$ and $p \colon \Tau_{k}(X) \to \Tau_{k}(X)/\Tau_{k}^{\IInt}(X)$ are the projections to the quotient; $T \colon C_{\bullet}(X; \R) \to \Tau_{\bullet}(X)$ is the morphism $\gamma_{k} \to T_{\gamma_{k}}$; and, in order to simplify some formulas below, $\bar{b}(\gamma_{k}) := (-1)^{k+1} b(\gamma_{k}) = [\gamma_{k}, 0]$.

\begin{Prop} The transformation \eqref{DefTransfB} is the unique one with the following properties:
\begin{enumerate}
	\item $I \circ \bar{b} = \pi$;
	\item $R \circ \bar{b} = T$;
	\item $\bar{b} \circ \partial = a \circ T$.
\end{enumerate}
Moreover, \eqref{DefTransfJ} is the unique transformation satisfying $J \circ a = p$ and making the following sequence exact:
\begin{equation}\label{ExSeqB}
	\xymatrix{
	0 \ar[r] & N_{k}(X) \ar[r] & Z_{k}(X) \ar[r]^(.48){b} & \hat{H}_{k}(X) \ar[r]^(.35){J} & \Tau_{k}(X)/\Tau_{k}^{\IInt}(X) \ar[r] & 0
}\end{equation}
\end{Prop}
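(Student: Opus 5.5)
The plan is to treat the two halves of the statement separately: first existence and uniqueness of $\bar{b}$ (equivalently of $b$, since they differ by the sign $(-1)^{k+1}$), then well-definedness, exactness and uniqueness for $J$.

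\emph{The three properties of $\bar{b}$.} These are immediate from the definitions.
\begin{itemize}
	\item $I[\gamma_{k},0] = [\gamma_{k}]$, which gives (1).
	\item $R[\gamma_{k},0] = T_{\gamma_{k}}$, which gives (2).
	\item For $\Gamma_{k+1} \in C_{k+1}(X)$, the relation \eqref{EqRelDiffHom} gives $[\partial\Gamma_{k+1},0] = [0,T_{\Gamma_{k+1}}] = a(T_{\Gamma_{k+1}})$, which gives (3).
\end{itemize}

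\emph{Uniqueness of $\bar{b}$.} Let $\bar{b}'$ be another natural transformation with properties (1)--(3), and set $\delta := \bar{b}' - \bar{b}$.
\begin{itemize}
	\item $\delta$ is natural with respect to pushforward of cycles.
	\item By (1) and (2), $\delta$ takes values in $\Ker I \cap \Ker R$. By Corollary \ref{CorExSeq3}, this intersection is the image of $H_{k+1}(X;\R)/H_{k+1}(X)$.
	\item Fix $\gamma_{k} \in Z_{k}(X)$ and a good neighbourhood $U$ of its image, as in the uniqueness section via \cite[Fact 2.1]{SS}. Then $H_{k+1}(U)=0$, hence $H_{k+1}(U;\R)=0$.
	\item Regarding $\gamma_{k}$ as a cycle in $U$, we get $\delta_{U}(\gamma_{k}) = 0$ in $\hat{H}_{k}(U)$.
	\item Naturality along $i \colon U \hookrightarrow X$ gives $\delta_{X}(\gamma_{k}) = i_{*}\delta_{U}(\gamma_{k}) = 0$.
\end{itemize}
Property (3) is not even needed for uniqueness. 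The main point to watch is that naturality is with respect to smooth maps and pushforward of chains, which holds by construction. This is the step I regard as the most delicate, since it relies on the existence of good neighbourhoods.

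\emph{Well-definedness of $J$ and exactness of \eqref{ExSeqB}.}
\begin{itemize}
	\item $J$ is well defined: a relation element $(\partial\Gamma_{k+1}, -T_{\Gamma_{k+1}} - \partial T'_{k+2})$ has second component in $\Tau^{\IInt}_{k+1}(X)$ by \eqref{DefINTCurr}.
	\item Exactness at $N_{k}(X)$ is the inclusion being injective.
	\item Exactness at $Z_{k}(X)$, i.e.\ $\Ker b = N_{k}(X)$, is exactly the equivalence (2)$\Leftrightarrow$(3) of Proposition \ref{LemmaZeroChar}.
	\item $J \circ b = 0$ because $b(\gamma_{k})$ has current component $0$.
	\item $\Ker J \subset \IIm\, b$: if $T_{k+1} = T_{\Gamma_{k+1}} + \partial T'_{k+2}$ with $\Gamma_{k+1}$ a chain, then
	\[ [\gamma_{k},T_{k+1}] = [\gamma_{k}, T_{\Gamma_{k+1}}] = [\gamma_{k}+\partial\Gamma_{k+1},0] = \bar{b}(\gamma_{k}+\partial\Gamma_{k+1}). \]
	\item Surjectivity of $J$ follows from $J \circ a = p$, which is obvious.
\end{itemize}

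\emph{Uniqueness of $J$.} Let $J'$ satisfy $J' \circ a = p$ and make \eqref{ExSeqB} exact. Given $\lambda \in \hat{H}_{k}(X)$, choose $\gamma_{k}$ with $[\gamma_{k}] = I(\lambda)$. By \eqref{ExSeq1}, $\lambda - \bar{b}(\gamma_{k}) \in \Ker I = \IIm\, a$, so $\lambda = \bar{b}(\gamma_{k}) + a(T_{k+1})$ for some $T_{k+1}$. Exactness gives $J' \circ b = 0$, hence
\[ J'(\lambda) = p(T_{k+1}) = J(\lambda). \]
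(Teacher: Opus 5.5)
Your proof is correct and follows essentially the same route as the paper. Properties (1)--(2) place the difference $\bar{b}'-\bar{b}$ in $\Ker I \cap \Ker R$, which vanishes over a good neighbourhood $U$ because $H_{k+1}(U;\R)/H_{k+1}(U)=0$; naturality then transports this back to $X$, and the exactness and uniqueness of $J$ are argued exactly as in the paper. The only deviation is that you run the good-neighbourhood argument directly on cycles, rather than first using (3) to descend to a natural transformation on $H_{\bullet}(X)$, and this correctly shows that (3) is redundant for the uniqueness of $\bar{b}$.
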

\begin{proof} It is straightforward to verify that \eqref{DefTransfB} satisfies (1)--(3). About uniqueness, if $b'$ is another transformation satisfying the same axioms, then (1) implies $b'(\gamma_{k}) = b(\gamma_{k}) + a(T_{k+1})$ for a suitable $T_{k+1}$, and (2) implies $\partial T_{k+1} = 0$. Moreover, (3) implies that $b$ and $b'$ coincide on boundaries, thus $a(T_{k+1})$ only depends on the homology class $[\gamma_{k}]$. Therefore, we get the natural transformation $B := b - b' \colon H_{\bullet}(X) \to H_{\bullet+1}(X; \R)/H_{\bullet+1}(X)$, $[\gamma_{k}] \mapsto [T_{k+1}]$. Such a transformation necessarily vanishes. In fact, following again \cite[Fact 2.1]{SS}, we choose a good neighbourhood $U$ of the image of a cycle $\gamma_{k}$ and we call $i \colon U \hookrightarrow X$ the inclusion. We have $B([\gamma_{k}]) = B \circ i_{*}[\gamma_{k}]_{U} = i_{*} \circ B[\gamma_{k}]_{U} = 0$, since $B[\gamma_{k}]_{U} \in H_{\bullet+1}(U; \R)/H_{\bullet+1}(U) = 0$.

The exactness of \eqref{ExSeqB} until $Z_{k}(X)$ follows from the definition of $N_{k}$ and proposition \ref{LemmaZeroChar}. Moreover, $J \circ \bar{b}(\gamma_{k}) = J[\gamma_{k}, 0] = 0$. Conversely, if $J[\gamma_{k}, T_{k+1}] = 0$, then there exists an integral chain $\Gamma_{k+1}$ such that $[\gamma_{k}, T_{k+1}] = [\gamma_{k}, T_{\Gamma_{k+1}}] = [\gamma_{k} + \partial \Gamma_{k+1}, 0] = \bar{b}(\gamma_{k} + \partial \Gamma_{k+1})$. Lastly, for any current $T_{k+1}$, we have $J \circ a(T_{k+1}) = J[0, T_{k+1}] = [T_{k+1}]$, which proves both surjectivity of $J$ and the identity $J \circ a = p$.

About the uniqueness of $J$, if $J'$ is another choice, then $J'([\gamma_{k}, T_{k+1}]) = J' \circ \bar{b}(\gamma_{k}) + J' \circ a(T_{k+1}) = p(T_{k+1}) = J([\gamma_{k}, T_{k+1}])$ for every $[\gamma_{k}, T_{k+1}] \in \hat{H}_{k}(X)$.
\end{proof}

\section{Relative, Non-Compact, and Local Coefficient Versions}\label{RelNonCptLocCSec}

We construct the relative versions of differential singular homology, dualising the cohomological ones described in \cite{BT}, \cite{FR3}, and \cite{FR2}. This allows to state Lefschetz duality for compact orientable manifolds with boundary. Moreover, we develop the non-compactly supported and the local coefficient versions, leading to Poincar\'e and Lefschetz dualities without compactness or orientability assumptions.

\SkipTocEntry \subsection{Relative Homology}

Given a smooth map $f \colon A \to X$, we recall that the corresponding mapping cone complex is defined as $C_{\bullet}(f) := C_{\bullet}(X) \oplus C_{\bullet-1}(A)$ with boundary
\begin{equation}\label{BoundaryConeChains}
	\partial_{k}(\alpha_{k}, \beta_{k-1}) := (\partial\alpha_{k} + f_{*}\beta_{k-1}, -\partial \beta_{k-1}).
\end{equation}
The same definition holds with real coefficients. Analogously, $\Omega^{\bullet}(f) := \Omega^{\bullet}(X) \oplus \Omega^{\bullet-1}(A)$ with differential
\begin{equation}\label{BoundaryConeForms}
	d^{k}(\omega^{k}, \eta^{k-1}) := (d\omega^{k}, f^{*}\omega^{k} - d\eta^{k-1}).
\end{equation}
We obtain the corresponding relative (co)homologies. Given a form $(\omega^{k}, \eta^{k-1}) \in \Omega^{k}(f)$ and a chain $(\alpha_{k}, \beta_{k-1}) \in C_{k}(f; \R)$, we set:
	\[\int_{(\alpha_{k}, \, \beta_{k-1})} (\omega^{k}, \eta^{k-1}) := \int_{\alpha_{k}} \omega^{k} + \int_{\beta_{k-1}} \eta^{k-1}
\]
Stokes' theorem holds as in the absolute setting. A relative closed form $(\omega, \eta)$ is integral when $\int_{(\gamma, \theta)} (\omega, \eta) \in \Z$ for every integral cycle $(\gamma, \theta)$. Also, $\Tau_{\bullet}(f) := \Tau_{\bullet}(X) \oplus \Tau_{\bullet-1}(A)$ with boundary analogous to \eqref{BoundaryConeChains}. This complex is naturally isomorphic to the dual of $\Omega^{\bullet}(f)$ with boundary $(\partial T_{k})(\omega^{k}, \eta^{k-1}) = T_{k}(d(\omega^{k}, \eta^{k-1}))$.

By replacing cycles and currents in definition \ref{DefDiffHom} with their relative version through the mapping cone, we obtain the \emph{relative differential homology groups} $\hat{H}_{\bullet}(f)$. The usual transformations $I$, $R$, $a$, $b$, and $J$ are constructed in the analogous way. This definition dualises the relative Cheeger-Simons characters presented in Definition 2.1 of \cite{BT}, and corresponds to the case $(p, p-1)$ in Definition 2.2 of \cite{FR3}.
\begin{Def2} We call \emph{covariant derivative} the natural morphism
\begin{eqnarray}
	\cov \colon \Tau_{k}(A) & \to & \hat{H}_{k}(f) \label{DefCov} \\
	T_{k} & \mapsto & a(0, T_{k}) \nonumber
\end{eqnarray}
\end{Def2}
The proofs of the following proposition and of most of the ones below are postponed to appendix \ref{AppExRelHom}.
\begin{Prop}\label{PropEx1} Each sequence of the following form is exact:

\vspace{-10pt}
\begin{small}
\begin{equation}\label{LongExact1}
\begin{tikzcd}
\cdots \arrow[r] & H_{\bullet+2}(A; \R/\Z) \arrow[r]
\arrow[d, phantom, ""{coordinate, name=Z}]
& H_{\bullet+2}(X; \R/\Z) \arrow[r] & H_{\bullet+2}(f; \R/\Z) \arrow[dll, rounded corners=8pt, curvarr=Z] & 
\\
& H_{\bullet+1}(A; \R/\Z) \arrow[r, "\iota \circ f_{*}"]\arrow[d, phantom, ""{coordinate, name=W}] & \hat{H}_{\bullet}(X) \arrow[r, "i_{X}"]
& \hat{H}_{\bullet}(f)\arrow[dll,rounded corners=8pt,curvarr=W, "\pi_{A}" description] & 
\\
& \hat{H}_{\bullet-1}(A) \arrow[r, "f_{*} \circ \, I"] & H_{\bullet-1}(X)\arrow[r] & H_{\bullet-1}(f) \arrow[r] & \cdots
\end{tikzcd}
\end{equation}
\end{small}

\noindent where
\begin{align}
	& \iota \circ f_{*}[\Gamma_{k+1}] = [-\partial f_{*}\tilde{\Gamma}_{k+1}, T_{f_{*}\tilde{\Gamma}_{k+1}}] & & i_{X}[\gamma_{k}, T_{k+1}] := \bigl[(\gamma_{k}, 0), (T_{k+1}, 0)\bigr] \nonumber \\
	& \pi_{A}\bigl[(\gamma_{k}, \theta_{k-1}), (T_{k+1}, T'_{k})\bigr] := [\theta_{k-1}, -T'_{k}] & & f^{*} \circ I[\theta_{k-1}, T'_{k}] = [f_{*}\theta_{k-1}].\label{MapPiA}
\end{align}
Moreover, the morphisms
	\[i_{X} \circ f_{*} \colon \hat{H}_{\bullet}(A) \to \hat{H}_{\bullet}(f) \hspace{50pt} f_{*} \circ \pi_{A} \colon \hat{H}_{\bullet}(f) \to \hat{H}_{\bullet-1}(X)
\]
satisfy the identities
\begin{equation}\label{FormulasCompRel}
	i_{X} \circ f_{*} = \cov \circ R \hspace{50pt} f_{*} \circ \pi_{A} = -a \circ p_{X} \circ R,
\end{equation}
where $p_{X} \colon \Tau_{k}(f) \to \Tau_{k}(X)$ is the projection $(T_{k}, T'_{k+1}) \mapsto T_{k}$.
\end{Prop}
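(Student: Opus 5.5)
The plan is to verify exactness at each of the nine positions by explicit representatives, using the definitions \eqref{MapPiA} and the relative version of Definition \ref{DefDiffHom}. Classes in $\hat{H}_{k}(f)$ are pairs $[(\gamma_{k},\theta_{k-1}),(T_{k+1},T'_{k})]$ with $\partial\gamma_{k}+f_{*}\theta_{k-1}=0$ and $\partial\theta_{k-1}=0$. They are taken modulo relative boundaries $(\partial(\Gamma,\Theta), -T_{(\Gamma,\Theta)}-\partial(S,S'))$.

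The first step is to check that the maps are well defined and that each composition vanishes. The three outer segments are the usual long exact sequence of the cone with $\R/\Z$ and $\Z$ coefficients, so they are exact by standard homological algebra. The composition $H_{\bullet+2}(f;\R/\Z)\to H_{\bullet+1}(A;\R/\Z)\to\hat{H}_{\bullet}(X)$ vanishes because the connecting map sends $(\Gamma,\Theta)$ to $-\Theta$, and $f_{*}\Theta=-\partial\Gamma$ mod $\Z$. We then lift and use that $\iota$ of a real boundary is trivial. The composition $i_{X}\circ\iota\circ f_{*}$ vanishes because $[(-\partial f_{*}\tilde\Gamma,0),(T_{f_{*}\tilde\Gamma},0)]$ equals the relative boundary of $(-f_{*}\tilde\Gamma$-type$)$ data. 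More precisely, it is $\partial(0,\tilde\Gamma)=(f_{*}\tilde\Gamma,-\partial\tilde\Gamma)$ adjusted by an integral piece, using that $\partial\tilde\Gamma$ is integral. The remaining compositions are straightforward: $\pi_{A}\circ i_{X}=0$ is trivial, $f_{*}I\circ\pi_{A}$ gives $[f_{*}\theta]=-[\partial\gamma]=0$, and $H_{\bullet-1}(X)\to H_{\bullet-1}(f)$ kills $f_{*}\theta$.

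The second step is the reverse inclusions, which carry the content. At $\hat{H}_{\bullet}(X)$, suppose $i_{X}[\gamma,T]=0$. Then $(\gamma,0)=(\partial\Gamma+f_{*}\Theta,-\partial\Theta)$ and $(T,0)=-(T_{\Gamma},T_{\Theta})-\partial(S,S')$. So $\Theta$ is an integral cycle and $T_{\Theta}=-\partial S'$, whence $\Theta$ is a real boundary $\partial\tilde\Xi$ by \eqref{BarPsiIso}. Projecting $\tilde\Xi$ to $\R/\Z$ yields a cycle of $A$, and unwinding gives $[\gamma,T]=\pm\iota f_{*}[\tilde\Xi]$. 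I expect this step to be the main obstacle, because the lifts and the extra exact current $f_{*}S'$ (up to sign) must be tracked carefully. At $\hat{H}_{\bullet}(f)$, suppose $\pi_{A}[\ldots]=0$. Then $\theta=\partial\Theta$ with $T'=T_{\Theta}+\partial S'$ for an integral $\Theta$, and subtracting the relative boundary of $(0,\Theta)$ produces a class in the image of $i_{X}$. At $\hat{H}_{\bullet-1}(A)$, suppose $f_{*}I[\theta,T']=0$, so $f_{*}\theta=-\partial\gamma$. Then $[(\gamma,\theta),(0,-T')]$ is a preimage. At $H_{\bullet+1}(A;\R/\Z)$, suppose $\iota f_{*}[\Gamma]=0$. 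By injectivity of $\iota$ (Proposition \ref{PropAxioms}) we get $f_{*}[\Gamma]=0$, so the topological sequence puts $[\Gamma]$ in the image of the connecting map. At $H_{\bullet-1}(X)$, a class mapping to zero in $H_{\bullet-1}(f)$ is $f_{*}$ of an integral cycle $\theta$, and $I$ is surjective.

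Finally, the identities \eqref{FormulasCompRel} are direct. First, $i_{X}f_{*}[\gamma,T]=[(f_{*}\gamma,0),(f_{*}T,0)]$. Subtracting the relative boundary of $(0,\gamma)$, namely $(f_{*}\gamma,-\partial\gamma=0)$ with current $(0,T_{\gamma})$, gives $[0,(f_{*}T,-T_{\gamma})]$. This should equal $a(0,T_{\gamma}+\partial T)$ up to the relative exact current $\partial(T,0)=(\partial T,f_{*}T)$; the signs are fixed by the chosen cone conventions. Second, $f_{*}\pi_{A}$ gives $[f_{*}\theta,-f_{*}T']=[-\partial\gamma,-f_{*}T']=[0,-T_{\gamma}-f_{*}T']$. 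This equals $-a(T_{\gamma}+\partial T+f_{*}T')$ modulo the exact current $\partial T$, and that is $-a(p_{X}R)$.
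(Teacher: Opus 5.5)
Your proposal is correct. At most positions it follows the paper's own proof:

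\begin{itemize}
\item the lift at $\hat{H}_{\bullet}(f)$ via the integral cone chain $(0,\Theta)$, which yields $i_{X}[\gamma+f_{*}\Theta,\,T+f_{*}S']$;
\item the preimage $[(\gamma,\theta),(0,-T')]$ at $\hat{H}_{\bullet-1}(A)$;
\item injectivity of $\iota$ and surjectivity of $I$ at the two junctions with the topological sequences;
\item the direct computation of the identities \eqref{FormulasCompRel}.
\end{itemize}

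The genuine difference is at $\hat{H}_{\bullet}(X)$. The paper argues structurally. Since $R\circ i_{X}(\lambda)=(R(\lambda),0)$, the condition $i_{X}(\lambda)=0$ makes $\lambda$ flat, so $\lambda=\iota(\alpha)$ by \eqref{ExSeq2}. Then $i_{X}\circ\iota=\iota\circ j_{X}$, together with injectivity of $\iota$ on $\hat{H}_{\bullet}(f)$, gives $j_{X}\alpha=0$, hence $\alpha\in\IIm(f_{*})$ by the $\R/\Z$ sequence. The same naturality and $j_{X}\circ f_{*}=0$ give $i_{X}\circ\iota\circ f_{*}=0$.

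You instead unpack the trivial relative class at chain level and produce explicit boundaries and preimages. For the composite, the trivial element $(\partial c,-T_{c}-\partial U)$ with $c=(0,-\partial\tilde{\Gamma})$ and $U=(0,-T_{\tilde{\Gamma}})$ is exactly $\bigl((-\partial f_{*}\tilde{\Gamma},0),(T_{f_{*}\tilde{\Gamma}},0)\bigr)$. The paper's route is shorter, but it leans on relative analogues of \eqref{ExSeq2} and of the naturality of $\iota$ that it never spells out. Yours needs only \eqref{BarPsiIso} on $A$ and is self-contained.

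Two bookkeeping points. First, the step you flagged. From \eqref{BoundaryConeChains} one actually gets $T_{\Theta}=+\partial S'$. From $\Theta=\partial\tilde{\Xi}$ you only learn that $S'-T_{\tilde{\Xi}}$ is \emph{closed}, so $f_{*}S'$ is not an exact current up to sign, and your formula fails for an arbitrary lift $\tilde{\Xi}$. To fix this, replace $\tilde{\Xi}$ by $\tilde{\Xi}+\tilde{\Psi}$, where $\tilde{\Psi}\in Z_{k+1}(A;\R)$ represents the class of $S'-T_{\tilde{\Xi}}$ under \eqref{BarPsiIso}. The boundary is unchanged and $S'-T_{\tilde{\Xi}}$ becomes exact, so $[\gamma,T]=[\partial f_{*}\tilde{\Xi},-T_{f_{*}\tilde{\Xi}}]=-\iota f_{*}[\Xi]$ exactly. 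Alternatively, keep the extra term $-a(T_{f_{*}\tilde{\Psi}})$ and absorb it using $a\circ\bar{\ScT}=\iota\circ\pi_{\R/\Z}$ from \eqref{ThreeFormulas} and naturality.

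Second, in $i_{X}\circ f_{*}=\cov\circ R$ the signs are forced, not a matter of convention. The trivial class attached to $(0,\gamma)$ is $\bigl((f_{*}\gamma,0),(0,-T_{\gamma})\bigr)$, so $[(f_{*}\gamma,0),(f_{*}T,0)]=[(0,0),(f_{*}T,T_{\gamma})]$. Since $T$ lives on $A$, the exact current to subtract is $\partial(0,T)=(f_{*}T,-\partial T)$, not $\partial(T,0)$. This leaves exactly $(0,T_{\gamma}+\partial T)$.
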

When $f \colon A \hookrightarrow X$ is a closed embedding, the relative chain complex is defined as $C_{\bullet}(X, A) := C_{\bullet}(X)/f_{*}C_{\bullet}(A)$, the version with real coefficients being analogous. Moreover, the complex of relative forms is $\Omega^{\bullet}(X, A) := \{\omega \in \Omega^{\bullet}(X): f^{*}\omega = 0\}$. The corresponding (co)homology groups are canonically isomorphic to the ones obtained through the complex cone. A relative closed form $\omega \in \Omega^{\bullet}(X, A)$ is integral when $\int_{\gamma} \omega \in \Z$ for every integral cycle $[\gamma] \in Z_{k}(X, A)$. Also, $\Tau_{\bullet}(X, A) := \Tau_{\bullet}(X)/f_{*}\Tau_{\bullet}(A)$ is naturally isomorphic to the dual of $\Omega^{\bullet}(X, A)$.

By replacing cycles and currents in definition \ref{DefDiffHom} with the relative versions just remembered, we obtain the \emph{parallel differential homology groups} $\hat{H}^{\bullet}(X, A)$. Again, the usual transformations $I$, $R$, $a$, $b$, and $J$ are constructed in the analogous way. The elements of $\hat{H}^{\bullet}(X, A)$ are called \emph{parallel classes}. This definition corresponds to the case $(p, p)$ in Definition 2.2 of \cite{FR3}. We also have the natural morphism:
\begin{eqnarray}
	\hat{f} \colon \hspace{5pt} \hat{H}_{\bullet}(f) & \to & \hat{H}_{\bullet}(X, A) \label{ProjParallel} \\
	\bigl\lbrack(\gamma_{k}, \theta_{k-1}), (T_{k+1}, T'_{k})\bigr\rbrack & \mapsto & \bigl[[\gamma_{k}], [T_{k+1}] \bigr] \nonumber
\end{eqnarray}
\begin{Prop}\label{PropEx2} The morphism \eqref{ProjParallel} is surjective and $\Ker(\hat{f}) = \IIm(\cov)$. Furthermore, each sequence of the following form is exact:

\vspace{-10pt}
\begin{small}
\begin{equation}\label{LongExact2}
\begin{tikzcd}
\cdots \arrow[r] & H_{\bullet+2}(A; \R/\Z) \arrow[r]
\arrow[d, phantom, ""{coordinate, name=Z}]
& H_{\bullet+2}(X; \R/\Z) \arrow[r] & H_{\bullet+2}(X, A; \R/\Z) \arrow[dll, rounded corners=8pt, curvarr=Z, "\iota \circ \beta" description] & 
\\
& \hat{H}_{\bullet}(A) \arrow[r, "f_{*}"]\arrow[d, phantom, ""{coordinate, name=W}] & \hat{H}_{\bullet}(X) \arrow[r, "\pi_{X}"]
& \hat{H}_{\bullet}(X, A)\arrow[dll,rounded corners=8pt,curvarr=W, "\beta \circ I" description] & 
\\
& H_{\bullet-1}(A) \arrow[r] & H_{\bullet-1}(X)\arrow[r] & H_{\bullet-1}(X, A) \arrow[r] & \cdots
\end{tikzcd}
\end{equation}
\end{small}

\noindent Here the map $\iota \circ \beta$ is the composition of $\beta \colon H_{\bullet+2}(X, A; \R/\Z) \to H_{\bullet+1}(A; \R/\Z)$ with $\iota \colon H_{\bullet+1}(A; \R/\Z) \to \hat{H}_{\bullet}(A)$. In addition, $\pi_{X}[\gamma_{k}, T_{k+1}] := \bigl[[\gamma_{k}], [T_{k+1}]\bigr]$, and $\beta \circ I$ is the composition of $I \colon \hat{H}_{\bullet}(X, A) \to H_{\bullet}(X, A)$ with $\beta \colon H_{\bullet}(X, A) \to H_{\bullet-1}(A)$.
\end{Prop}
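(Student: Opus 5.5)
The proof splits into two parts. First I show that $\hat{f}$ is surjective with kernel $\IIm(\cov)$. Then I check exactness of \eqref{LongExact2} at each of the six new spots; the purely topological portions are already exact, since they form the long exact sequence of the pair.

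\emph{Surjectivity and kernel of $\hat{f}$.} Let a parallel class $\bigl[[\gamma_{k}], [T_{k+1}]\bigr]$ be given, where $\partial\gamma_{k} = f_{*}\theta_{k-1}$ for some $\theta_{k-1} \in C_{k-1}(A)$. Since $f$ is an embedding, $\theta_{k-1}$ is unique and $\partial\theta_{k-1}=0$. I lift $T_{k+1}$ to any $\tilde{T}_{k+1} \in \Tau_{k+1}(X)$. Then $\bigl[(\gamma_{k}, -\theta_{k-1}), (\tilde{T}_{k+1}, 0)\bigr]$ is a mapping-cone cycle class mapping onto the given class; the sign on $\theta_{k-1}$ is fixed by \eqref{BoundaryConeChains}.

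Clearly $\hat{f}\circ\cov = 0$, because $\cov(T_{k}) = a(0,T_{k})$ has zero first components. For the converse, suppose $\hat{f}[(\gamma,\theta),(T,T')]=0$. Then $([\gamma],[T])$ has the form \eqref{EqRelDiffHom} in relative chains and currents. Lifting $\Gamma$ and $T''$ from the quotients to $X$, the class differs from the trivial class $[\partial(\Gamma,0),-(T_{\Gamma},0)-\partial(T'',0)]$ only by terms of the form $(f_{*}\alpha,\ast)$ and $(f_{*}S,\ast)$. I then use the cone relation once more, namely the trivial elements built from $(0,\alpha)$ and $(0,S)$, to absorb the $X$-components pushed forward from $A$. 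This leaves a class of the form $[(0,\theta'),(0,T''')]$ with $\partial\theta'=0$.

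That cone cycle is $\partial(0,\ldots)$ only up to a sign issue: $(0,\theta')$ is a cycle with $f_{*}\theta'=0$, so $\theta'=0$ because $f_{*}$ is injective on chains. The remaining class is $a(0,T''') = \cov(T''')$.

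\emph{Exactness of \eqref{LongExact2}.} The strategy is to deduce it from \eqref{LongExact1} together with the first part. Composing, I note $\pi_{X} = \hat{f}\circ i_{X}$. The maps $\iota\circ\beta$ and $\beta\circ I$ correspond to the connecting maps of \eqref{LongExact1} under the isomorphisms $H_{\bullet}(f;G)\simeq H_{\bullet}(X,A;G)$. I will nonetheless verify each spot directly, since that is cleaner.

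At $\hat{H}_{\bullet}(A)$: $f_{*}\iota\beta = \iota f_{*}\beta = 0$ by topology. Conversely, if $f_{*}\lambda=0$, apply $I$ to get $I\lambda\in\ker f_{*}$ in homology. Then subtract the $a$-part and use exactness of \eqref{ExSeq1} on $A$ and $X$ to reduce to a flat class. Finally, use the $\R/\Z$ long exact sequence to see that it comes from $\beta$.

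At $\hat{H}_{\bullet}(X)$: $\pi_{X}f_{*}=0$ is immediate, since chains and currents from $A$ die in the quotient. Conversely, if $\pi_{X}[\gamma,T]=0$, then $\gamma \equiv \partial\Gamma + f_{*}\alpha$ and $T\equiv -T_{\Gamma}-\partial T''+f_{*}S$. Hence $[\gamma,T]=f_{*}[\alpha,S]$, once I check that $\alpha$ is a cycle; this holds by injectivity of $f_{*}$.

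At $\hat{H}_{\bullet}(X,A)$: $\beta I\pi_{X}=0$ is immediate. Conversely, if $\beta[\gamma]=0$, I modify $\gamma$ within its relative class to an absolute cycle and conclude using surjectivity of the relative $a$ and a lift of $T$.

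At $H_{\bullet-1}(A)$: exactness follows from topological exactness plus the surjectivity of $I$ on $\hat{H}_{\bullet}(X,A)$.

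The main obstacle is the kernel computation for $\hat{f}$, together with exactness at $\hat{H}_{\bullet}(A)$. These require tracking sign conventions in the cone boundary, and a careful reduction of a trivial class to the cone representatives.
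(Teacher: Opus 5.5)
Your argument for the surjectivity of $\hat f$ and for $\Ker(\hat f)=\IIm(\cov)$ is correct. It is the paper's argument with different bookkeeping: absorbing $(f_*\alpha,\ast)$ through the trivial element built from $(0,\alpha)$ is the same as the paper's observation that $(\gamma_k,\theta_{k-1})=\partial(\Gamma_{k+1},\phi_k)$. Your exactness checks at $\hat H_\bullet(X)$, $\hat H_\bullet(X,A)$ and $H_{\bullet-1}(A)$ also match the paper. One minor point: exactness at $H_{\bullet+2}(X,A;\R/\Z)$ is not purely topological, since it uses injectivity of $\iota$; you should say so.

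The genuine gap is the converse inclusion at $\hat H_\bullet(A)$. Your step ``subtract the $a$-part and use exactness of \eqref{ExSeq1} on $A$ and $X$ to reduce to a flat class'' does not work.

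\begin{itemize}
\item On the curvature side, $f_*\lambda=0$ only gives $f_*R(\lambda)=R(f_*\lambda)=0$.
\item \eqref{ExSeq1} controls currents only modulo integral currents, so it cannot turn this into $R(\lambda)=0$.
\item Passing to homology does not help either. $f_*\lambda=0$ only yields $f_*(I(\lambda)\otimes 1)=0$ in $H_\bullet(X;\R)$. Since $H_\bullet(A;\R)\to H_\bullet(X;\R)$ need not be injective, this does not even make $I(\lambda)$ torsion, which you would need in order to find a flat class with the same $I$.
\end{itemize}

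The missing idea is the closed-embedding hypothesis used at the level of currents. Every form on $A$ extends to $X$, so $f^*\colon\Omega^\bullet(X)\to\Omega^\bullet(A)$ is surjective, and dually $f_*\colon\Tau_\bullet(A)\to\Tau_\bullet(X)$ is injective. With this, the argument closes:

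\begin{enumerate}
\item $f_*R(\lambda)=0$ forces $R(\lambda)=0$.
\item Hence $\lambda=\iota(\rho)$ by \eqref{ExSeq2}.
\item Then $\iota(f_*\rho)=f_*\lambda=0$, so $f_*\rho=0$ by injectivity of $\iota$ on $X$.
\item The $\R/\Z$ sequence of the pair then puts $\rho$ in the image of $\beta$.
\end{enumerate}

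This is exactly the point where the term $\hat H_\bullet(A)$ can sit in the sequence at all. For a general map, \eqref{LongExact1} has $H_{\bullet+1}(A;\R/\Z)$ in that position instead. So your diagnosis that the main obstacle is tracking cone sign conventions misses the real input.
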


The morphism \eqref{ProjParallel}, together with $\iota \colon H_{\bullet-2}(A; \R/\Z) \to \hat{H}_{\bullet-1}(A)$ and $I \colon \hat{H}_{\bullet}(A) \to H_{\bullet}(A)$, induces a morphism of exact sequences from \eqref{LongExact1} to \eqref{LongExact2}. Moreover, the next proposition shows that the parallel theory satisfies excision as well as the topological one.

\begin{Prop} If $A$ closed in $X$ and $B$ open in $A$ are submanifolds (with or without boundary) satisfying $\bar{B} \subset \mathring{A}$, then the pairwise inclusion $i \colon (X \setminus B, A \setminus B) \to (X, A)$ induces the natural \emph{isomorphism}
	\[i_{*} \colon \hat{H}_{\bullet}(X \setminus B, A \setminus B) \overset{\!\simeq}\longrightarrow \hat{H}_{\bullet}(X, A).
\]
\end{Prop}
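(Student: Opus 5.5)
The plan is to use the five lemma on the exact sequence \eqref{ExSeq1} for the parallel theory, together with topological excision and an excision statement for currents. First I check that the analogue of \eqref{ExSeq1} holds for parallel classes, with the same proof as in proposition \ref{PropAxioms}:
\[0 \to \Tau^{\Int}_{k+1}(X,A) \to \Tau_{k+1}(X,A) \overset{a}\to \hat{H}_{k}(X,A) \overset{I}\to H_{k}(X,A) \to 0.\]
Here $\Tau_{\bullet}(X,A) = \Tau_{\bullet}(X)/f_{*}\Tau_{\bullet}(A)$, and $\Tau^{\Int}$ consists of classes of $T_{\gamma} + \partial T'$ with $\gamma$ a relative integral cycle. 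The pairwise inclusion $i$ induces a morphism between this sequence for $(X\setminus B, A\setminus B)$ and the one for $(X,A)$, because $a$ and $I$ are natural. By the five lemma, it then suffices to prove three things: $i_{*}$ is an isomorphism on $H_{k}$, on $\Tau_{k+1}$, and on $\Tau^{\Int}_{k+1}$.

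The statement on $H_{k}$ is classical excision, since $\bar B \subset \mathring A$. For currents I want to show that
\[\Tau_{k}(X\setminus B)/\Tau_{k}(A\setminus B) \to \Tau_{k}(X)/\Tau_{k}(A)\]
is bijective, and this is the main obstacle. A current on $X$ is not supported away from $B$, so I will cut it with a partition of unity. Choose a smooth $\rho\colon X\to[0,1]$ with $\rho = 1$ on a neighbourhood of $\bar B$ and $\operatorname{supp}\rho \subset \mathring A$. Every $T \in \Tau_{k}(X)$ then splits as
\[T = T\wedge\rho + T\wedge(1-\rho).\]
The first summand is supported in $A$, so it defines an element of $f_{*}\Tau_{k}(A)$. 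Here I interpret the compactly supported current on $X$ with support in the closed submanifold $A$ as a current on $A$; if $\Tau_{k}(A)$ is taken to be the dual of $\Omega^{k}(A)$, this needs care, and I would restrict to currents extendable by pulling back forms via a collar or tubular neighbourhood. The second summand is supported in $X\setminus B$ with compact support there, since $1-\rho$ vanishes near $\bar B$. This gives surjectivity. For injectivity, suppose $T \in \Tau_{k}(X\setminus B)$ equals $f_{*}S$ for some $S \in \Tau_{k}(A)$. Then $S$ is supported in $A\setminus B$ away from $\bar B$, so $S\wedge(1-\rho)$ has the same image and lies in $\Tau_{k}(A\setminus B)$. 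Writing $S = S\wedge\rho + S\wedge(1-\rho)$ handles the part near $B$. If needed, I would instead choose $\rho$ supported in a small neighbourhood of $\bar B$ on which $T$ vanishes. The same decomposition commutes with $\partial$ up to the term $T\wedge d\rho$, by \eqref{BoudaryWedge}. Since $d\rho$ is supported in $\mathring A\setminus\bar B$, this term lies in the image of $\Tau(A\setminus B)$, and the isomorphism is compatible with boundaries.

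For integral currents, I combine the two previous isomorphisms. An element of $\Tau^{\Int}_{k+1}(X,A)$ is $T_{\gamma}+\partial T'$ with $\gamma$ a relative integral cycle. By chain-level excision, obtained through subdivision, $\gamma$ is homologous to a relative cycle $\gamma'$ in $(X\setminus B, A\setminus B)$, say $\gamma = \gamma' + \partial\Gamma + \alpha$ with $\alpha$ a chain in $A$. Then $T_{\gamma} = T_{\gamma'} + \partial T_{\Gamma} + T_{\alpha}$, and the term $T_{\alpha}$ vanishes in the relative quotient. This gives surjectivity. Injectivity follows from the injectivity on $\Tau_{k+1}$, because $\Tau^{\Int}$ is a subgroup and the inclusions commute with $i_{*}$. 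With these three isomorphisms, the five lemma applied to the morphism of sequences concludes the proof. Naturality is immediate from the definition $i_{*}[\gamma,T] = [i_{*}\gamma, i_{*}T]$.
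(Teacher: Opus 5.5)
Your skeleton is the same as the paper's. You apply the five lemma to the parallel version of $0 \to \Tau^{\Int}_{k+1} \to \Tau_{k+1} \to \hat{H}_{k} \to H_{k} \to 0$, with topological excision on the right and an excision isomorphism for relative (integral) currents on the left.

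Where you differ is in proving the current isomorphism. The paper argues by duality. Restriction $\Omega^{\bullet}(X, A) \to \Omega^{\bullet}(X \setminus B, A \setminus B)$ is bijective (extension by zero), so its transpose $\Tau_{\bullet}(X \setminus B, A \setminus B) \to \Tau_{\bullet}(X, A)$ is bijective, since relative currents are the dual of relative forms. Integrality then matches because on closed currents the map induces excision in real homology. You instead work directly on the quotients $\Tau(X)/f_{*}\Tau(A)$ with a cut-off $\rho$, and treat integral currents by chain-level excision. Your route is more hands-on and does not need the identification of $\Tau(X)/f_{*}\Tau(A)$ with the dual of $\Omega(X,A)$, i.e.\ that $f_{*}\Tau(A)$ is exactly the annihilator of $\Omega(X,A)$. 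The paper's route is a two-line argument once that identification is granted.

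Two of your hedges are unnecessary. The collar worry can be dropped: $\mathring{A}$ is open in $X$ and $\operatorname{supp}\rho \subset \mathring{A}$, so $T \wedge \rho = f_{*}S$ with $S(\eta) := T(\rho\,\eta^{0})$, where $\eta^{0}$ is extension by zero. The remark about $T \wedge d\rho$ can also go, since $i_{*}$ is already a chain map.

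The injectivity step, however, is wrong as written. From $i_{*}T = f_{*}S$ you only get $\operatorname{supp} S \subset A \setminus B$, not that $S$ avoids $\bar{B}$. Take $T$ a point current at $p \in \bar{B} \setminus B$. Then $S$ is the same point current, so $S \wedge (1-\rho) = 0 \neq S$. Moreover $T$ vanishes on no neighbourhood of $\bar{B}$, so neither your main argument nor your fallback works.

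The repair applies your cut-off to $T$ itself. Let $f' \colon A \setminus B \hookrightarrow X \setminus B$ and write $T = T \wedge \rho + T \wedge (1-\rho)$ in $\Tau(X \setminus B)$.
\begin{itemize}
\item First summand: $\operatorname{supp}\rho \cap (X \setminus B)$ is a closed subset of $X \setminus B$ lying in $\mathring{A} \setminus B$. Hence $T \wedge \rho$ equals $f'_{*}$ of $\eta \mapsto T(\rho\,\eta^{0})$.
\item Second summand: $1-\rho$ vanishes near $\bar{B}$, so for $\eta \in \Omega(A \setminus B)$ the form $(1-\rho)\eta$ extends by zero to a smooth form on $A$. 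Set $S'(\eta) := S\bigl(((1-\rho)\eta)^{0}\bigr)$. Evaluating on $\omega \in \Omega(X \setminus B)$ via the zero extension of $(1-\rho)\omega$ to $X$, the hypothesis $i_{*}T = f_{*}S$ gives $f'_{*}S' = T \wedge (1-\rho)$.
\end{itemize}
Hence $T \in f'_{*}\Tau(A \setminus B)$, and with this correction your proof goes through.
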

\begin{proof} We consider the following morphism of short exact sequences:
	\[\xymatrix{
	0 \ar[r] & \frac{\Tau_{\bullet+1}(X \setminus B, A \setminus B)}{\Tau^{\Int}_{\bullet+1}(X \setminus B, A \setminus B)} \ar[r]^(.43){a} \ar[d] & \hat{H}_{\bullet}(X \setminus B, A \setminus B) \ar[r]^{I} \ar[d] & H_{\bullet}(X \setminus B, A \setminus B) \ar[r] \ar[d] & 0 \\
	0 \ar[r] & \frac{\Tau_{\bullet+1}(X, A)}{\Tau^{\Int}_{\bullet+1}(X, A)} \ar[r]^(.48){a} & \hat{H}_{\bullet}(X, A) \ar[r]^{I} & H_{\bullet}(X , A) \ar[r] & 0
}\]
The right vertical arrow is the excision isomorphism in singular homology. About the left one, we observe that the pullback $\Omega^{\bullet+1}(X, A) \to \Omega^{\bullet+1}(X \setminus B, A \setminus B)$ is an isomorphism, since a form on $X \setminus B$ that vanishes in $A \setminus B$ can be extended uniquely to a form on $X$ that vanishes in $A$. It follows that the pushforward $\Tau_{\bullet+1}(X \setminus B, A \setminus B) \to \Tau_{\bullet+1}(X, A)$ is an isomorphism as well. It sends integral currents to integral currents, since its restriction to closed currents induces the excision isomorphism in real singular homology. Hence, the left vertical arrow in the previous diagram is an isomorphism. Then the five lemma leads to the result.
\end{proof}

\begin{Corollary} We have the following Mayer-Vietoris sequence:

\vspace{-10pt}
\begin{footnotesize}
\begin{equation}\label{MayerVietoris}
\begin{tikzcd}
\cdots \arrow[r] \arrow[d, phantom, ""{coordinate, name=Z}]
& H_{\bullet+2}(U; \R/\Z) \oplus H_{\bullet+2}(V; \R/\Z) \arrow[r] & H_{\bullet+2}(X; \R/\Z) \arrow[dll, rounded corners=8pt, curvarr=Z, "\iota \circ \beta" description] & 
\\
\hat{H}_{\bullet}(U \cap V) \arrow[r, "(j_{U})_{*} \oplus (j_{V})_{*}"]\arrow[d, phantom, ""{coordinate, name=W}] & \hat{H}_{\bullet}(U) \oplus \hat{H}_{\bullet}(V) \arrow[r, "(i_{U})_{*} - (i_{V})_{*}"]
& \hat{H}_{\bullet}(X)\arrow[dll,rounded corners=8pt,curvarr=W, "\beta \circ I" description] & 
\\
H_{\bullet-1}(U \cap V) \arrow[r] & H_{\bullet-1}(U) \oplus H_{\bullet-1}(V) \arrow[r] & \cdots
\end{tikzcd}
\end{equation}
\end{footnotesize}

\noindent where $U, V, U \cap V \subseteq X$ are closed submanifolds such that $X = \mathring{U} \cup \mathring{V}$,\footnote{This assumption becomes much more flexible by extending the theory to stratifolds.} and $i_{U} \colon U \hookrightarrow X$, $i_{V} \colon V \hookrightarrow X$, $j_{U} \colon U \cap V \hookrightarrow U$, and $j_{V} \colon U \cap V \hookrightarrow V$ are the corresponding inclusions.
\end{Corollary}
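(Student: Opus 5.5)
The plan is to derive the Mayer--Vietoris sequence from the long exact sequence \eqref{LongExact2} of the pair together with the excision isomorphism just proved, following the classical Eilenberg--Steenrod argument. The template is the derivation of Mayer--Vietoris from the exact sequences of the pairs $(U, U\cap V)$ and $(X, V)$ connected by excision. Concretely, I take $A := V$ and $B := X \setminus U$. Then $B$ is open in $V$, because $U$ is closed, and $\bar{B} \subset \mathring{V}$, because $X = \mathring{U} \cup \mathring{V}$. Moreover $(X \setminus B, A \setminus B) = (U, U \cap V)$. The excision proposition therefore gives the natural isomorphism $\hat{H}_{\bullet}(U, U\cap V) \simeq \hat{H}_{\bullet}(X, V)$. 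By the five lemma applied to the underlying exact sequences, the analogous excision isomorphisms also hold in singular homology with $\Z$ and $\R/\Z$ coefficients.

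Next I write the two long exact sequences \eqref{LongExact2}: one for the pair $(U, U\cap V)$ and one for $(X, V)$. The inclusion of pairs $(U, U \cap V) \to (X, V)$ induces a map between them, and naturality of all the maps involved ($\iota\circ\beta$, $f_*$, $\pi_X$, $\beta\circ I$) makes the resulting ladder commute. Every third vertical arrow is an excision isomorphism. These relative terms are $H_{\bullet+2}(\cdot,\cdot;\R/\Z)$ in the top row, $\hat{H}_{\bullet}(\cdot,\cdot)$ in the middle, and $H_{\bullet-1}(\cdot,\cdot)$ at the bottom. The standard algebraic lemma (Barratt--Whitehead) says that a commutative ladder of long exact sequences in which every third map is an isomorphism yields a long exact sequence. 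Its maps are $(j_U)_* \oplus (j_V)_*$, then $(i_U)_* - (i_V)_*$, then the connecting map, which is the composition $\partial \circ (\text{exc})^{-1} \circ \pi_X$.

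Finally, I identify the connecting maps with those displayed in \eqref{MayerVietoris}. In the hybrid segment, the connecting map out of $H_{\bullet+2}(X;\R/\Z)$ is $\iota\circ\beta$ composed with the excision, and it lands in $\hat{H}_\bullet(U\cap V)$. The connecting map out of $\hat{H}_\bullet(X)$ is $\beta\circ I$ precomposed with excision, and it lands in $H_{\bullet-1}(U\cap V)$. Both coincide with the topological Mayer--Vietoris boundaries, composed with $\iota$ or $I$ respectively, because $\iota$ and $I$ are natural and commute with the topological connecting morphisms.

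The main obstacle is checking that the Barratt--Whitehead lemma applies across the hybrid part of the ladder, where topological and differential groups alternate. The lemma is purely algebraic and needs only exactness and commutativity, so I would verify commutativity of the ladder squares in which $\iota\circ\beta$ and $\beta\circ I$ appear, using the naturality of $\iota$ and $I$. I would also check that the sign conventions give exactly $(i_U)_* - (i_V)_*$.
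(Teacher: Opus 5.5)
Your proposal is correct and is the argument the paper intends: the corollary is stated without proof right after the excision proposition, and it follows from combining the excision $(U, U\cap V) \to (X, V)$ with the hybrid long exact sequences \eqref{LongExact2} through the standard Barratt--Whitehead ladder argument, which is exactly your route. You do not need the five lemma to obtain topological excision, since singular excision with $\Z$ or $\R/\Z$ coefficients is the classical theorem (and the paper already uses it as an input to its proof of differential excision).
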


We can also construct a version of relative differential homology that fits in a long exact sequence completely made by differential extensions. The construction is similar to the one in section 3 of \cite{BT}. We first observe that the groups $\hat{H}_{\bullet}(X)$ can be equivalently defined as the homology groups of the following chain complex:
	\[\hat{C}_{\bullet}(X) := C_{\bullet}(X) \oplus \Tau_{\bullet+1}(X) \oplus \Tau_{\bullet}^{\Int}(X) \hspace{20pt} \partial(\gamma_{k}, T_{k+1}, R_{k}) := (\partial \gamma_{k}, R_{k} - T_{\gamma_{k}} - \partial T_{k+1}, 0)
\]
The natural isomorphism from definition \ref{DefDiffHom} to this model can be easily obtained by setting $R_{k}$ as the curvature of $[\gamma_{k}, T_{k+1}]$. Now, given a smooth map $f \colon A \to X$, we define $\hat{H}'_{\bullet}(f)$ as the homology of the mapping cone of $\hat{C}_{\bullet}$.

\begin{Prop}\label{PropEx3} We have

\vspace{-10pt}
\begin{small}
\begin{equation}\label{HPrimeIso}
	\hat{H}'_{\bullet}(f) \simeq \frac{\Ker\bigl(\hat{H}_{\bullet}(f) \xrightarrow{f_{*} \circ \,\pi_{A}} \hat{H}_{\bullet-1}(X)\bigr)}{\IIm\bigl(\hat{H}_{\bullet}(A) \xrightarrow{i_{X} \circ f_{*}} \hat{H}_{\bullet}(f)\bigr)}
\end{equation}
\end{small}

\noindent and the following sequence is exact:

\vspace{-10pt}
\begin{small}
\begin{equation}\label{LongExact3}
\begin{tikzcd}
\cdots \arrow[r] & \hat{H}_{\bullet+1}(A) \arrow[r]
\arrow[d, phantom, ""{coordinate, name=Z}]
& \hat{H}_{\bullet+1}(X) \arrow[r,] & \hat{H}'_{\bullet+1}(f) \arrow[dll, rounded corners=8pt, curvarr=Z] & 
\\
& \hat{H}_{\bullet}(A) \arrow[r, "f_{*}"]\arrow[d, phantom, ""{coordinate, name=W}] & \hat{H}_{\bullet}(X) \arrow[r, "i_{X}"]
& \hat{H}'_{\bullet}(f)\arrow[dll,rounded corners=8pt,curvarr=W, "\pi_{A}" description] & 
\\
& \hat{H}_{\bullet-1}(A) \arrow[r] & \hat{H}_{\bullet-1}(X)\arrow[r] & \hat{H}'_{\bullet-1}(f) \arrow[r] & \cdots
\end{tikzcd}
\end{equation}
\end{small}
\end{Prop}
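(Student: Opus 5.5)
The plan is to work directly with the complex $\hat{C}_{\bullet}$ and its mapping cone. Write $\hat{f} \colon \hat{C}_{\bullet}(A) \to \hat{C}_{\bullet}(X)$ for the chain map $(\beta, T, R) \mapsto (f_{*}\beta, f_{*}T, f_{*}R)$. We first check that this is a chain map: $f_{*}$ commutes with $\partial$ on chains and currents, $f_{*}T_{\beta} = T_{f_{*}\beta}$, and $f_{*}$ preserves integral currents. The mapping cone $\hat{C}_{\bullet}(\hat{f}) := \hat{C}_{\bullet}(X) \oplus \hat{C}_{\bullet-1}(A)$ then carries the boundary of \eqref{BoundaryConeChains}, and there is a short exact sequence of complexes
\[
0 \to \hat{C}_{\bullet}(X) \to \hat{C}_{\bullet}(\hat{f}) \to \hat{C}_{\bullet-1}(A) \to 0.
\]
The associated long exact homology sequence is exactly \eqref{LongExact3}: its connecting morphism is $f_{*}$, as for every mapping cone. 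Here the second map is $i_{X}$, the third is the projection, which we identify with $\pi_{A}$ up to the sign convention of \eqref{MapPiA}, and we use that $H_{\bullet}(\hat{C}(X)) \simeq \hat{H}_{\bullet}(X)$ as stated in the text. So \eqref{LongExact3} is formal, and the real content is \eqref{HPrimeIso}.

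For \eqref{HPrimeIso}, I would compare $\hat{C}_{\bullet}(\hat{f})$ with the complex computing $\hat{H}_{\bullet}(f)$. The plan is to define the latter analogously as $C_{\bullet}(f) \oplus \Tau_{\bullet+1}(f) \oplus \Tau^{\Int}_{\bullet}(f)$. A cycle of the cone $\hat{C}(\hat{f})$ is a tuple $((\gamma_{k}, T_{k+1}, R_{k}), (\theta_{k-1}, T'_{k}, R'_{k-1}))$. The cycle conditions give:
\begin{itemize}
\item[(a)] $\theta_{k-1}$ is a cycle and $R'_{k-1} = T_{\theta_{k-1}} + \partial T'_{k}$;
\item[(b)] $\partial \gamma_{k} + f_{*}\theta_{k-1} = 0$;
\item[(c)] $R_{k} - T_{\gamma_{k}} - \partial T_{k+1} + f_{*}R'_{k-1} = 0$.
\end{itemize}
Condition (c) is stronger than the relative cycle condition: it requires the pair $(\gamma, \theta)$ to have curvature whose pushforward relation is \emph{integral} on the $X$ side separately. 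Define $\Phi \colon \hat{H}'_{k}(f) \to \hat{H}_{k}(f)$ by sending the class above to $[(\gamma_{k}, -\theta_{k-1}), (T_{k+1}, -T'_{k})]$, with signs adjusted to match \eqref{BoundaryConeChains}.

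Three checks follow. First, we verify that $\Phi$ is well-defined on homology. Second, we show that its image is precisely $\Ker(f_{*} \circ \pi_{A})$. By \eqref{FormulasCompRel}, $f_{*} \circ \pi_{A} = -a \circ p_{X} \circ R$, so a class lies in the kernel iff its $X$-curvature component is integral. That is exactly the extra datum $R_{k}$ with (c) that a cone cycle supplies. Third, we show that the kernel of $\Phi$ maps onto $\IIm(i_{X} \circ f_{*}) = \IIm(\cov \circ R)$, which forces the quotient. Concretely, two cone cycles with the same $\Phi$-image differ by a cone boundary plus a term of the form $(0, (0, T'', R''))$ with $R''$ integral. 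Those terms, modulo boundaries, are the image of $\hat{H}_{k-1}(A)$ under the connecting map. Comparing with $\cov$ then gives the quotient description.

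The main obstacle will be this kernel/image bookkeeping, namely proving that $\Phi$ is injective modulo $\IIm(i_{X} \circ f_{*})$, together with keeping the signs consistent. I would handle it with a small diagram chase. I would use the exact sequence \eqref{LongExact1} from Proposition \ref{PropEx1} and the morphism from $(A \to X)$-sequences induced by forgetting $R$. Concretely, I would map \eqref{LongExact3} to \eqref{LongExact1} by $\Phi$ and by $I$, $\iota$ on the outer terms, and deduce \eqref{HPrimeIso} from the exactness of both rows.
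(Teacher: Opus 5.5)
Your derivation of \eqref{LongExact3} is fine and matches the paper: it is the long exact sequence of the mapping cone of $\hat{C}_{\bullet}(A) \to \hat{C}_{\bullet}(X)$.

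The gap is in \eqref{HPrimeIso}. The forgetful map $\Phi \colon \hat{H}'_{k}(f) \to \hat{H}_{k}(f)$ is \emph{not} well-defined, so your three checks cannot be carried out. Take a cone chain $w = ((\gamma, T, R), (\theta, T'', S))$ of degree $k+1$. Then $\Phi(\partial w)$ is the class of $\bigl(\partial(\gamma, \theta), -T_{(\gamma,\theta)} - \partial(T, -T'')\bigr)$, which vanishes, plus $[(0,0),(R,S)] = a(R,S)$. Since $(R,0)$ is relatively integral, $a(R,S) = \cov(S) = i_{X} \circ f_{*}(\mu)$ for any $\mu \in \hat{H}_{k}(A)$ with $R(\mu) = S$, and this is generally non-zero.

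A concrete instance: take $A = X = \pt$ and $f = \id$. The cone is acyclic, so $\hat{H}'_{0}(f) = 0$. However, $\Ker(f_{*}\circ\pi_{A}) \subset \hat{H}_{0}(f)$ equals $\{a(0,n) : n \in \Z\} \simeq \Z$. Indeed, $w = ((0,0,0),(0,0,n))$ has $\partial w = ((0,0,n),(0,-n,0))$, and $\Phi(\partial w) = a(0,n)$ has curvature $(n,0) \neq 0$.

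So the quotient by $\IIm(i_{X} \circ f_{*})$ comes from this failure of well-definedness, not from a kernel of $\Phi$. In fact, two cone cycles with the same image in $\hat{H}_{k}(f)$ always differ by a cone boundary: the relative boundary $(\partial(\alpha,\beta), -T_{(\alpha,\beta)} - \partial(U,U'))$ lifts to $\partial((\alpha,U,0),(\beta,-U',0))$.

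This also breaks the assembly of your plan:
\begin{enumerate}
\item Your third check does not type-check, since $\Ker\Phi \subset \hat{H}'_{k}(f)$ while $\IIm(i_{X}\circ f_{*}) \subset \hat{H}_{k}(f)$.
\item Even a well-defined $\Phi$ onto $\Ker(f_{*}\circ\pi_{A})$ would give $\hat{H}'_{k}(f)/\Ker\Phi \simeq \Ker(f_{*}\circ\pi_{A})$, not the stated quotient.
\item The fallback ladder to \eqref{LongExact1} does not exist. The outer terms do not correspond ($\hat{H}_{\bullet}(A)$ versus $H_{\bullet+1}(A;\R/\Z)$, and $\hat{H}_{\bullet-1}(X)$ versus $H_{\bullet-1}(X)$). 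Moreover, $\iota$ and $I$ point in opposite directions, and no vertical map is an isomorphism.
\end{enumerate}

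The repair is to reverse the direction, as the paper does. Define $p \colon \Ker(f_{*}\circ\pi_{A}) \to \hat{H}'_{k}(f)$ by $[(\gamma,\theta),(T,T')] \mapsto [((\gamma,T,R_{k}),(\theta,-T',S_{k-1}))]$, where $(R_{k},S_{k-1})$ is the relative curvature. This lands in cone cycles for three reasons:
\begin{enumerate}
\item By \eqref{FormulasCompRel}, $R_{k}$ being integral is exactly the kernel condition.
\item $S_{k-1}$ is automatically integral.
\item $f_{*}S_{k-1} = -\partial R_{k} = 0$.
\end{enumerate}
The computation above, read backwards, shows that $p(u)$ is a cone boundary if and only if $[u] \in \IIm(\cov\circ R) = \IIm(i_{X}\circ f_{*})$. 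Finally, $p$ is surjective because the cone cycle conditions force $R_{k}$ and $S_{k-1}$.

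Two smaller points. Your condition (c) is miscomputed: the $X$-part of the cone boundary is $(\partial\gamma + f_{*}\theta,\, R - T_{\gamma} - \partial T + f_{*}T',\, f_{*}R')$. So the conditions are $R = T_{\gamma} + \partial T - f_{*}T'$ and $f_{*}R' = 0$, the latter being automatic. It is this corrected form that identifies $R$ with the $X$-component of the relative curvature of $[(\gamma,\theta),(T,-T')]$. Likewise, $\theta$ must enter $\Phi$ with sign $+$, otherwise $(\gamma,\theta)$ is not a relative cycle.
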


For completeness, we define the fourth and last type of relative homology, corresponding to the case $(p, p-2)$ in Definition 2.2 of \cite{FR3}. We set $\hat{H}''_{\bullet}(f) := \Ker(\pi_{A} \circ R)$, where $\pi_{A} \circ R \colon \hat{H}_{\bullet}(f) \to \Tau_{\bullet-1}(A)$ selects the second component of the curvature.

\begin{Prop}\label{PropEx4} Each sequence of the following family is exact:

\vspace{-10pt}
\begin{small}
\begin{equation}\label{LongExact4}
\begin{tikzcd}
\cdots \arrow[r] & H_{\bullet+2}(A; \R/\Z) \arrow[r]
\arrow[d, phantom, ""{coordinate, name=Z}]
& H_{\bullet+2}(X; \R/\Z) \arrow[r] & H_{\bullet+2}(f; \R/\Z) \arrow[dll, rounded corners=8pt, curvarr=Z] & 
\\
& H_{\bullet+1}(A; \R/\Z) \arrow[r, "\iota \circ f_{*}"]\arrow[d, phantom, ""{coordinate, name=W}] & \hat{H}_{\bullet}(X) \arrow[r, "i_{X}"]
& \hat{H}''_{\bullet}(f)\arrow[dll,rounded corners=8pt,curvarr=W, "\iota^{-1} \circ \pi_{A}" description] & 
\\
& H_{\bullet}(A; \R/\Z) \arrow[r, "\beta \circ f_{*}"] & H_{\bullet-1}(X)\arrow[r] & H_{\bullet-1}(f) \arrow[r] & \cdots
\end{tikzcd}
\end{equation}
\end{small}

\end{Prop}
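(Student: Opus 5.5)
The plan is to derive the exactness of \eqref{LongExact4} from the already established sequence \eqref{LongExact1} (proposition \ref{PropEx1}), restricting along the inclusion $\hat{H}''_{\bullet}(f) \subset \hat{H}_{\bullet}(f)$. The terms to the left of $\hat{H}_{\bullet}(X)$ are the same as in \eqref{LongExact1}, so nothing new has to be proven there. The work is concentrated at $\hat{H}_{\bullet}(X)$, at $\hat{H}''_{\bullet}(f)$, and at $H_{\bullet}(A; \R/\Z)$.

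\emph{Step 1: the maps are well defined.} For $[\gamma_{k}, T_{k+1}] \in \hat{H}_{k}(X)$, the class $i_{X}[\gamma_{k}, T_{k+1}]$ has curvature of the form $(\ast, 0)$. Hence $i_{X}$ lands in $\hat{H}''_{k}(f) = \Ker(\pi_{A} \circ R)$. Next, using \eqref{MapPiA} and the cone boundary \eqref{BoundaryConeChains} for currents, I will compute $R(\pi_{A}\lambda)$ for $\lambda = [(\gamma_{k}, \theta_{k-1}), (T_{k+1}, T'_{k})]$. It should equal, up to sign, the second component $T_{\theta_{k-1}} - \partial T'_{k}$ of $R(\lambda)$, which is exactly what $\pi_{A} \circ R$ selects. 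Therefore $\pi_{A}$ maps $\hat{H}''_{k}(f)$ into $\Ker R \subset \hat{H}_{k-1}(A)$. By \eqref{ExSeq2}, $\Ker R = \IIm\,\iota$ and $\iota$ is injective, so $\iota^{-1} \circ \pi_{A}$ is well defined. This sign and identification check is the first thing to nail down.

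\emph{Step 2: exactness at $\hat{H}_{k}(X)$ and at $\hat{H}''_{k}(f)$.} Because $\hat{H}''_{k}(f) \subset \hat{H}_{k}(f)$, the kernel of $i_{X}$ into $\hat{H}''$ equals its kernel into $\hat{H}(f)$, which is $\IIm(\iota \circ f_{*})$ by \eqref{LongExact1}. Since $\iota$ is injective, $\Ker(\iota^{-1} \circ \pi_{A}) = \Ker(\pi_{A}) \cap \hat{H}''_{k}(f)$. By \eqref{LongExact1} this equals $\IIm(i_{X}) \cap \hat{H}''_{k}(f) = \IIm(i_{X})$, where the last equality uses Step 1.

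\emph{Step 3: exactness at $H_{k}(A; \R/\Z)$.} By \eqref{ThreeFormulas}, $I \circ \iota = -\beta$, so $f_{*} \circ I \circ \iota = -\beta \circ f_{*}$ by naturality of the Bockstein. Hence the composite $(\beta \circ f_{*}) \circ (\iota^{-1}\pi_{A})$ is, up to sign, $f_{*} \circ I \circ \pi_{A}$, which vanishes by \eqref{LongExact1}. Conversely, let $x$ satisfy $\beta f_{*} x = 0$. Then $f_{*}I(\iota x) = 0$, so by \eqref{LongExact1} there is $y \in \hat{H}_{k}(f)$ with $\pi_{A} y = \iota x$. By the Step 1 computation, the second curvature component of $y$ is $\pm R(\iota x) = 0$. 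Hence $y \in \hat{H}''_{k}(f)$ and $\iota^{-1}\pi_{A} y = x$.

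\emph{Step 4: the topological tail.} From $H_{\bullet-1}(X)$ onwards the sequence consists of topological terms. Exactness at $H_{k-1}(X)$ requires $\IIm(\beta \circ f_{*})$ to equal $\Ker\bigl(H_{k-1}(X) \to H_{k-1}(f)\bigr) = f_{*}H_{k-1}(A)$. I expect this to be the main obstacle. The image $f_{*}\IIm(\beta)$ only captures $f_{*}$ of the torsion of $H_{k-1}(A)$, so in general it is strictly smaller than $f_{*}H_{k-1}(A)$. I would first recheck the precise maps and degrees intended in the lower row of \eqref{LongExact4}, for instance whether the term after $H_{\bullet}(A; \R/\Z)$ should be a differential or $\R/\Z$-group. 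I would then settle that row by splicing the topological long exact sequences of $f$ with coefficients $\Z$ and $\R/\Z$ through the Bockstein. If the row is as printed, I would look for the missing terms, or for a correction to the last arrows, that makes the sequence exact.
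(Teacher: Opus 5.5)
Your Steps 1--3 are correct. Restricting \eqref{LongExact1} along $\hat{H}''_{\bullet}(f)\subset\hat{H}_{\bullet}(f)$ is the natural argument; the paper leaves this proposition to the reader, so there is no written proof to compare with. In Step 1 the identity holds exactly, not just up to sign: $R(\pi_{A}\lambda)=R[\theta_{k-1},-T'_{k}]=T_{\theta_{k-1}}-\partial T'_{k}$, which is precisely the second component of $R(\lambda)$ in \eqref{FormulaRRel}. With this, you get exactness at every term up to and including $H_{\bullet}(A;\R/\Z)$. From $H_{\bullet-1}(f)$ onwards the sequence is the integral one and is exact as well.

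Your worry in Step 4 is justified, and you should state it as a conclusion rather than as an obstacle to be worked around. The sequence as printed is \emph{not} exact at $H_{\bullet-1}(X)$, so no proof of the statement can exist.

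\begin{itemize}
\item The image of $\beta\colon H_{\bullet}(A;\R/\Z)\to H_{\bullet-1}(A)$ is the kernel of $H_{\bullet-1}(A)\to H_{\bullet-1}(A)\otimes\R$, i.e.\ the torsion subgroup. Hence $\IIm(\beta\circ f_{*})=f_{*}(\mathrm{Tors}\,H_{\bullet-1}(A))$.
\item On the other hand, $\Ker\bigl(H_{\bullet-1}(X)\to H_{\bullet-1}(f)\bigr)=f_{*}H_{\bullet-1}(A)$, which is in general strictly larger.
\item Concrete counterexample: take $A=X=\pt$, $f=\id$, $\bullet=1$. The relevant segment reads $0=H_{1}(\pt;\R/\Z)\to H_{0}(\pt)=\Z\to H_{0}(f)=0$. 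More generally, it fails for $\bullet=1$ whenever $A\neq\emptyset$, since $H_{0}(A)$ is free and $f_{*}H_{0}(A)\neq0$.
\end{itemize}

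The arrow $\beta\circ f_{*}$ itself is not the culprit. Your Step 3 shows that any arrow leaving $H_{\bullet}(A;\R/\Z)$ must have kernel $\Ker(\beta\circ f_{*})$. This rules out, for instance, an $\R/\Z$-continuation by $f_{*}$: for $A=X=\pt$ and $\bullet=0$, the map $\iota^{-1}\circ\pi_{A}$ sends $\hat{H}''_{0}(f)\simeq\R$ onto $\R/\Z$, while $f_{*}=\id$ there.

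What your argument actually proves is the corrected statement: the sequence is exact up to $H_{\bullet}(A;\R/\Z)\xrightarrow{\beta\circ f_{*}}H_{\bullet-1}(X)$. To continue it as a long exact sequence, $H_{\bullet-1}(X)\to H_{\bullet-1}(f)$ must be replaced by a map whose kernel is $f_{*}(\mathrm{Tors}\,H_{\bullet-1}(A))$. So the tail of the diagram has to be redesigned, not merely re-signed.
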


The inclusion $\hat{H}''_{\bullet}(f) \hookrightarrow \hat{H}_{\bullet}(f)$ and $\iota \colon H_{\bullet}(A; \R/\Z) \hookrightarrow \hat{H}_{\bullet}(A)$ induce a morphism of exact sequences from \eqref{LongExact4} to \eqref{LongExact1}.

\subsubsection{Relative Cohomology}

The mapping cone complex of a smooth map $f \colon A \to X$ in cohomology is $C^{\bullet}(f) := C^{\bullet}(X) \oplus C^{\bullet-1}(A)$ with coboundary analogous to \eqref{BoundaryConeForms}. The definition with real coefficients is analogous, and the de-Rham isomorphism holds as in the absolute setting. We define the \emph{relative differential cohomology groups} $\hat{H}^{\bullet}(f)$ as in \eqref{DefDiffCoh}, by considering relative cocycles and forms instead. Given a relative class with curvature $(\omega, \eta) \in \Omega^{\bullet}_{\Int}(f)$, we call $\eta$ \emph{covariant derivative}; a class with vanishing covariant derivative is called \emph{parallel}. When $f \colon A \hookrightarrow X$ is a closed embedding, we obtain the \emph{parallel differential cohomology groups} $\hat{H}^{\bullet}(X, A)$ by using the corresponding relative complexes in \eqref{DefDiffCoh}. Here $\hat{H}^{\bullet}(X, A)$ is a subgroup of $\hat{H}^{\bullet}(f)$, while it was a quotient in homology. We also have the cohomological versions of $\hat{H}'_{\bullet}(f)$ and $\hat{H}''_{\bullet}(f)$. In each case, exact sequences dual to the ones above hold.

\subsubsection{Cap Product and Lefschetz Duality}

Given a smooth map $f \colon A \to X$, we have the cap products $C_{l}(X) \times C^{s}(f) \to C_{l-s}(f)$ and $C_{l}(f) \times C^{s}(X) \to C_{l-s}(f)$ defined componentwise. Similarly, we have the wedge products $\Tau_{l}(X) \times \Omega^{s}(f) \to \Tau_{l-s}(f)$ and $\Tau_{l}(f) \times \Omega^{s}(X) \to \Tau_{l-s}(f)$. Hence, by adapting definition \eqref{DiffCapP} in the natural way, we obtain the following products:
\begin{equation}\label{RelDiffCapP}
\begin{split}
	\hat{\cap} \colon \hat{H}_{l}(X) \times \hat{H}^{s}(f) \to \hat{H}_{l-s}(f) \hspace{40pt} \hat{\cap} \colon \hat{H}_{l}(f) \times \hat{H}^{s}(X) \to \hat{H}_{l-s}(f)
\end{split}
\end{equation}
When $f \colon A \hookrightarrow X$ is a closed embedding, the compatibility between the cap product and $a$ implies that \eqref{RelDiffCapP} projects to
\begin{equation}\label{RelDiffCapP2}
\begin{split}
	\hat{\cap} \colon \hat{H}_{l}(X) \times \hat{H}^{s}(X, A) \to \hat{H}_{l-s}(X, A) \hspace{25pt} \hat{\cap} \colon \hat{H}_{l}(X, A) \times \hat{H}^{s}(X) \to \hat{H}_{l-s}(X, A).
\end{split}
\end{equation}
Moreover, in this case, we have the cap product $C_{l}(X, A) \times C^{s}(X, A) \to C_{l-s}(X)$ defined by $[\alpha] \cap \varphi := \alpha \cap \varphi$, and the analogous wedge product $\Tau_{l}(X, A) \times \Omega^{s}(X, A) \to \Tau_{l-s}(X)$. Hence, we obtain:
\begin{equation}\label{RelDiffCapP3}
	\hat{\cap} \colon \hat{H}_{l}(X, A) \times \hat{H}^{s}(X, A) \to \hat{H}_{l-s}(X)
\end{equation}
Let us consider a compact orientable manifold $X$ of dimension $n$ with boundary. We have the fundamental class $[X] \in H_{n}(X, \partial X)$. By calling $\beta$ the Bockstein map of the long exact sequence induced by $(X, \partial X)$, the identity $\beta[X] = [\partial X]$ holds. Because of the relative version of corollary \ref{LemmaH}-(i), we can identify $[X]$ with $I^{-1}[X] \in \hat{H}_{n}(X, \partial X)$. Formulas \eqref{RelDiffCapP2} and \eqref{RelDiffCapP3} with $A = \partial X$ lead to the \emph{differential Lefschetz dualities}
\begin{equation}\label{DiffLef}
	\hatL \colon \hat{H}^{k}(X) \to \hat{H}_{n-k}(X, \partial X) \hspace{50pt} \hatL \colon \hat{H}^{k}(X, \partial X) \to \hat{H}_{n-k}(X)
\end{equation}
both defined as $\xi \mapsto [X] \hatcap \xi$. The statement and the proof of proposition \ref{ProofPD} can be adapted to \eqref{DiffLef}, provided that we construct diagrams analogous to \eqref{DiagPD2} and \eqref{DiagPD1}. This is possible thanks to the embeddings $\Omega^{\bullet}(X) \hookrightarrow \Tau^{\bullet}(X, \partial X)$ and $\Omega^{\bullet}(X, \partial X) \hookrightarrow \Tau^{\bullet}(X)$, both defined by $\omega \mapsto T_{\omega}$ where $T_{\omega}(\eta) := \int_{X} \omega \wedge \eta$. The Stokes' theorem implies $T_{d\omega} = dT_{\omega}$, so that $H^{\bullet}_{\dR}(X) \simeq H\Tau^{\bullet}(X, \partial X)$ and $H^{\bullet}_{\dR}(X, \partial X) \simeq H\Tau^{\bullet}(X)$ naturally.\footnote{In the case of real chains, we have the maps $C_{\bullet}(X, \partial X; \R) \to \Tau_{\bullet}(X, \partial X)$ and $C_{\bullet}(X; \R) \to \Tau_{\bullet}(X)$ with no absolute-relative inversion. We obtain the Lefschetz dualities $H_{\bullet}(X, \partial X; \R) \simeq H\Tau_{\bullet}(X, \partial X) = H\Tau^{n-\bullet}(X, \partial X) \simeq H^{n-\bullet}_{\dR}(X)$ and $H_{\bullet}(X; \R) \simeq H\Tau_{\bullet}(X) = H\Tau^{n-\bullet}(X) \simeq H^{n-\bullet}_{\dR}(X, \partial X)$.} If we extend cohomology or restrict homology, we get diagrams similar to \eqref{DiagPD2B} and \eqref{DiagPD1B}, so that formulas \eqref{DiffLef} become isomorphisms. Lastly, the family of exact sequences \eqref{LongExact2} and its cohomological counterpart induce the following family of commutative diagrams with exact rows:
	\[\resizebox{\textwidth}{!}{\xymatrix{
	\cdots \ar[r] & H^{\bullet-2}(X; \R/\Z) \ar[r] \ar[d]^(.45){\textnormal{L}} & H^{\bullet-2}(\partial X; \R/\Z) \ar[r] \ar[d]^(.45){\iota \,\circ\, \PD} & \hat{H}^{\bullet}(X, \partial X) \ar[r] \ar[d]^(.45){\hatL} & \hat{H}^{\bullet}(X) \ar[r] \ar[d]^(.45){\hatL} & \hat{H}^{\bullet}(\partial X) \ar[r] \ar[d]^(.45){\PD \,\circ\, I} & H^{\bullet+1}(X, \partial X) \ar[r] \ar[d]^(.45){\textnormal{L}} & \cdots \\
	\cdots \ar[r] & H_{n-\bullet+2}(X, \partial X; \R/\Z) \ar[r] & \hat{H}_{n-\bullet}(\partial X) \ar[r] & \hat{H}_{n-\bullet}(X) \ar[r] & \hat{H}_{n-\bullet}(X, \partial X) \ar[r] & H_{n-\bullet-1}(\partial X) \ar[r] & H_{n-\bullet-1}(X) \ar[r] & \cdots
}}\]
\vspace{-5pt}

\SkipTocEntry \subsection{Non-Compactly Supported Homology}

We defined differential homology via singular chains, which have compact support by construction, and compactly-supported currents. We can construct the analogous version with no restrictions on the supports. This leads to Poincar\'e and Lefchetz dualities without the compactness assumption.

We briefly recall the definition of \emph{Borel-Moore homology} on smooth manifolds. In singular homology, the group of $k$-chains is the direct sum of a copy of $\Z$ for each singular simplex, that is, $C_{k}(X) := \bigoplus_{\sigma \colon \Delta^{k} \to X} \Z$. Here we define $C^{\BM}_{k}(X)$ as the subgroup of the direct product $\prod_{\sigma \colon \Delta^{k} \to X} \Z$ formed by \emph{locally finite} elements. This means that every $x \in X$ admits a neighbourhood $U$ intersecting the image of finitely many singular simplexes with non-zero coefficient. The boundary can be defined as usual, since only finitely many $k$-simplexes with non-zero coefficient can share the same $(k-1)$-simplex as a face. We then obtain the chain complex $(C^{\BM}_{\bullet}(X), \partial_{\bullet})$ and denote by $H^{\BM}_{\bullet}(X)$ the corresponding homology. The latter is covariant with respect to proper smooth functions and contravariant with respect to open embeddings.

The space of (non-compactly-supported) currents of degree $k$ on $X$ is the dual of $\Omega^{n-k}_{\cpt}(X)$. By analogy with chains, we denote this space by $\Tau^{k}_{\BM}(X)$ and use the homological notation $\Tau_{k}^{\BM}(X)$ as before. Functoriality coincides with the one of Borel-Moore homology. By using $C^{\BM}_{\bullet}(X)$ and $\Tau_{\bullet}^{\BM}(X)$ in definition \ref{DefDiffHom}, we obtain $\hat{H}^{\BM}_{\bullet}(X)$. The cap product, the relative versions of $\hat{H}^{\BM}_{\bullet}$, and the Poincar\'e and Lefschetz dualities are then constructed similarly.

\SkipTocEntry \subsection{Local Coefficient Homology}

In order to state Poincar\'e and Lefschetz dualities for not necessarily orientable manifolds, we define differential homology with local coefficients. We begin by briefly reviewing the theory of densities and its relation with currents.

\subsubsection{Densities and Currents} Given a smooth manifold $X$ (even with boundary), we consider the two-sheeted orientation cover $\tilde{X} \to X$. As a set, it is formed by the pairs $(x, o_{x})$ where $x \in X$ and $o_{x}$ is an orientation of the tangent fibre $T_{x}X$. The \emph{orientation line bundle} of $X$ is $\Oo := \tilde{X} \times_{\Z_{2}} \R$ and it is classified up to isomorphism by the first Stiefel-Whitney class $w_{1}(X) \in H^{1}(X; \Z_{2})$. Furthermore, $\Oo$ is naturally endowed with the metric $\langle [x, o_{x}, t], [x, o_{x}, u] \rangle := tu$, and there exist two unitary sections in every local chart. A \emph{density} on $X$ is a differential form with values in $\Oo$, that is, a section of $\Lambda^{\bullet} T^{*}X \otimes \Oo$. Concretely, we represent it as a sum of elementary densities of the form $\omega^{k} \otimes s$, where $\omega^{k}$ is a $k$-form and $s$ is a section of $\Oo$. One can multiply densities by forms (or vice-versa) by setting $\omega^{h} \wedge (\omega^{k} \otimes s) := (\omega^{h} \wedge \omega^{k}) \otimes s$. Moreover, the product of two densities is defined as the form $(\omega^{k} \otimes s) \wedge (\omega^{h} \otimes s') := \langle s, s' \rangle \cdot \omega^{k} \wedge \omega^{h}$.

We denote by $\Omega^{\bullet}(X; \Oo)$ the complex of densities on $X$ with exterior differential defined as follows. A section $s$ of $\Oo$ can be written in a local chart $U$ of $X$ as $s(x) = [x, o_{x}, f(x)]$, where $x \mapsto o_{x}$ is a fixed orientation of $U$ and $f$ is a smooth real-valued function. We say that $s$ is \emph{constant} if $f$ is a constant function. We set $d(\omega^{k} \otimes s_{0}) := d\omega^{k} \otimes s_{0}$ for any constant section $s_{0}$. The space of \emph{compactly-supported $k$-currents with local coefficients} on $X$, denoted by $\Tau^{k}(X; \Oo)$, is then the dual of $\Omega^{n-k}(X; \Oo)$. The corresponding exterior differential is defined as usual and, with homological notation, we obtain the complex $(\Tau_{\bullet}(X; \Oo), \partial_{\bullet})$. Densities and local-coefficient currents are functorial with respect to smooth covering maps.

Densities can be integrated similarly to forms by fixing an atlas, without the orientability assumption. We then have the embeddings

\noindent \begin{minipage}{0.5\textwidth}
    \centering
    \begin{eqnarray}\label{EmbJLoc}
		\ScT^{k}_{\Oo} \colon \Omega^{k}(X) & \hookrightarrow & \Tau^{k}(X; \Oo) \\
		\omega^{k} & \mapsto & T_{\omega^{k}} \nonumber
\end{eqnarray}
\end{minipage}
\hspace{-30pt}
\begin{minipage}{0.5\textwidth}
    \centering
    \begin{eqnarray*}
		\ScT^{k}_{\Oo'} \colon \Omega^{k}(X; \Oo) & \hookrightarrow & \Tau^{k}(X) \\
		\omega^{k} \otimes s & \mapsto & T_{\omega^{k} \otimes s}
\end{eqnarray*}
\end{minipage}
\vspace{10pt}

\noindent where $T_{\omega^{k}}(\eta^{n-k} \otimes s) := \int_{X} \omega^{k} \wedge \eta^{n-k} \otimes s$ and $T_{\omega^{k} \otimes s}(\eta^{n-k}) := \int_{X} \omega^{k} \wedge \eta^{n-k} \otimes s$. By projecting to cohomology, we obtain the isomorphisms $\bar{\ScT}^{k}_{\Oo} \colon H^{k}_{\dR}(X) \overset{\!\simeq}\longrightarrow H\Tau^{k}(X; \Oo)$ and $\bar{\ScT}^{k}_{\Oo'} \colon H^{k}_{\dR}(X; \Oo) \overset{\!\simeq}\longrightarrow H\Tau^{k}(X)$.

\subsubsection{Singular (Co)Homology} A \emph{real singular $k$-chain with local coefficients} in $X$ is as a sum of terms of the form $s\sigma$, where $\sigma \colon \Delta^{k} \to X$ is a singular simplex and $s \colon \Delta^{k} \to \Oo$ is a \emph{constant} section of $\Oo$ that lifts $\sigma$. The boundary of $s\sigma$ is defined by considering the usual boundary of $\sigma$ and restricting $s$ to each face. We obtain the chain complex $C_{\bullet}(X; \Oo)$ inducing the homology groups $H_{\bullet}(X; \Oo)$. Considering the sub-fibre-bundle $\Oo_{\Z} := \tilde{X} \times_{\Z_{2}} \Z$ of $\Oo$, we define the complex $C_{\bullet}(X; \Oo_{\Z})$ of \emph{integral} chains with local coefficients, inducing the homology groups $H_{\bullet}(X; \Oo_{\Z})$. Cohomology is defined as usual. We have the natural morphism
\begin{eqnarray*}
	\ScT_{k, \Oo} \colon C_{k}(X; \Oo) & \to & \Tau_{k}(X; \Oo) \\
	s\sigma & \mapsto & T_{s\sigma}
\end{eqnarray*}
where $T_{s\sigma}(\omega \otimes s') := \langle s, s' \rangle \cdot \int_{\sigma} \omega$, inducing $\bar{\ScT}_{k, \Oo} \colon H_{k}(X; \Oo) \overset{\!\simeq}\longrightarrow H\Tau_{k}(X; \Oo)$.\footnote{By composing with the isomorphisms induced by \eqref{EmbJLoc}, we obtain the Poincar\'e dualities $H_{\bullet}(X; \Oo) \simeq H\Tau_{\bullet}(X; \Oo) = H\Tau^{n-\bullet}(X; \Oo) \simeq H^{n-\bullet}_{\dR}(X)$ and $H_{\bullet}(X; \R) \simeq H\Tau_{\bullet}(X) = H\Tau^{n-\bullet}(X) \simeq H^{n-\bullet}_{\dR}(X; \Oo)$.} Lastly, the embedding $\Omega^{k}(X; \Oo) \hookrightarrow C^{k}(X; \Oo)$ is defined as $(\omega \otimes s)(s'\sigma) := \langle s, s' \rangle \cdot \int_{\sigma} \omega$.

\subsubsection{Differential (Co)Homology}

By using local coefficients in definitions \ref{DefDiffHom}, we obtain the \emph{differential singular $k$-homology group with local coefficients} $\hat{H}_{k}(X; \Oo)$. The same operation on \eqref{DefDiffCoh} leads to $\hat{H}^{k}(X; \Oo)$. The following cap products are defined as in \eqref{DiffCapP}:
	\[\hat{\cap} \colon \hat{H}_{l}(X; \Oo) \times \hat{H}^{s}(X) \to \hat{H}_{l-s}(X; \Oo) \hspace{25pt} \hat{\cap} \colon \hat{H}_{l}(X; \Oo) \times \hat{H}^{s}(X; \Oo) \to \hat{H}_{l-s}(X)
\]
Every $n$-manifold without boundary has a fundamental class $[X] \in H_{n}(X; \Oo_{\Z})$. By the straightforward adaptation of corollary \ref{LemmaH}-(i), we obtain the following versions of Poincar\'e duality:
	\[\hatPD \colon \hat{H}^{k}(X) \to \hat{H}_{n-k}(X; \Oo) \hspace{50pt} \hatPD \colon \hat{H}^{k}(X; \Oo) \to \hat{H}_{n-k}(X)
\]
both defined by $\xi \mapsto [X] \hatcap \xi$. These maps are injective and become isomorphisms by extending cohomology to currents or restricting homology to forms. The analogous construction in the relative framework leads to the Lefschetz dualities, and both the absolute and the relative versions can be adapted to the non-compactly-supported setting.

\section{Generalised Differential Homology}\label{GenHomSec}

We extend the construction of the differential extension to a generalised homology theory.

\SkipTocEntry \subsection{Preliminaries on differential cohomology}

We fix a multiplicative differential cohomology theory $\hat{h}^{\bullet}$, refining the topological theory $h^{\bullet}$. We set $\h^{\bullet} := h^{\bullet}(\pt)$ and $\h^{\bullet}_{\R} := \h^{\bullet} \otimes \R$, supposing $\chr(\h^{\bullet}) = 0$. Moreover, $\ch \colon h^{\bullet}(X) \to H^{\bullet}(X; \h_{\R})$ is the generalised Chern character, inducing the isomorphism $h^{\bullet}(X) \otimes \R \simeq H^{\bullet}(X; \h_{\R})$.

Given a real smooth vector bundle $\pi \colon E \to X$ of rank $n$, a \emph{differential Thom class} on $E$ is a vertically-compact class $\hat{u} \in \hat{h}^{n}_{\vcpt}(E)$ such that $u := I(\hat{u})$ is a Thom class. A \emph{differential orientation} of the bundle is a differential Thom class up to homotopy and stabilization (see \cite[sections 4.8--4.10]{Bunke} and \cite[section 3]{FR}). It is also possible to orient a neat proper submersion $\varphi \colon Y \to X$ between manifolds with (or without) boundary, essentially by orienting the normal bundle of $Y$ with respect to a neat embedding $\iota \colon Y \hookrightarrow X \times \R^{N}$ (for any $N$) such that $\pi_{X} \circ \iota = f$. In this case, we obtain the \emph{Gysin map} $\varphi_{!} \colon \hat{h}^{\bullet}(Y) \to \hat{h}^{\bullet-d}(X)$, where $d := \dim(Y) - \dim(X)$. Denoting by $\hat{u}$ the orientation of the normal bundle $\mathcal{N}$, we have
\begin{equation}\label{GysinR}
	R\bigl(\varphi_{!}(\hat{\alpha})\bigr) = \int_{Y/X} R(\alpha) \wedge \Td(\hat{u}),
\end{equation}
where $\Td(\hat{u}) := \int_{\mathcal{N}/Y} R(\hat{u})$ is the Todd class induced by $\hat{u}$. Coherently, by setting $u := I(\hat{u})$, the underlying topological map satisfies the identity
\begin{equation}\label{GysinCh}
	\ch\bigl(\varphi_{!}(\alpha)\bigr) = \int_{Y/X} \ch(\alpha) \cdot \Td(u),
\end{equation}
where $\Td(u) = \int_{\mathcal{N}/Y} \ch(u)$. Lastly, a differential orientation of a smooth manifold is a differential orientation of its stable normal bundle (equivalently, of its tangent bundle). If the manifold is compact without boundary, this is equivalent to orienting the proper submersion $p_{X} \colon X \to \pt$.

\SkipTocEntry \subsection{Differential Construction of Topological Homology}

Given a multiplicative cohomology theory $h^{\bullet}$ represented by a spectrum $\{E_{q}, \varepsilon_{q}\}_{q \in \Z}$, the dual homology theory $h_{\bullet}$ can be defined through the well-known formula $h_{k}(X) := \varinjlim \pi_{k+q}(X_{+} \wedge E_{q})$. In \cite{Jakob}, the author proposed a bordism-type model of $h_{\bullet}$ in which a homology class is represented by a quadruple $(M, u, \alpha, f)$, where $(M, u)$ is an $h^{\bullet}$-oriented compact manifold without boundary, $\alpha$ is a cohomology class on $M$ and $f \colon M \to X$ is a continuous function. The suitable equivalence relation between two representatives involves an operation called ``vector bundle modification'', that was equivalently replaced in \cite{FR} by the Gysin map. The same result can be obtained through a differential refinement of the representatives, as the following definition shows (see section 4 of \cite{FR}).
\begin{Def2}\label{DefHomH} On a smooth compact manifold $X$, we define:
\begin{itemize}
	\item the group of \emph{$k$-precycles}, denoted by $\hat{p}_{k}(X)$, as the free abelian group generated by the quadruples $(M, \hat{u}, \hat{\alpha}, f)$, where:
\begin{itemize}
	\item $(M, \hat{u})$ is a smooth compact manifold without boundary with $\hat{h}^{\bullet}$-orientation $\hat{u}$, whose connected components $\{M_{i}\}$ have dimension $k+q_{i}$ with $q_{i}$ arbitrary;
	\item $\hat{\alpha} \in \hat{h}^{\bullet}(M)$, such that $\hat{\alpha}\vert_{M_{i}} \in \hat{h}^{q_{i}}(M)$;
	\item $f \colon M \to X$ is a smooth map;
\end{itemize}
	\item the group of \emph{$k$-cycles}, denoted by $\hat{z}_{k}(X)$, as the quotient of $\hat{p}_{k}(X)$ by the free subgroup generated by elements of the form:
\begin{align*}
	& (M, \hat{u}, \hat{\alpha} + \hat{\beta}, f) - (M, \hat{u}, \hat{\alpha}, f) - (M, \hat{u}, \hat{\beta}, f) \\
	& (M_{1} \sqcup M_{2}, \hat{u}, \hat{\alpha}, f) - (M_{1}, \hat{u}\vert_{M_{1}}, \hat{\alpha}\vert_{M_{1}}, f\vert_{M_{1}}) - (M_{2}, \hat{u}\vert_{M_{2}}, \hat{\alpha}\vert_{M_{2}}, f\vert_{M_{2}}) \\
	& (M, \hat{u}, \varphi_{!}\hat{\alpha}, f) - (N, \hat{v}, \hat{\alpha}, f \circ \varphi)
\end{align*}
where $\varphi \colon N \to M$ is a submersion oriented via the 2x3 principle;
	\item the group of \emph{$k$-prechains}, denoted by $\hat{c}_{k}(X)$, as the free abelian group generated by the quadruples $(W, \hat{U}, \hat{A}, F)$ defined as in the first item, but allowing $W$ to have boundary; we set $\partial(W, \hat{U}, \hat{A}, F) := (\partial W, \hat{U}\vert_{\partial W}, \hat{A}\vert_{\partial W}, F\vert_{\partial W})$, the latter being a $(k-1)$-precycle;
	\item the group of \emph{$k$-boundaries}, denoted by $\hat{b}_{k}(X)$, as the subgroup of $\hat{z}_{k}(X)$ formed by the cycles that admit a representative of the form $\partial(W, \hat{U}, \hat{A}, F)$;\footnote{We observe that this is actually a subgroup, since we do not assume $W$ connected, so that $(W, \hat{U}, \hat{A}, F) - (W', \hat{U}', \hat{A}', F')$ is equivalent to $(W \sqcup W', \hat{U} \sqcup \hat{U}', \hat{A} \sqcup -\hat{A}', F \sqcup F')$.}
	\item the $k$-homology group $h_{k}(X) := \hat{z}_{k}(X) / \hat{b}_{k}(X)$. \hfill$\diamondsuit$
\end{itemize}
\end{Def2}
This model might seem redundant, since it is naturally isomorphic to the one with topological representatives through the transformation $[M, \hat{u}, \hat{\alpha}, f] \mapsto [M, I(\hat{u}), I(\hat{\alpha}), f]$. Nevertheless, differential representatives will be necessary as an intermediate step to refine homology. The situation is similar to the one of singular homology of a manifold described through \emph{smooth} chains: representatives carry differential information, but the corresponding homology groups are canonically isomorphic to the topological ones. In order to obtain a differential refinement of homology, we need currents as before.

We define the homological Chern character:
\begin{eqnarray}
	\ch \colon \; h_{\bullet}(X) & \to & H_{\bullet}(X; \h_{\R}) \label{HomCh} \\
	\lbrack M, \hat{u}, \hat{\alpha}, f \rbrack & \mapsto & \lbrack M, \ch(u), \ch(\alpha), f \rbrack \nonumber
\end{eqnarray}
where $\alpha := I(\hat{\alpha})$ and $u := I(\hat{u})$. This character induces the isomorphism
\begin{equation}\label{IsoHomR}
	h_{\bullet}(X) \otimes \R \simeq H_{\bullet}(X; \h_{\R}).
\end{equation}
In order to define the cap product, given a cohomology class $\beta$ and a homology class $[M, \hat{u}, \hat{\alpha}, f]$, we fix any differential refinement $\hat{\beta}$ and set
\begin{equation}\label{CapProductH}
	\beta \pmb{\cdot} [M, \hat{u}, \hat{\alpha}, f] := [M, \hat{u}, f^{*}\hat{\beta} \cdot \hat{\alpha}, f].
\end{equation}
Contrary to the case of singular homology, we put cohomology on the left (as in \cite{Jakob}), since this choice will lead to more natural sign conventions. From now on, we suppose that $h^{\bullet}$ is rationally-even.

\SkipTocEntry \subsection{Currents}

We define the following graded groups:
\begin{align}
	& \Tau^{k}(X; \h_{\R}) := \bigoplus_{h \in \Z} \Tau^{k-h}(X) \otimes \h^{h}_{\R} \nonumber \\
	& \Tau_{k}(X; \h_{\R}) := \Tau^{n-k}(X; \h_{\R}) = \bigoplus_{h \in \Z} \Tau^{n-k-h}(X) \otimes \h^{h}_{\R} = \bigoplus_{h \in \Z} \Tau_{k+h}(X) \otimes \h^{h}_{\R} \label{DefCurrentH}
\end{align}
Therefore, $T_{k} \in \Tau_{k}(X; \h_{\R})$ and $\omega^{k+h} \in \Omega^{k+h}(X)$ lead to $T_{k}(\omega^{k+h}) \in \h^{h}_{\R}$. Equivalently, $\omega^{k+h} := \omega^{k+h-l}_{0} \otimes x^{l} \in \Omega^{k+h}(X; \h_{\R})$ leads to $T_{k}(\omega^{k+h}) := T_{k}(\omega^{k+h-l}_{0}) \cdot x^{l} \in \h^{h}_{\R}$. Definition \eqref{DefCurrentH} is then equivalent to
\begin{equation}\label{DefCurrentH2}
	\Tau_{k}(X; \h_{\R}) := \Hom_{\h_{\R}}\bigl(\Omega^{k+\bullet}(X; \h_{\R}), \h^{\bullet}_{\R} \bigr),
\end{equation}
where $\Hom_{\h_{\R}}$ denotes morphisms of $\h_{\R}$-modules.

We define the exterior differential and the boundary of a current respectively as in \eqref{ExtDiffCurrents} and \eqref{DefBoundaryCurr}, so that \eqref{BoundaryDifferential} holds. Coherently with the convention on the cap product \eqref{CapProductH}, given a form $\eta \in \Omega^{\bullet}(X; \h_{\R})$ and a current $T \in \Tau_{\star}(X; \h_{\R})$, we define the current $\eta \wedge T \in \Tau_{\star-\bullet}(X; \h_{\R})$ as follows:
\begin{equation}\label{WedgeOtherSide}
	(\eta \wedge T)(\omega) := T(\omega \wedge \eta)
\end{equation}
We show in appendix \ref{ProofBdWedge} that
\begin{equation}\label{WedgeOtherSideBoundary}
	\partial(\eta^{h} \wedge T_{k}) = \eta^{h} \wedge \partial T_{k} + (-1)^{k+h} \, d\eta^{h} \wedge T_{k}
\end{equation}
If $X$ is oriented and $\omega^{k} \in \Omega^{k}_{\cpt}(X; \h_{\R})$, we define $\int_{X} \omega ^{k}$ as follows: we select the components $\sum \omega^{n}_{0} \otimes x^{k-n}$, where $n = \dim(X)$, and set $\int_{X} \omega^{k} := \sum (\int_{X} \omega^{n}_{0}) \cdot x^{k-n}$. In this way, a $k$-prechain $(W, \hat{U}, \hat{A}, F)$ induces the current $T_{(W, \hat{U}, \hat{A}, F)} \in T_{k}(X; \h_{\R})$ defined by
\begin{equation}\label{DefIndCurr}
	T_{(W, \hat{U}, \hat{A}, F)}(\omega) := \int_{W} F^{*}\omega \wedge R(\hat{A}) \wedge \Td(\hat{U}).
\end{equation}
Indeed, indicating explicitly the degrees, we integrate the $(q+k+h)$-form $F^{*}\omega^{k+h} \wedge R(\hat{A}^{q}) \wedge \Td(\hat{U})$ in the $(k+q)$-manifold $W$. The result belongs to $\h^{h}_{\R}$, coherently with \eqref{DefCurrentH2}. This leads to the natural morphism:
\begin{eqnarray}
	\ScT_{k} \colon \; \hat{c}_{k}(X) \otimes \R & \to & \Tau_{k}(X; \h_{\R}) \label{PsikH} \\
	(W, \hat{U}, \hat{A}, F) \otimes 1 & \mapsto & T_{(W, \hat{U}, \hat{A}, F)} \nonumber
\end{eqnarray}
The following lemma shows that the restriction of \eqref{PsikH} to precycles projects to homology.
\begin{Lemma}\label{LemmaProjHom} We have $\partial T_{(W, \hat{U}, \hat{A}, F)} = T_{\partial(W, \hat{U}, \hat{A}, F)}$. Moreover, the restriction of \eqref{PsikH} to precycles projects to cycles, hence it induces the isomorphism:
\begin{equation}\label{BarPsiIsoH}
	\bar{\ScT}_{k} \colon \; h_{k}(X) \otimes \R \overset{\!\simeq}\longrightarrow H\Tau_{k}(X; \h_{\R})
\end{equation}
\end{Lemma}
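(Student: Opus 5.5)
The proof splits into three parts: the boundary formula, descent to cycles, and the isomorphism after tensoring with $\R$.

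\emph{Boundary formula.} We evaluate both sides on a test form $\omega \in \Omega^{\bullet}(X; \h_{\R})$. By \eqref{BoundaryDifferential}, $(\partial T_{(W, \hat{U}, \hat{A}, F)})(\omega) = \int_{W} F^{*}d\omega \wedge R(\hat{A}) \wedge \Td(\hat{U})$. Since $R(\hat{A})$ and $\Td(\hat{U})$ are closed forms, $F^{*}d\omega \wedge R(\hat{A}) \wedge \Td(\hat{U}) = d\bigl(F^{*}\omega \wedge R(\hat{A}) \wedge \Td(\hat{U})\bigr)$. Stokes' theorem then gives $\int_{\partial W} F^{*}\omega \wedge R(\hat{A}\vert_{\partial W}) \wedge \Td(\hat{U}\vert_{\partial W})$, using naturality of $R$ and $\Td$ under restriction to the boundary. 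This is exactly $T_{\partial(W, \hat{U}, \hat{A}, F)}(\omega)$. The only care needed is the sign and orientation convention of Stokes on $\partial W$, which must match the one implicit in the definition of $\partial$ on prechains.

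\emph{Descent to cycles.} We must show that $\ScT_{k}$ kills the three relations defining $\hat{z}_{k}(X)$. Additivity in $\hat{\alpha}$ follows because $R$ is additive. Disjoint union is handled by additivity of the integral. The key relation is $(M, \hat{u}, \varphi_{!}\hat{\alpha}, f) \sim (N, \hat{v}, \hat{\alpha}, f\circ\varphi)$. For this we plug \eqref{GysinR} into \eqref{DefIndCurr}. The resulting integrand is $\int_{M} f^{*}\omega \wedge \bigl(\int_{N/M} R(\hat{\alpha}) \wedge \Td(\hat{u}_{\varphi})\bigr) \wedge \Td(\hat{u})$. We then apply the projection formula and Fubini for fibre integration, $\int_{M}\int_{N/M} = \int_{N}$. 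Finally we use multiplicativity of the Todd form under composition of orientations, $\Td(\hat{v}) = \Td(\hat{u}_{\varphi}) \wedge \varphi^{*}\Td(\hat{u})$, which is what the 2x3 principle provides at the level of differential Thom classes (\cite{Bunke}, \cite{FR}). Since all forms involved are even (rational evenness), no commutation signs arise. This multiplicativity of $\Td$ at the level of forms, not just cohomology, is the step I expect to be the main obstacle. Where only a cohomological identity is available, it suffices that the two currents differ by a boundary $\partial(\cdot)$. That suffices for the induced map to homology, although equality on cycles is what is stated. Combined with the boundary formula, $\ScT_{k}$ therefore sends $\hat{z}_{k}$ to closed currents and $\hat{b}_{k}$ to exact ones, so it induces $\bar{\ScT}_{k}$.

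\emph{Isomorphism.} We compare with the singular case. Consider the composition
\[
h_{k}(X)\otimes\R \xrightarrow{\ch} H_{k}(X;\h_{\R}) \xrightarrow{\bar{\ScT}} H\Tau_{k}(X;\h_{\R}),
\]
where the first map is the isomorphism \eqref{IsoHomR} and the second is degreewise the isomorphism \eqref{BarPsiIso}, tensored with $\h^{h}_{\R}$. We need this composition to coincide with $\bar{\ScT}_{k}$. By \eqref{HomCh}, $\ch[M, \hat{u}, \hat{\alpha}, f] = f_{*}\bigl(\ch(\alpha)\cdot\Td(u)\cap[M]\bigr)$. Its image under \eqref{BarPsiIso} is represented by the current $\omega \mapsto \int_{M} f^{*}\omega \wedge R(\hat{\alpha}) \wedge \Td(\hat{u})$, because $R(\hat{\alpha})$ and $\Td(\hat{u})$ are de Rham representatives of $\ch(\alpha)$ and $\Td(u)$. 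The cap product with $[M]$ of closed forms corresponds, through \eqref{BarPsiIso} and \eqref{BarJIso} on $M$, to the wedge of the current $T_{1}$ with those forms. Hence the two maps agree, and $\bar{\ScT}_{k}$ is an isomorphism as a composition of isomorphisms.
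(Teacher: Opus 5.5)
Your proposal is correct and follows the paper's proof. The paper also uses Stokes for the boundary formula, and for the Gysin relation the same computation with \eqref{GysinR}, the projection formula, Fubini, and form-level multiplicativity of the Todd forms; the paper takes that multiplicativity as the identity \eqref{TdOrientH} furnished by the 2x3 principle. For bijectivity it likewise compares with the singular isomorphism through the Chern character \eqref{IsoHomR}. One imprecision: your remark that ``all forms involved are even'' is too strong, since $\omega$ and $R(\hat{\alpha})$ can have odd degree. The only commutations you actually perform move Todd forms, which have total degree zero and hence even form-degree components (as $\h_{\R}$ is even), so your conclusion that no signs arise still holds.
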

The proof can be found in appendix \ref{ProofLemmaProjHom}, and we explain the chosen conventions on signs and fibrewise integration in appendix \ref{ConvFibreInt}. For further reference, we observe that, given a prechain $(W, \hat{U}, \hat{A}, F)$ and a cohomology class $\hat{B}$ in $X$, we have
\begin{equation}\label{CurrentPrechainWedge}
	T_{(W, \hat{U}, F^{*}\hat{B} \cdot \hat{A}, F)} = R(\hat{B}) \wedge T_{(W, \hat{U}, \hat{A}, F)}
\end{equation}
as the reader can verify by direct computation from \eqref{WedgeOtherSide} and \eqref{DefIndCurr}.

\begin{Def}\label{DefIntFormH} A form $\omega \in \Omega^{\bullet}(X; \h_{\R})$ is \emph{integral} when it is closed and represents a class in the image of $\ch \colon h^{\bullet}(X) \to H^{\bullet}(X; \h_{\R})$.
\end{Def}

\begin{Lemma}\label{LemmaIntForms} An integral form has integral periods, that is, $T_{(M, \hat{u}, \hat{\alpha}, f)}(\omega) \in \IIm(\h^{\bullet} \to \h^{\bullet}_{\R})$ for every precycle $(M, \hat{u}, \hat{\alpha}, f)$, where $\h^{\bullet} \to \h^{\bullet}_{\R}$ coincides with the Chern character on the point.
\end{Lemma}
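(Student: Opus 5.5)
The plan is to reduce the statement to a period computation on the closed differentially oriented manifold $M$ of the precycle, using the Gysin map to the point. Let $\omega$ be integral, so $[\omega] = \ch(\beta)$ for some $\beta \in h^{\bullet}(X)$, and fix a differential refinement $\hat{\beta} \in \hat{h}^{\bullet}(X)$ with $I(\hat{\beta}) = \beta$. Since $R(\hat{\beta})$ is a closed form representing $\ch(\beta) = [\omega]$, we can write $\omega = R(\hat{\beta}) + d\eta$ for some form $\eta$. The first step is to show that the exact part contributes nothing. By definition \eqref{DefIndCurr},
\[
T_{(M, \hat{u}, \hat{\alpha}, f)}(d\eta) = \int_{M} f^{*}d\eta \wedge R(\hat{\alpha}) \wedge \Td(\hat{u}),
\]
and the forms $R(\hat{\alpha})$ and $\Td(\hat{u})$ are closed. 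Hence the integrand is exact and the integral vanishes by Stokes, since $M$ has no boundary. Equivalently, $T_{(M,\hat{u},\hat{\alpha},f)}$ is a cycle by lemma \ref{LemmaProjHom}.

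It therefore suffices to treat $\omega = R(\hat{\beta})$. By naturality of $R$ and multiplicativity,
\[
f^{*}R(\hat{\beta}) \wedge R(\hat{\alpha}) = R(f^{*}\hat{\beta} \cdot \hat{\alpha}),
\]
so that
\[
T_{(M, \hat{u}, \hat{\alpha}, f)}(R(\hat{\beta})) = \int_{M} R(f^{*}\hat{\beta} \cdot \hat{\alpha}) \wedge \Td(\hat{u}).
\]
The key step is to recognise this through \eqref{GysinR}. Apply it to the differentially oriented proper submersion $p_{M} \colon M \to \pt$ and the class $\hat{\gamma} := f^{*}\hat{\beta} \cdot \hat{\alpha}$. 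The right-hand side is then exactly $R\bigl((p_{M})_{!}\hat{\gamma}\bigr)$, a curvature of a class in $\hat{h}^{\bullet}(\pt)$. This holds provided the sign and ordering conventions of appendix \ref{ConvFibreInt} match \eqref{DefIndCurr}; if they differ, only a global sign appears, which is harmless.

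It remains to see that the curvature of any class in $\hat{h}^{\bullet}(\pt)$ lies in $\IIm(\h^{\bullet} \to \h^{\bullet}_{\R})$. On the point all positive-degree forms vanish, so $R(\hat{x})$ is a closed form of degree zero with values in $\h_{\R}$. Its de Rham class is $\ch(I(\hat{x}))$ by the compatibility of $R$, $I$ and $\ch$ in the differential theory. On the point this is just the image of $I(\hat{x}) \in \h^{\bullet}$ under $\h^{\bullet} \to \h^{\bullet}_{\R}$, which gives the claim.

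I expect the main obstacle to be the bookkeeping of the grading. We must check that the component of $T(\omega)$ extracted by the integration convention, namely $\int_{M}$ of the degree-$\dim M$ part paired with the $x^{l}$ coefficients, agrees with the full $\h_{\R}$-valued degree-zero curvature on the point. We must also check that \eqref{GysinR} holds with the Todd form placed as in \eqref{DefIndCurr}. I would verify both points directly against the conventions in appendix \ref{ConvFibreInt}, reducing to components $\omega_{0} \otimes x^{l}$ by $\h_{\R}$-linearity.
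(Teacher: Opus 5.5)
Your argument is correct and is essentially the paper's: both identify the period with the pushforward to the point $(p_{M})_{!}(f^{*}\beta \cdot \alpha)$ through the Riemann--Roch-type formula for $p_{M} \colon M \to \pt$. The only difference is the level at which you work. You pick a differential refinement $\hat{\beta}$, kill the exact part explicitly by Stokes, and use \eqref{GysinR} together with $R = \ch \circ I$ on the point. The paper instead passes directly to cohomology classes and invokes the topological formula \eqref{GysinCh}.
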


The proof is straightforward and can be found in appendix \ref{ProofLemmaIntForms}. Moreover, we define \emph{integral} currents as follows:
\begin{equation}\label{DefIntCurrH}
	\Tau_{k}^{\Int}(X; \h_{\R}) := \bigl\{ T_{(M, \hat{u}, \hat{\alpha}, f)} + \partial T'_{k+1}: (M, \hat{u}, \hat{\alpha}, f) \in \hat{p}_{k}(X), T'_{k+1} \in \Tau_{k+1}(X; \h_{\R}) \bigr\}
\end{equation}
Lemma \ref{LemmaIntForms} implies that an integral current satisfies $T_{k}(\omega^{k}) \in \IIm(\h^{\bullet} \to \h^{\bullet}_{\R})$ for every integral form $\omega^{k}$. Lastly, a $\hat{h}$-orientation $\hat{u}$ on an $n$-manifold $X$ without boundary induces the embedding
\begin{eqnarray*}
	\ScT^{k} \colon \Omega^{k}(X; \h_{\R}) & \hookrightarrow & \Tau^{k}(X; \h_{\R}) \\
	\eta^{k} & \mapsto & T_{\eta^{k}}
\end{eqnarray*}
where
\begin{equation}\label{DefTOmegaH}
	T_{\eta^{k}}(\omega^{n-k}) := \int_{X} \omega^{n-k} \wedge \eta^{k} \wedge \Td(\hat{u}).
\end{equation}
We have $dT_{\omega^{k}} = (-1)^{n-1} T_{d\omega^{k}}$, the proof being analogous to the one after formula \eqref{EmbJK}. By using the orientation $\hat{u}$ in a precycle $(M, \hat{u}, \hat{\alpha}, f)$, formula \eqref{DefIndCurr} can be stated as follows:
\begin{equation}\label{DefIndCurr2}
	T_{(M, \hat{u}, \hat{\alpha}, f)} := f_{*}T_{R(\hat{\alpha})}
\end{equation}

\SkipTocEntry \subsection{Differential Homology}

\begin{Def}\label{DefDiffHomGen} The \emph{differential homology group} $\hat{h}_{k}(X)$ is the quotient of
	\[\hat{z}_{k}(X) \times \Tau_{k+1}(X; \h_{\R})
\]
by the subgroup whose elements are of the form
	\[\bigl(\partial (W, \hat{U}, \hat{A}, F), -T_{(W, \hat{U}, \hat{A}, F)} - \partial T'_{k+2}\bigr)
\]
with $(W, \hat{U}, \hat{A}, F) \in \hat{c}_{k}(X)$ and $T'_{k+2} \in \Tau_{k+2}(X; \h_{\R})$.
\end{Def}

Given a smooth map $f \colon X \to Y$, the natural pushforward $f_{*} \colon \hat{h}_{k}(X) \to \hat{h}_{k}(Y)$ is defined by $f_{*}[M, \hat{u}, \hat{\alpha}, g, T_{k+1}] := [M, \hat{u}, \hat{\alpha}, f \circ g, f_{*}T_{k+1}]$. We define the following natural transformations:
\begin{align}
	& I \colon \hat{h}_{k}(X) \to h_{k}(X), & & [M, \hat{u}, \hat{\alpha}, f, T_{k+1}] \mapsto [M, \hat{u}, \hat{\alpha}, f] \label{DefIGen} \\
	& R \colon \hat{h}_{k}(X) \to \Tau_{k}(X; \h_{\R}), & & [M, \hat{u}, \hat{\alpha}, f, T_{k+1}] \mapsto T_{(M, \hat{u}, \hat{\alpha}, f)} + \partial T_{k+1} \label{DefRGen} \\
	& a \colon \Tau_{k+1}(X; \h_{\R}) \to \hat{h}_{k}(X), & & T_{k+1} \mapsto [0, T_{k+1}] \label{DefAGen}
\end{align}
We can easily check that they are well-defined, as we did about singular homology. The following lemma will be useful later on.
\begin{Lemma} We have
\begin{equation}\label{LemmaA}
	[M, \hat{u}, a(\theta), f, 0] = (-1)^{k+1}a(f_{*}T_{\theta})
\end{equation}
where $\theta \in \Omega^{\bullet}(M; \h_{\R})$ and $[M, \hat{u}, a(\theta), f, 0] \in \hat{h}_{k}(X)$. Similarly,
\begin{equation}\label{LemmaA2}
	[M, \hat{u} + a(\theta), \hat{\alpha}, f, 0] - [M, \hat{u}, \hat{\alpha}, f, 0] = (-1)^{m+1} \, a(f_{*}T_{R(\hat{\alpha}) \wedge (\int_{\mathcal{N}/M} \theta) \wedge \Td(\hat{u})^{-1}})
\end{equation}
where $\mathcal{N}$ is the normal bundle of $M$ in which $\hat{u}$ is a Thom class, $\theta \in \Omega^{\abs{\mathcal{N}}-1}_{\cpt}(\mathcal{N}; \h_{\R})$, and $m := \dim(M)$.
\end{Lemma}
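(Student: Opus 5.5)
The plan is to realize both classes as boundary corrections coming from a cylinder prechain, and then use the defining relation of Definition \ref{DefDiffHomGen}, $[\partial(W,\hat{U},\hat{A},F), 0] = [0, T_{(W,\hat{U},\hat{A},F)}]$. The input is the homotopy formula in differential cohomology. If $\hat{\beta}$ is a class on $[0,1]\times M$ with $\hat{\beta}|_{0}=0$ and $\hat{\beta}|_{1}=a(\theta)$, we may take $\hat{\beta}=a(t\theta)$, since $a$ is natural and additive. Then $a(\theta)$ is the restriction to the ``top'' boundary of a cylinder class, and the bottom is zero.

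For \eqref{LemmaA}, consider the $(k+1)$-prechain
\[
(W,\hat{U},\hat{A},F) := \bigl([0,1]\times M, \pi^{*}\hat{u}, a(t\,\pi^{*}\theta), f\circ\pi\bigr).
\]
Its boundary is $(M,\hat{u},a(\theta),f)-(M,\hat{u},0,f)$, up to the orientation sign of $\{1\}\times M$ versus $\{0\}\times M$ fixed by the conventions of appendix \ref{ConvFibreInt}. The second term vanishes in $\hat{z}_{k}(X)$ by the additivity relation. Hence $[M,\hat{u},a(\theta),f,0]=\pm[0,T_{(W,\hat{U},\hat{A},F)}]$.

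Next we compute $T_{W}(\omega)=\int_{[0,1]\times M}\pi^{*}f^{*}\omega\wedge d(t\theta)\wedge\Td(\hat{u})$, using $R(a(t\theta))=d(t\theta)$. We write $d(t\theta)=dt\wedge\theta+t\,d\theta$; only the $dt$-component survives integration over the cylinder. Fibre-integrating over $[0,1]$ gives $\pm\int_{M}f^{*}\omega\wedge\theta\wedge\Td(\hat{u})=\pm T_{\theta}(f^{*}\omega)=\pm(f_{*}T_{\theta})(\omega)$, by \eqref{DefTOmegaH} and \eqref{DefIndCurr2}. The main bookkeeping is the sign. It comes from moving $dt$ past $f^{*}\omega$, which has degree $k+h$, and from the boundary orientation. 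Its parity should depend only on $k$, because the relevant component of $\theta$ has degree $\dim M-k-1$ after pairing with the $\h$-grading; this should yield $(-1)^{k+1}$. I expect the careful tracking of this sign, with graded $\h_{\R}$-coefficients and rational evenness, to be the main obstacle.

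For \eqref{LemmaA2}, the same argument is run with the orientation varying. On $W=[0,1]\times M$ we take the orientation $\pi^{*}\hat{u}+a(t\,\pi^{*}\theta)$, a class on the pulled-back normal bundle $[0,1]\times\mathcal{N}$, with $\hat{A}=\pi^{*}\hat{\alpha}$. Its boundary is the difference on the left-hand side. The current is computed with
\[
\Td\bigl(\hat{u}+a(t\theta)\bigr)=\Td(\hat{u})+\int_{\mathcal{N}/M}d(t\theta).
\]
Again only the $dt\wedge\int_{\mathcal{N}/M}\theta$ term survives integration over $[0,1]$. This gives $\pm\int_{M}f^{*}\omega\wedge R(\hat{\alpha})\wedge\int_{\mathcal{N}/M}\theta$.

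To put this in the stated form, we insert $\Td(\hat{u})^{-1}\wedge\Td(\hat{u})$, which is allowed because Todd forms are invertible, being $1$ plus nilpotent terms. We then recognize $f_{*}T_{R(\hat{\alpha})\wedge(\int_{\mathcal{N}/M}\theta)\wedge\Td(\hat{u})^{-1}}$ via \eqref{DefTOmegaH}. The sign now depends on moving $dt$ past forms whose total degree is governed by $m=\dim M$ rather than $k$, which accounts for $(-1)^{m+1}$. We also need to check that the varying Thom class is still a legitimate orientation of the cylinder; this holds because $I$ of it is constant in $t$.
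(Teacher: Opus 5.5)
Your proposal is correct and is essentially the paper's own proof. It uses the same cylinder prechains $(\Ii\times M,\pi^{*}\hat{u},a(t\,\pi^{*}\theta),f\circ\pi)$ and $(\Ii\times M,\pi^{*}\hat{u}+a(t\,\pi^{*}\theta),\pi^{*}\hat{\alpha},f\circ\pi)$, the relation $[\partial W,0]=[0,T_{W}]$, the variation formula $\Td(\hat{U})=\pi^{*}\Td(\hat{u})+d\bigl(t\,\pi^{*}\int_{\mathcal{N}/M}\theta\bigr)$, and the observation that only the $dt$-term survives. The sign you left open comes out as you predicted, with one correction: $\omega$ pairs with a current in $\Tau_{k+1}(X;\h_{\R})$, so by rational evenness its contributing component has degree congruent to $k+1$ mod $2$, not $k+h$. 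With the Stokes orientation $\partial(\Ii\times M)=\{1\}\times M-\{0\}\times M$, moving $dt$ to the front then gives exactly $(-1)^{k+1}$, and $(-1)^{k+1+(m-k)}=(-1)^{m+1}$ once $dt$ also passes $R(\hat{\alpha})$; the paper writes the first sign as $(-1)^{m+\abs{\theta}}$ with $\abs{\theta}=m-k-1$.
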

\begin{proof} We set $\Ii := [0, 1]$ and $\pi_{\Ii} \colon \Ii \times M \to M$, $(t, x) \mapsto x$. With this notation:
	\[(M, \hat{u}, a(\theta), f) = \partial (\Ii \times M, \pi_{\Ii}^{*}\hat{u}, a(t \cdot \pi_{\Ii}^{*}\theta), f \circ \pi_{\Ii})
\]
Therefore, $[M, \hat{u}, a(\theta), f, 0] = [0, T_{(\Ii \times M, \pi_{\Ii}^{*}\hat{u}, a(t \cdot \pi_{\Ii}^{*}\theta), f \circ \pi_{\Ii})}]$, and we have:
\begin{align*}
	T_{(\Ii \times M, \pi_{\Ii}^{*}\hat{u}, a(t \cdot \pi_{\Ii}^{*}\theta), f \circ \pi_{\Ii})}(\omega) & \overset{\eqref{DefIndCurr}}= (-1)^{m} \int_{M \times \Ii} \pi_{\Ii}^{*}f^{*}\omega \wedge d(t \cdot \pi_{\Ii}^{*}\theta) \wedge \pi_{\Ii}^{*}\Td(\hat{u}) \\
	& \hspace{3pt} = \hspace{3pt} (-1)^{m+\abs{\theta}} \int_{M \times \Ii} \pi_{\Ii}^{*}\bigl( f^{*}\omega \wedge \theta \wedge \Td(\hat{u}) \bigr) \wedge dt \\
	& \hspace{3pt} = \hspace{3pt} (-1)^{k+1} \int_{M} f^{*}\omega \wedge \theta \wedge \Td(\hat{u}) \\
	& \overset{\eqref{DefTOmegaH}}= (-1)^{k+1} \, T_{\theta}(f^{*}\omega) = (-1)^{k+1} (f_{*}T_{\theta})(\omega)
\end{align*}
Similarly:
	\[(M, \hat{u} + a(\theta), \hat{\alpha}, f) - (M, \hat{u}, \hat{\alpha}, f) = \partial (\Ii \times M, \hat{U}, \pi_{\Ii}^{*}\hat{\alpha}, f \circ \pi_{\Ii})
\]
with $\hat{U} := \pi_{\Ii}^{*}\hat{u} + a(t \cdot \pi_{\Ii}^{*}\theta)$. Thus, the left-hand side of \eqref{LemmaA2} is $[0, T_{(\Ii \times M, \hat{U}, \pi_{\Ii}^{*}\hat{\alpha}, f \circ \pi_{\Ii})}]$. Since
\begin{align*}
	\Td(\hat{U}) & = \int_{\Ii \times \mathcal{N}/\Ii \times M} R(\hat{U}) = \int_{\Ii \times \mathcal{N}/\Ii \times M} \bigl( \pi_{\Ii}^{*}R(\hat{u}) + d(t \cdot \pi_{\Ii}^{*}\theta) \bigr) \\
	& = \pi_{\Ii}^{*}\Td(\hat{u}) + d \biggl( t \cdot \pi_{\Ii}^{*}\int_{\mathcal{N}/M} \theta \biggr),
\end{align*}
we have:
\begin{align*}
	T_{(\Ii \times M, \hat{U}, \pi_{\Ii}^{*}\hat{\alpha}, f \circ \pi_{\Ii})}(\omega) & \overset{\eqref{DefIndCurr}}= (-1)^{m+1} \int_{M \times \Ii} \pi_{\Ii}^{*} \biggl( f^{*}\omega \wedge R(\hat{\alpha}) \wedge \int_{\mathcal{N}/M} \theta \biggr) \wedge dt \\
	& \hspace{3pt} = \hspace{3pt} (-1)^{m+1} \int_{M} \biggl( f^{*}\omega \wedge R(\hat{\alpha}) \wedge \int_{\mathcal{N}/M} \theta \biggr) \\
	& \overset{\eqref{DefTOmegaH}}= (-1)^{m+1}\,  T_{R(\hat{\alpha}) \wedge (\int_{\mathcal{N}/X} \theta) \wedge \Td(\hat{u})^{-1}}(f^{*}\omega) \\
	& \hspace{3pt} = \hspace{3pt} (-1)^{m+1} (f_{*}T_{R(\hat{\alpha}) \wedge (\int_{\mathcal{N}/X} \theta) \wedge \Td(\hat{u})^{-1}})(\omega) \qedhere
\end{align*}
\end{proof}
We define the homological flat theory $\hat{h}_{\bullet}^{\fl}(X)$ as the dual of the cohomological one $\hat{h}^{\bullet}_{\fl}(X)$. The latter is a cohomology theory (see \cite[Section 5]{BS}), which is not multiplicative, but has a natural $h^{\bullet}$-module structure.\footnote{Given $\alpha \in h^{\bullet}(X)$ and $\hat{\beta} \in \hat{h}^{\bullet}_{\fl}(X)$, we set $\alpha \cdot \hat{\beta} := \hat{\alpha} \cdot \hat{\beta}$ for any differential refinement $\hat{\alpha}$ of $\alpha$. The choice of the refinement is immaterial, since $a(\omega) \cdot \hat{\beta} = a(\omega \wedge R(\hat{\beta})) = 0$.} Therefore, an element of $\hat{h}_{\bullet}^{\fl}(X)$ can be represented in the form $[M, u, \hat{\alpha}, f]$, where $u$ is a Thom class in $h^{\bullet}$ and $\hat{\alpha} \in \hat{h}^{\bullet}_{\fl}(X)$. When $\hat{h}^{\bullet}_{\fl}(\,\cdot\,) \simeq h^{\bullet-1}(\,\cdot\,; \R/\Z)$ (again, see \cite[Section 5]{BS}), we have $\hat{h}_{\bullet}^{\fl}(\,\cdot\,) \simeq h_{\bullet+1}(\,\cdot\,; \R/\Z)$ by construction. The following natural transformation will be proved to be injective:
\begin{eqnarray}
	\iota \colon \; \hat{h}_{k}^{\fl}(X) & \to & \hat{h}_{k}(X) \label{MorfIotaH} \\
	\lbrack M, u, \hat{\alpha}, f \rbrack & \mapsto & [M, \hat{u}, \hat{\alpha}, f, 0] \nonumber
\end{eqnarray}
Here $\hat{u}$ is any differential refinement of $u$. Let us show that $\iota$ is well-defined. The choice of $\hat{u}$ is immaterial because of formula \eqref{LemmaA2} with $R(\hat{\alpha}) = 0$. The independence of the precycle is straightforward, since additivity with respect to $M$ and $\hat{\alpha}$ is immediate and the Gysin map in the flat theory is a particular case of the one in the differential theory. Lastly, $[\partial (W, U, \hat{A}, F)] \mapsto [\partial (W, \hat{U}, \hat{A}, F), 0] = [0, T_{(W, \hat{U}, \hat{A}, F)}] = 0$ by formula \eqref{DefIndCurr} with $R(\hat{\alpha}) = 0$.

\begin{Prop}\label{PropExSeqH} The following square is well-defined and commutative:
	\[\xymatrix{
	\hat{h}_{k}(X) \ar@{->>}[rr]^{I} \ar@{->>}[d]_(.45){R} & & h_{k}(X) \ar[d]^(.475){\ch} \\
	\Tau_{k}^{\Int}(X; \h_{\R}) \ar[rr]^{\bar{\ScT}_{k}^{\,-1}} & & H_{k}(X; \h_{\R})
}\]
The isomorphism $\bar{\ScT}_{k}$ was defined in \eqref{BarPsiIsoH} and we are applying implicitly the projection from $\Tau_{k}^{\Int}(X; \h_{\R})$ to currential homology and the isomorphism \eqref{IsoHomR}. Moreover, we have
\begin{equation}\label{RAPartialh}
	R \circ a = \partial \hspace{50pt} I \circ \iota = B \hspace{50pt} \iota \circ p_{\fl} = (-1)^{k} \, a \circ \bar{\ScT}_{k}
\end{equation}
where $B$ and $p_{\fl}$ are part of the following exact sequence, induced by the corresponding cohomological one:
\begin{equation}\label{ExSeqFlat}
	\xymatrix{
	\cdots \ar[r] & h_{\bullet}(X) \ar[r]^(.4){- \otimes 1} & h_{\bullet}(X) \otimes \R \ar[r]^(.52){p_{\fl}} & \hat{h}_{\bullet+1}^{\fl}(X) \ar[r]^(.48){B} & h_{\bullet+1}(X) \ar[r] & \cdots
}
\end{equation}
Lastly, the following sequences are exact:
\begin{align}
	& \xymatrix{0 \ar[r] & \Tau_{k+1}^{\Int}(X; \h_{\R}) \ar[r] & \Tau_{k+1}(X; \h_{\R}) \ar[r]^(.58){a} & \hat{h}_{k}(X) \ar[r]^(.48){I} & h_{k}(X) \ar[r] & 0} \label{ExSeq1h} \\
	& \xymatrix{0 \ar[r] & \hat{h}_{k}^{\fl}(X) \ar[r]^{\iota} & \hat{h}_{k}(X) \ar[r]^(.4){R} & \Tau_{k}^{\Int}(X; \h_{\R}) \ar[r] & 0} \label{ExSeq2h}
\end{align}
\end{Prop}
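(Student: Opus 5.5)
The plan is to transpose the proof of proposition \ref{PropAxioms} to the bordism model, replacing smooth chains by prechains, $\partial T_{\Gamma} = T_{\partial\Gamma}$ by lemma \ref{LemmaProjHom}, and the isomorphism \eqref{BarPsiIso} by \eqref{BarPsiIsoH}.

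For the square, the upper path sends $[M,\hat u,\hat\alpha,f,T]$ to $\ch[M,\hat u,\hat\alpha,f]$. The lower path sends it to the class of $T_{(M,\hat u,\hat\alpha,f)}+\partial T$ in currential homology, which is $\bar\ScT_k$ of $[M,\hat u,\hat\alpha,f]\otimes 1$. Via \eqref{IsoHomR} this is the Chern character by the definition \eqref{HomCh} of $\ch$. Surjectivity of $I$ follows from $I[M,\hat u,\hat\alpha,f,0]=[M,\hat u,\hat\alpha,f]$, and surjectivity of $R$ from \eqref{DefIntCurrH}. For \eqref{RAPartialh}, $R\circ a=\partial$ is immediate. $I\circ\iota=B$ holds because $B$ forgets the flat refinement, taking $[M,u,\hat\alpha,f]$ to $[M,\hat u,\hat\alpha,f]$ with $\hat\alpha$ regarded as a differential class. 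For the third identity, a class in $h_k(X)\otimes\R$ is represented by a precycle with real form data. The map $p_{\fl}$ sends it to $[M,u,a(\theta),f]$, where $\theta$ is closed, so $a(\theta)$ is flat. Formula \eqref{LemmaA} then gives $\iota(p_{\fl}(\cdot))=(-1)^{k+1}a(f_*T_\theta)$, and $f_*T_\theta$ represents $\bar\ScT$ of the class by \eqref{DefIndCurr2}. The sign convention and the degree shift between $k$ and $k+1$ in $p_{\fl}$ must be reconciled here; I expect this to produce the stated $(-1)^k$.

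For \eqref{ExSeq1h}, $a(T)=0$ means $T=-T_{(W,\ldots)}-\partial T'$ with $\partial(W,\ldots)=0$ in $\hat z$. Then $(W,\ldots)$ is a precycle, so $T$ is integral, and conversely. If $I[M,\ldots,T]=0$, then $(M,\ldots)=\partial(W,\ldots)$ in $\hat z_k$, so the class equals $a(T+T_{(W,\ldots)})$.

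For \eqref{ExSeq2h} I follow the singular proof. To show $R\circ\iota=0$, note that $T_{(M,\hat u,\hat\alpha,f)}$ vanishes because $R(\hat\alpha)=0$. Conversely, if $R[M,\ldots,T]=0$, then $T_{(M,\ldots)}$ is exact. By \eqref{BarPsiIsoH} the class $[M,\ldots]\otimes1$ vanishes, so a multiple $r(M,\ldots)$ bounds some $(W,\hat U,\hat A,F)$. The model element is then $\frac1r(W,\ldots)$, corresponding to the real lift $\tilde\Gamma$. Since real prechains are not literally available, the main obstacle is to realize this lift inside the model. My first attempt is to use the exact sequence \eqref{ExSeqFlat} together with the five lemma rather than explicit lifts. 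Concretely, I would map the sequence $h_{k+1}\otimes\R\to\hat h^{\fl}_k\to h_k\to h_k\otimes\R$ into the diagram built from \eqref{ExSeq1h} and the $R$-sequence, exactly as the hexagon \eqref{CommHex} was obtained, and derive exactness at $\hat h_k(X)$ and injectivity of $\iota$ by a diagram chase. For injectivity, suppose $\iota(x)=0$. Then $B(x)=I\iota(x)=0$, so $x=p_{\fl}(y)$ and $a(\bar\ScT y)=0$. Hence $\bar\ScT y$ is integral, so $y$ lies in the image of $h_{k+1}(X)$, and $x=0$. For exactness at the middle, suppose $R(\lambda)=0$. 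Then $I(\lambda)$ is torsion, so $I(\lambda)=B(x)$ for some flat $x$. The difference $\lambda-\iota(x)$ lies in $a(\Tau_{k+1})$ and has $\partial$-closed current, so it lies in $a\circ\bar\ScT$, which is in the image of $\iota\circ p_{\fl}$.
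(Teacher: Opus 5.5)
Your proposal is correct and essentially follows the paper. The square, the identities \eqref{RAPartialh} (via \eqref{LemmaA} and \eqref{DefIndCurr2}) and \eqref{ExSeq1h} are treated the same way. For \eqref{ExSeq2h}, your diagram chase through \eqref{ExSeqFlat} is the unpacked form of the paper's five-lemma argument on the morphism of long exact sequences \eqref{ExactSeqFlDiag}; as a small bonus it does not need the exactness of that diagram's bottom row, which the paper leaves implicit.

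The sign you left open does close. $p_{\fl}$ lowers degree by one, and your indexing $h_{k+1}(X)\otimes\R\to\hat{h}^{\fl}_{k}(X)$ is the consistent one. So for a class in $h_{k}(X)\otimes\R$ the element $[M,\hat{u},a(\omega),f,0]$ lies in $\hat{h}_{k-1}(X)$, and \eqref{LemmaA} gives $(-1)^{(k-1)+1}=(-1)^{k}$.
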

\begin{proof} About the square, the upper-right path is
	\[[M, \hat{u}, \hat{\alpha}, f, T_{k+1}] \mapsto [M, \hat{u}, \hat{\alpha}, f] \mapsto \ch[M, \hat{u}, \hat{\alpha}, f],
\]
and the left-lower path is
	\[[M, \hat{u}, \hat{\alpha}, f, T_{k+1}] \mapsto T_{(M, \hat{u}, \hat{\alpha}, f)} + \partial T_{k+1} \mapsto \bar{\ScT}_{k}^{\,-1}[T_{(M, \hat{u}, \hat{\alpha}, f)}] = [M, \hat{u}, \hat{\alpha}, f] \otimes 1,
\]
so they coincide via \eqref{IsoHomR}. The morphism $I$ is surjective, since, given $[M, \hat{u}, \hat{\alpha}, f] \in h_{k}(X)$, we have $[M, \hat{u}, \hat{\alpha}, f] = I[M, \hat{u}, \hat{\alpha}, f, 0]$. Lastly, the surjectivity of $R$ immediately follows from definitions \eqref{DefRGen} and \eqref{DefIntCurrH}.

About \eqref{RAPartialh}, we have $R \circ a(T_{k+1}) = R[0, T_{k+1}] = \partial T_{k+1}$. Furthermore, we consider the following exact sequence in cohomology:
	\[\xymatrix{
	\cdots \ar[r] & h^{\bullet}(X) \ar[r]^(.27){- \otimes 1} & h^{\bullet}(X) \otimes \R \simeq H^{\bullet}(X; \h_{\R}) \ar[r]^(.67){a} & \hat{h}^{\bullet+1}_{\fl}(X) \ar[r]^(.48){I} & h^{\bullet+1}(X) \ar[r] & \cdots
}\]
By using the corresponding structure of $h^{\bullet}$-module, we represent $h_{\bullet}$, $h_{\bullet} \otimes \R$, and $\hat{h}_{\bullet}^{\fl}$ in the form $[M, u, \alpha, f]$, where $u$ is an $h^{\bullet}$-orientation and $\alpha$ belongs to the corresponding cohomology theory. In this way, by applying the morphisms of the previous sequence to $\alpha$, we obtain \eqref{ExSeqFlat} with the following morphisms: $[M, u, \alpha, f] \mapsto [M, u, \alpha \otimes 1, f] \simeq [M, u, \alpha, f] \otimes 1$; $p_{\fl}[M, u, [\omega], f] = [M, u, a(\omega), f]$ via \eqref{IsoHomR}; and $B[M, u, \hat{\alpha}, f] = [M, u, I(\hat{\alpha}), f]$. Then, by using the model with differential cycles, $B[M, u, \hat{\alpha}, f] = [M, \hat{u}, \hat{\alpha}, f] = I[M, \hat{u}, \hat{\alpha}, f, 0] = I \circ \iota[M, u, \hat{\alpha}, f]$. Also,
\begin{align*}
	\iota \circ p_{\fl}[M, u, [\omega], f] & \hspace{3pt} = \hspace{3pt} \iota[M, u, a(\omega), f] = [M, \hat{u}, a(\omega), f, 0] \\
	& \overset{\eqref{LemmaA}}= (-1)^{k} \, a(f_{*}T_{\omega}) = (-1)^{k} \, a \circ \bar{\ScT}_{k}[M, u, [\omega], f]
\end{align*}
the last equality following from the composition of $\bar{\ScT}_{k}$ with \eqref{IsoHomR}, since $[M, u, [\omega], f] = [M, u, \ch(\alpha), f] \simeq [M, u, \alpha, f] \otimes 1$ where $\ch(\alpha) = [\omega]$, so that
	\[\bar{\ScT}_{k}[M, u, [\omega], f] = \bar{\ScT}_{k}[M, \hat{u}, \hat{\alpha}, f] = [T_{(M, \hat{u}, \hat{\alpha}, f)}] \overset{\eqref{DefIndCurr2}}= f_{*}[T_{R(\hat{\alpha})}] = f_{*}[T_{\omega}]
\]
for any differential refinements $\hat{u}$ and $\hat{\alpha}$.

About \eqref{ExSeq1h}, we have $a(T_{k+1}) = [0, T_{k+1}] = 0$ if and only if there exist $(W, \hat{U}, \hat{A}, F)$ and $T'_{k+2}$ such that $(0, T_{k+1}) = (\partial(W, \hat{U}, \hat{A}, F), -T_{(W, \hat{U}, \hat{A}, F)} - \partial T'_{k+2})$. This means that there exist a \emph{precycle} $(W, \hat{U}, \hat{A}, F)$ and a current $T'_{k+2}$ such that $T_{k+1} = -T_{(W, \hat{U}, \hat{A}, F)} - \partial T'_{k+2}$, that is, $T_{k+1}$ is integral by definition \eqref{DefIntCurrH}. Moreover, $I \circ a(T_{k+1}) = I[0, T_{k+1}] = [0] = 0$. Conversely, if $I[M, \hat{u}, \hat{\alpha}, f, T_{k+1}] = [M, \hat{u}, \hat{\alpha}, f] = 0$, then $[M, \hat{u}, \hat{\alpha}, f] = [\partial (W, \hat{U}, \hat{A}, F)]$, thus $[M, \hat{u}, \hat{\alpha}, f, T_{k+1}] = [\partial (W, \hat{U}, \hat{A}, F), T_{k+1}] = [0, T_{(W, \hat{U}, \hat{A}, F)} + T_{k+1}] = a(T_{(W, \hat{U}, \hat{A}, F)} + T_{k+1})$. Lastly, we already proved the surjectivity of $I$.

About \eqref{ExSeq2h}, we already showed that $R$ is surjective. Definition \eqref{MorfIotaH} and formula \eqref{DefIndCurr2} immediately imply $R \circ \iota = 0$, hence it remains to prove that $\iota \colon \hat{h}_{k}^{\fl}(X) \to \Ker(R_{k})$ is an isomorphism. We have the following morphism of long exact sequences, the upper one being \eqref{ExSeqFlat} up to a sign in $p_{\fl}$: \pagebreak
\begin{equation}\label{ExactSeqFlDiag}
	\xymatrix{
	\cdots \ar[r] & h_{\bullet}(X) \ar[r]^(.4){- \otimes 1} \ar@{=}[d] & h_{\bullet}(X) \otimes \R \ar[r]^(.52){(-1)^{\bullet} \, p_{\fl}} \ar[d]_{\simeq}^{\bar{\ScT}_{\bullet}} & \hat{h}_{\bullet+1}^{\fl}(X) \ar[r]^(.48){B} \ar[d]^{\iota} & h_{\bullet+1}(X) \ar[r] \ar@{=}[d] & \cdots \\
	\cdots \ar[r] & h_{\bullet}(X) \ar[r]^(.39){\ch} & H\Tau_{\bullet}(X; \h_{\R}) \ar[r]^(.51){a} & \Ker(R_{\bullet+1}) \ar[r]^(.52){I} & h_{\bullet+1}(X) \ar[r] & \cdots
}\end{equation}
Commutativity follows from \eqref{RAPartialh}, hence the five lemma implies the result.\footnote{Alternatively, one can prove that $\Ker(R_{\bullet})$ is a homology theory and $\iota \colon \hat{h}_{\bullet}^{\fl}(X) \to \Ker(R_{\bullet})$ is a natural transformation inducing an isomorphism on the point.}
\end{proof}
Corollary \ref{CorExSeq3} can be easily adapted to the present framework, so that we obtain the following commutative hexagon with exact diagonals:
\begin{equation}\label{CommHexh}
	\resizebox{0.8\textwidth}{!}{
	\xymatrix{
	& \frac{\Tau_{\bullet+1}(X; \h_{\R})}{\Tau^{\Int}_{\bullet+1}(X; \h_{\R})} \ar[rr]^{\partial} \ar@{^(->}[dr]^{a} & & \Tau^{\Int}_{\bullet}(X; \h_{\R}) \ar[dr]^{\bar{\ScT}^{-1}} \\
	\frac{h_{\bullet+1}(X) \otimes \R}{h_{\bullet+1}(X)} \ar@{^(->}[ur]^{\bar{\ScT}} \ar@{^(->}[dr]^{p_{\fl}} & & \hat{h}_{\bullet}(X) \ar@{->>}[ur]^{R} \ar@{->>}[dr]^{I} & & h_{\bullet}(X) \otimes \R \\
	& \hat{h}_{\bullet}^{\fl}(X) \ar@{^(->}[ur]^{\iota} \ar[rr]^{B} & & h_{\bullet}(X) \ar[ur]^{- \otimes 1}
}}
\end{equation}

\SkipTocEntry \subsection{Differential Cap Product and Poincar\'e Duality}

We define the differential cap product
	\[\pmb{\cdot}\, \colon \hat{h}^{s}(X) \times \hat{h}_{l}(X) \to \hat{h}_{l-s}(X)
\]
as follows:
\begin{equation}\label{DiffCapPH}
	\hat{\beta} \pmb{\cdot} [M, \hat{u}, \hat{\alpha}, f, T_{l+1}] := [M, \hat{u}, f^{*}\hat{\beta} \cdot \hat{\alpha}, f, R(\hat{\beta}) \wedge T_{l+1}]
\end{equation}
Let us show that it is well-defined. Additivity with respect to $M$ and $\hat{\alpha}$ is immediate. Given a submersion $\varphi \colon M \to N$ and two precycles $(M, \hat{u}, \hat{\alpha}, f \circ \varphi)$ and $(N, \hat{v}, \varphi_{!}\hat{\alpha}, f)$, we have $(M, \hat{u}, \varphi^{*}f^{*}\hat{\beta} \cdot \hat{\alpha}, f \circ \varphi) \sim (N, \hat{v}, \varphi_{!}(\varphi^{*}f^{*}\hat{\beta} \cdot \hat{\alpha}), f) = (N, \hat{v}, f^{*}\beta \cdot \varphi_{!}\hat{\alpha}, f)$, thus \eqref{DiffCapPH} remains unchanged. Lastly:
\begin{align*}
	\hat{\beta} \pmb{\cdot} [\partial(W, \hat{U}, &\hat{A}, F), -T_{(W, \hat{U}, \hat{A}, F)} - \partial T'_{k+2}] \\
	& \overset{\eqref{DiffCapPH}}= \hspace{10pt} [\partial W, \hat{U}\vert_{\partial W}, F\vert_{\partial W}^{*}\hat{\beta} \cdot \hat{A}\vert_{\partial W}, F\vert_{\partial W}, - R(\hat{\beta}) \wedge (T_{(W, \hat{U}, \hat{A}, F)} + \partial T'_{k+2})] \\
	& \hspace{-8pt} \overset{\eqref{CurrentPrechainWedge}, \eqref{WedgeOtherSideBoundary}}= [\partial (W, \hat{U}, F^{*}\hat{\beta} \cdot \hat{A}, F), -T_{(W, \hat{U}, F^{*}\hat{\beta} \cdot \hat{A}, F)} - \partial(R(\hat{\beta}) \wedge T'_{k+2})] = 0
\end{align*}

\begin{Prop}\label{AxiomsDiffCapH} The differential cap product $\,\pmb{\cdot}\, \colon \hat{h}^{s}(X) \times \hat{h}_{l}(X) \to \hat{h}_{l-s}(X)$ satisfies the following axioms:
\begin{enumerate}
	\item it is bi-additive;
	\item it is compatible with the natural transformations $I$, $R$, $a$, and $\iota$ as follows:
	\begin{align*}
		& I(\hat{\beta} \pmb{\cdot} \hat{\lambda}) = I(\hat{\beta}) \pmb{\cdot} I(\hat{\lambda}) & & R(\hat{\beta} \pmb{\cdot} \hat{\lambda}) = R(\hat{\beta}) \wedge R(\hat{\lambda}) \\
		& \hat{\beta} \pmb{\cdot} a(T) = a(R(\hat{\beta}) \wedge T) & & a(\eta) \pmb{\cdot} \hat{\lambda} = (-1)^{s+l+1} \, a(\eta \wedge R(\hat{\lambda})) \\
		& \hat{\beta} \pmb{\cdot} \iota(\hat{\lambda}) = \iota(I(\hat{\beta}) \cdot \hat{\lambda}) & & \hat{\beta} \pmb{\cdot} \hat{\lambda} = \iota(\hat{\beta} \cdot I(\hat{\lambda})) \textnormal{ if } R(\hat{\beta}) = 0
	\end{align*}
	\item it is mixed-associative with cohomology, that is, $(\hat{\gamma} \cdot \hat{\beta}) \pmb{\cdot} \hat{\lambda} = \hat{\gamma} \pmb{\cdot} (\hat{\beta} \pmb{\cdot} \hat{\lambda})$;
	\item it is functorial, that is, given a smooth function $g \colon X \to Y$, we have
	\[g_{*}(g^{*}\hat{\beta} \pmb{\cdot} \hat{\lambda}) = \hat{\beta} \pmb{\cdot} (g_{*}\hat{\lambda}).
\]
\end{enumerate}
\end{Prop}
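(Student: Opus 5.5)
The plan is to verify each axiom directly on representatives, using definition \eqref{DiffCapPH} together with the formulas \eqref{CurrentPrechainWedge}, \eqref{WedgeOtherSideBoundary}, \eqref{LemmaA}, and \eqref{DefIndCurr2}. Well-definedness has already been shown right after \eqref{DiffCapPH}, so every identity can be checked on a single representative $[M, \hat{u}, \hat{\alpha}, f, T_{l+1}]$.

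\textbf{Item (1).} Bi-additivity is immediate. Additivity in $\hat{\lambda}$ holds on representatives. Additivity in $\hat{\beta}$ follows from two facts:
\begin{itemize}
\item the relation $(M, \hat{u}, \hat{\alpha}+\hat{\alpha}', f) \sim (M, \hat{u}, \hat{\alpha}, f) + (M, \hat{u}, \hat{\alpha}', f)$ holds in $\hat{z}_{l}(X)$;
\item $R$ and $f^{*}$ are additive.
\end{itemize}

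\textbf{Item (2).} The $I$-compatibility is the definition \eqref{CapProductH} of the topological cap product. For $R$, I would use \eqref{CurrentPrechainWedge} and \eqref{WedgeOtherSideBoundary}:
\[R(\hat{\beta} \pmb{\cdot} \hat{\lambda}) = R(\hat{\beta}) \wedge T_{(M, \hat{u}, \hat{\alpha}, f)} + \partial(R(\hat{\beta}) \wedge T_{l+1}).\]
Since $R(\hat{\beta})$ is closed, the extra term in \eqref{WedgeOtherSideBoundary} vanishes, and the right-hand side equals $R(\hat{\beta}) \wedge R(\hat{\lambda})$.

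The identity $\hat{\beta} \pmb{\cdot} a(T) = a(R(\hat{\beta}) \wedge T)$ is the definition applied with empty $M$.

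For $a(\eta) \pmb{\cdot} \hat{\lambda}$, I would write it as
\[[M, \hat{u}, a(f^{*}\eta) \cdot \hat{\alpha}, f, d\eta \wedge T_{l+1}],\]
using $a(f^{*}\eta) \cdot \hat{\alpha} = a(f^{*}\eta \wedge R(\hat{\alpha}))$. Then \eqref{LemmaA} turns the cycle part into $(-1)^{l-s+1}a\bigl(f_{*}T_{f^{*}\eta \wedge R(\hat{\alpha})}\bigr)$. By \eqref{DefIndCurr2} and \eqref{WedgeOtherSide}, this current equals $\eta \wedge T_{(M, \hat{u}, \hat{\alpha}, f)}$. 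The current part $a(d\eta \wedge T_{l+1})$ is rewritten via \eqref{WedgeOtherSideBoundary} as
\[\pm\, a\bigl(\partial(\eta \wedge T_{l+1}) - \eta \wedge \partial T_{l+1}\bigr),\]
where $a$ of a boundary is zero.

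The main obstacle is the sign bookkeeping: showing that the sign $(-1)^{l+s+1}$ from \eqref{LemmaA} and the sign $(-1)^{l+1+s-1}$ from \eqref{WedgeOtherSideBoundary} coincide. If they do, the two pieces combine to $(-1)^{s+l+1}a(\eta \wedge R(\hat{\lambda}))$. I would check this carefully by tracking the degrees of $\hat{\alpha}$ and $M$, and the convention that $a(\eta)$ has degree $s$ with $\abs{\eta} = s-1$.

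For the $\iota$-formulas:
\begin{itemize}
\item If $\hat{\lambda} = \iota[M, u, \hat{\alpha}, f]$ is flat, then $f^{*}\hat{\beta} \cdot \hat{\alpha}$ is flat and depends only on $I(\hat{\beta})$, which is the $h^{\bullet}$-module structure of the footnote. This gives $\iota(I(\hat{\beta}) \cdot \hat{\lambda})$.
\item If $R(\hat{\beta}) = 0$, the current part vanishes and $f^{*}\hat{\beta} \cdot \hat{\alpha}$ is flat. By the same module structure applied with roles exchanged, the class depends only on $I(\hat{\alpha})$, which yields $\iota(\hat{\beta} \cdot I(\hat{\lambda}))$.
\end{itemize}

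\textbf{Item (3).} Mixed associativity follows from associativity of the product in $\hat{h}^{\bullet}$ and multiplicativity of $f^{*}$ and $R$, using
\[R(\hat{\gamma} \cdot \hat{\beta}) \wedge T = R(\hat{\gamma}) \wedge (R(\hat{\beta}) \wedge T),\]
which comes from \eqref{WedgeOtherSide}.

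\textbf{Item (4).} Functoriality is direct. We have
\[g_{*}(g^{*}\hat{\beta} \pmb{\cdot} \hat{\lambda}) = [M, \hat{u}, f^{*}g^{*}\hat{\beta} \cdot \hat{\alpha}, g \circ f, g_{*}(g^{*}R(\hat{\beta}) \wedge T_{l+1})],\]
and the projection formula $g_{*}(g^{*}\omega \wedge T) = \omega \wedge g_{*}T$ follows immediately from \eqref{WedgeOtherSide} and the definition of pushforward of currents.
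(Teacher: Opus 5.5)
Your proposal is correct and follows essentially the same route as the paper: a direct check on representatives, with the $a(\eta)$ case handled exactly as in the paper via \eqref{LemmaA}, \eqref{DefIndCurr2} and \eqref{WedgeOtherSideBoundary}, and the remaining items verified as the straightforward computations the paper leaves implicit. The sign check you deferred does close. Applying \eqref{LemmaA} in degree $k=l-s$ gives $(-1)^{l-s+1}$. Applying \eqref{WedgeOtherSideBoundary} with $h=s-1$ and $k=l+1$ gives $d\eta\wedge T_{l+1}=(-1)^{l+s+1}\,\eta\wedge\partial T_{l+1}$ up to a boundary. So both pieces carry the common sign $(-1)^{s+l+1}$.
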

\begin{proof} One can immediately deduce bi-additivity from definition \eqref{DiffCapPH}, compatibility with $I$ from formula \eqref{CapProductH}, and compatibility with $R$ from formulas \eqref{CurrentPrechainWedge} and \eqref{WedgeOtherSideBoundary}. About $a$, we have
	\[\hat{\beta} \pmb{\cdot} a(T) = \hat{\beta} \pmb{\cdot} [0, T] \overset{\eqref{DiffCapPH}}= [0, R(\hat{\beta}) \wedge T] = a(R(\hat{\beta}) \wedge T),
\]
where the class $0$ can be represented by any precycle of the form $(M, \hat{u}, 0, f)$. Furthermore:
\begin{align*}
	a(\eta) \pmb{\cdot} [M, \hat{u}, \hat{\alpha}, f, T] & \overset{\eqref{DiffCapPH}}= [M, \hat{u}, a(f^{*}\eta \wedge R(\hat{\alpha})), f, d\eta \wedge T] \\
	& \overset{\eqref{LemmaA}}= [0, (-1)^{s+l+1} \, f_{*}T_{f^{*}\eta \wedge R(\hat{\alpha})} + d\eta \wedge T] \\
	& \hspace{3pt} = \hspace{3pt} [0, (-1)^{s+l+1} \, \eta \wedge f_{*}T_{R(\hat{\alpha})} + d\eta \wedge T] \\
	& \overset{\eqref{WedgeOtherSideBoundary}}= (-1)^{s+l+1} \, [0, \eta \wedge f_{*}T_{R(\hat{\alpha})} + \eta \wedge \partial T] \\
	& \overset{\eqref{LemmaA}}= (-1)^{s+l+1} \, a(\eta \wedge R[M, \hat{u}, \hat{\alpha}, f, T])
\end{align*}
The formulas with $\iota$ and the last two items are straightforward computations.
\end{proof}

Let $(X, \hat{u})$ be a compact $\hat{h}$-oriented $n$-manifold. We define the \emph{fundamental class} $[X] := [X, \hat{u}, 1, \id_{X}, 0]$ and state the \emph{differential Poincar\'e duality} as follows:
\begin{eqnarray}
	\hatPD \colon \; \hat{h}^{k}(X) & \to & \hat{h}_{n-k}(X) \label{DiffPDH} \\
	\hat{\beta} & \mapsto & \hat{\beta} \,\pmb{\cdot}\, [X] \nonumber
\end{eqnarray}
It follows from definition \eqref{DiffCapPH} that $\hatPD(\hat{\beta}) = [X, \hat{u}, \hat{\beta}, \id_{X}, 0]$.

\begin{Prop}\label{ProofPDH} The morphism \eqref{DiffPDH} is injective and its image is formed by the elements of $\hat{h}_{n-k}(X)$ whose curvature is a form.
\end{Prop}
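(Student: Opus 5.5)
The plan is to mimic the proof of proposition \ref{ProofPD}, replacing singular homology by $h_{\bullet}$ and the cap product \eqref{DiffCapP} by \eqref{DiffCapPH}. The key tool is the commutative diagram with exact rows
\[\xymatrix{
0 \ar[r] & \hat{h}^{k}_{\fl}(X) \ar[r]^(.55){\iota} \ar[d]^{\PD^{\fl}} & \hat{h}^{k}(X) \ar[r]^{R} \ar[d]^{\hatPD} & \Omega^{k}_{\Int}(X; \h_{\R}) \ar[r] \ar@{^(->}[d]^{\ScT^{k}} & 0 \\
0 \ar[r] & \hat{h}^{\fl}_{n-k}(X) \ar[r]^(.55){\iota} & \hat{h}_{n-k}(X) \ar[r]^{R} & \Tau^{\Int}_{n-k}(X; \h_{\R}) \ar[r] & 0
}\]
The lower row is \eqref{ExSeq2h} and the upper row is the standard cohomological sequence. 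The left vertical arrow is the flat Poincar\'e duality $\hat{\beta} \mapsto [X, u, \hat{\beta}, \id_{X}]$, with $u := I(\hat{u})$. It is an isomorphism: $\hat{h}_{\bullet}^{\fl}$ was defined as the dual of the flat cohomology theory via the same bordism model, so this map is the topological Poincar\'e duality for the $h^{\bullet}$-module theory $\hat{h}^{\bullet}_{\fl}$. This is the content of the geometric models of \cite{Jakob}, \cite{FR}. If that is not immediate, I would deduce it from the five lemma applied to the morphism between the long exact sequence \eqref{ExSeqFlat} and its cohomological counterpart, using topological Poincar\'e duality for $h$ and for $h \otimes \R$.

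Next I check commutativity. For the left square, the remark after \eqref{DiffPDH} gives $\hatPD(\iota\hat{\beta}) = [X, \hat{u}, \hat{\beta}, \id_{X}, 0]$, and by \eqref{MorfIotaH} this equals $\iota[X, u, \hat{\beta}, \id_{X}]$. For the right square, \eqref{DefIndCurr2} and \eqref{DefTOmegaH} give
\[R(\hatPD(\hat{\beta})) = T_{(X, \hat{u}, \hat{\beta}, \id)} = T_{R(\hat{\beta})} = \ScT^{k}(R(\hat{\beta})).\]
Equivalently, this follows from the compatibility of the product with $R$ in proposition \ref{AxiomsDiffCapH}, together with $R[X] = T_{1}$. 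The map $\ScT^{k}$ is injective because $\Td(\hat{u})$ is invertible, having degree-zero component $1$. The four lemma then yields injectivity of $\hatPD$. There may be a sign in the left square; it is harmless, since $\pm$ an isomorphism is still an isomorphism.

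For the image, commutativity of the right square shows that every $R(\hatPD(\hat{\beta}))$ lies in the image of $\ScT^{k}$. Conversely, suppose $\lambda$ satisfies $R(\lambda) = T_{\omega}$. First I verify that $\omega$ is an integral form. Closedness follows from $dT_{\omega} = \pm T_{d\omega}$ and injectivity of $\ScT^{k+1}$. The class of $\omega$ lies in the image of $\ch$ because $[T_{\omega}] = \bar{\ScT}_{n-k}(\ch I(\lambda))$, and the real Poincar\'e duality induced by $\ScT^{\bullet}$ is compatible with $\ch$ via \eqref{GysinCh}. By surjectivity of $R$ in cohomology, choose $\hat{\beta}'$ with $R(\hat{\beta}') = \omega$. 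Then $\lambda - \hatPD(\hat{\beta}')$ lies in $\Ker R = \iota(\hat{h}^{\fl}_{n-k}(X))$. Correcting $\hat{\beta}'$ by $\iota$ of the preimage under the flat duality gives $\hatPD(\hat{\beta}) = \lambda$.

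The main obstacle is the left vertical isomorphism, i.e.\ flat Poincar\'e duality in the bordism model. The integrality of $\omega$ in the surjectivity step is the secondary delicate point.
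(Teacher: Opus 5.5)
Your proposal is correct and is essentially the paper's argument. The paper also proves this by rerunning the proof of Proposition~\ref{ProofPD} on the diagram \eqref{DiagPD2H}: the four lemma gives injectivity, and surjectivity onto curvature-forms comes from lifting the curvature and correcting by a flat class through the left-hand duality isomorphism. Your explicit checks that $\omega$ is integral and that the flat duality is an isomorphism (e.g.\ by the five lemma on \eqref{ExSeqFlat}) fill in details the paper leaves implicit.
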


The proof is similar to the one of proposition \ref{ProofPD}, by starting from the following diagram:
\begin{equation}\label{DiagPD2H}
	\xymatrix{
	0 \ar[r] & \hat{h}^{k}_{\fl}(X) \ar@{^(->}[r] \ar[d]^(.46){\PD}_(.46){\simeq} & \hat{h}^{k}(X) \ar[r]^(.43){R} \ar[d]^(.43){\hatPD} & \Omega^{k}_{\Int}(X; \h_{\R}) \ar[r] \ar@{^(->}[d]^(.45){T^{k}} & 0 \\
	0 \ar[r] & \hat{h}_{n-k}^{\fl}(X) \ar[r]^{\iota} & \hat{h}_{n-k}(X) \ar[r]^(.43){R} & \Tau_{n-k}^{\Int}(X; \h_{\R}) \ar[r] & 0
}\end{equation}
Equivalently, one can consider the following diagram:
\begin{equation}\label{DiagPD1H}
	\xymatrix{
	0 \ar[r] & \frac{\Omega^{k-1}(X; \h_{\R})}{\Omega^{k-1}_{\Int}(X; \h_{\R})} \ar[r]^{a} \ar@{^(->}[d]^(.45){(-1)^{k-1} \, T^{k-1}} & \hat{h}^{k}(X) \ar[r]^{I} \ar[d]^(.43){\hatPD} & h^{k}(X) \ar[r] \ar[d]^(.46){\PD}_(.46){\simeq} & 0 \\
	0 \ar[r] & \frac{\Tau_{n-k+1}(X; \h_{\R})}{\Tau_{n-k+1}^{\Int}(X; \h_{\R})} \ar[r]^{a} & \hat{h}_{n-k}(X) \ar[r]^{I} & h_{n-k}(X) \ar[r] & 0
}\end{equation}
As in the case of singular homology, we can turn $\hatPD$ into an isomorphism by restricting homology to classes whose curvature is a form or by extending cohomology to classes whose curvature is a current. The latter possibility is more interesting and can be realised by adapting definition \ref{DefRedHom} in the natural way.

Lastly, one can define the differential external and slant products similarly to the case of singular homology, that is, $[M, \hat{u}, \hat{\alpha}, f, T] \hattimes [N, \hat{v}, \hat{\beta}, g, U]$ is defined as
\begin{equation}
	(-1)^{\abs{M} \cdot \abs{\hat{\beta}}} \bigl[M \times N, \pi_{M}^{*}\hat{u} \cdot \pi_{N}^{*}\hat{v}, \pi_{M}^{*}\hat{\alpha} \cdot \pi_{N}^{*}\hat{\beta}, f \times g, T_{(M, \hat{u}, \hat{\alpha}, f)} \times U + T \times T_{(N, \hat{v}, \hat{\beta}, g)} + T \times \partial U \bigr]
\end{equation}
and $\hat{\alpha} \hatslant \hat{\lambda} := (\pi_{X})_{*}(\pi_{Y}^{*}\hat{\alpha} \pmb{\cdot} \hat{\lambda})$. We obtain the \emph{$S^{1}$-differentiation map}:
\begin{eqnarray}
	\partial_{S^{1}} \colon \hat{h}_{\bullet}(X) & \to & \hat{h}_{\bullet+1}(X \times S^{1}) \label{DiffS1h} \\
	\hat{\lambda} & \mapsto & \hat{\lambda} \hattimes [S^{1}] \nonumber
\end{eqnarray}
 where the orientation of $S^{1}$ is the trivial one on $TS^{1} \simeq S^{1} \times \R$. Proposition \ref{PropS1Diff} holds essentially with the same statement. We do not show the proof here, since it is a particular case of the analogous statement about the Gysin map in homology (see proposition \ref{PropGysinHom} and corollary \ref{CorGysinTrivial} below).

\section{Gysin Maps and Uniqueness}\label{GysinMapHomSec}

Let us fix two $\hat{h}$-oriented compact manifolds $(X, \hat{u})$ and $(Y, \hat{v})$. We set $m := \dim(X)$, $k := \dim(Y)$, and $d := k-m$. Given a neat submersion $\varphi \colon Y \to X$, oriented through the 2x3 principle, the following diagram commutes:
\begin{equation}\label{CommDiagGysinPD}
	\xymatrix{
	\hat{h}^{\bullet}(Y) \ar[rr]^{\varphi_{!}} \ar[d]_(.45){\hatPD} & & \hat{h}^{\bullet-d}(X) \ar[d]^(.45){\hatPD} \\
	\hat{h}_{k-\bullet}(Y) \ar[rr]^{\varphi_{*}} & & \hat{h}_{k-\bullet}(X)
}\end{equation}
Indeed:
	\[\varphi_{*} \circ \hatPD(\hat{\alpha}) = \varphi_{*}[Y, \hat{v}, \hat{\alpha}, \id_{Y}, 0] = [Y, \hat{v}, \hat{\alpha}, \varphi, 0] = [X, \hat{u}, \varphi_{!}\hat{\alpha}, \id_{X}, 0] = \hatPD(\varphi_{!}\hat{\alpha})
\]
Without the compactness hypothesis, the same result holds by assuming $\varphi$ proper and considering non-compactly supported homology, whose definition we will sketch below, deferring the details to a future paper.

We observe that the morphism $\hat{\alpha} \mapsto \hatPD^{-1} \circ \varphi_{*} \circ \hatPD(\hat{\alpha})$ is well-defined in currential cohomology even when $\varphi$ is not a submersion (we need currents because of the pushforward $\varphi_{*}$). Moreover, it is functorial and depends only on the orientations of $X$ and $Y$. Therefore, it is natural to inquire if there is a suitable definition of $\varphi_{!}$ that makes the previous diagram commute for every smooth proper map $\varphi$. Let us show that this is possible.

\SkipTocEntry \subsection{Currential Thom Morphism}

We fix a smooth vector bundle $\pi \colon E \to X$ of rank $n$ and denote by $i \colon X \to E$ the embedding of $X$ as the zero-section (which is proper). Also, we denote by $\bar{\Tau}^{\bullet}$ the space of currents with no hypotheses on the support. A \emph{strict differential Thom class} on $E$ is a currential class $\check{u} \in \check{h}^{n}_{\vcpt}(E)$ such that:\footnote{About currential cohomology, see definition \ref{DefRedHom} adapted to a generalised cohomology theory.}
\begin{itemize}
	\item $u := I(\check{u})$ is a Thom class;
	\item there exists a closed current $\Td(\check{u}) \in \bar{\Tau}^{0}_{\cl}(X; \h_{\R})$ representing $\Td(u)$ and satisfying the condition $R(\check{u}) = i_{*}\Td(\check{u})$.
\end{itemize}
The degrees are consistent, since $\Td(\check{u}) \in \bar{\Tau}^{0}(X; \h_{\R})$ implies $i_{*}\Td(\check{u}) \in \bar{\Tau}^{n}_{\vcpt}(E)$. We obtain the corresponding Thom morphism:
\begin{eqnarray}
	\Thom \colon \hat{h}^{\bullet}(X) & \hookrightarrow & \check{h}^{\bullet+n}_{\vcpt}(E) \label{ThomMorphFC} \\
	\hat{\alpha} & \mapsto & \pi^{*}\hat{\alpha} \cdot \check{u} \nonumber
\end{eqnarray}
Here we are using the natural left-module structure of currential cohomology over cohomology, that is, $\hat{\beta} \cdot [\hat{\alpha}, T] := [\hat{\beta} \cdot \hat{\alpha}, R(\hat{\beta}) \wedge T]$.

We now assume that the base-manifold $X$ is $\hat{h}$-oriented, so that we can embed forms in currents. By fixing a form that represents $\Td(u)$, denoted by $\Td(\hat{u}) \in \Omega^{0}(X; \h_{\R})$, the morphism \eqref{ThomMorphFC} can be extended to currential cohomology in the domain. In fact, we set $\Td(\check{u}) := \ScT^{0}\Td(\hat{u})$ and define:
\begin{eqnarray}
	\Thom \colon \; \check{h}^{\bullet}(X) & \hookrightarrow & \check{h}^{\bullet+n}_{\vcpt}(E) \label{ThomMorphCC} \\
	\lbrack \hat{\alpha}, T \rbrack & \mapsto & \pi^{*}\hat{\alpha} \cdot \check{u} + a\bigl(i_{*}(T \wedge \Td(\hat{u}))\bigr) \nonumber
\end{eqnarray}
The previous formula is motivated by observing that $[\hat{\alpha}, T] = [\hat{\alpha}, 0] + a(T)$ and, in principle, $a(\pi^{*}T) \cdot \check{u} = a(\pi^{*}T \wedge R(\check{u}))$. Nevertheless, the product of currents $\pi^{*}T \wedge R(\check{u})$ is not defined, nor the pull-back $\pi^{*}T$. Hence, we replace $\pi^{*}T$ by $i_{*}T$ and obtain $i_{*}T \wedge i_{*}\ScT^{0}\Td(\hat{u})$, which we \emph{define} to be $i_{*}(T \wedge \Td(\hat{u}))$.

\begin{Rmk*} Let us show that the morphism \eqref{ThomMorphCC} is well-defined. We first observe that, given a current $T \in \bar{\Tau}^{\star}(X)$ and a form $\eta \in \Omega^{\bullet}(X)$, the following equalities hold:
\begin{align}
	& i_{*}(\eta \wedge T) = \pi^{*}\eta \wedge i_{*}T \label{PushFWedge} \\
	& \eta \wedge \ScT^{0}\Td(\hat{u}) = \ScT^{\bullet}\eta \wedge \Td(\hat{u}) \label{WedgeTodd}
\end{align}
The proof can be found in appendix \ref{LemmaCurrBundleProof}. By definition of currential cohomology, we have $[a(\eta), 0] = [0, \ScT^{\bullet}\eta]$. Coherently:
\begin{flalign*}
	& & \Thom[a(\eta), 0] &\overset{\eqref{ThomMorphCC}}= \pi^{*}a(\eta) \cdot \check{u} = a\bigl(\pi^{*}\eta \wedge R(\check{u})\bigr) = a\bigl(\pi^{*}\eta \wedge i_{*}\ScT^{0}\Td(\hat{u})\bigr) \\
	& & & \overset{\eqref{PushFWedge}}= a\bigl(i_{*}(\eta \wedge \ScT^{0}\Td(\hat{u}))\bigr) \overset{\eqref{WedgeTodd}}= a\bigl(i_{*}(\ScT^{\bullet}\eta \wedge \Td(\hat{u}))\bigr) \overset{\eqref{ThomMorphCC}}= \Thom[0, \ScT^{\bullet}\eta] & & \diamondsuit
\end{flalign*}
\end{Rmk*}

Since $R(\check{u}) = i_{*}\ScT^{0}\Td(\hat{u})$, we have $R(\check{u})(\omega) = \int_{X} i^{*}\omega \wedge \Td(\hat{u}) \wedge \Td(\hat{v})$ where $\hat{v}$ is the orientation of $X$. In particular, in the setting of singular cohomology, we can choose $\Td(\hat{u}) = \Td(\hat{v}) = 1$ and obtain the Dirac delta on $X$, that is, $(i_{*}1)(\omega) = \int_{X} \omega$.

\begin{Lemma} By considering respectively \eqref{ThomMorphFC} and \eqref{ThomMorphCC}, we have
\begin{equation}\label{CurvatureThom}
	R(\Thom(\hat{\alpha})) = i_{*}(R(\hat{\alpha}) \wedge \Td(\check{u})) \hspace{30pt} R(\Thom(\check{\alpha})) = i_{*}(R(\check{\alpha}) \wedge \Td(\hat{u}))
\end{equation}
\end{Lemma}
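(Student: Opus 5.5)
The plan is a direct computation, using the compatibility of the curvature with the module structure and the defining property of a strict differential Thom class. First consider \eqref{ThomMorphFC}. By definition $\Thom(\hat{\alpha}) = \pi^{*}\hat{\alpha} \cdot \check{u}$. The left-module structure of currential cohomology over cohomology is $\hat{\beta} \cdot [\hat{\gamma}, T] = [\hat{\beta}\cdot\hat{\gamma}, R(\hat{\beta}) \wedge T]$. Together with $R[\hat{\gamma}, T] = R(\hat{\gamma}) + dT$, the multiplicativity of $R$ in $\hat{h}^{\bullet}$, and the Leibniz rule for a closed form wedged with a current, this gives $R(\hat{\beta} \cdot \check{\gamma}) = R(\hat{\beta}) \wedge R(\check{\gamma})$. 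The only point to check is that $d(R(\hat{\beta}) \wedge T) = R(\hat{\beta}) \wedge dT$ with the correct sign, using that $R(\hat{\beta})$ is closed and even-degree (we are in the rationally-even setting, so signs from odd components do not arise). Hence
\[R(\Thom(\hat{\alpha})) = \pi^{*}R(\hat{\alpha}) \wedge R(\check{u}) = \pi^{*}R(\hat{\alpha}) \wedge i_{*}\Td(\check{u}),\]
using the strictness condition $R(\check{u}) = i_{*}\Td(\check{u})$. Formula \eqref{PushFWedge}, with $\eta = R(\hat{\alpha})$ and $T = \Td(\check{u})$, then turns this into $i_{*}(R(\hat{\alpha}) \wedge \Td(\check{u}))$, which is the first identity.

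For the currential version \eqref{ThomMorphCC}, write $\check{\alpha} = [\hat{\alpha}, T]$, so that $R(\check{\alpha}) = R(\hat{\alpha}) + dT$ with $R(\hat{\alpha})$ embedded as the current $\ScT^{\bullet}R(\hat{\alpha})$. Here $\Td(\check{u}) = \ScT^{0}\Td(\hat{u})$. The first part, combined with \eqref{WedgeTodd}, gives
\[R(\pi^{*}\hat{\alpha} \cdot \check{u}) = i_{*}\bigl(R(\hat{\alpha}) \wedge \ScT^{0}\Td(\hat{u})\bigr) = i_{*}\bigl(\ScT^{\bullet}R(\hat{\alpha}) \wedge \Td(\hat{u})\bigr).\]
The second summand contributes $R\bigl(a(i_{*}(T \wedge \Td(\hat{u})))\bigr) = d\, i_{*}(T \wedge \Td(\hat{u}))$, using the cohomological analogue of $R \circ a = d$. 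Since $i_{*}$ commutes with $d$ (pushforward of currents is dual to pullback of forms), and $\Td(\hat{u})$ is a closed form of total even degree, we get $d(T \wedge \Td(\hat{u})) = dT \wedge \Td(\hat{u})$. Adding the two contributions yields $i_{*}\bigl((R(\hat{\alpha}) + dT) \wedge \Td(\hat{u})\bigr) = i_{*}(R(\check{\alpha}) \wedge \Td(\hat{u}))$.

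The main obstacle is bookkeeping of signs and conventions rather than anything conceptual. Three things need care:
\begin{itemize}
\item which side the form sits on in the current--form wedge, since \eqref{WedgeOtherSide} puts the form on the left;
\item the sign convention in $d$ of a current from \eqref{ExtDiffCurrents}, when verifying $d(T \wedge \Td(\hat{u})) = dT \wedge \Td(\hat{u})$ and $d\,i_{*} = i_{*}d$ (the latter is also where the rank $n$ of $E$ could enter a sign);
\item that the module-structure curvature identity is compatible with these conventions.
\end{itemize}
The plan is to verify each of these by evaluating both sides on a test form $\omega$, exactly as in the appendix proof of \eqref{WedgeOtherSideBoundary}. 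I expect every sign to cancel because $R(\hat{\alpha})$, $\Td$ and the $\h_{\R}$-coefficients are all even in the rationally-even setting.
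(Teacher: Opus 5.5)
Your argument is the paper's proof. You get the first identity from the module structure, strictness $R(\check u)=i_*\Td(\check u)$, and \eqref{PushFWedge}. You get the second by splitting off $a\bigl(i_*(T\wedge\Td(\hat u))\bigr)$ and using \eqref{WedgeTodd}. The paper does this in two lines and says nothing about signs.

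Your sign bookkeeping, however, rests on two incorrect claims, and the check you plan would not come out as you predict if you use $d$.

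\begin{enumerate}
\item Rational evenness only says that $\h^\bullet_{\R}$ is concentrated in even degrees. It does not make $R(\hat\beta)$ even: a class in $\hat h^{s}$ with $s$ odd (for instance a class in $\hat{H}^{3}(X)$) has curvature of odd form degree. With \eqref{WedgeOtherSide} and \eqref{ExtDiffCurrents}, one finds $d(\eta\wedge T)=(-1)^{|\eta|}\,\eta\wedge dT$ for closed $\eta$.
\item Pushforward does not commute with $d$ in the cohomological grading. Since $i_*T$ has degree $|T|+n$, one gets $d\,i_*=(-1)^n\,i_*d$, and the rank $n$ may be odd.
\end{enumerate}

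Neither sign is removed by the evenness of $\Td(\hat u)$ or of the coefficients.

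The way around this is to measure the currential part of the curvature with the homological boundary $\partial$. The paper's proof does this tacitly by writing the correction term as $\partial T$. By duality, $\partial i_*=i_*\partial$ exactly. By \eqref{WedgeOtherSideBoundary}, $\partial(\eta\wedge T)=\eta\wedge\partial T$ for closed $\eta$, with no sign. Both identities in the lemma then hold on the nose.
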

\begin{proof} About the left-hand side, formula \eqref{ThomMorphFC} implies $R(\Thom(\hat{\alpha})) = \pi^{*}R(\hat{\alpha}) \wedge R(\check{u}) = \pi^{*}R(\hat{\alpha}) \wedge i_{*}\Td(\check{u})$, hence formula \eqref{PushFWedge} implies the result. On the right-hand side, we set $\check{\alpha} = [\hat{\alpha}, T]$. From the left-hand side and formulas \eqref{ThomMorphCC} and \eqref{WedgeTodd} we obtain $R(\Thom(\check{\alpha})) = i_{*}\bigl(R(\hat{\alpha}) \wedge \ScT^{0}\Td(\hat{u}) + \partial T \wedge \Td(\hat{u})\bigr) = i_{*}\bigl((\ScT^{\bullet}R(\hat{\alpha}) + \partial T) \wedge \Td(\hat{u})\bigr) = i_{*}(R[\hat{\alpha}, T] \wedge \Td(\hat{u}))$.
\end{proof}

\SkipTocEntry \subsection{Currential Gysin Map}

Given a smooth map between compact manifolds $\varphi \colon Y \to X$, we define a \emph{currential orientation} of $\varphi$ as usual: up to homotopy and stabilisation, we fix a neat embedding $\iota \colon Y \hookrightarrow X \times \R^{N}$ such that $\pi_{X} \circ \iota = \varphi$, a \emph{strict} Thom class $\check{u}$ of the normal bundle $\mathcal{N} := \iota^{*}\mathcal{N}_{\iota(Y)}(X \times \R^{N})$, a tubular neighbourhood $j \colon U \hookrightarrow X \times \R^{N}$ of $\iota(Y)$, and a diffeomorphism $\phi \colon \mathcal{N} \to U$. We obtain the Gysin map $\varphi_{!} \colon \hat{h}^{\bullet}(Y) \to \check{h}^{\bullet-d}(X)$ where $d := \dim(Y) - \dim(X)$, defined through \eqref{ThomMorphFC} as follows:
\begin{equation}\label{GysinMap}
	\varphi_{!}(\hat{\alpha}) := \int_{X \times \R^{N}/X} j_{*}\phi_{*}\Thom(\hat{\alpha})
\end{equation}
where the pushforward in currential compactly-supported cohomology is defined componentwise (for example, $j_{*}[\hat{\beta}, T] := [j_{*}\hat{\beta}, j_{*}T]$) and, by calling $\pi_{X} \colon X \times \R^{N} \to X$ the projection, we set $\int_{X \times \R^{N}/X} [\hat{\beta}, T] := \bigl[\int_{X \times \R^{N}/X} \hat{\beta}, (\pi_{X})_{*}T\bigr]$. If also $X$ is $\hat{h}$-oriented and $\Td(u)$ is represented by a form, then the Gysin map in currential cohomology $\varphi_{!} \colon \check{h}^{\bullet}(Y) \to \check{h}^{\bullet-d}(X)$ is defined through \eqref{ThomMorphCC} with the same formula \eqref{GysinMap}. This is the interesting case for us. Without the compactness hypothesis, we assume $\varphi$ proper and we have to choose the tubular neighbourhood in such a way that $j \circ \phi \colon \mathcal{N} \to X \times \R^{N}$ is vertically proper, so that the push-forward in verically-compactly-supported cohomology is well-defined.

\begin{Prop} The curvature map on currents
\begin{equation}\label{CurvMap}
	\varphi_{!}(T) := \varphi_{*}(T \wedge \Td(\hat{u}))
\end{equation}
satisfies the identities $R(\varphi_{!}(\check{\alpha})) = \varphi_{!}(R(\check{\alpha}))$ and $\varphi_{!}(a(T)) = a(\varphi_{!}(T))$.
\end{Prop}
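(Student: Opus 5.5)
We prove both identities by unwinding definition \eqref{GysinMap} into its three steps: the Thom morphism, the pushforward along $j \circ \phi$, and integration along the fibres of $\pi_{X} \colon X \times \R^{N} \to X$. For each step we check separately that it commutes with $R$ and with $a$, where the action on currents is the natural one. Composing the three current-level maps should then give exactly $T \mapsto \varphi_{*}(T \wedge \Td(\hat{u}))$, which is formula \eqref{CurvMap}.

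\textbf{The Thom step.} For $R$, we use formula \eqref{CurvatureThom}, which gives
\[R(\Thom(\check{\alpha})) = i_{*}(R(\check{\alpha}) \wedge \Td(\hat{u})).\]
So on currents the Thom step is $T \mapsto i_{*}(T \wedge \Td(\hat{u}))$, with $i$ the zero section of $\mathcal{N}$. For $a$, definition \eqref{ThomMorphCC} with $\check{\alpha} = a(T) = [0, T]$ gives directly
\[\Thom(a(T)) = a\bigl(i_{*}(T \wedge \Td(\hat{u}))\bigr),\]
so the same map on currents appears and the compatibility is immediate.

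\textbf{The pushforward step.} The map $j_{*}\phi_{*}$ is defined componentwise on pairs $[\hat{\beta}, T]$. It therefore commutes with $a$ by construction. It commutes with $R$ because pushforward of currents commutes with $d$ and is compatible with pushforward of forms through the embedding of forms into currents. This compatibility holds since $\phi$ is a diffeomorphism and $j$ is an open embedding, with extension by zero; in the vertically-compact setting the same holds because $j \circ \phi$ is vertically proper.

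\textbf{The fibre-integration step.} The map $\int_{X \times \R^{N}/X}[\hat{\beta}, T] = \bigl[\int \hat{\beta}, (\pi_{X})_{*}T\bigr]$ clearly commutes with $a$. For $R$ we need two facts. First, $R$ of the integrated differential class equals the fibre integral of $R(\hat{\beta})$. Second, under the embedding $\ScT$ of forms into currents, fibre integration of forms corresponds to $(\pi_{X})_{*}$ of currents, up to the sign convention of appendix \ref{ConvFibreInt} and the Todd factors. Composing the three current-level maps gives
\[(\pi_{X})_{*}\, j_{*}\, \phi_{*}\, i_{*}\bigl(T \wedge \Td(\hat{u})\bigr) = \varphi_{*}\bigl(T \wedge \Td(\hat{u})\bigr),\]
because $\pi_{X} \circ j \circ \phi \circ i = \pi_{X} \circ \iota = \varphi$. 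This yields both identities.

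\textbf{Expected obstacle.} The main obstacle is bookkeeping in the fibre-integration step. The identification $\ScT$ in \eqref{DefTOmegaH} carries the Todd form of the orientation of the ambient manifold, and a fibrewise sign appears. We must check that, on $X \times \R^{N}$ with the product orientation, the Todd factor of $\R^{N}$ is trivial. Then the Todd classes of $X$ cancel on both sides and $(\pi_{X})_{*} \circ \ScT = \ScT \circ \int_{X \times \R^{N}/X}$ holds exactly, with the correct sign. If a sign $(-1)^{N \cdot \bullet}$ survives, we would absorb it into the convention of appendix \ref{ConvFibreInt}, as was done for \eqref{DefIndCurr}. The remaining steps are formal.
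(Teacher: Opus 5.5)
Your proposal is correct and is essentially the paper's proof. Both arguments split $\varphi_{!}$ into the Thom step (using \eqref{CurvatureThom} for $R$ and \eqref{ThomMorphCC} for $a$), the pushforward $j_{*}\phi_{*}$, and fibre integration, which reduces everything to $\pi_{X} \circ j \circ \phi \circ i = \varphi$ and to the identity $(\pi_{X})_{*}\ScT^{\bullet}\eta = \ScT^{\bullet-N}\bigl(\int_{X \times \R^{N}/X} \eta\bigr)$. That identity is a one-line check: the orientation of $X \times \R^{N}$ has Todd form $\pi^{*}\Td(\hat{v})$, and the convention of appendix \ref{ConvFibreInt} gives $\int_{X \times \R^{N}} \pi^{*}\omega \wedge \eta \wedge \pi^{*}\Td(\hat{v}) = \int_{X} \omega \wedge \bigl(\int_{X \times \R^{N}/X} \eta\bigr) \wedge \Td(\hat{v})$ exactly, so no sign survives and nothing needs to be absorbed into a convention.
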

\begin{proof} Since $\pi_{X} \circ j \circ \phi \circ i = \pi_{X} \circ j = \varphi$, we have:
\begin{equation}\label{CurvMap2}
	\varphi_{!}(T) = (\pi_{X})_{*} j_{*} \phi_{*} i_{*} \bigl(T \wedge \Td(\hat{u})\bigr)
\end{equation}
About curvature, definition \eqref{GysinMap} implies $R(\varphi_{!}(\check{\alpha})) = \int_{X \times \R^{N}/X} j_{*}\phi_{*}R(\Thom(\hat{\alpha}))$. Hence, the result follows from formula \eqref{CurvatureThom} by showing that $(\pi_{X})_{*} = \int_{X \times \R^{N}/X}$ on forms, that is:
\begin{equation}\label{PiXForms}
	(\pi_{X})_{*}\ScT^{\bullet}\eta = \ScT^{\bullet-N} \biggl(\int_{X \times \R^{N}/X} \eta\biggr)
\end{equation}
In fact, $((\pi_{X})_{*}\ScT^{\bullet}\eta)(\omega) = (\ScT^{\bullet}\eta)(\pi^{*}\omega) = \int_{X \times \R^{N}} \pi^{*}\omega \wedge \eta \wedge \pi^{*}\Td(\hat{v}) = \int_{X} \omega \wedge \bigl(\int_{X \times \R^{N}/X}\eta\bigr) \wedge \Td(\hat{v}) = \bigl(\ScT^{\bullet-N}\bigl(\int_{X \times \R^{N}/X} \eta\bigr)\bigr)(\omega)$. Lastly, the identity about $a$ immediately follows from formula \eqref{ThomMorphCC}.
\end{proof}

\begin{Rmk} Let us compare \eqref{CurvMap2} with the corresponding one on forms, that is:
	\[\varphi_{!}(\eta) = \int_{X \times \R^{N}/X} j_{*} \phi_{*} \bigl(\pi^{*}\eta \wedge R(\hat{u})\bigr)
\]
In \eqref{CurvMap}, the term $\eta \wedge R(\hat{u})$ is replaced by $i_{*}(T \wedge \Td(\hat{u}))$. This is coherent because of formula \eqref{PushFWedge}, since, if $T = \ScT^{\bullet}\eta$, then $i_{*}(\ScT^{\bullet}\eta \wedge \Td(\hat{u})) = \pi^{*}\eta \wedge i_{*} \Td(\hat{u}) = \pi^{*}\eta \wedge R(\check{u})$. Moreover, the integration $X \times \R^{N}/X$ is replaced by the pushforward $(\pi_{X})_{*}$ on currents, coherently with formula \eqref{PiXForms}.
\end{Rmk}

With forms, the identity analogous to \eqref{CurvMap} holds only by assuming that $\varphi$ is a submersion, since, under this hypothesis, we can choose $U$ and $\phi$ in such a way that $\phi(\mathcal{N}_{y}) \subset \{\iota(y)\} \times \R^{N}$ for every $y \in Y$. Using a strict Todd class on $\mathcal{N}$, this problem disappears, because the support of $R(\check{u})$ is completely contained in $Y$, thus the behaviour of the tubular neighbourhood is immaterial with respect to the curvature. Since topologically any two tubular neighbourhoods are equivalent, the Gysin map behaves naturally even if $\varphi$ is not a submersion.

Moreover, the fact that $\varphi_{!}(T)$ only depends on $\Td(\hat{u})$ implies that two representatives of an orientation of $\varphi$ are equivalent essentially if and only if the two corresponding Thom classes $\check{u}$ are homotopic. This allows to find a natural classification of the differential orientations refining a fixed topological one, which is essentially the same classification on the normal bundle. This suggests that currential cohomology is the natural setting to orient smooth maps in general, without supposing they are submersions (see problem 4.230 in \cite{Bunke}).

Lastly, a smooth proper map $\varphi \colon Y \to X$ between two oriented manifolds $(X, \hat{u})$ and $(Y, \hat{v})$ inherits an orientation as usual. In fact, we fix any $\iota \colon Y \hookrightarrow X \times \R^{N}$, so that $\iota^{*}T(X \times \R^{N}) \simeq TY \oplus \mathcal{N}$. Also, we orient $\mathcal{N}$ through the 2x3 rule and choose the representative $\check{w}$ such that $R(\check{w}) = i_{*}(\Td(\hat{u}) \wedge \Td(\hat{v})^{-1})$. By fixing any tubular neighbourhood $U$ and any diffeomorphism $\phi \colon \mathcal{N} \to U$, we obtain the orientation $[\iota, \hat{w}, \phi]$ of $\varphi$, so that $\varphi_{!}$ is well-defined.

\SkipTocEntry \subsection{Gysin Map and Differential Homology}

Precycles were defined without currents, and we cannot replace them with the currential version, since, in order to define $\partial(W, \hat{U}, \check{A}, F)$, we should restrict $\check{A}$ to $\partial W$, while we cannot restrict currents. Nevertheless, we give the following definition:
\begin{equation}\label{DefPrecCurr}
	[M, \hat{u}, [\hat{\alpha}, T], f, U] := [M, \hat{u}, \hat{\alpha}, f, (-1)^{k+1} f_{*}T + U]
\end{equation}
This formula is coherent with the identity $[a(\omega), 0] = [0, T_{\omega}]$, since
	\[[M, \hat{u}, [a(\omega), 0], f, 0] \overset{\eqref{DefPrecCurr}}= [M, \hat{u}, a(\omega), f, 0] \overset{\eqref{LemmaA}}= [0, (-1)^{k+1} f_{*}T_{\omega}] \overset{\eqref{DefPrecCurr}}= [M, \hat{u}, [0, T_{\omega}], f, 0].
\]
\begin{Prop} Using definition \eqref{DefPrecCurr}, we have
\begin{equation}\label{GysinAnyMap}
	[M, \hat{u}, \varphi_{!}\check{\alpha}, f, 0] = [N, \hat{v}, \check{\alpha}, f \circ \varphi, 0]
\end{equation}
for any smooth map $\varphi \colon N \to M$ oriented via the 2x3 principle and any currential class $\check{\alpha}$ on $N$.
\end{Prop}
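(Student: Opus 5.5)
We reduce by additivity to two cases, using the decomposition $\check{\alpha} = [\hat{\alpha}, T] = [\hat{\alpha}, 0] + a(T)$ with $\hat{\alpha} \in \hat{h}^{\bullet}(N)$ and $T$ a current on $N$. Both sides of \eqref{GysinAnyMap} are additive in $\check{\alpha}$, by definition \eqref{DefPrecCurr} and the relations defining $\hat{z}_{k}$. The Gysin map is additive as well, since the Thom morphism \eqref{ThomMorphCC} is.

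\emph{First case: $\check{\alpha} = a(T) = [0, T]$.} By \eqref{ThomMorphCC}, $\varphi_{!}(a(T)) = a(\varphi_{!}(T))$, with $\varphi_{!}(T) = \varphi_{*}(T \wedge \Td(\hat{w}))$ as in \eqref{CurvMap}. Definition \eqref{DefPrecCurr} turns both sides of \eqref{GysinAnyMap} into pure $a$-classes.
\begin{itemize}
	\item The left-hand side becomes $[0, (-1)^{k+1} f_{*}\varphi_{*}(T \wedge \Td(\hat{w}))]$.
	\item The right-hand side becomes $[0, (-1)^{k+1}(f \circ \varphi)_{*}T]$.
\end{itemize}
Here $k$ is the homology degree, which is the same on both sides. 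However, the two currents $T$ are read through different embeddings of forms into currents: on $M$ via $\Td(\hat{u})$, and on $N$ via $\Td(\hat{v})$, by \eqref{DefTOmegaH}. The choice $R(\check{w}) = i_{*}(\Td(\hat{u}) \wedge \Td(\hat{v})^{-1})$ made for the induced orientation is exactly what reconciles the two conventions. So I would check on a test form $\omega$ that the two currents coincide, and this case is a direct computation with \eqref{PushFWedge} and \eqref{WedgeTodd}.

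\emph{Second case: $\check{\alpha} = [\hat{\alpha}, 0]$.} Here I would first replace the strict Thom class $\check{u}$ of $\mathcal{N}$ by a smooth differential Thom class $\hat{u}'$ refining the same topological class. Their difference is $a(S)$ for some vertically compact current $S$. The Thom morphisms then differ by $a(\pi^{*}R(\hat{\alpha}) \wedge S)$, and this difference contributes to the left-hand side only through an $a$-term. The first case, applied after pushing forward, shows that this term is accounted for consistently on both sides.

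With a smooth Thom class, I would factor $\varphi$ geometrically. Let $P := S(\mathcal{N} \oplus \R)$ be the fibrewise one-point compactification, with projection $q \colon P \to N$, which is a submersion. Since $q_{!}(\Thom(\hat{\alpha})) = \hat{\alpha}$ for the extension of the Thom class, the third relation in definition \ref{DefHomH} gives
\[
	(N, \hat{v}, \hat{\alpha}, f \circ \varphi) \sim (P, \cdot, \Thom(\hat{\alpha}), f \circ \varphi \circ q).
\]
Next, the tubular neighbourhood $j \circ \phi$ followed by the collapse gives a map $c \colon P \to M \times S^{N}$. The composite $\pi_{M} \circ c$ is homotopic to $\varphi \circ q$ on the support of $\Thom(\hat{\alpha})$. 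A homotopy $H$ on $\Ii \times P$ produces a prechain whose boundary is the difference of the two cycles. In the differential group this difference equals $a(T_{(\Ii \times P, \ldots, H)})$, which I would compute as in the proof of \eqref{LemmaA} and show to be exact or integral, hence zero. Finally, $\pi_{M} \colon M \times S^{N} \to M$ is a submersion, so the same relation again gives $(M, \hat{u}, \int_{M \times \R^{N}/M} j_{*}\phi_{*}\Thom(\hat{\alpha}), f) \sim (M \times S^{N}, \ldots, j_{*}\phi_{*}\Thom(\hat{\alpha}), f \circ \pi_{M})$. Chaining these steps gives \eqref{GysinAnyMap}.

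The main obstacle is the second case. One must control the currential corrections arising from the homotopy between $\pi_{M} \circ c$ and $\varphi \circ q$, and from replacing $\check{u}$ by a smooth Thom class, and show that together they give a trivial class. This also requires the sign bookkeeping between the orientation conventions $(-1)^{k+1}$ of \eqref{DefPrecCurr} and those of fibre integration. I would first try to show that the total correction is $a$ of a current of the form $T_{(W, \ldots)}$ with $W$ a prechain, so that it vanishes directly by definition \ref{DefDiffHomGen}.
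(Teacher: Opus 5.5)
The reduction by additivity to $\check{\alpha}=a(T)$ and $\check{\alpha}=[\hat{\alpha},0]$ is fine, but neither case is actually established.

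\emph{The first case.} There is no mismatch of conventions for $\check{w}$ to reconcile. By \eqref{DefPrecCurr} (a precycle carrying the zero class contributes nothing), the two sides of \eqref{GysinAnyMap} are $a\bigl((-1)^{k+1}f_{*}\varphi_{*}(T\wedge\Td(\hat{w}))\bigr)$ and $a\bigl((-1)^{k+1}f_{*}\varphi_{*}T\bigr)$. Definition \eqref{DefPrecCurr} pushes currents forward as they are, so the Todd twists of \eqref{DefTOmegaH} never enter. Moreover, \eqref{PushFWedge} and \eqref{WedgeTodd} only concern currents induced by forms.

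On a test form, the two currents are $T(\varphi^{*}f^{*}\omega\wedge\Td(\hat{w}))$ and $T(\varphi^{*}f^{*}\omega)$. Their difference is $\R$-linear in $T$, and $\Tau^{\Int}_{k+1}(X;\h_{\R})$ contains no real line of non-exact currents. So the identity forces $f_{*}\varphi_{*}(T\wedge(\Td(\hat{w})-1))$ to be exact for every $T$, which in general fails unless $\Td(\hat{w})=1$. This is automatic in ordinary homology, but not for the normalisation $R(\check{w})=i_{*}(\Td(\hat{u})\wedge\Td(\hat{v})^{-1})$ you invoke. Testing a submersion on $[a(\eta),0]=[0,T_{\eta}]$ against the smooth Gysin map gives the same condition. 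So this case requires the strict class to be normalised so that the form in \eqref{CurvMap} is $1$, i.e.\ $R(\check{w})=i_{*}\ScT^{0}(1)$ with $\ScT^{0}$ taken with respect to $\hat{v}$. It does not follow from the computation you describe.

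\emph{The second case.} This case carries the real content, and it stops exactly at the decisive step.
\begin{itemize}
\item The map $c\colon P\to M\times S^{N}$ does not exist as described. The map $j\circ\phi$ sends each fibre of $\mathcal{N}$ into a bounded region, so the section at infinity cannot go continuously to $M\times\{\infty\}$. The Pontryagin--Thom collapse runs from $M\times S^{N}$ to the Thom space, so you need a bordism rather than a homotopy on $\Ii\times P$.
\item More seriously, neither correction is exact or integral by itself. For a non-submersion, replacing $\check{w}$ by a smooth $\hat{w}'$ turns the curvature of $\varphi_{!}\check{\alpha}$ from a current supported on $\varphi(N)$ into a smooth form on $M$. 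Hence the term $a\bigl((\pi_{M}\circ j\circ\phi)_{*}(\pi^{*}R(\hat{\alpha})\wedge S)\bigr)$ and the bordism current both have non-zero boundaries, which at best cancel each other.
\item The former term lives on the $M$-side with no counterpart on the $N$-side. So ``the first case, applied after pushing forward'' does not account for it.
\end{itemize}
What must be proved is that the sum of the two corrections is integral, e.g.\ by a transgression argument deforming a smooth Thom form to the Dirac current $R(\check{w})$ along the fibres. The proposal only announces this.

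\emph{Comparison with the paper.} The paper does no such bookkeeping. It quotients $\hat{h}_{\bullet}(X)$ by the relation \eqref{GysinAnyMap}, asserts that \eqref{ExSeq1h} remains exact for the quotient, and applies the five lemma. Note, however, that both sides of \eqref{GysinAnyMap} have the same image under $I$, so their difference is some $a(S)$. Injectivity of $a$ modulo integral currents on the quotient is then equivalent to every such $S$ being integral. So the paper's argument rests on exactly the fact that your second case, together with the normalisation issue above, would have to settle. A completed direct computation along your lines would supply it.
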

\begin{proof} We call $\hat{h}'_{\bullet}(X)$ the quotient of $\hat{h}_{\bullet}(X)$ by the equivalence relation generated by \eqref{GysinAnyMap}. Let us prove that the projection $\pi \colon \hat{h}_{\bullet}(X) \to \hat{h}'_{\bullet}(X)$ is an isomorphism. In fact, the exact sequence \eqref{ExSeq1h} holds also replacing $\hat{h}_{k}(X)$ with $\hat{h}'_{k}(X)$, by defining $I[M, \hat{u}, [\hat{\alpha}, T], f, U] := [M, \hat{u}, \hat{\alpha}, f]$ and $a(T) := [0, T]$. Therefore, we have the following commutative diagram of exact sequences:
	\[\xymatrix{
	0 \ar[r] & \frac{\Tau_{k+1}(X; \h_{\R})}{\Tau_{k+1}^{\Int}(X; \h_{\R})} \ar[r]^(.52){a} \ar@{=}[d] & \hat{h}_{k}(X) \ar[r]^{I} \ar[d]^{\pi} & h_{k}(X) \ar[r] \ar@{=}[d] & 0 \\
	0 \ar[r] & \frac{\Tau_{k+1}(X; \h_{\R})}{\Tau_{k+1}^{\Int}(X; \h_{\R})} \ar[r]^(.52){a} & \hat{h}'_{k}(X) \ar[r]^{I} & h_{k}(X) \ar[r] & 0
}\]
It follows from the five lemma that $\pi$ is an isomorphism.
\end{proof}

We get diagram \eqref{CommDiagGysinPD} with currential cohomology, the proof of commutativity being the same. In this way, we get an explicit expression also for the inverse of Poincar\'e duality on an oriented compact manifold $(X, \hat{u})$:
	\[\hatPD(\check{\alpha}) = [X, \hat{u}, \check{\alpha}, \id_{X}, 0] \qquad\quad \hatPD^{-1}[M, \hat{v}, \check{\beta}, f, T] = f_{!}\check{\beta} + a(T)
\]

\SkipTocEntry \subsection{Homological Thom Morphism and Differentiation}

We now come back to orientation of a vector bundle in ordinary differential cohomology (without currents) and introduce an operation in homology which dualises fibrewise integration. Let $\pi \colon E \to X$ be a real vector bundle of rank $n$ with differential Thom class $\hat{u} \in \hat{h}^{n}_{\vcpt}(E)$. The Thom morphism
\begin{eqnarray}
	\Thom \colon \hat{h}^{\bullet}(X) & \hookrightarrow & \hat{h}^{\bullet+n}_{\vcpt}(E) \label{ThomCoh} \\
	\hat{\alpha} & \mapsto & \pi^{*}\hat{\alpha} \cdot \hat{u}, \nonumber
\end{eqnarray}
induces the integration map
\begin{eqnarray}
	\int_{E/X} \colon \; \hat{h}^{\bullet+n}_{\vcpt}(E) & \twoheadrightarrow & \hat{h}^{\bullet}(X) \label{IntMap} \\
	\Thom(\hat{\alpha}) + a(\omega) & \mapsto & \hat{\alpha} + a\Bigl( \Td(\hat{u})^{-1} \wedge \int_{E/X} \omega \Bigr). \nonumber
\end{eqnarray}
The reader can verify that this map is well-defined---that is, it does not depend on the decomposition $\Thom(\hat{\alpha}) + a(\omega)$---and two differential Thom classes are homotopic if and only if they induce the same integration map. Moreover, it immediately follows from the definition that the integration map is left-inverse to the Thom morphism. Dually, we define the \emph{homological Thom morphism}
\begin{eqnarray}
	\Thom' \colon \hat{h}_{\bullet+n}^{\vncpt}(E) & \twoheadrightarrow & \hat{h}_{\bullet}(X) \label{ThomHom} \\
	\hat{\lambda} & \mapsto & (-1)^{n(m-\bullet)} \, \pi_{*}(\hat{u} \pmb{\cdot} \hat{\lambda}), \nonumber
\end{eqnarray}
where $m := \dim(X)$ and $\hat{h}_{\bullet}^{\vncpt}(E)$ denotes the vertically non-compactly supported homology. We recall that the vertically compactly supported cohomology $\hat{h}^{\bullet}_{\vcpt}(E)$ is the colimit of the parallel groups $\hat{h}^{\bullet}(E, E \setminus K)$ over the directed set of vertically compact subsets $K \subset E$. Similarly, $\hat{h}_{\bullet}^{\vncpt}(E)$ is the limit of the parallel groups $\hat{h}_{\bullet}(E, E \setminus K)$; we constructed the parallel groups in the case of singular homology, and they can be defined similarly for a generalised cohomology theory. The group $\hat{h}_{\bullet}^{\vncpt}(E)$ can be represented through a model similar to the one we used for ordinary homology: we consider pre-cycles of the form $(M, \hat{u}, \hat{\alpha}, f)$, where $M$ is not necessarily compact, and $f \colon M \to X$ is vertically proper (that is, the inverse image of a vertically compact subset is compact). By considering proper maps (not only vertically), we obtain the non-compactly supported homology.

The Thom morphism induces the \emph{differentiation map}
\begin{eqnarray}\label{DiffMap}
	\partial_{E/X} \colon \hat{h}_{\bullet}(X) & \hookrightarrow & \hat{h}_{\bullet+n}^{\vncpt}(E) \\
	\hat{\lambda} & \mapsto & \partial_{E/X}\hat{\lambda} \nonumber	
\end{eqnarray}
where $\partial_{E/X}\hat{\lambda}$ is the unique class satisfying the two following conditions:
\begin{align}
	& \Thom'(\partial_{E/X}\hat{\lambda}) = \hat{\lambda} \label{Cond1Diff} \\
	& R(\partial_{E/X}\hat{\lambda}) = (-1)^{n(m-\bullet)} R(\hat{\lambda}) \circ \biggl(\Td(\hat{u})^{-1} \wedge \int_{E/X}\biggr) \label{Cond2Diff}
\end{align}
Formulas \eqref{Cond1Diff} and \eqref{Cond2Diff} are compatible with definition \eqref{ThomHom}, since, for every $\hat{\lambda} \in \hat{h}_{k}(X)$ and $\omega \in \Omega^{k+n}_{\vcpt}(E)$, we have:
\begin{align*}
	\bigl(R \circ \Thom'(\partial_{E/X}\hat{\lambda})\bigr)(\omega) & \overset{\eqref{ThomHom}}= (-1)^{n(m-k)} \pi_{*}\bigl(R(\hat{u}) \wedge R(\partial_{E/X}\hat{\lambda})\bigr)(\omega) \\
	& \hspace{5pt} = \hspace{5pt} (-1)^{n(m-k)} R(\partial_{E/X}\hat{\lambda})\bigl(\pi^{*}\omega \wedge R(\hat{u})\bigr) \\
	& \overset{\eqref{Cond2Diff}}= R(\hat{\lambda})\Bigl(\Td(\hat{u})^{-1} \wedge \int_{E/X} \bigl( \pi^{*}\omega \wedge R(\hat{u}) \bigr) \Bigr) \\
	& \hspace{5pt} = \hspace{5pt} \bigl(R(\hat{\lambda})\bigr)(\omega)
\end{align*}
Again, it immediately follows from the definition that the differentiation map is right-inverse to the homological Thom morphism.

When $X$ is oriented by $\hat{v}$, we obtain the fundamental class of the total space $[E, \hat{v} \times \hat{u}', 1, \id_{E}, 0]$, inducing Poincar\'e duality. The orientation $\hat{v} \times \hat{u}'$ can be constructed as follows. By fixing any connection on the vector bundle $E$, we get a splitting $TE \simeq \pi^{*}TX \oplus \pi^{*}E$. Since we are supposing $\hat{v}$ defined on a normal bundle of $X$, we call $\hat{v}'$ the complementary orientation on the tangent bundle. We obtain $\hat{w}' := \pi^{*}\hat{v}' \cdot \pi^{*}\hat{u}$ on $TE$, inducing $\hat{w}$ on a normal bundle of $E$. It follows that $\Td(\hat{w}) = \pi^{*}\Td(\hat{v}) \wedge \pi^{*}\Td(\hat{u})^{-1}$, hence we denote $\hat{w}$ by $\hat{v} \times \hat{u}'$. The proof of the following theorem is in appendix \ref{ProofThmDiff}.

\begin{Theorem}\label{ThmDiff} For every $\hat{\lambda} \in \hat{h}_{\bullet+N}^{\vncpt}(E)$ and $\hat{\alpha} \in \hat{h}^{\star}(X)$:
\begin{equation}\label{FundThThom}
	\textstyle \Thom(\hat{\alpha}) \pmb{\cdot} \hat{\lambda} = (-1)^{n(m-\bullet)}i_{*}(\hat{\alpha} \pmb{\cdot} \Thom'(\hat{\lambda}))
\end{equation}
where $m := \dim(X)$. Also, for every $\hat{\lambda} \in \hat{h}_{\bullet}(X)$ and $\hat{\alpha} \in \hat{h}^{\star+N}_{\vcpt}(E)$:
\begin{equation}\label{FundThVectBundle}
	\textstyle \bigl(\int_{E/X} \hat{\alpha}\bigr) \pmb{\cdot} \hat{\lambda} = (-1)^{n(m-\bullet)}\pi_{*}(\hat{\alpha} \pmb{\cdot} \partial_{E/X} \hat{\lambda})
\end{equation}
Moreover, by assuming $X$ compact and oriented, the following diagrams commute:
\begin{equation}\label{CommDiagThom}
	\resizebox{0.85\textwidth}{!}{\xymatrix{
	\hat{h}_{\bullet}^{\vncpt}(E) \ar[rr]^{\Thom'} & & \hat{h}_{\bullet-n}(X) \\
	\hat{h}^{m+n-\bullet}(E) \ar[rr]^{i^{*}} \ar[u]^{\hatPD} & & \hat{h}^{m+n-\bullet}(X) \ar[u]_{\hatPD}
	} \qquad\quad \xymatrix{
	\hat{h}^{\bullet}(X) \ar[rr]^{\Thom} \ar[d]_{\hatPD} & & \hat{h}^{\bullet+n}_{\vcpt}(E) \ar[d]^{\hatPD} \\
	\hat{h}_{m-\bullet}(X) \ar[rr]^{i_{*}} & & \hat{h}_{m-\bullet}^{\vncpt}(E)
}}\end{equation}
Also, the following diagrams commute:
\begin{equation}\label{CommDiagDiff}
	\resizebox{0.85\textwidth}{!}{\xymatrix{
	\hat{h}_{\bullet}(X) \ar[rr]^{\partial_{E/X}} & & \hat{h}_{\bullet+n}^{\vncpt}(E) \\
	\hat{h}^{m-\bullet}(X) \ar[rr]^{\pi^{*}} \ar[u]^{\hatPD} & & \hat{h}^{m-\bullet}(E) \ar[u]_{\hatPD}
	} \qquad\quad \xymatrix{
	\hat{h}^{\bullet}_{\vcpt}(E) \ar[rr]^{\int_{E/X}} \ar[d]_{\hatPD} & & \hat{h}^{\bullet-n}(X) \ar[d]^{\hatPD} \\
	\hat{h}_{m+n-\bullet}(E) \ar[rr]^{\pi_{*}} & & \hat{h}^{m+n-\bullet}(X)
}}\end{equation}
\end{Theorem}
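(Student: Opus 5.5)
The four claims all reduce to the defining properties of $\Thom'$ (formula \eqref{ThomHom}) and $\partial_{E/X}$ (conditions \eqref{Cond1Diff} and \eqref{Cond2Diff}), together with the axioms of Proposition \ref{AxiomsDiffCapH}. I would prove \eqref{FundThThom} first, then \eqref{FundThVectBundle}, and then read off the four squares from these two identities and the explicit form $\hatPD(\hat{\beta}) = [X, \hat{v}, \hat{\beta}, \id_{X}, 0]$.

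For \eqref{FundThThom}, expand $\Thom(\hat{\alpha}) = \pi^{*}\hat{\alpha} \cdot \hat{u}$. By mixed associativity (item (3)),
\[\Thom(\hat{\alpha}) \pmb{\cdot} \hat{\lambda} = \pi^{*}\hat{\alpha} \pmb{\cdot} (\hat{u} \pmb{\cdot} \hat{\lambda}).\]
The class $\hat{u} \pmb{\cdot} \hat{\lambda}$ is compactly supported, because $\hat{u}$ is vertically compact and $\hat{\lambda}$ is vertically non-compact but proper over $X$. Since $i \circ \pi$ is homotopic to the identity of $E$ through the fibrewise scaling, I would compare $\pi^{*}\hat{\alpha}$ with $\pi^{*}i^{*}\pi^{*}\hat{\alpha}$ using the homotopy formula: their difference is $a$ of a homotopy form. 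The correction term that appears has curvature supported away from where it matters, or vanishes under the integral. I expect this correction to be the main obstacle. My first attempt would work at the level of representatives $[M,\hat{u}_{M},\hat{\alpha}_{M},f,T]$, where one can take the pushforward along $i\circ\pi$ composed with $f$. The equality then holds on cycles by definition of $f_{*}$, and the currential parts need Lemma \eqref{LemmaA} to absorb the homotopy. Once $\pi^{*}\hat{\alpha}\pmb{\cdot}(\cdot)$ is replaced by $i_{*}\pi_{*}$, functoriality (item (4)) gives $i_{*}(\hat{\alpha} \pmb{\cdot} \pi_{*}(\hat{u} \pmb{\cdot} \hat{\lambda}))$, and the sign $(-1)^{n(m-\bullet)}$ comes directly from \eqref{ThomHom}.

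For \eqref{FundThVectBundle}, every $\hat{\alpha} \in \hat{h}_{\vcpt}(E)$ can be written as $\Thom(\hat{\beta}) + a(\omega)$, by surjectivity in \eqref{IntMap}. I would check the two pieces separately. For the Thom piece, item (4) and associativity give
\[\pi_{*}(\pi^{*}\hat{\beta} \cdot \hat{u} \pmb{\cdot} \partial_{E/X}\hat{\lambda}) = \hat{\beta} \pmb{\cdot} \pi_{*}(\hat{u} \pmb{\cdot} \partial\hat{\lambda}),\]
which equals $(-1)^{n(m-\bullet)}\hat{\beta} \pmb{\cdot} \hat{\lambda}$ by \eqref{Cond1Diff}. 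For the $a(\omega)$ piece, the rule $a(\eta)\pmb{\cdot}\hat{\lambda} = \pm a(\eta\wedge R(\hat{\lambda}))$ reduces the claim to an identity of currents. That identity is exactly \eqref{Cond2Diff} evaluated against $\omega$, and it matches the term $a(\Td(\hat{u})^{-1}\wedge\int_{E/X}\omega)$ in \eqref{IntMap}. Tracking the sign $(-1)^{s+l+1}$ is routine.

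For the diagrams, I would take $\hat{\lambda} = \hatPD_{E}(\hat{\gamma})$ in the left square of \eqref{CommDiagThom}. Using \eqref{FundThThom} with $\hat{\alpha}$ running over Thom classes, or more directly computing $\pi_{*}(\hat{u} \pmb{\cdot} [E,\hat{v}\times\hat{u}',\hat{\gamma},\id,0])$ as a Gysin pushforward via \eqref{CommDiagGysinPD}, the result is $[X,\hat{v},\int_{E/X}(\hat{\gamma}\cdot\hat{u}),\id,0] = \hatPD(i^{*}\hat{\gamma})$. For the right square, $\hatPD(\Thom\hat{\alpha}) = \Thom(\hat{\alpha})\pmb{\cdot}[E]$, and \eqref{FundThThom} together with the left square applied to $\hat{\gamma}=1$ gives $i_{*}(\hat{\alpha}\pmb{\cdot}[X])$. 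Here I use that $\Thom'[E]=[X]$, which is the Todd-class normalisation built into $\hat{v}\times\hat{u}'$. For \eqref{CommDiagDiff}, the left square follows from uniqueness in the definition of $\partial_{E/X}$: one checks that $\hatPD(\pi^{*}\hat{\beta})$ satisfies \eqref{Cond1Diff}, by the previous square, and \eqref{Cond2Diff}, by a direct curvature computation with $\Td(\hat{w}) = \pi^{*}\Td(\hat{v})\wedge\pi^{*}\Td(\hat{u})^{-1}$. The right square is \eqref{CommDiagGysinPD} for the submersion $\pi$, since $\int_{E/X} = \pi_{!}$.
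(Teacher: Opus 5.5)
The gap is in \eqref{FundThThom}, and it carries over to \eqref{CommDiagThom}. Comparing $\pi^{*}\hat{\alpha}$ with $\pi^{*}i^{*}\pi^{*}\hat{\alpha}$ gives nothing, since $i^{*}\pi^{*} = \id$. After mixed associativity and functoriality, what you actually need is $\hat{\mu} = i_{*}\pi_{*}\hat{\mu}$ for $\hat{\mu} := \hat{u} \pmb{\cdot} \hat{\lambda}$. The homotopy $i \circ \pi \simeq \id_{E}$ gives this only up to $a$ of a cylinder current built from $R(\hat{\mu})$. For a smooth Thom class that correction is not integral. Lemma \eqref{LemmaA} cannot absorb it either, since it concerns changing $\hat{\alpha}$ or $\hat{u}$ by $a(\theta)$, not homotopies of $f$. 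The underlying reason is that the two sides of \eqref{FundThThom} then have different curvatures. Already for $X = \pt$, $E = \R^{n}$, $\hat{\alpha} = 1$ and $\hat{\lambda}$ the fundamental class of $\R^{n}$, the left-hand side has curvature $f \mapsto \int_{\R^{n}} f \, R(\hat{u})$, while the right-hand side has curvature $f \mapsto f(0)$. The missing idea, which is the whole of the paper's proof, is to pass to a strict representative $\check{u}$ with $R(\check{u}) = i_{*}\Td(\check{u})$, concentrated on the zero section. Then $\check{u} \pmb{\cdot} \hat{\lambda}$, and hence $\Thom(\hat{\alpha}) \pmb{\cdot} \hat{\lambda} = \pi^{*}\hat{\alpha} \pmb{\cdot} (\check{u} \pmb{\cdot} \hat{\lambda})$, lies in $\IIm(i_{*})$; write it as $i_{*}\hat{\nu}$. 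Applying $\pi_{*}$, and using $\pi \circ i = \id_{X}$ together with functoriality, gives $\hat{\nu} = \hat{\alpha} \pmb{\cdot} \pi_{*}(\check{u} \pmb{\cdot} \hat{\lambda}) = (-1)^{n(m-\bullet)} \hat{\alpha} \pmb{\cdot} \Thom'(\hat{\lambda})$.

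The same problem affects your left square of \eqref{CommDiagThom}. The identity $\int_{E/X}(\hat{\gamma} \cdot \hat{u}) = i^{*}\hat{\gamma}$ fails for smooth $\hat{u}$ unless $\hat{\gamma}$ is pulled back from $X$: the curvatures are $\Td(\hat{u})^{-1} \wedge \int_{E/X} R(\hat{\gamma}) \wedge R(\hat{u})$ and $i^{*}R(\hat{\gamma})$. The paper gets this square from the currential Gysin relation \eqref{GysinAnyMap} for the embedding $i$. It rewrites $[X, \hat{v}, i^{*}\hat{\gamma}, \pi \circ i, 0]$ as $[E, \hat{v} \times \hat{u}', \hat{\gamma} \cdot i_{!}(1), \pi, 0]$, where $i_{!}(1)$ is the strict Thom class, and it gets the right square the same way. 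Your right square is derived from \eqref{FundThThom}, so it inherits the gap.

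The remaining parts are sound. In the left square of \eqref{CommDiagDiff}, condition \eqref{Cond1Diff} only involves the basic class $\pi^{*}\hat{\beta}$, for which $\int_{E/X}(\pi^{*}\hat{\beta} \cdot \hat{u}) = \hat{\beta}$ holds directly. The right square is \eqref{CommDiagGysinPD} for $\pi$, exactly as in the paper.

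Your proof of \eqref{FundThVectBundle} is correct and takes a different route. You split $\hat{\alpha} = \Thom(\hat{\beta}) + a(\omega)$; this decomposition comes from the topological Thom isomorphism and $\Ker(I) = \IIm(a)$, not from surjectivity of $\int_{E/X}$. The split reduces the claim to the cap-product axioms, \eqref{Cond1Diff}, and \eqref{Cond2Diff} combined with the projection formula for $\int_{E/X}$. The paper instead computes on a representative $[M, \hat{v}, \hat{\beta}, f, T]$ through the pulled-back bundle $f^{*}E$. That is heavier, but it produces an explicit representative of $\partial_{E/X}\hat{\lambda}$. In doing so it also establishes the existence that the definition of $\partial_{E/X}$ only asserts and that your argument takes for granted.
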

In the previous statement and in theorem \ref{PropGysinHom} below, the compactness hypotheses can be removed by considering compactly-supported cohomology.

\SkipTocEntry \subsection{Homological Gysin Map}

Given a proper oriented submersion $\varphi \colon Y \to X$ and using the same notation of formula \eqref{GysinMap}, we define the Gysin map in homology $\varphi^{!} \colon \hat{h}_{\bullet}(X) \to \hat{h}_{\bullet+d}(Y)$ as follows:
\begin{equation}\label{GysinMapHom}
	\varphi^{!} := \Thom' \circ \phi^{*} \circ j^{*} \circ \partial_{X \times \R^{N}/X}
\end{equation}
This is dual to the cohomological definition, that is, $\varphi_{!} := \int_{X \times \R^{N}/X} \circ j_{*} \circ \phi_{*} \circ \Thom$.
\begin{Prop} The curvature map on currents
\begin{equation}\label{CurvMapHom}
	\varphi^{!}(T) := (-1)^{d(m-\bullet)} \, T \circ \biggl(\int_{Y/X} \Td(\hat{u}) \wedge \text{-} \biggr)
\end{equation}
satisfies the identities $R(\varphi^{!}(\hat{\lambda})) = \varphi^{!}(R(\hat{\lambda}))$ and $\varphi^{!}(a(T)) = a(\varphi^{!}(T))$.
\end{Prop}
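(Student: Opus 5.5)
The plan is to decompose $\varphi^{!}$ according to its definition \eqref{GysinMapHom}, that is $\varphi^{!} = \Thom' \circ \phi^{*} \circ j^{*} \circ \partial_{X \times \R^{N}/X}$. I will compute the effect on curvature and the compatibility with $a$ one factor at a time, and then compose the resulting current-level maps. The aim is to show that the composite equals \eqref{CurvMapHom}, in the same spirit as the cohomological computation around \eqref{CurvMap2} and \eqref{PiXForms}.

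\textbf{The four factors, from right to left.}
\begin{itemize}
\item[(1)] For $\partial_{X \times \R^{N}/X}$, condition \eqref{Cond2Diff} already gives the curvature: $R(\partial\hat{\lambda}) = \pm R(\hat{\lambda}) \circ \int_{X \times \R^{N}/X}$. The Todd class of the trivial bundle is taken to be $1$ via the standard Thom class. For $a$, I need $\partial_{E/X}(a(T)) = a(\pm T \circ \int_{E/X})$. Using the uniqueness characterisation, it suffices to verify that the right-hand side satisfies \eqref{Cond1Diff} and \eqref{Cond2Diff}.
\begin{itemize}
\item \eqref{Cond1Diff} follows from $\hat{u} \pmb{\cdot} a(T') = a(R(\hat{u}) \wedge T')$ in Proposition \ref{AxiomsDiffCapH}, together with $\pi_{*} \circ a = a \circ \pi_{*}$.
\item \eqref{Cond2Diff} follows from $R \circ a = \partial$ and the fact that fibre integration commutes with $d$.
\end{itemize}
\item[(2)] The restriction $j^{*}$ to the open tubular neighbourhood $U$, on vertically non-compact homology, is given on currents by the restriction of currents. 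It commutes with $R$ and $a$ by naturality.
\item[(3)] The pullback $\phi^{*}$ along the diffeomorphism is the pushforward $(\phi^{-1})_{*}$, which again commutes with $R$ and $a$.
\item[(4)] For $\Thom'$, Proposition \ref{AxiomsDiffCapH} gives $R(\Thom'\hat{\mu}) = \pm\pi_{*}(R(\hat{u}) \wedge R(\hat{\mu}))$ and $\Thom'(a(T)) = \pm a(\pi_{*}(R(\hat{u}) \wedge T))$, using the rule $\hat{u} \pmb{\cdot} a(T) = a(R(\hat{u}) \wedge T)$ and functoriality of $a$ under $\pi_{*}$.
\end{itemize}
Once each factor is known to intertwine $R$ and $a$ with an explicit current-level map, both identities in the statement follow formally. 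Indeed, the composite of these current-level maps is the definition of $\varphi^{!}(T)$, once it is shown to agree with \eqref{CurvMapHom}.

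\textbf{Identifying the composite.} Evaluate the composite on a test form $\omega$ on $Y$ of the appropriate degree. Tracing backwards, the composite applied to $\omega$ gives
\[T\Bigl(\int_{X \times \R^{N}/X} j_{*}\phi_{*}\bigl(\pi^{*}\omega \wedge R(\hat{u})\bigr)\Bigr),\]
up to sign. This is exactly the form-level Gysin map $\varphi_{!}\omega$. Because $\varphi$ is a submersion, $U$ and $\phi$ can be chosen fibrewise, with $\phi(\mathcal{N}_{y}) \subset \{\iota(y)\} \times \R^{N}$. Then \eqref{GysinR} shows that this integral equals $\int_{Y/X} \omega \wedge \Td(\hat{u})$. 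Graded commutativity then turns this into $\int_{Y/X} \Td(\hat{u}) \wedge \omega$; no sign arises here, because $\Td$ is of even total degree in the rationally even setting.

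\textbf{Main obstacle.} The hard part is the sign bookkeeping. The factors $(-1)^{n(m-\bullet)}$ come from \eqref{ThomHom} and \eqref{Cond2Diff}, once with $n = N$ over the base $X$ and once with $n = N - d$, the rank of $\mathcal{N}$, over the base $Y$, with the degree shifted by $N$. These must combine to exactly $(-1)^{d(m-\bullet)}$. I would track the parity $N(m-\bullet) + (N-d)(\dim Y - \bullet - N)$ modulo $2$, together with the wedge-order sign from \eqref{WedgeOtherSide}. The reduction to $d(m-\bullet)$ will use $\dim Y = m + d$ and the evenness of $N(N-1)$. If a residual sign remains, I would absorb it by the convention on fibre integration fixed in appendix \ref{ConvFibreInt}.
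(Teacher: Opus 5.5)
Your route is the paper's: factor $\varphi^{!}$ through \eqref{GysinMapHom}, read off the curvature of $\Thom'$ and of $\partial_{X\times\R^{N}/X}$ from \eqref{ThomHom} and \eqref{Cond2Diff}, get $\partial_{E/X}a(T)$ from the uniqueness in \eqref{Cond1Diff}--\eqref{Cond2Diff}, and use a fibrewise tubular neighbourhood (possible because $\varphi$ is a submersion) to rewrite $\int_{X\times\R^{N}/X}\circ j_{*}\circ\phi_{*}$ as $\int_{Y/X}\int_{\mathcal{N}/Y}$.

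\textbf{The gap.} The sign step you flagged fails, because of an indexing slip. In \eqref{ThomHom} the index $\bullet$ is the degree of the \emph{target} class. Here $\Thom'$ for $\mathcal{N}\to Y$ takes a class of degree $\bullet+N$ to one of degree $\bullet+d$, since $\abs{\mathcal{N}}=N-d$. Its sign is therefore $(-1)^{(N-d)(\dim Y-\bullet-d)}=(-1)^{(N-d)(m-\bullet)}$, not $(-1)^{(N-d)(\dim Y-\bullet-N)}$. Multiplied by $(-1)^{N(m-\bullet)}$ from \eqref{Cond2Diff}, this gives $(-1)^{(2N-d)(m-\bullet)}=(-1)^{d(m-\bullet)}$ directly. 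These are exactly the two factors $(-1)^{\abs{\mathcal{N}}(m-\bullet)+N(m-\bullet)}$ in the paper.

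Your parity exceeds the correct one by $(N-d)^{2}\equiv N-d$. So it leaves a residual $(-1)^{N-d}$, however you use the evenness of $N(N-1)$. That residue cannot be absorbed into a convention, for two reasons. First, the fibre-integration convention of appendix \ref{ConvFibreInt} is fixed for this section. Second, $(-1)^{N-d}$ flips under the stabilisation $N\mapsto N+1$, whereas $\varphi^{!}$ and the formula \eqref{CurvMapHom} do not depend on $N$. There is also no wedge-order sign to track. By \eqref{WedgeOtherSide}, for a current $S$ on $\mathcal{N}$ you have $\bigl((\pi_{\mathcal{N}})_{*}(R(\hat{u})\wedge S)\bigr)(\omega)=S\bigl(\pi_{\mathcal{N}}^{*}\omega\wedge R(\hat{u})\bigr)$. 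Moreover $\int_{\mathcal{N}/Y}\bigl(\pi_{\mathcal{N}}^{*}\omega\wedge R(\hat{u})\bigr)=\omega\wedge\Td(\hat{u})=\Td(\hat{u})\wedge\omega$.

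\textbf{The identity with $a$.} Your claim that it ``follows formally'' also needs the degrees made explicit. We have $R\circ a=\partial$, and $T\mapsto T\circ\bigl(\int_{Y/X}\Td(\hat{u})\wedge -\bigr)$ commutes with $\partial$. Hence the current-level map intertwining $a$ on $\Tau_{k+1}$ must carry the same sign as the curvature map on $\Tau_{k}$, namely $(-1)^{d(m-k)}$. So in $\varphi^{!}(a(T))=a(\varphi^{!}(T))$, the $\bullet$ of \eqref{CurvMapHom} must be read as the degree $k$ of the class $a(T)$, not the degree $k+1$ of $T$. Likewise, the $\pm$ in your identity $\partial_{E/X}(a(T))=a(\pm T\circ\int_{E/X})$ is $(-1)^{N(m-k)}$, and the paper's starting identity for $\partial_{E/X}a(T)$ is meant the same way. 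If you index every sign by the degree of the homology class, your factor-by-factor argument gives both identities.
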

\begin{proof} About curvature:
\begin{align*}
	R(\varphi^{!}(\hat{\lambda})) & \overset{\eqref{GysinMapHom}, \eqref{ThomHom}} = (-1)^{\abs{\mathcal{N}}(m-\bullet)} (\pi_{\mathcal{N}})_{*}\bigl( R(\hat{u}) \wedge R\bigl(\phi^{*} \circ j^{*} \circ \partial_{X \times \R^{N}/X}(\hat{\lambda})\bigr) \bigr) \\
	& \hspace{11.5pt} \overset{\eqref{Cond2Diff}}= (-1)^{\abs{\mathcal{N}}(m-\bullet) + N(m-\bullet)} (\pi_{\mathcal{N}})_{*}\biggl( R(\hat{u}) \wedge \phi^{*} \circ j^{*} \biggl( R(\hat{\lambda}) \circ \int_{X \times \R^{N}/X}\biggr) \biggr) \\
	& \hspace{15pt} = (-1)^{d(m-\bullet)} (\pi_{\mathcal{N}})_{*}\biggl( R(\hat{u}) \wedge \biggl( R(\hat{\lambda}) \circ \int_{X \times \R^{N}/X} \circ j_{*} \circ \phi_{*}\biggr)\biggr) \\
	& \hspace{15pt} = (-1)^{d(m-\bullet)} (\pi_{\mathcal{N}})_{*}\biggl( R(\hat{u}) \wedge \biggl( R(\hat{\lambda}) \circ \int_{Y/X} \int_{\mathcal{N}/Y} \biggr)\biggr) \\
	& \hspace{15pt} = (-1)^{d(m-\bullet)} R(\hat{\lambda}) \circ \biggl(\int_{Y/X} \int_{\mathcal{N}/Y} (\pi_{\mathcal{N}})^{*}(\,\text{-}\,) \wedge R(\hat{u}) \biggr) \\
	& \hspace{15pt} = (-1)^{d(m-\bullet)} R(\hat{\lambda}) \circ \biggl(\int_{Y/X} \Td(\hat{u}) \wedge \text{-} \biggr) \overset{\eqref{CurvMapHom}}= \varphi^{!}(R(\hat{\lambda}))
\end{align*}
The proof about $a$ is similar by starting from the identity
	\[\partial_{E/X} a(T) = (-1)^{n(m-\bullet)} \, a\biggl(T \circ \biggl(\Td(\hat{u})^{-1} \wedge \int_{E/X}\biggr)\biggr)
\]
that can be proven from \eqref{Cond1Diff} and \eqref{Cond2Diff} applied to $\hat{\lambda} = a(T)$.
\end{proof}

The proof of the following theorem is in appendix \ref{ProofPropGysinHom}. 

\begin{Theorem}\label{PropGysinHom} For every $\hat{\lambda} \in \hat{h}_{\bullet}(X)$ and $\hat{\alpha} \in \hat{h}^{\star}_{\vcpt}(Y)$:
\begin{equation}\label{FundThGysin}
	\textstyle (\varphi_{!} \hat{\alpha}) \pmb{\cdot} \hat{\lambda} = (-1)^{d(m-\bullet)}\varphi_{*}(\hat{\alpha} \pmb{\cdot} \varphi^{!}\hat{\lambda})
\end{equation}
where $d := \dim(Y) - \dim(X)$ and $m := \dim(X)$. Moreover, by assuming $X$ and $Y$ compact and oriented, the following diagrams commute:
\begin{equation}\label{CommDiagGysin}
	\resizebox{0.85\textwidth}{!}{\xymatrix{
	\hat{h}_{\bullet}(X) \ar[rr]^{\varphi^{!}} & & \hat{h}_{\bullet+d}(Y) \\
	\hat{h}^{m-\bullet}(X) \ar[rr]^{\varphi^{*}} \ar[u]^(.55){\hatPD} & & \hat{h}^{m-\bullet}(Y) \ar[u]_(.55){\hatPD}
	} \qquad\quad \xymatrix{
	\hat{h}^{\bullet}(Y) \ar[rr]^{\varphi_{!}} \ar[d]_{\hatPD} & & \hat{h}^{\bullet-d}(X) \ar[d]^{\hatPD} \\
	\hat{h}_{m+d-\bullet}(Y) \ar[rr]^{\varphi_{*}} & & \hat{h}_{m+d-\bullet}(X)
}}\end{equation}
\end{Theorem}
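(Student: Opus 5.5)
The plan is to reduce everything to the vector bundle case of Theorem \ref{ThmDiff}, via the factorisations $\varphi_{!} = \int_{X \times \R^{N}/X} \circ j_{*} \circ \phi_{*} \circ \Thom$ and $\varphi^{!} = \Thom' \circ \phi^{*} \circ j^{*} \circ \partial_{X \times \R^{N}/X}$ from \eqref{GysinMapHom}.

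For \eqref{FundThGysin}, set $E := X \times \R^{N}$ with projection $\pi_{X}$ and $\hat{\lambda}' := \partial_{E/X}\hat{\lambda}$. First apply \eqref{FundThVectBundle} to the class $j_{*}\phi_{*}\Thom(\hat{\alpha})$. This rewrites $(\varphi_{!}\hat{\alpha}) \pmb{\cdot} \hat{\lambda}$ as $(\pi_{X})_{*}\bigl(j_{*}\phi_{*}\Thom(\hat{\alpha}) \pmb{\cdot} \hat{\lambda}'\bigr)$, up to the sign $(-1)^{N(m-\bullet)}$.

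Next, use the projection formula of Proposition \ref{AxiomsDiffCapH}(4) for the open embedding $j \circ \phi$, which is contravariant in vertically non-compact homology. Here $(j\phi)_{*}\hat{\beta} \pmb{\cdot} \hat{\lambda}' = (j\phi)_{*}\bigl(\hat{\beta} \pmb{\cdot} (j\phi)^{*}\hat{\lambda}'\bigr)$, with $(j\phi)_{*}$ in homology being extension by zero of a class supported in the tube. I will justify this by working in the parallel groups $\hat{h}_{\bullet}(E, E\setminus K)$ and using excision, adapted from the singular case.

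Then apply \eqref{FundThThom} on $\mathcal{N} \to Y$. This gives $\Thom(\hat{\alpha}) \pmb{\cdot} \phi^{*}j^{*}\hat{\lambda}' = \pm\, i_{*}\bigl(\hat{\alpha} \pmb{\cdot} \Thom'(\phi^{*}j^{*}\hat{\lambda}')\bigr) = \pm\, i_{*}(\hat{\alpha} \pmb{\cdot} \varphi^{!}\hat{\lambda})$. Finally, $\pi_{X} \circ j \circ \phi \circ i = \varphi$ collapses the pushforwards to $\varphi_{*}$.

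The signs $(-1)^{N(m-\bullet)}$ and $(-1)^{\abs{\mathcal{N}}(\dots)}$ should combine into $(-1)^{d(m-\bullet)}$, since $\abs{\mathcal{N}} = N - d$. The degree shift of $\bullet$ across steps needs care, and I expect the parity to work out exactly as in the curvature computation of the previous proposition. I would also check independence of the choice of orientation representative. This should be automatic, since both sides are defined intrinsically once \eqref{FundThGysin} holds for one representative.

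For \eqref{CommDiagGysin}, the right square is \eqref{CommDiagGysinPD} with a submersion, already proved in the paper. Alternatively, it follows from \eqref{FundThGysin} with $\hat{\lambda} = [X]$, provided we know $\varphi^{!}[X] = \pm[Y]$. For the left square, the goal is $\varphi^{!}(\hat{\beta} \pmb{\cdot} [X]) = \varphi^{*}\hat{\beta} \pmb{\cdot} [Y]$. I would compose the left diagrams of \eqref{CommDiagDiff} (for $\partial_{E/X}$ with $\pi_{X}^{*}$) and \eqref{CommDiagThom} (for $\Thom'$ with $i^{*}$), plus naturality of $\hatPD$ under the open embedding $j \circ \phi$, i.e.\ $(j\phi)^{*}[E] = [\mathcal{N}]$ with compatible orientations. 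Together with $\pi_{X} \circ j \circ \phi \circ i = \varphi$, this gives $\varphi^{!} \circ \hatPD = \hatPD \circ \varphi^{*}$.

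The main obstacle is this orientation compatibility. The fundamental class of $E = X \times \R^{N}$ carries orientation $\hat{v} \times \hat{u}'$ from the trivial bundle. Restricted to the tube $U \simeq \mathcal{N}$, it must agree with the product orientation of $\mathcal{N}$ built from the orientation of $Y$ and the Thom class $\hat{w}$ chosen by the 2x3 rule with $R(\check{w}) = i_{*}(\Td(\hat{u})\wedge\Td(\hat{v})^{-1})$. I would try verifying this at the level of Todd forms, $\Td(\hat{w}) = \pi^{*}\Td \wedge \pi^{*}\Td^{-1}$. Any remaining discrepancy is $a$ of an exact form, which I would absorb into the homotopy/stabilisation equivalence of orientations using \eqref{LemmaA2}.
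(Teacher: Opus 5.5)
Your proposal is correct and follows the paper's proof step by step. For \eqref{FundThGysin} you apply \eqref{FundThVectBundle} on $X\times\R^{N}$, then the projection formula for the open embedding $j\circ\phi$, then \eqref{FundThThom} on $\mathcal{N}\to Y$, with the signs combining as $(-1)^{N(m-\bullet)+(N-d)(m-\bullet)}=(-1)^{d(m-\bullet)}$; for the squares you use the left diagrams of \eqref{CommDiagThom} and \eqref{CommDiagDiff} for the left one and the direct computation of \eqref{CommDiagGysinPD} for the right one, exactly as the paper does. The two points you flag as needing justification, namely the open-embedding projection formula and the compatibility of the restricted fundamental class of $X\times\R^{N}$ with that of $\mathcal{N}$, are used silently in the paper, so your extra care there adds rigour rather than departing from its argument.
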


\begin{Corollary} Given a compact oriented manifold $(X, \hat{u})$, the corresponding map to the point $p \colon X \to \pt$ satisfies the identity
\begin{equation}\label{FundGysin}
	[X] = p^{!}\bigl(\hatPD(1)\bigr).
\end{equation}
\end{Corollary}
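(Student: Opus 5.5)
We will deduce \eqref{FundGysin} from the left diagram of \eqref{CommDiagGysin} in Theorem \ref{PropGysinHom}, applied to the submersion $\varphi = p \colon X \to \pt$. Here $\pt$ is a compact oriented $0$-manifold with the trivial orientation, $d = \dim(X) - 0 = n$, and $m = 0$. We take $\bullet = 0$ in the left diagram. It then reads
\[
\hatPD_{X}\bigl(p^{*}\hat{\alpha}\bigr) = p^{!}\bigl(\hatPD_{\pt}(\hat{\alpha})\bigr) \qquad \text{for every } \hat{\alpha} \in \hat{h}^{0}(\pt).
\]
We choose $\hat{\alpha} = 1 \in \hat{h}^{0}(\pt) = \h^{0}$, the unit.

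The first step is to check that $p^{*}1 = 1 \in \hat{h}^{0}(X)$. This holds because the pullback is a ring morphism, so it preserves units. By the formula following \eqref{DiffPDH}, we get
\[
\hatPD_{X}(1) = [X, \hat{u}, 1, \id_{X}, 0] = [X].
\]
The second step identifies $\hatPD_{\pt}(1) = [\pt, \hat{u}_{\pt}, 1, \id, 0]$ with the class written $\hatPD(1)$ in the statement. Combining the two steps, the commutative square yields $[X] = p^{!}(\hatPD(1))$.

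The main obstacle is that Theorem \ref{PropGysinHom} presumably presents $p^{!}$ through the orientation of $p$ obtained from the orientations of $X$ and $\pt$ via the 2x3 principle. We must check that this induced orientation agrees with the one used to define $[X]$, namely $\hat{u}$, regarded as an orientation of $p \colon X \to \pt$. The paper recalls this equivalence for compact manifolds without boundary in the preliminaries on differential cohomology. Since the point carries the trivial orientation with $\Td = 1$, the 2x3 rule should return exactly $\hat{u}$.

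We also need the sign $(-1)^{d(m-\bullet)}$ appearing in \eqref{CurvMapHom} and \eqref{FundThGysin} to be trivial in this instance. Here $m = 0$ and $\bullet = 0$, so $d(m-\bullet) = 0$ and the sign is $+1$.

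If the diagram is not directly usable, we will argue directly from \eqref{GysinMapHom} instead. We would embed $X \hookrightarrow \R^{N}$ and apply $\partial_{\R^{N}/\pt}$ to $\hatPD(1)$. The resulting class is characterised by \eqref{Cond1Diff} and \eqref{Cond2Diff}, and is the fundamental class of $\R^{N}$ in $\hat{h}^{\vncpt}_{N}$. Restricting this class to the tubular neighbourhood and applying $\Thom'$ should give $[X, \hat{u}, 1, \id_{X}, 0]$. The comparison would use the uniqueness in \eqref{Cond1Diff}–\eqref{Cond2Diff}, together with the fact that $I$ is an isomorphism in top degree, which is the analogue of Corollary \ref{LemmaH}-(i).
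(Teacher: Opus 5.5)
Your proposal is correct and is the paper's own argument: apply the left square of \eqref{CommDiagGysin} to $p \colon X \to \pt$ with $\hat{\alpha} = 1$, which gives $p^{!}(\hatPD(1)) = \hatPD(p^{*}1) = \hatPD(1) = [X, \hat{u}, 1, \id_{X}, 0] = [X]$. The extra checks you raise (that the orientation of $p$ is $\hat{u}$, and that the signs are trivial) are harmless, since that square carries no sign, and the fallback argument via \eqref{GysinMapHom} is not needed.
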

\begin{proof} From \eqref{CommDiagGysin}, we obtain $p^{!}\bigl(\hatPD(1)\bigr) = \hatPD(p^{*}1) = \hatPD(1) = [X, \hat{u}, 1, \id_{X}, 0] = [X]$.
\end{proof}

\begin{Corollary}\label{CorGysinTrivial} Given a trivial fibre bundle $\pi \colon X \times F \to X$, where $F$ is a compact oriented manifold, the following identity holds:
\begin{equation}\label{GysinTrivial}
	\pi^{!}(\hat{\lambda}) = \hat{\lambda} \times [F]
\end{equation}
In particular, if $F = S^{1}$, then we obtain $\partial_{S^{1}} \hat{\lambda} = \pi^{!}(\hat{\lambda})$.
\end{Corollary}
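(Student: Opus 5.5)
The plan is to deduce \eqref{GysinTrivial} from the characterisation of classes by Poincar\'e duality. I would not compute $\pi^{!}$ directly from \eqref{GysinMapHom}. The strategy is to show that both sides of \eqref{GysinTrivial} are natural transformations $\hat{h}_{\bullet}(X) \to \hat{h}_{\bullet+f}(X \times F)$, with $f := \dim F$, that commute with $I$, $R$ and $a$. The difference is then a natural transformation that vanishes on the image of $a$ and has trivial curvature. It therefore factors through $h_{\bullet}(X)$ and lands in the flat part $\hat{h}^{\fl}_{\bullet+f}(X \times F)$. An argument like the one used for uniqueness of the cap product in section \ref{SecUnique} should then kill it.

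Concretely, I will compare each structural map on both sides. On the right, $I(\hat{\lambda} \times [F]) = I(\hat{\lambda}) \times [F]$ is the usual topological transfer for a product bundle. On the left, $I(\pi^{!}\hat{\lambda})$ is the topological Gysin map built from the Thom--Pontryagin construction. These agree because of the standard identity $\pi^{!}(\lambda) = \lambda \times [F]$ in topological homology. For curvature, formula \eqref{CurvMapHom} gives $R(\pi^{!}\hat{\lambda}) = \pm R(\hat{\lambda}) \circ \bigl(\int_{X \times F/X} \Td(\hat{u}) \wedge \text{-}\bigr)$. Since $\pi$ is a trivial bundle, the orientation of $\pi$ comes from the orientation of $F$, and $\Td(\hat{u})$ is the pull-back of the Todd form of $F$. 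Fubini then yields $R(\hat{\lambda}) \times R[F]$, where $R[F] = T_{\Td}$. This matches the external product formula $R(\hat{\lambda} \hattimes \hat{\mu}) = R(\hat{\lambda}) \times R(\hat{\mu})$, the generalised version of proposition \ref{PropExtSlant}. Compatibility with $a$ follows in the same way from the last proposition, since $\pi^{!}(a(T)) = a(\pi^{!}T)$ and $a(T) \hattimes [F] = a(T \times R[F])$. The signs $(-1)^{f(m-\bullet)}$ must be checked against the sign convention in the definition of $\hattimes$; I expect them to match because of how $\Thom'$ is normalised in \eqref{ThomHom}.

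An alternative route, which I find simpler when $X$ is compact and oriented, uses \eqref{CommDiagGysin}. One writes $\hat{\lambda} = \hatPD(\check{\beta})$ with currential $\check{\beta}$, so that $\pi^{!}\hat{\lambda} = \hatPD(\pi^{*}\check{\beta}) = [X \times F, \hat{v} \times \hat{u}_{F}, \pi^{*}\check{\beta}, \id, 0]$. Via the representative model, this is exactly $[X, \hat{v}, \check{\beta}, \id, 0] \hattimes [F, \hat{u}_{F}, 1, \id, 0]$. For general $X$ one can reduce to this case by naturality. Any $\hat{\lambda}$ is represented by $[M, \hat{u}, \hat{\alpha}, g, T]$, which equals $g_{*}\hatPD_{M}(\hat{\alpha}) + a(T)$. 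Both sides commute with $g_{*}$ and with $a$, so it suffices to treat $X = M$ compact and oriented.

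The main obstacle is the naturality of $\pi^{!}$ with respect to $g_{*}$, namely $(g \times \id_{F})_{*} \circ \pi_{M}^{!} = \pi_{X}^{!} \circ g_{*}$. This requires comparing the Thom morphisms $\Thom'$ and the differentiation maps for pulled-back embeddings $M \times F \hookrightarrow M \times \R^{N}$. To handle it, I would choose the embedding $F \hookrightarrow \R^{N}$ independently of the base, so that every map in \eqref{GysinMapHom} is a product with $\id_{F}$ or $\id_{\R^{N}}$, and apply the projection formula \eqref{FundThVectBundle}. The $S^{1}$ case then follows immediately from \eqref{DiffS1h}, since $[S^{1}]$ carries the trivial orientation.
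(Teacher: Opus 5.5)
Your main route fails at its decisive step. Once two transformations agree after $I$ and $R$ and on $\IIm(a)$, their difference factors through $I$ and lands in $\ker I\cap\ker R$, which is the image of $\frac{h_{\bullet+f+1}(X\times F)\otimes\R}{h_{\bullet+f+1}(X\times F)}$ under $a\circ\bar{\ScT}$. In a generalised theory, natural maps into this group need not vanish. Both arguments of section~\ref{SecUnique} rely on vanishing results specific to ordinary homology.

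Here is a concrete example. Take $h=K$, $F=S^{1}$, $s(x):=(x,1)$ and the Bott isomorphism $\beta_{\textnormal{Bott}}\colon K_{\bullet}\to K_{\bullet+2}$. Set $D(\hat{\lambda}):=a\bigl(\bar{\ScT}(\tfrac12\, s_{*}\beta_{\textnormal{Bott}}I(\hat{\lambda}))\bigr)$, with $a$ applied to any closed representative. Then:
\begin{itemize}
\item $D$ is natural;
\item $I\circ D=0$, $R\circ D=0$ and $D\circ a=0$;
\item $D\circ\iota=0$, since $I\circ\iota$ lands in torsion;
\item $D\neq 0$ on $\hat{K}_{0}(\pt)$, because $\tfrac12\, s_{*}\beta_{\textnormal{Bott}}(1)$ is not integral in $K_{2}(S^{1})\otimes\R\simeq\R$.
\end{itemize}
So any transformation with the properties you list can be altered by $D$ without losing them. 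No uniqueness argument based only on $I$, $R$, $a$ and $\iota$ can identify $\pi^{!}$ with $-\hattimes[F]$.

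The sign check you postpone is also not harmless. Take the orientation of $\pi$ induced by that of $F$. Then \eqref{CurvMapHom} and $\pi^{!}a(T)=a(\pi^{!}T)$ give $\pi^{!}a(T)=(-1)^{f(m-\bullet)}a(T\times R[F])$. The representative formula for $\hattimes$ gives $a(T)\hattimes[F]=a(T\times R[F])$. So for odd $f$ even the compatibility with $a$ fails in alternate degrees, unless the conventions are repaired.

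Your alternative route, for $X$ compact and oriented, is exactly the paper's proof. The left square of \eqref{CommDiagGysin} gives $\pi^{!}\hatPD(\hat{\alpha})=\hatPD(\pi_{X}^{*}\hat{\alpha})=[X\times F,\pi_{X}^{*}\hat{u}\cdot\pi_{F}^{*}\hat{v},\pi_{X}^{*}\hat{\alpha},\id,0]$. This is then read off as the representative-level product $[X,\hat{u},\hat{\alpha},\id,0]\hattimes[F,\hat{v},1,\id,0]$.

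The paper does this only for $\hat{\alpha}\in\hat{h}^{\bullet}(X)$, i.e.\ for classes whose curvature is a form. The left square of \eqref{CommDiagGysin} is not established for currential $\check{\beta}$, and the paper does not define $\pi^{*}$ on currential classes. Splitting $\hat{\lambda}=\hatPD(\hat{\alpha})+a(T)$ instead runs into the sign just mentioned. The Poincar\'e duality computation never meets that sign because it works with one fixed representative.

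The reduction from general $X$ to compact oriented $M$ fails for a similar reason. The sign in \eqref{CurvMapHom} involves $m=\dim X$, so $R(\pi_{X}^{!}g_{*}\hat{\lambda})=(-1)^{f(\dim X-\dim M)}R\bigl((g\times\id_{F})_{*}\pi_{M}^{!}\hat{\lambda}\bigr)$. Meanwhile $\hat{\lambda}\mapsto\hat{\lambda}\hattimes[F]$ is strictly natural on representatives. Choosing the embedding $F\hookrightarrow\R^{N}$ independently of the base cannot remove this sign, because $\Thom'$ and $\partial_{E/X}$ carry it through the dimension of the base.

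So what your proposal actually proves is what the paper proves: the identity for compact oriented $X$ and $\hat{\lambda}\in\IIm(\hatPD)$, by the same computation.
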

\begin{proof} From \eqref{CommDiagGysin}, we deduce:
\begin{align*}
	\pi^{!}\bigl(\hatPD(\hat{\alpha})\bigr) &= \hatPD(\pi^{*}_{X}\hat{\alpha}) = [X \times F, \pi_{X}^{*}\hat{u} \cdot \pi_{F}^{*}\hat{v}, \pi^{*}_{X}\hat{\alpha}, \id_{X \times F}, 0] \\
	&= [X, \hat{u}, \pi^{*}_{X}\hat{\alpha}, \id_{X}, 0] \hat{\times} [F, \hat{v}, 1, \id_{F}, 0] = \hatPD(\hat{\alpha}) \hat{\times} [F]. \qedhere
\end{align*}
\end{proof}

In the homological setting, we have not considered currential orientations. The reason is the following one. In chomology, they lead to a natural curvature map without assuming that the underlying function is a submersion, since we obtain the currential pushfoward twisted by the Todd class. In homology, in order to obtain a  natural pullback, we need to restrict curvature to forms. This is possible and leads to the natural curvature map $\varphi^{!}(\eta) = (-1)^{n(m-\bullet)} \Td(\hat{u}) \wedge \varphi^{*}\eta$, which is a particular case of \eqref{CurvMapHom} with $T = \ScT^{\bullet}\eta$. However, since restricted homology is less interesting than extended cohomology, we do not show the details here.

\SkipTocEntry \subsection{Uniqueness}

Let us prove the essential uniqueness of differential homology. We fix a multiplicative cohomology theory $h^{\bullet}$, its dual homology theory $h_{\bullet}$, and a multiplicative differential refinement $\hat{h}^{\bullet}$ with its natural transformations. A dual differential refinement is a functor $\hat{h}_{\bullet}$ endowed with its natural transformations and the cap product---that is, $(\hat{h}_{\bullet}, I, R, a, \iota, \,\pmb{\cdot}\,)$---in such a way that propositions \ref{PropExSeqH} and \ref{AxiomsDiffCapH} hold as axioms. We denote by $\hat{h}_{\bullet}$ the model we used up to now and we fix any other refinement $(\hat{h}'_{\bullet}, I', R', a', \iota', \,\pmb{\cdot}'\,)$. We have a natural isomorphism $\Phi \colon \hat{h}_{\bullet} \overset{\!\simeq}\longrightarrow \hat{h}'_{\bullet}$ defined as follows. We first observe that $\hat{h}_{\bullet}(\pt)$ is canonically determined by the topological theory because $h^{\bullet}$ is rationally even. In fact, sequences \eqref{ExSeq1h} and \eqref{ExSeq2h} imply that $I \colon \hat{h}_{2k}(\pt) \to h_{2k}(\pt)$ and $\iota \colon \hat{h}_{2k+1}^{\fl}(\pt) \to \hat{h}_{2k+1}(\pt)$ are isomorphisms. Thus, we have differential Poincar\'e duality on the point as a consequence of the topological one. Each oriented manifold has a fundamental class in $\hat{h}'_{\bullet}$ because of formula \eqref{FundGysin}, inducing Poincar\'e Duality. Therefore, we set:
\begin{eqnarray*}
	\Phi(X) \colon \; \hat{h}_{\bullet}(X) & \to & \hat{h}'_{\bullet}(X) \\
	\lbrack M, \hat{u}, \hat{\alpha}, f, T \rbrack & \mapsto & f_{*}\hatPD'_{(M, \hat{u})}(\hat{\alpha}) + a(T)
\end{eqnarray*}
The following diagram commutes:
	\[\xymatrix{
	0 \ar[r] & \frac{\Tau_{k+1}(X; \h_{\R})}{\Tau_{k+1}^{\Int}(X; \h_{\R})} \ar[r]^(.55){a} \ar@{=}[d] & \hat{h}_{k}(X) \ar[r]^{I} \ar[d]^(.43){\Phi(X)} & h_{k}(X) \ar[r] \ar@{=}[d] & 0 \\
	0 \ar[r] & \frac{\Tau_{k+1}(X; \h_{\R})}{\Tau_{k+1}^{\Int}(X; \h_{\R})} \ar[r]^(.55){a'} & \hat{h}'_{k}(X) \ar[r]^{I} & h_{k}(X) \ar[r] & 0
}\]
It follows from the five lemma that $\Phi$ is an isomorphism. As in the case of singular homology, the transformation $\iota$ is completely determined by the other ones, since diagram \eqref{DiagPD2H} implies that $\iota(\hat{\alpha}_{\fl}) = \hatPD \circ \PD^{-1}(\hat{\alpha}_{\fl})$.

\section{Further Perspectives}\label{FurthPer}

We briefly outline some issues that we will develop in future papers. Some of them have already been mentioned in the previous sections.

\SkipTocEntry \subsection*{Possible Applications in Mathematical Physics}

Let us consider a charge source, a field strength with potential, and the corresponding minimal coupling. This is a ty\-pical situation in electromagnetism. Also, in string theory, the source can be a D-brane world-volume, and the minimal coupling with the Ramond-Ramond potentials is called Wess-Zumino action. In such a situation, the source is usually described mathematically as a fixed cycle in a suitable homology theory. The underlying homology class is the corresponding charge, but the representative itself is meaningful, since it indicates the trajectory in space-time (or the position in space, depending on the context). The field potential is a differential cohomology class, and the minimal coupling is the corresponding holonomy map on the world-volume. The chosen (co)homology theory depends on the context, the main examples being singular (co)homology and K-theory. The disadvantage of this picture consists in the fact that a fixed cycle carries an excess of information with respect to the physical meaning: considering for example singular homology, a representative of the fundamental class of a submanifold essentially requires to fix a triangulation, which is physically meaningless. This is the reason why differential homology may be useful, since it provides a description of the charge source without fixing any representative. Indeed, we saw that the natural transformation $b$ identifies the set of embedded oriented submanifolds of $X$ with a subset of $\hat{H}_{\bullet}(X)$, and the corresponding holonomy of a Cheeger-Simons character is well-defined. By extending the transformation $b$ to a gene\-ralised homology theory (in particular, to K-theory), one can describe any charge source as a differential homology class, which naturally couples to the cohomology class represen\-ting the field. This coupling is realised through the notion of \emph{generalised Cheeger-Simons characters}, that was defined in \cite{FR} on cycles and can be projected to homology.

\SkipTocEntry \subsection*{Homotopy \emph{vs} Homology}

In the topological framework, the well-know Hurewicz map $w_{k} \colon \pi_{k}(X, x_{0}) \to H_{k}(X)$ relates homotopy and homology. One can define \emph{thin} homotopy groups of a smooth manifold, which we denote by $\pi_{k}^{\textnormal{th}}(X, x_{0})$, by requiring that the differential of a homotopy between two representatives does not reach the maximal rank in any point. In this way, the holonomy of a differential character projects to a thin homotopy class. Since a thin homotopy is in particular a homotopy, we have the natural projection $P_{k} \colon \pi_{k}^{\textnormal{th}}(X, x_{0}) \to \pi_{k}(X, x_{0})$. One can define the \emph{differential Hurewicz Map} $\hat{w}_{k} \colon \pi_{k}^{\textnormal{th}}(X, x_{0}) \to \hat{H}_{k}(X)$ as follows: we fix a cycle $u_{k} \in Z_{k}(S^{k})$ representing a generator in homology, and we set $\hat{w}_{k}[f] := [f_{*}u_{k}, 0]$ for any $f \colon S^{k} \to X$. Since two different choices of $u_{k}$ differ by the boundary of a thin chain, the class $[f_{*}u_{k}, 0]$ is well-defined, and it is straightforward to verify that the following diagram commutes:
	\[\xymatrix{
	\pi_{k}^{\textnormal{th}}(X, x_{0}) \ar[rr]^{\hat{w}_{k}} \ar[d]_{P_{k}} & & \hat{H}_{k}(X) \ar[d]^{I_{k}} \\
	\pi_{k}(X, x_{0}) \ar[rr] \ar[rr]^{w_{k}} & & H_{k}(X) \\
}\]
We think it would be interesting to analyse the differential Hurewicz map and its possible applications in detail.

\SkipTocEntry \subsection*{Relative, Non-Compact, and Twisted Differential Homology}

In the case of singular homology, we constructed in detail the relative, non-compact, and local coefficient versions, together with the corresponding dualities. The same can be done about a generalised theory. Actually, we already sketched how to defined the (vertically-)non-compact version through (vertically) proper maps, but we have to work out the details. The relative version can be performed by following the same line of the singular case, adapted to the model we are using, and the corresponding axiomatic framework can be worked out as in \cite{FR2}. About local coefficients, the generalisation is a deeper issue, since we have to extend the present work to twisted (co)homology theories.

\SkipTocEntry \subsection*{Hopkins-Singer Model}

The most general model of differential cohomology was introduced by Hopkins and Singer in \cite{HS} and elaborated in detail by Upmeier in \cite{Upmeier}. It consists in a refinement of the description of cohomology through spectra. In particular, fixing a spectrum based on the spaces $\{E_{k}\}_{k \in \Z}$ with fundamental cocycles $\theta^{k} \in H^{k}(E_{k}; \h_{\R})$, a class $\hat{\alpha} \in \hat{h}^{k}(X)$ is represented by a triple $(f, h, \omega)$, where $f \colon X \to E_{k}$ represents $I(\hat{\alpha})$, the form $\omega$ is the curvature, and $h$ is a $\h_{\R}$-valued $(k-1)$-cochain that trivialises $\omega - f^{*}\theta^{k}$. The equivalence relation is a suitable notion of homotopy that fixes the curvature. We believe that a similar construction can be realised in homology. In fact, starting from $h_{k}(X) = \varinjlim \pi_{k+q} \bigl(X_{+} \wedge E_{q}\bigr)$, one can consider pairs of the form $(f, T)$, where $f \colon S^{k+q} \to X_{+} \wedge E_{q}$ represents $I(\hat{\lambda})$, and $T$ is a current. Then, up to a suitable notion of homotopy compatible with the direct limit and fixing the curvature, we set $I[f, T] := [f]$, $R[f, T] := T_{(f_{*}u_{k+q})/\theta^{q}} + \partial T$, and $a(T) := [0, T]$ where $0$ is the constant function to the marked point.

\SkipTocEntry \subsection*{Other Topics on Singular Homology}

We showed an explicit geometric description of the Poincar\'e dual of a low-degree cohomology class, and we believe it is possible to generalise such a description to any degree through the language of abelian $k$-gerbes with connection.

Furthermore, differential singular homology can be extended to differential spaces (in particular, stratifolds) as in \cite{BB}. In this way, we deal with a much wider class of spaces, and the hypotheses on excision and the Mayer-Vietoris sequence become weaker. Moreover, the cap product can be described from a geometric point of view, also deducing in a more natural way its uniqueness from the axioms.

Lastly, the Deligne cohomology is a well-known model of differential singular cohomology, consisting in the cohomology of a suitable complex of sheaves. We believe it is possible to dualise this construction by defining \emph{Deligne homology} as the homology of a suitable complex of cosheaves.

\appendix

\section{Computations about the Cap Product}\label{AppCapProduct}

We show the computations about the differential cap product not included in the main text.

\SkipTocEntry \subsection{Chain Maps}\label{SecChMaps}

The two maps $C_{\bullet}(X; \R) \otimes_{\R} \Omega_{\star}(X) \to \Tau_{\bullet+\star}(X)$ defined respectively by $\gamma_{k} \otimes \omega_{h} \mapsto T_{\gamma_{k}} \wedge \omega_{h}$ and $\gamma_{k} \otimes \omega_{h} \mapsto T_{\gamma_{k} \cap \omega_{h}}$ are chain maps. In fact, with respect to the former:
\begin{align*}
	\partial(\gamma_{k} \otimes \omega_{h}) & \overset{\eqref{BoundaryTensor}}= (-1)^{h}(\partial \gamma_{k} \otimes \omega_{h} - \gamma_{k} \otimes d\omega_{h}) \\
	& \, \mapsto (-1)^{h}(\partial T_{\gamma_{k}} \wedge \omega_{h} - T_{\gamma_{k}} \wedge d\omega_{h}) \overset{\eqref{BoudaryWedge}}= \partial(T_{\gamma_{k}} \wedge \omega_{h})
\end{align*}
Similarly, with respect to the latter:
\begin{align*}
	\partial(\gamma_{k} \otimes \omega_{h}) & \overset{\eqref{BoundaryTensor}}= (-1)^{h}(\partial \gamma_{k} \otimes \omega_{h} - \gamma_{k} \otimes d\omega_{h}) \\
	& \,\mapsto (-1)^{h}(T_{\partial \gamma_{k} \cap \omega_{h} - \gamma_{k} \cap d\omega_{h}}) \overset{\eqref{BoundaryCap}}= \partial(T_{\gamma_{k} \cap \omega_{h}})
\end{align*}

\SkipTocEntry \subsection{Well-Definedness of the Cap Product}\label{AppProdWD}

If one of the two classes $[\gamma_{l}, T_{l+1}]$ and $[\mu^{s}, c^{s-1}, \omega^{s}]$ vanishes, then their cap product vanishes as well. Indeed, in the former case:
\begin{align*}
	[\partial\Gamma_{l+1}, & - T_{\Gamma_{l+1}} + \partial T_{\Gamma_{l+2}}] \hatcap [\mu^{s}, c^{s-1}, \omega^{s}] \\
	& \overset{\eqref{DiffCapP}}= \bigl[\partial\Gamma_{l+1} \cap \mu^{s}, (-1)^{s}\bigl((-T_{\Gamma_{l+1}} + \partial T_{l+2}) \wedge \omega^{s} + T_{\partial\Gamma_{l+1} \cap c^{s-1}}\bigr) + B(\partial\Gamma_{l+1} \otimes \omega^{s})\bigr] \\
	& \hspace{-8pt} \overset{\eqref{BoundaryCap}, \eqref{BoundaryTensor}}= \bigl[(-1)^{s}\partial(\Gamma_{l+1} \cap \mu^{s}), (-1)^{s}\bigl(({\color{red} -T_{\Gamma_{l+1}}} + \partial T_{l+2}) {\color{red} \wedge \omega^{s}} \\
	& \phantom{XXXXXXXXXXXXX}\, + T_{(-1)^{s-1}\partial(\Gamma_{l+1} \cap c^{s-1}) + \Gamma_{l+1} \cap \delta c^{s-1}} + {\color{red} B \partial(\Gamma_{l+1} \otimes \omega^{s})}\bigr) \bigr] \\
	& \overset{\eqref{HomotopyCapWedge}}= \bigl[(-1)^{s}\partial(\Gamma_{l+1} \cap \mu^{s}), (-1)^{s}\bigl({\color{red} - T_{\Gamma_{l+1} \cap \omega^{s}}} + \partial T_{l+2} \wedge \omega^{s} \\
	& \phantom{XXXXXXXXXXXXX}\, + T_{(-1)^{s-1}\partial(\Gamma_{l+1} \cap c^{s-1}) + \Gamma_{l+1} \cap \delta c^{s-1}} - {\color{red} \partial B(\Gamma_{l+1} \otimes \omega^{s})}\bigr) \bigr] \\
	& \hspace{-5pt} \overset{\eqref{BoudaryWedge}, \eqref{BoudaryCurrents}}= \bigl[(-1)^{s}\partial(\Gamma_{l+1} \cap \mu^{s}), (-1)^{s}(- T_{\Gamma_{l+1} \cap \omega^{s}} + T_{\Gamma_{l+1} \cap \delta c^{s-1}}) + \partial(\,\cdots) \bigr] \\
	& \overset{\eqref{FormC}}= \bigl[(-1)^{s}\partial(\Gamma_{l+1} \cap \mu^{s}), (-1)^{s}(-T_{\Gamma_{l+1} \cap \mu^{s}}) + \partial(\,\cdots)\bigr] = 0
\end{align*}
In the latter case:
\begin{align*}
	[\gamma_{l}, &T_{l+1}] \hatcap [\delta \nu^{s-1}, -\nu^{s-1} - \delta d^{s-2}, 0] \\
	& \overset{\eqref{DiffCapP}}= \bigl[\gamma_{l} \cap \delta \nu^{s-1}, (-1)^{s}T_{\gamma_{l} \cap (-\nu^{s-1}-\delta d^{s-2})}\bigr] \\
	& \overset{\eqref{BoundaryCap}}= \bigl[(-1)^{s}\partial(\gamma_{l} \cap \nu^{s-1}), (-1)^{s}(-T_{\gamma_{l} \cap \nu^{s-1}}) + \partial(\,\cdots)\bigr] = 0
\end{align*}

\SkipTocEntry \subsection{Proof of Proposition \ref{AxiomsDiffCap}}\label{AppProof}

(1) It is obvious from definition \eqref{DiffCapP}, since the expression is bi-additive on representatives.

\vspace{3pt} (2) Again, the compatibility with $I$ immediately follows from definition \eqref{DiffCapP}. About $R$:
\begin{align*}
	R([\gamma_{l}, T_{l+1}&] \hatcap [\mu^{s}, c^{s-1}, \omega^{s}]) \\
	& \hspace{-11pt} \overset{\eqref{DiffCapP}, \eqref{DefR}}= T_{\gamma_{l} \cap \mu^{s}} + (-1)^{s}\partial (T_{l+1} \wedge \omega^{s} + T_{\gamma_{l} \cap c^{s-1}}) + \partial B(\gamma_{l} \otimes \omega^{s}) \\
	& \hspace{-9pt} \overset{\eqref{BoudaryWedge}, \eqref{BoundaryCap}}= T_{\gamma_{l} \cap \mu^{s}} + \partial T_{l+1} \wedge \omega^{s} + T_{\gamma_{l} \cap \delta c^{s-1}} + \partial B(\gamma_{l} \otimes \omega^{s}) \\
	& \hspace{-11pt} \overset{\eqref{HomotopyCapWedge}, \eqref{FormC}}= {\color{blue} T_{\gamma_{l} \cap \mu^{s}}} + \partial T_{l+1} \wedge \omega^{s} + {\color{blue} T_{\gamma_{l} \cap (\omega^{s} - \mu^{s})}} + T_{\gamma_{l}} \wedge \omega^{s} - {\color{blue} T_{\gamma_{l} \cap \omega^{s}}} \\
	& \hspace{1pt} = (\partial T_{l+1} + T_{\gamma_{l}}) \wedge \omega^{s} \overset{\eqref{DefR}}= R([\gamma_{l}, T_{l+1}]) \wedge R([\mu^{s}, c^{s-1}, \omega^{s}])
\end{align*}
About $a$:
	\[a(T) \hatcap [\mu^{s}, c^{s-1}, \omega^{s}] \overset{\eqref{DefA}, \eqref{DiffCapP}}= \bigl[0, (-1)^{s}T_{l+1} \wedge \omega^{s}\bigr] \overset{\eqref{DefA}}= (-1)^{s}a(T_{l+1} \wedge \omega^{s})
\]
and
\begin{align*}
	[\gamma_{l}, &T_{l+1}] \hatcap a(\omega^{s-1}) = [\gamma_{l}, T_{l+1}] \hatcap [0, \omega^{s-1}, d\omega^{s-1}] \\
	& \hspace{-3pt} \overset{\eqref{DiffCapP}}= \bigl[0, (-1)^{s} ({\color{red} T_{l+1} \wedge d\omega^{s-1}} + T_{\gamma_{l} \cap \omega^{s-1}}) + B(\gamma_{l} \otimes d\omega^{s-1})\bigr] \\
	& \hspace{-1pt} \overset{\eqref{BoudaryWedge}}= \bigl[0, {\color{red} \partial(\,\cdots) + (-1)^{s}\partial T_{l+1} \wedge \omega^{s-1}} + (-1)^{s}T_{\gamma_{l} \cap \omega^{s-1}} + (-1)^{s}B(\gamma_{l} \otimes \partial\omega_{1-s})\bigr] \\
	& \hspace{-3pt} \overset{\eqref{BoundaryTensor}}= (-1)^{s} \bigl[0, \partial T_{l+1} \wedge \omega^{s-1} + T_{\gamma_{l} \cap \omega^{s-1}} + B\partial(\gamma_{l} \otimes \omega_{1-s})\bigr] \\
	& \hspace{-3pt} \overset{\eqref{HomotopyCapWedge}}= (-1)^{s} \bigl[0, \partial T_{l+1} \wedge \omega^{s-1} + {\color{green} T_{\gamma_{l} \cap \omega^{s-1}}} + T_{\gamma_{l}} \wedge \omega^{s-1} - {\color{green} T_{\gamma_{l} \cap \omega^{s-1}}} + \partial(\,\cdots) \bigr] \\
	& = (-1)^{s} \bigl[0, \partial T_{l+1} \wedge \omega^{s-1} + T_{\gamma_{l}} \wedge \omega^{s-1}\bigr] = (-1)^{s}a\bigl(R[\gamma_{l}, T_{l+1}] \wedge \omega^{s-1}\bigr)
\end{align*}
About $\iota$:
\begin{align*}
	\iota[\Gamma_{l+1}&] \hatcap [\mu^{s}, c^{s-1}, \omega^{s}] \overset{\eqref{MorfIota}}= \bigl[-\partial \tilde{\Gamma}_{l+1}, T_{\tilde{\Gamma}_{l+1}} \bigr] \hatcap [\mu^{s}, c^{s-1}, \omega^{s}] \\
	& \overset{\eqref{DiffCapP}}=  \bigl[-{\color{red} \partial \tilde{\Gamma}_{l+1} \cap \mu^{s}}, (-1)^{s}(T_{\tilde{\Gamma}_{l+1}} \wedge \omega^{s} - T_{{\color{magenta} \partial\tilde{\Gamma}_{l+1} \cap c^{s-1}}}) - {\color{blue} B(\partial\tilde{\Gamma}_{l+1} \otimes \omega^{s})}\bigr] \\
	& \hspace{-9pt} \overset{\eqref{BoundaryCap}, \eqref{BoundaryTensor}}= \bigl[-{\color{red}(-1)^{s} \partial (\tilde{\Gamma}_{l+1} \cap \mu^{s})}, (-1)^{s}T_{\tilde{\Gamma}_{l+1}} \wedge \omega^{s} - T_{{\color{magenta} \partial(\tilde{\Gamma}_{l+1} \cap c^{s-1}) + (-1)^{s}\tilde{\Gamma}_{l+1} \cap (\omega^{s}-\mu^{s})}} \\
	& \hspace{267pt} - {\color{blue} (-1)^{s}B\partial(\tilde{\Gamma}_{l+1} \otimes \omega^{s})} \bigr] \\
	& \overset{\eqref{HomotopyCapWedge}}=  \bigl[-(-1)^{s} \partial (\tilde{\Gamma}_{l+1} \cap \mu^{s}), {\color{green} (-1)^{s}T_{\tilde{\Gamma}_{l+1}} \wedge \omega^{s}} - T_{{\color{orange} \partial(\tilde{\Gamma}_{l+1} \cap c^{s-1})} + {\color{cyan} (-1)^{s}\tilde{\Gamma}_{l+1} \cap } ({\color{cyan} \omega^{s}}-\mu^{s})} \\
	& \hspace{160pt} - {\color{green} (-1)^{s} T_{\tilde{\Gamma}_{l+1}} \wedge \omega^{s}} + {\color{cyan} (-1)^{s} T_{\tilde{\Gamma}_{l+1} \cap \omega^{s}}} + {\color{orange} \partial(\,\cdots)} \bigr] \\
	& \hspace{3pt} = (-1)^{s} \bigl[-\partial (\tilde{\Gamma}_{l+1} \cap \mu^{s}), T_{\tilde{\Gamma}_{l+1} \cap \mu^{s}}] \\
	& \overset{\eqref{MorfIota}}= (-1)^{s} \iota[\Gamma_{l+1} \cap \mu^{s}] = (-1)^{s} \iota\bigl([\Gamma_{l+1}] \cap I[\mu^{s}, c^{s-1}, \omega^{s}]\bigr)
\end{align*}
and
\begin{align*}
	[\gamma_{l}, T_{l+1}] \hatcap \iota[\varphi^{s-1}] &= [\gamma_{l}, T_{l+1}] \hatcap [-\delta \tilde{\varphi}^{s-1}, \tilde{\varphi}^{s-1}, 0] \\
	& \hspace{-3pt} \overset{\eqref{DiffCapP}}= \bigl[-\gamma_{l} \cap \delta \tilde{\varphi}^{s-1}, (-1)^{s} T_{\gamma_{l} \cap \tilde{\varphi}^{s-1}}\bigr] \\
	& \hspace{-3pt} \overset{\eqref{BoundaryCap}}= (-1)^{s} \bigl[-\partial(\gamma_{l} \cap \tilde{\varphi}^{s-1}), T_{\gamma_{l} \cap \tilde{\varphi}^{s-1}}\bigr] \\
	& = (-1)^{s} \iota[\gamma_{l} \cap \varphi^{s-1}] = (-1)^{s}\iota\bigl(I[\gamma_{l}, T_{l+1}] \cap [\varphi^{s-1}]\bigr)
\end{align*}

\vspace{3pt} (3) The two chain maps $\Omega^{\bullet}(X) \otimes_{\R} \Omega^{\star}(X) \to C^{\bullet+\star}(X; \R)$ defined respectively by $\omega^{s} \otimes \eta^{t} \mapsto \omega^{s} \wedge \eta^{t}$ and $\omega^{s} \otimes \eta^{t} \mapsto \omega^{s} \cup \eta^{t}$ are chain homotopic in an essentially unique way; see \cite[formula (3.8)]{HS}. We thus fix any $B' \colon \Omega^{\bullet}(X) \otimes_{\R} \Omega^{\star}(X) \to C^{\bullet+\star-1}(X; \R)$ such that
\begin{equation}\label{DefBLinha}
	\omega \wedge \eta - \omega \cup \eta = (B'\delta + \delta B')(\omega \otimes \eta),
\end{equation}
where
\begin{equation}\label{DefDeltaTensor}
	\delta(\omega^{s} \otimes \eta^{t}) = d\omega^{s} \otimes \eta^{t} + (-1)^{s} \omega^{s} \otimes d\eta^{t}.
\end{equation}
The product in differential cohomology is defined as follows:
\begin{equation}\label{ProdDiffCoh}
	[\mu^{s}, c^{s-1}, \omega^{s}] \hatcup [\nu^{t}, d^{t-1}, \eta^{t}] := [\mu^{s} \cup \nu^{t}, (-1)^{s}\mu^{s} \cup d^{t-1} + c^{s-1} \cup \eta^{t} + B'(\omega^{s} \otimes \eta^{t}), \omega^{s} \wedge \eta^{t}]
\end{equation}
Therefore,
\begin{align*}
	[\gamma_{l}, &T_{l+1}] \hatcap \bigl([\mu^{s}, c^{s-1}, \omega^{s}] \hatcup [\nu^{t}, d^{t-1}, \eta^{t}]\bigr) \overset{\eqref{ProdDiffCoh}, \eqref{DiffCapP}}= \bigl[{\color{blue} \gamma_{l} \cap (\mu^{s} \cup \nu^{t})}, \\
	& \hspace{10pt} (-1)^{s+t}\bigl({\color{red} T_{l+1} \wedge (\omega^{s} \wedge \eta^{t})} + T_{{\color{green} \gamma_{l} \cap } ((-1)^{s} {\color{green} \mu^{s} \cup d^{t-1}} + c^{s-1} \cup \eta^{t} + B'(\omega^{s} \otimes \eta^{t}))}\bigr) + B(\gamma_{l} \otimes (\omega^{s} \wedge \omega^{t}))\bigr]
\end{align*}
and
\begin{align*}
	\bigl([\gamma_{l}, T_{l+1}] \hatcap [&\mu^{s}, c^{s-1}, \omega^{s}]\bigr) \hatcap [\nu^{t}, d^{t-1}, \eta^{t}]\bigr) \\
	& \hspace{-5pt} \overset{\eqref{DiffCapP}}= \bigl[\gamma_{l} \cap \mu^{s}, (-1)^{s}(T_{l+1} \wedge \omega^{s} + T_{\gamma_{l} \cap c^{s-1}}) + B(\gamma_{l} \otimes \omega^{s})\bigr] \hatcap [\nu^{t}, d^{t-1}, \eta^{t}] \\
	& \hspace{-5pt} \overset{\eqref{DiffCapP}}= \bigl[{\color{blue} (\gamma_{l} \cap \mu^{s}) \cap \nu^{t}}, (-1)^{t}\bigl(\bigl((-1)^{s}({\color{red} T_{l+1} \wedge \omega^{s}} + T_{\gamma_{l} \cap c^{s-1}}) \\
	& \hspace{95pt} + B(\gamma_{l} \otimes \omega^{s})\bigr) {\color{red} \wedge \eta^{t} } + {\color{green} T_{(\gamma_{l} \cap \mu^{s}) \cap d^{t-1}}}\bigr) + B((\gamma_{l} \cap \mu^{s}) \otimes \eta^{t})\bigr].
\end{align*}
We need to compare the following two expressions:
\begin{align}
	& {\color{magenta} (-1)^{s+t} } T_{{\color{magenta} \gamma_{l} \cap } ({\color{magenta} c^{s-1} \cup \eta^{t} } + B'(\omega^{s} \otimes \eta^{t}))} + B(\gamma_{l} \otimes (\omega^{s} \wedge \omega^{t})) \label{FirstTermToC} \\
	& {\color{teal} (-1)^{t} } \bigl( {\color{teal} (-1)^{s} T_{\gamma_{l} \cap c^{s-1}} } + B(\gamma_{l} \otimes \omega^{s})\bigr) \wedge \eta^{t} + B((\gamma_{l} \cap \mu^{s}) \otimes \eta^{t}) \label{SecondTermToC}
\end{align}
The first terms on the left are respectively $\color{magenta} (-1)^{s+t} T_{\gamma_{l} \cap (c^{s-1} \cup \eta^{t})}$ and $\color{teal} (-1)^{s+t} T_{\gamma_{l} \cap c^{s-1}} \wedge \eta^{t}$. The difference between them can be computed as follows:
\begin{align*}
	(-1)^{s+t} ({\color{teal} T_{\gamma_{l} \cap c^{s-1}} } & {\color{teal} \wedge \eta^{t} } - {\color{magenta} T_{(\gamma_{l} \cap c^{s-1}) \cap \eta^{t}} }) \\
	& \overset{\eqref{HomotopyCapWedge}}= (-1)^{s+t} B\partial(\gamma_{l} \cap c^{s-1} \otimes \eta^{t}) + \partial(\,\cdots) \\
	& \overset{\eqref{BoundaryTensor}}= (-1)^{s}B(\partial(\gamma_{l} \cap c^{s-1}) \otimes \eta^{t}) + \partial(\,\cdots) \\
	& \overset{\eqref{BoundaryCap}}= B(\gamma_{l} \cap \delta c^{s-1} \otimes \eta^{t}) + \partial(\,\cdots) \\
	& \overset{\eqref{FormC}}= B(\gamma_{l} \cap (\omega^{s} - \mu^{s}) \otimes \eta^{t}) + \partial(\,\cdots)
\end{align*}
Therefore, by subtracting the term $\color{magenta} (-1)^{s+t} T_{\gamma_{l} \cap (c^{s-1} \cup \eta^{t})}$ from \eqref{FirstTermToC} and \eqref{SecondTermToC}, we obtain respectively (up to boundaries):
\begin{align}
	& (-1)^{s+t}T_{\gamma_{l} \cap B'(\omega^{s} \otimes \eta^{t})} + B(\gamma_{l} \otimes (\omega^{s} \wedge \omega^{t})) \label{FirstTermToC2} \\
	& B(\gamma_{l} \cap \omega^{s} \otimes \eta^{t}) + (-1)^{t}B(\gamma_{l} \otimes \omega^{s}) \wedge \eta^{t} \label{SecondTermToC2}
\end{align}
In order to prove that these expressions coincide up to an exact current, we show that they are homotopies between the chain maps $C_{\bullet}(X; \R) \otimes \Omega_{\star}(X) \otimes \Omega_{\#}(X) \to \Tau_{\bullet+\star+\#}(X)$ defined by $\gamma \otimes \omega \otimes \eta \mapsto T_{\gamma} \wedge \omega \wedge \eta$ and $\gamma \otimes \omega \otimes \eta \mapsto T_{(\gamma \cap \omega) \cap \eta}$. The essential uniqueness then implies the thesis.

We first compute the boundary of the complex $C_{\bullet}(X; \R) \otimes \Omega_{\star}(X) \otimes \Omega_{\#}(X)$:
\begin{align}
	\partial(\gamma_{l} \otimes \omega^{s} \otimes \eta^{t}) & \overset{\eqref{BoundaryTensor}}= (-1)^{s+t} \partial \gamma_{l} \otimes \omega^{s} \otimes \eta^{t} - (-1)^{s+t} \gamma_{l} \otimes \delta(\omega^{s} \otimes \eta^{t}) \nonumber \\
	& \hspace{-2pt} \overset{\eqref{DefDeltaTensor}}= (-1)^{s+t} \partial \gamma_{l} \otimes \omega^{s} \otimes \eta^{t} - (-1)^{s+t} \gamma_{l} \otimes (d\omega^{s} \otimes \eta^{t} + (-1)^{s}\omega^{s} \otimes d\eta^{t}) \nonumber \\
	& \hspace{3pt}= (-1)^{s+t} (\partial \gamma_{l} \otimes \omega^{s} \otimes \eta^{t} - \gamma_{l} \otimes d \omega^{s} \otimes \eta^{t}) - (-1)^{t} \gamma_{l} \otimes \omega^{s} \otimes d\eta^{t} \label{TripleBoundary}
\end{align}

\emph{First case.} We have:
\begin{align}
	T_{\gamma_{l}} \wedge \omega^{s} \wedge \eta^{t} - T_{(\gamma_{l} \cap \omega^{s}) \cap \eta^{t}} & = (T_{\gamma_{l}} \wedge \omega^{s} \wedge \eta^{t} - {\color{cyan} T_{\gamma_{l} \cap (\omega^{s} \wedge \eta^{t})} }) + T_{{\color{cyan} \gamma_{l} \cap (\omega^{s} \wedge \eta^{t})} - \gamma_{l} \cap (\omega^{s} \cup \eta^{t})} \nonumber \\
	& \hspace{-13pt} \overset{\eqref{HomotopyCapWedge}, \eqref{DefBLinha}}= (B\partial + \partial B)(\gamma_{l} \otimes (\omega^{s} \wedge \eta^{t})) + T_{\gamma_{l} \cap (B'\delta + \delta B')(\omega^{s} \otimes \eta^{t})} \label{FirstBLL}
\end{align}
By calling $B''(\gamma_{l} \otimes \omega^{s} \otimes \eta^{t})$ the expression \eqref{FirstTermToC2}, we obtain:\footnote{In the first equality below, the sign $(-1)^{s+t}$ in \eqref{FirstTermToC2} remains unchanged when applied to $\partial_{\gamma_{l}} \otimes \omega^{s} \otimes \eta^{t}$, but becomes $(-1)^{(s+1)+t}$ and $(-1)^{s+(t+1)}$ when applied respectively to $\gamma_{l} \otimes d\omega^{s} \otimes \eta^{t}$ and $\gamma_{l} \otimes \omega^{s} \otimes d\eta^{t}$.}
\begin{align*}
	B'' & \partial(\gamma_{l} \otimes \omega^{s} \otimes \eta^{t}) \\
	& \overset{\eqref{TripleBoundary}}= T_{\partial \gamma_{l} \cap B'(\omega^{s} \otimes \eta^{t}) + \gamma_{l} \cap B'(d \omega^{s} \otimes \eta^{t}) + (-1)^{s} \gamma_{l} \cap B'(\omega^{s} \otimes d\eta^{t})} \\
	& \hspace{22pt} + B\bigl((-1)^{s+t} \bigl(\partial \gamma_{l} \otimes (\omega^{s} \wedge \eta^{t}) - \gamma_{l} \otimes (d \omega^{s} \wedge \eta^{t})\bigr) - (-1)^{t} \gamma_{l} \otimes (\omega^{s} \wedge d\eta^{t})\bigr) \\
	& \hspace{-8pt} \overset{\eqref{DefDeltaTensor}, \eqref{BoundaryTensor}}= T_{\partial \gamma_{l} \cap B'(\omega^{s} \otimes \eta^{t}) + \gamma_{l} \cap B' \delta(\omega^{s} \otimes \eta^{t})} + B\partial(\gamma_{l} \otimes (\omega^{s} \wedge \eta^{t}))
\end{align*}
Moreover:
\begin{align*}
	\partial B''(\gamma_{l} \otimes \omega^{s} \otimes \eta^{t}) & \overset{\eqref{FirstTermToC2}}= (-1)^{s+t} T_{\partial(\gamma_{l} \cap B'(\omega^{s} \otimes \eta^{t}))} + \partial B(\gamma_{l} \otimes (\omega^{s} \wedge \eta^{t})) \\
	& \hspace{2pt} \overset{\eqref{BoundaryCap}}= - T_{\partial\gamma_{l} \cap B'(\omega^{s} \otimes \eta^{t}) - \gamma_{l} \cap \delta B'(\omega^{s} \otimes \eta^{t})} + \partial B(\gamma_{l} \otimes (\omega^{s} \wedge \eta^{t})) 
\end{align*}
Thus, $(B'' \partial + \partial B'')(\gamma_{l} \otimes \omega^{s} \otimes \eta^{t})$ coincides with \eqref{FirstBLL}.

\vspace{3pt} \emph{Second case.} We have:
\begin{align}
	T_{\gamma_{l}} \wedge \omega^{s} \wedge \eta^{t} - T_{(\gamma_{l} \cap \omega^{s}) \cap \eta^{t}} &= (T_{\gamma_{l}} \wedge \omega^{s} \wedge \eta^{t} - {\color{cyan} T_{\gamma_{l} \cap \omega^{s}} \wedge \eta^{t} }) + ({\color{cyan} T_{\gamma_{l} \cap \omega^{s}} \wedge \eta^{t} } - T_{(\gamma_{l} \cap \omega^{s}) \cap \eta^{t}}) \nonumber \\
	&= (B\partial + \partial B)(\gamma_{l} \otimes \omega^{s}) \wedge \eta^{t} + (B\partial + \partial B)(\gamma_{l} \cap \omega^{s} \otimes \eta^{t}) \label{SecondBLL}
\end{align}
By calling $B'''(\gamma_{l} \otimes \omega^{s} \otimes \eta^{t})$ the expression \eqref{SecondTermToC2}, we obtain:
\begin{align*}
	B''' & \partial(\gamma_{l} \otimes \omega^{s} \otimes \eta^{t}) \\
	& \overset{\eqref{TripleBoundary}}= B\bigl((-1)^{s+t} (\partial \gamma_{l} \cap \omega^{s} \otimes \eta^{t} - \gamma_{l} \cap d \omega^{s} \otimes \eta^{t}) - (-1)^{t} \gamma_{l} \cap \omega^{s} \otimes d\eta^{t} \bigr) \\
	& \hspace{35pt} + (-1)^{s} \bigl( B(\partial \gamma_{l} \otimes \omega^{s}) \wedge \eta^{t} - B(\gamma_{l} \otimes d \omega^{s}) \wedge \eta^{t} \bigr) + B(\gamma_{l} \otimes \omega^{s}) \wedge d\eta^{t} \\
	& \hspace{-7pt} \overset{\eqref{BoundaryCap}, \eqref{BoundaryTensor}}= B\bigl((-1)^{t} \partial (\gamma_{l} \cap \omega^{s}) \otimes \eta^{t} - (-1)^{t} \gamma_{l} \cap \omega^{s} \otimes d\eta^{t} \bigr) \\
	& \hspace{35pt} + B\partial (\gamma_{l} \otimes \omega^{s}) \wedge \eta^{t} + B(\gamma_{l} \otimes \omega^{s}) \wedge d\eta^{t} \\
	& \hspace{2pt} \overset{\eqref{BoundaryTensor}}= B\partial(\gamma_{l} \cap \omega^{s} \otimes \eta^{t}) + B\partial(\gamma_{l} \otimes \omega^{s}) \wedge \eta^{t} + B(\gamma_{l} \otimes \omega^{s}) \wedge d\eta^{t}
\end{align*}
Moreover:
\begin{align*}
	\partial B'''(&\gamma_{l} \otimes \omega^{s} \otimes \eta^{t}) \overset{\eqref{SecondTermToC2}, \eqref{BoudaryWedge}}= \partial B(\gamma_{l} \cap \omega^{s} \otimes \eta^{t}) + \partial B(\gamma_{l} \otimes \omega^{s}) \wedge \eta^{t} - B(\gamma_{l} \otimes \omega^{s}) \wedge d\eta^{t}
\end{align*}
Thus, $(B''' \partial + \partial B''')(\gamma_{l} \otimes \omega^{s} \otimes \eta^{t})$ coincides with \eqref{SecondBLL}.

\vspace{3pt} (4) We have:
\begin{align*}
	f_{*}\bigl([\gamma_{l}, & T_{l+1}] \hatcap f^{*}[\mu^{s}, c^{s-1}, \omega^{s}]\bigr) \\
	& \hspace{-3pt} \overset{\eqref{DiffCapP}}= f_{*}\bigl[\gamma_{l} \cap f^{*}\mu^{s}, (-1)^{s}(T_{l+1} \wedge f^{*}\omega^{s} + T_{\gamma_{l} \cap f^{*}c^{s-1}}) + B(\gamma_{l} \otimes f^{*}\omega^{s})\bigr] \\
	& = \bigl[f_{*}(\gamma_{l} \cap f^{*}\mu^{s}), (-1)^{s}f_{*}(T_{l+1} \wedge f^{*}\omega^{s} + T_{\gamma_{l} \cap f^{*}c^{s-1}}) + f_{*}B(\gamma_{l} \otimes f^{*}\omega^{s})\bigr]
\end{align*}
and
\begin{align*}
	f_{*}[\gamma_{l}, & T_{l+1}] \hatcap [\mu^{s}, c^{s-1}, \omega^{s}] \overset{\eqref{DiffCapP}}= \bigl[f_{*}\gamma_{l} \cap \mu^{s}, (-1)^{s}(f_{*}T_{l+1} \wedge \omega^{s} + T_{f_{*}\gamma_{l} \cap c^{s-1}}) + B(f_{*}\gamma_{l} \otimes \omega^{s})\bigr]
\end{align*}
The only terms we have to compare are $f_{*}B(\gamma_{l} \otimes f^{*}\omega^{s})$ and $B(f_{*}\gamma_{l} \otimes \omega^{s})$. If we consider the chain maps $C_{\bullet}(X; \R) \otimes \Omega_{\star}(Y) \to \Tau_{\bullet+\star}(Y)$ defined by $\gamma_{l} \otimes \omega^{s} \mapsto T_{f_{*}\gamma_{l}} \wedge \omega^{k}$ and $\gamma_{l} \otimes \omega^{s} \mapsto T_{f_{*}\gamma_{l} \cap \omega^{k}}$, then one can readily verify that two suitable choices for a chain homotopy between them are $\gamma_{l} \otimes \omega^{k} \mapsto f_{*}B(\gamma_{l} \otimes f^{*}\omega^{s})$ and $\gamma_{l} \otimes \omega^{k} \mapsto B(f_{*}\gamma_{l} \otimes \omega^{s})$. The essential uniqueness implies the thesis.

\SkipTocEntry \subsection{Proof of Proposition \ref{PropS1Diff}}\label{AppProofS1}

By identifying the 1-simplex $\Delta^{1} \subset \R^{2}$ with the unit interval $\Ii := [0, 1]$, we call $u_{1} \in Z_{1}(S^{1})$ the singular simplex $u_{1} \colon \Ii \to S^{1}$, $t \mapsto e^{2\pi i t}$. It immediately follows that $[u_{1}, 0] = [S^{1}]$, since $[u_{1}] = [S^{1}]$ topologically, and $T_{u_{1}} = \int_{S^{1}}$. Hence:
\begin{equation}\label{IntS1Formula}
	\textstyle \partial_{S^{1}}[\gamma_{k}, T_{k+1}] \overset{\eqref{DiffS1}}= [u_{1}, 0] \hattimes [\gamma_{k}, T_{k+1}] \overset{\eqref{DiffExtPr}}= \bigl[u_{1} \times \gamma_{k}, -\int_{S^{1}} \times T_{k+1} \bigr]
\end{equation}
We observe that $\int_{S^{1}} \times T = T \circ \int_{S^{1}} = \partial_{S^{1}}T$ for every $T \in \Tau_{\bullet}(X)$. In fact, given $\omega_{1} \in \Omega^{1}(S^{1})$ and $\omega_{2} \in \Omega^{\bullet-1}(X)$, and calling $\pi \colon S^{1} \times X \to X$ and $p \colon S^{1} \times X \to S^{1}$ the projections, the following identity holds:
\begin{align*}
	\textstyle \bigl(T \circ \int_{S^{1}}\bigr)(\omega_{1} \times \omega_{2}) & \textstyle = T\bigl(\int_{S^{1}}(p^{*}\omega_{1} \wedge \pi^{*}\omega_{2})\bigr) = T\bigl(\bigl(\int_{S^{1}} p^{*}\omega_{1}\bigr) \wedge \omega_{2})\bigr) \\
	& \textstyle = \bigl(\int_{S^{1}}\omega_{1}\bigr) \cdot T(\omega_{2}) = \bigl(\int_{S^{1}} \times T\bigr)(\omega_{1} \times \omega_{2})
\end{align*}
Therefore, item (1) follows from proposition \ref{PropExtSlant}, considering that $I[S^{1}] = [S^{1}]$ and $R[S^{1}] = \int_{S^{1}}$. About (2):
\begin{align*}
	(t \times \id_{X})_{*} \circ \partial_{S^{1}}[\gamma_{k}, T_{k+1}] & \textstyle \overset{\eqref{IntS1Formula}}= (t \times \id_{X})_{*}\bigl[u_{1} \times \gamma_{k}, -\int_{S^{1}} \times T_{k+1}\bigr] \\
	&  \textstyle \hspace{3pt} = \hspace{3pt} \bigl[t_{*}u_{1} \times \gamma_{k}, -\bigl(t_{*}\int_{S^{1}}\bigr) \times T_{k+1}\bigr] = -\partial_{S^{1}} [\gamma_{k}, T_{k+1}]
\end{align*}
In order to prove (3):
	\[\textstyle \pi_{*} \circ \partial_{S^{1}}[\gamma_{k}, T_{k}] \overset{\eqref{IntS1Formula}}= \pi_{*}\bigl[u_{1} \times \gamma_{k}, -\int_{S^{1}} \times T_{k+1}\bigr] = \bigl[\pi_{*}(u_{1} \times \gamma_{k}), -\pi_{*}\bigl(\int_{S^{1}} \times T_{k+1}\bigr)\bigr] = 0
\]
In fact:
	\[\textstyle \bigl(\int_{S^{1} \times T_{k+1}}\bigr)(\pi^{*}\omega^{k+1}) = \bigl(\int_{S^{1}} \times T_{k+1}\bigr)(1 \times \omega^{k+1}) = \int_{S^{1}} 1 \cdot T_{k+1}(\omega^{k+1}) = 0 \cdot T_{k+1}(\omega^{k+1}) = 0
\]
Moreover, for any singular $k$-simplex $\sigma$, by calling $P_{k+1} \in C_{k+1}(\Ii \times \Delta^{k})$ the standard triangulation inducing the prism map, the cross product $u_{1} \times \sigma$ coincides with the push-forward $(u_{1} \times \sigma)_{*}(P_{k+1})$ through the function $u_{1} \times \sigma \colon \Ii \times \Delta^{k} \to S^{1} \times X$, and $\pi_{*}(u_{1} \times \gamma_{k}) = \pi_{*}(u_{1} \times \gamma_{k})_{*}(P_{k+1})$ is the boundary of an integral combination of degenerate simplices, which is thin.

With respect to item (4), we first prove the formula analogous to \eqref{IntDer} about currents, that is:
\begin{equation}\label{IntDerCurr}
	\textstyle T \wedge \int_{S^{1}} \eta = \pi_{*}(\partial_{S^{1}}T \wedge \eta)
\end{equation}
In fact:
\begin{align*}
	\textstyle \bigl(T \wedge \int_{S^{1}} \eta\bigr)(\omega) & \textstyle = T\bigl(\bigl(\int_{S^{1}} \eta\bigr) \wedge \omega\bigr) = T\bigl(\int_{S^{1}}(\eta \wedge \pi^{*}\omega)\bigr) = (\partial_{S^{1}}T)(\eta \wedge \pi^{*}\omega) \\
	& \textstyle = (\partial_{S^{1}}T \wedge \eta)(\pi^{*}\omega) = \bigl(\pi_{*}(\partial_{S^{1}}T \wedge \eta)\bigr)(\omega)
\end{align*}
Moreover, given $\gamma \in C_{k}(X)$ and $\varphi \in C^{h+1}(S^{1} \times X)$, we set $\partial_{S^{1}}\gamma := u_{1} \times \gamma$ and $\int_{S^{1}} \varphi := \varphi \circ \partial_{S^{1}} \in C^{h}(X)$. The same definitions hold with coefficients in $\R/\Z$. Then, $\int_{S^{1}} [\mu^{s+1}, c^{s}, \omega^{s+1}] := \bigl[\int_{S^{1}} \mu^{s+1}, -\int_{S^{1}} c^{s}, \int_{S^{1}} \omega^{s+1}\bigr]$. We have $\gamma \cap \int_{S^{1}} \varphi = \pi_{*}(\partial_{S^{1}}\gamma \cap \varphi)$ up to the boundary of a thin chain, the proof being similar to the one of \eqref{IntDerCurr} by applying the definition of cap product. Analogously, one can prove that $B(\gamma \otimes \int_{S^{1}} \omega) = B(\partial_{S^{1}}\gamma \otimes \omega)$ from definition \eqref{HomotopyCapWedge}. We obtain:
\begin{align*}
	[\gamma_{k}, T_{k+1}] & \textstyle \hatcap \int_{S^{1}} [\mu^{s+1}, c^{s}, \omega^{s+1}] \\
	& \textstyle \overset{\eqref{DiffCapP}}= \bigl[\gamma_{l} \cap \int_{S^{1}}\mu^{s+1}, (-1)^{s}\bigl(T_{l+1} \wedge \int_{S^{1}}\omega^{s+1} - T_{\gamma_{l} \cap \int_{S^{1}} c^{s}}\bigr) + B\bigl(\gamma_{l} \otimes \int_{S^{1}}\omega^{s+1}\bigr)\bigr] \\
	& \textstyle \hspace{-2pt} \overset{\eqref{IntDerCurr}}= \pi_{*}\bigl[\partial_{S^{1}}\gamma_{l} \cap \mu^{s+1}, (-1)^{s+1}(-\partial_{S^{1}} T_{l+1} \wedge \omega^{s+1} + T_{\partial_{S^{1}}\gamma_{l} \cap c^{s}}) + B(\partial_{S^{1}}\gamma_{l} \otimes \omega^{s+1})\bigr] \\
	& \textstyle \overset{\eqref{DiffCapP}}= \pi_{*}\bigl([\partial_{S^{1}}\gamma_{l}, -\partial_{S^{1}}T_{l+1}] \hatcap [\mu^{s+1}, c^{s}, \omega^{s+1}]\bigr) \\
	& \hspace{-2pt} \overset{\eqref{IntS1Formula}}= \pi_{*}\bigl(\partial_{S^{1}}[\gamma_{l}, T_{l+1}] \hatcap [\mu^{s+1}, c^{s}, \omega^{s+1}]\bigr)
\end{align*}
About the first diagram in (5):
	\[\hatPD(\pi^{*}\xi) = [S^{1} \times X] \hatcap \pi^{*}\xi = ([S^{1}] \hattimes [X]) \hatcap (1 \hattimes \xi) = [S^{1}] \hattimes ([X] \hatcap \xi) = \partial_{S^{1}}\hatPD(\xi)
\]
About the second:
	\[\textstyle \hatPD\bigl(\int_{S^{1}}\xi\bigr) = [X] \hatcap \int_{S^{1}} \xi \overset{\eqref{IntDer}}= \pi_{*}(\partial_{S^{1}}[X] \hatcap \xi) = \pi_{*}([S^{1} \times X] \hatcap \xi) = \pi_{*}\hatPD(\xi) \qedhere
\]

\section{Exactness in Relative Homology}\label{AppExRelHom}

We show the proofs of the propositions about relative homology not included in the main text.

\SkipTocEntry \subsection{Proof of Proposition \ref{PropEx1}}

The first part of \eqref{LongExact1} is the sequence in homology with coefficients in $\R/\Z$, and the last part is the one in integral homology. Exactness in $H_{\bullet+1}(A; \R/\Z)$ follows from homology in $\R/\Z$, since $\iota$ is injective. Analogously, exactness in $H_{\bullet-1}(X)$ follows from integral homology, since $I$ is surjective.

In $\hat{H}_{\bullet}(X)$, one can easily show that $i_{X} \circ (\iota \circ f_{*}) = \iota \circ j_{X} \circ f_{*}$ with $j_{X} \colon H_{\bullet+1}(X; \R/\Z) \to H_{\bullet+1}(f; \R/\Z)$, and $j_{X} \circ f_{*} = 0$ because it is a part of the sequence with coefficients in $\R/\Z$. In turn, let us suppose $i_{X}(\lambda) = 0$. By definition
\begin{equation}\label{FormulaRRel}
	R\bigl[(\gamma_{k}, \theta_{k-1}), (T_{k+1}, T'_{k}) \bigr] = (T_{\gamma_{k}} + \partial T_{k+1} + f_{*}T'_{k}, T_{\theta_{k-1}} - \partial T'_{k}).
\end{equation}
By setting $\lambda = [\gamma_{k}, T_{k+1}]$, we deduce $R \circ i_{X}(\lambda) = R[(\gamma_{k}, 0), (T_{k+1}, 0)] = (R(\lambda), 0)$, hence $R(\lambda) = 0$. This means that $\lambda$ is a flat class, thus it lifts to $H_{k+1}(A; \R/\Z)$ along $f_{*}$ in the topological sequence.

In $\hat{H}_{\bullet-1}(f)$, we have $\pi_{A} \circ i_{X}[\gamma_{k}, T_{k+1}] = \pi_{A}[(\gamma_{k}, 0), (T_{k+1}, 0)] = 0$. Now we set $\lambda := [(\gamma_{k}, \theta_{k-1}), (T_{k+1}, T'_{k})]$ and suppose $\pi_{A}(\lambda) = 0$, that is, $[\theta_{k-1}, -T'_{k}] = 0$ by formula \eqref{MapPiA}. This means that $(\theta_{k-1}, -T'_{k}) = (\partial \Theta_{k}, -T_{\Theta_{k}} - \partial T''_{k+1})$. Therefore:
\begin{align*}
	\lambda &= \bigl[ (\gamma_{k}, \partial \Theta_{k}), (T_{k+1}, T_{\Theta_{k}} + \partial T''_{k+1}) \bigr] \\
	&= \bigl[ (\gamma_{k} + f_{*}\Theta_{k}, 0) + \partial (0, -\Theta_{k}), (T_{k+1} + f_{*}T''_{k+1}, T_{\Theta_{k}}) - \partial(0, T''_{k+1}) \bigr] \\
	&= \bigl[ (\gamma_{k} + f_{*}\Theta_{k}, 0), T_{(0, -\Theta_{k})} + (T_{k+1} + f_{*}T''_{k+1}, T_{\Theta_{k}}) \bigr] \\
	&= \bigl[ (\gamma_{k} + f_{*}\Theta_{k}, 0), (T_{k+1} + f_{*}T''_{k+1}, 0) \bigr] = i_{X}[\gamma_{k} + f_{*}\Theta_{k}, T_{k+1} + f_{*}T''_{k+1}]
\end{align*}
In $\hat{H}_{\bullet-1}(A)$, one can easily show that $f_{*} \circ I \circ \pi_{A} = f_{*} \circ p_{A} \circ I$ with $p_{A} \colon H_{\bullet}(f) \to H_{\bullet-1}(A)$, and $f_{*} \circ p_{A} = 0$ because of exactness in integral cohomology. In turn, let us suppose $f_{*} \circ I[\theta_{k-1}, T_{k}] = 0$. It follows that $f_{*}\theta_{k-1} = \partial \Theta_{k}$. We obtain the cycle $(-\Theta_{k}, \theta_{k-1}) \in Z_{k}(f)$, so that $\pi_{A}\bigl[(-\Theta_{k}, \theta_{k-1}), (0, -T_{k})\bigr] = [\theta_{k-1}, T_{k}]$ by formula \eqref{MapPiA}.

About \eqref{FormulasCompRel}:
\begin{align*}
	i_{X} \circ f_{*}[\theta_{k}, T_{k+1}] &= i_{X}[f_{*}\theta_{k}, f_{*}T_{k+1}] = \bigl[ (f_{*}\theta_{k}, 0), (f_{*}T_{k+1}, 0) \bigr] \\
	&= \bigl[ \partial(0, \theta_{k}), \partial(0, T_{k+1}) + (0, \partial T_{k+1}) \bigr] = \bigl[ (0, 0), T_{(0, \theta_{k})} + (0, \partial T_{k+1}) \bigr] \\
	&= \bigl[ (0, 0), (0, T_{\theta_{k}} + \partial T_{k+1}) \bigr] = a\bigl(0, R[\theta_{k}, T_{k+1}]\bigr) \overset{\eqref{DefCov}}= \cov \circ R[\theta_{k}, T_{k+1}]
\end{align*}
Furthermore:
\begin{align*}
	f_{*} \circ \pi_{A} \bigl[&(\gamma_{k}, \theta_{k-1}), (T_{k+1}, T'_{k}) \bigr] = f_{*}[\theta_{k-1}, -T'_{k}\bigr] \\
	&\overset{(!)}= [-\partial \gamma_{k}, -f_{*}T'_{k}] = [0, -T_{\gamma_{k}} - f_{*}T'_{k}] \overset{\eqref{FormulaRRel}}= -a \circ \pi_{X} \circ R\bigl(\bigl[(\gamma_{k}, \theta_{k-1}), (T_{k+1}, T'_{k}) \bigr]\bigr)
\end{align*}
In the equality $(!)$, we applied the fact that $(\gamma_{k}, \theta_{k-1})$ is a cycle in the mapping cone, hence $\partial\gamma_{k} + f_{*}\theta_{k-1} = 0$.

\SkipTocEntry \subsection{Proof of Proposition \ref{PropEx2}}

We prove that \eqref{ProjParallel} is surjective. Given a parallel class $\lambda := \bigl[[\gamma_{k}], [T_{k+1}] \bigr]$, we have $\partial[\gamma_{k}] = 0$ by definition, that is, $\partial\gamma_{k} = f_{*}\theta_{k-1}$ for some $\theta_{k-1}$. Since $f_{*}$ is injective on chains, this implies $\partial\theta_{k-1} = 0$, thus $[(\gamma_{k}, -\theta_{k-1}), (T_{k+1}, 0)]$ is a lift of $\lambda$ along $\hat{f}$. We now prove that $\Ker(\hat{f}) = \IIm(\cov)$. From definition \eqref{DefCov}, we deduce
	\[\hat{f} \circ \cov(T'_{k}) = \hat{f}\bigl[(0, 0), (0, T'_{k})\bigr] = \bigl[[0], [0]\bigr] = 0,
\]
therefore $\IIm(\cov) \subseteq \Ker(\hat{f})$. Conversely, let us suppose
	\[\hat{f}\bigl[(\gamma_{k}, \theta_{k-1}), (T_{k+1}, T'_{k})\bigr] \overset{\eqref{ProjParallel}}= \bigl[[\gamma_{k}], [T_{k+1}] \bigr] = 0.
\]
This means $([\gamma_{k}], [T_{k+1}]) = ([\partial\Gamma_{k+1}], [-T_{\Gamma_{k+1}} - \partial T''_{k+2}])$ for some $\Gamma_{k+1}$ and $T''_{k+2}$, that is, there exist $\phi_{k}$ and $U_{k+1}$ in $A$ satisfying:
\begin{equation}\label{CondKerI}
	\gamma_{k} = \partial\Gamma_{k+1} + f_{*}\phi_{k} \hspace{50pt} T_{k+1} = -(T_{\Gamma_{k+1}} + \partial T''_{k+2} + f_{*}U_{k+1})
\end{equation}
We obtain $\partial\gamma_{k} = f_{*}\partial\phi_{k}$ from the first condition. Since $[\gamma_{k}, \theta_{k-1}]$ is a cycle, we have $\partial \gamma_{k} + f_{*}\theta_{k-1} = 0$, thus the injectivity of $f_{*}$ implies $\theta_{k-1} = -\partial\phi_{k}$. It follows that $(\gamma_{k}, \theta_{k-1}) = (\partial\Gamma_{k+1} + f_{*}\phi_{k}, -\partial\phi_{k}) = \partial(\Gamma_{k+1}, \phi_{k})$. Hence, the second condition in \eqref{CondKerI} implies
\begin{align*}
	\bigl[(\gamma_{k}, \theta_{k-1}), (T_{k+1}, T'_{k})\bigr] &= \bigl[\partial(\Gamma_{k+1}, \phi_{k}), -(T_{\Gamma_{k+1}} + \partial T''_{k+2} + f_{*}U_{k+1}), T'_{k} \bigr] \\
	&= \bigl[(0, 0), T_{(\Gamma_{k+1}, \phi_{k})} - (T_{\Gamma_{k+1}} + \partial T''_{k+2} + f_{*}U_{k+1}), T'_{k} \bigr] \\
	&= \bigl[(0, 0), (- \partial T''_{k+2} - f_{*}U_{k+1}, T_{\phi_{k}} + T'_{k}) \bigr] \\
	&= \bigl[(0, 0), -\partial(T''_{k+2}, U_{k+1}) + (0, T_{\phi_{k}} + T'_{k} - \partial U_{k+1}) \bigr] \\
	&= \cov(T_{\phi_{k}} + T'_{k} - \partial U_{k+1}),
\end{align*}
therefore $\Ker(\hat{f}) \subseteq \IIm(\cov)$.

We now prove the exactness of \eqref{LongExact2}. The first part is the sequence in homology with coefficients in $\R/\Z$, and the last part is the sequence in integral cohomology. Exactness in $H_{\bullet+2}(X, A; \R/\Z)$ follows from homology in $\R/\Z$, since $\iota$ is injective. Analogously, exactness in $H_{\bullet-1}(A)$ follows from integral cohomology, since $I$ is surjective.

In $\hat{H}_{\bullet}(A)$, we have $f_{*} \circ (\iota \circ \beta) = \iota \circ f_{*} \circ \beta$, and $f_{*} \circ \beta = 0$ because it is a part of the sequence with coefficients in $\R/\Z$. In turn, let us suppose $f_{*}(\lambda) = 0$. This implies $f_{*}(R(\lambda)) = 0$. Since $f$ is a closed embedding, $f^{*} \colon \Omega^{\bullet}(X) \to \Omega^{\bullet}(A)$ is surjective, therefore $f_{*} \colon \Tau_{\bullet}(A) \to \Tau_{\bullet}(X)$ is injective. Hence, $R(\lambda) = 0$, so that $\lambda$ lifts to $H_{\bullet+2}(A; \R/\Z)$ along the Bockstein map in the topological sequence.

In $\hat{H}_{\bullet}(X)$, we have $\pi_{X} \circ f_{*}[\theta_{k}, T'_{k+1}] = \pi_{X}[f_{*}\theta_{k}, f_{*}T'_{k+1}] = \bigl[[f_{*}\theta_{k}], [f_{*}T'_{k+1}]\bigr] = [0, 0]$. Conversely, if $\pi_{X}[\gamma_{k}, T_{k+1}] = \bigl[[\gamma_{k}], [T_{k+1}]\bigr] = 0$, then there exist $\Theta_{k+1}$ and $T'_{k+2}$ satisfying $([\gamma_{k}], [T_{k+1}]) = ([\partial\Theta_{k+1}], [-T_{\Theta_{k+1}} - \partial T'_{k+2}])$. The class $[\gamma_{k} - \partial\Theta_{k+1}, T_{k+1} + T_{\Theta_{k+1}} + \partial T'_{k+2}]$ in $\hat{H}_{k}(A)$ thence lifts $[\gamma_{k}, T_{k+1}]$ along $f_{*}$.

In $\hat{H}_{\bullet}(X, A)$, we have $(\beta \circ I) \circ \pi_{X} = \beta \circ \pi_{X} \circ I$, and $\beta \circ \pi_{X} = 0$ because it is a part of the sequence in integral cohomology. In turn, let us suppose $\beta \circ I\bigl[[\gamma_{k}], [T_{k+1}]\bigr] = 0$, that is, $[\partial\gamma_{k}] = 0$ in $H_{k-1}(A)$. It follows that there exists a chain $\theta_{k} \in C_{k}(A)$ satisfying $\partial\gamma_{k} = \partial\theta_{k}$. The class $[\gamma_{k} - f_{*}\theta_{k}, T_{k+1}]$ thence lifts $\bigl[[\gamma_{k}], [T_{k+1}]\bigr]$ along $\pi_{X}$.

\SkipTocEntry \subsection{Proof of Proposition \ref{PropEx3}}

Exactness is a consequence of the construction through the mapping cone, hence we only have to prove formula \eqref{HPrimeIso}. We show that the following natural morphism is well-defined and surjective:
\begin{eqnarray}
	p \colon \hspace{5pt} \Ker(f_{*} \circ \pi_{A}) & \to & \hat{H}'_{k}(f) \label{DefMorP} \\
	\bigl\lbrack u_{k} = ((\gamma_{k}, \theta_{k-1}), (T_{k+1}, T'_{k})) \bigr\rbrack & \mapsto & \bigl\lbrack v_{k} = ((\gamma_{k}, T_{k+1}, R_{k}), (\theta_{k-1}, -T'_{k}, S_{k-1})) \bigr\rbrack \nonumber
\end{eqnarray}
where $R[u_{k}] = (R_{k}, S_{k-1})$. Moreover, we prove that its kernel is $\IIm(i_{X} \circ f_{*})$, so that it induces the isomorphism \eqref{HPrimeIso}.

We first check that $v_{k}$ is a cycle in the mapping cone. We have $f_{*} \circ \pi_{A}[u_{k}] = -a(R_{k})$ by formula \eqref{FormulasCompRel}. Then $R_{k}$ is an integral current since $[u_{k}] \in \Ker(f_{*} \circ \pi_{A})$, so it fits in the triple $(\gamma_{k}, T_{k+1}, R_{k})$. Formula \eqref{FormulaRRel} or a direct computation shows that $S_{k-1}$ is also integral (independently of $[u_{k}]$ belonging to the kernel), thus it fits in the second triple. The boundary of $v_{k}$ is formed by two triples. The first one is $\partial(\gamma_{k}, T_{k+1}, R_{k}) + f_{*}(\theta_{k-1}, -T'_{k}, S_{k-1})$, which coincides with
\begin{equation}\label{Comp1Bd}
	 (\partial \gamma_{k} + f_{*}\theta_{k-1}, \; R_{k} - T_{\gamma_{k}} - \partial T_{k+1} - f_{*}T'_{k}, \; f_{*}S_{k-1}).
\end{equation}
The first entry vanishes since $(\gamma_{k}, \theta_{k-1})$ in $u_{k}$ is a cycle, and the second one because of formula \eqref{FormulaRRel}. About the third one, both $(R_{k}, S_{k-1})$ and $R_{k}$ are closed, since they are integral, hence it easily follows that $f_{*}S_{k-1} = 0$. The second component of $\partial v_{k}$ is $-\partial (\theta_{k-1}, -T'_{k}, S_{k-1})$, coinciding with
\begin{equation}\label{Comp2Bd}
	 -(\partial \theta_{k-1}, \; S_{k-1} - T_{\theta_{k-1}} + \partial T'_{k}, 0).
\end{equation}
Again, the first entry vanishes since $(\gamma_{k}, \theta_{k-1})$ is a cycle, and the second one because of formula \eqref{FormulaRRel}.

We now verify that $v_{k}$ is a boundary if and only if $[u_{k}] \in \IIm(i_{X} \circ f_{*})$, which holds in particular if $[u_{k}] = 0$. This completes the proof of well-definedness and confirms the claim about the kernel. With an irrelevant shift on the degree, $v_{k-1}$ is a boundary if and only if there exists a chain $w_{k} = ((\gamma_{k}, T_{k+1}, R_{k}), (\theta_{k-1}, -T'_{k}, S_{k-1}))$ such that $v_{k-1} = \partial w_{k} = (\eqref{Comp1Bd}, \eqref{Comp2Bd})$. By definition \eqref{DefMorP}, this means that
	\[u_{k-1} = \bigl( (\partial \gamma_{k} + f_{*}\theta_{k-1}, \; -\partial \theta_{k-1}), \; (R_{k} - T_{\gamma_{k}} - \partial T_{k+1} - f_{*}T'_{k}, \; S_{k-1} - T_{\theta_{k-1}} + \partial T'_{k}) \bigr)
\]
We decompose $u_{k-1} = u'_{k-1} + u''_{k-1}$ as follows:
\begin{align*}
	& u'_{k-1} = \bigl( (\partial \gamma_{k} + f_{*}\theta_{k-1}, \; -\partial \theta_{k-1}), \; (- T_{\gamma_{k}} - \partial T_{k+1} - f_{*}T'_{k}, \; - T_{\theta_{k-1}} + \partial T'_{k}) \bigr) \\
	& u''_{k-1} = \bigl( (0, 0), (R_{k}, S_{k-1}) \bigr)
\end{align*}
We have
	\[u'_{k-1} = \bigl( \partial(\gamma_{k}, \theta_{k-1}), (-T_{(\gamma_{k}, \theta_{k-1})} - \partial(T_{k+1}, -T'_{k})) \bigr),
\]
which is exactly the condition $[u'_{k}] = 0$. Since $(R_{k}, 0)$ is integral, the only meaningful term in $u''_{k-1}$ is $((0, 0), (0, S_{k-1})) = \cov(S_{k-1})$. Here $S_{k-1}$ is any integral form, therefore $u''_{k-1} = \cov \circ R(\mu)$ where $\mu \in \hat{H}_{k-1}(A)$. Formula \eqref{FormulasCompRel} then implies that the kernel of $p$ is the image of $i_{X} \circ f_{*}$.

It only remains to prove surjectivity. Since $v_{k}$ is a cycle in \eqref{DefMorP}, formulas \eqref{Comp1Bd} and \eqref{Comp2Bd} imply respectively $R_{k} = T_{\gamma_{k}} + \partial T_{k+1} + f_{*}T'_{k}$ and $S_{k-1} = T_{\theta_{k-1}} - \partial T'_{k}$, so that $R_{k}$ and $S_{k-1}$ are completely determined by the entries of $u_{k}$. The other conditions in the mentioned formulas are equivalent to $(\gamma_{k}, \theta_{k-1})$ in $u_{k}$ being a cycle, which is part of the definition of $u_{k}$. Thus, any class $[v_{k}]$ admits a lift $[u_{k}]$ along $p$.

\SkipTocEntry \subsection{Proof of Proposition \ref{PropEx4}}

We leave the proof to the reader.

\section{Computations on Currents in Generalised Homology}\label{AppCurrents}

We show the proofs of the propositions about currents in generalised homology not included in the main text.

\SkipTocEntry \subsection{Boundary of Wedge Product}\label{ProofBdWedge}

We have:
\begin{align*}
	\bigl(\partial(\eta^{h} \wedge T_{k})\bigr)(\omega^{l}) &= (\eta^{h} \wedge T_{k})(d\omega^{l}) = T_{k}(d\omega^{l} \wedge \eta^{h}) \\
	& = T_{k}\bigl( d(\omega^{l} \wedge \eta^{h}) - (-1)^{l} \, \omega^{l} \wedge d\eta^{h} \bigr) \\
	& = (\partial T_{k})(\omega^{l} \wedge \eta^{h}) - (-1)^{l} \, T_{k}(\omega^{l} \wedge d\eta^{h}) \\
	& = \bigl(\eta^{h} \wedge \partial T_{k} - (-1)^{l} \, d\eta^{h} \wedge T_{k}\bigr)(\omega^{l}).
\end{align*}
Since $h^{\bullet}$ rationally even, the identity $(-1)^{l} = (-1)^{k+h+1}$ holds, proving the thesis.

\SkipTocEntry \subsection{Proof of Lemma \ref{LemmaProjHom}}\label{ProofLemmaProjHom}

We have:
\begin{align*}
	\bigl(\partial T_{(W, \hat{U}, \hat{A}, F)}\bigr)(\omega) &\overset{\eqref{BoundaryDifferential}}= T_{(W, \hat{U}, \hat{A}, F)}(d\omega) = \int_{W} F^{*}d\omega \wedge R(\hat{A}) \wedge \Td(\hat{U}) \\
	& \hspace{2pt} = \int_{W} d \bigl( F^{*}\omega \wedge R(\hat{A}) \wedge \Td(\hat{U}) \bigr) \\
	& \hspace{2pt} = \int_{\partial W} F^{*}\omega \wedge R(\hat{A}) \wedge \Td(\hat{U}) = T_{\partial(W, \hat{U}, \hat{A}, F)}(\omega)
\end{align*}
The integral $\int_{W} F^{*}\omega \wedge R(\hat{A}) \wedge \Td(\hat{U})$ is trivially additive on $W$ and $\hat{A}$. Let us fix a submersion $\varphi \colon M \to N$ and two precycles $(M, \hat{u}, \hat{\alpha}, f \circ \varphi)$ and $(N, \hat{v}, \varphi_{!}\hat{\alpha}, f)$. We denote by $\hat{n}$ the orientation of the normal bundle $\mathcal{N}$ on $M$, that satisfies the identity
\begin{equation}\label{TdOrientH}
	\Td(\hat{u}) = \varphi^{*}\Td(\hat{v}) \wedge \Td(\hat{n})
\end{equation}
Then:
\begin{align*}
	T_{(M, \hat{u}, \hat{\alpha}, f \circ \varphi)}(\omega) & \overset{\eqref{DefIndCurr}}= \int_{M} (f \circ \varphi)^{*}\omega \wedge R(\hat{\alpha}) \wedge \Td(\hat{u}) \\
	& \hspace{2pt} = \int_{N} \int_{M/N} \varphi^{*}f^{*}\omega \wedge R(\hat{\alpha}) \wedge \Td(\hat{u}) \\
	& \hspace{-3pt} \overset{\eqref{TdOrientH}}= \int_{N} \int_{M/N} \varphi^{*}\bigl( f^{*}\omega \wedge \Td(\hat{v}) \bigr) \wedge R(\hat{\alpha}) \wedge \Td(\hat{n}) \\
	& \hspace{2pt} = \int_{N} f^{*}\omega \wedge \Td(\hat{v}) \wedge \biggl( \int_{M/N} R(\hat{\alpha}) \wedge \Td(\hat{n}) \biggr) \\
	& \hspace{-1pt} \overset{\eqref{GysinR}}= \int_{N} f^{*}\omega \wedge R(\varphi_!\hat{\alpha}) \wedge \Td(\hat{v}) \overset{\eqref{DefIndCurr}}= T_{(N, \hat{v}, \varphi_{!}\hat{\alpha}, f)}(\omega)
\end{align*}
This shows that \eqref{BarPsiIsoH} is a well-defined morphism. It is an isomorphism since composition with \eqref{IsoHomR} leads to the analogous isomorphism in singular cohomology with coefficients in $\h_{\R}$.

\SkipTocEntry \subsection{Conventions on Fibre-Wise Integration}\label{ConvFibreInt}

Dealing with a generalised cohomology theory, we integrate a form $\omega \in \Omega^{k}(Y)$ on a fibre bundle $\pi \colon Y \to X$ of rank $n$ following the convention
	\[\biggl(\int_{Y/X} \omega\biggr)_{x}(v_{1}, \ldots, v_{k-n}) := \int_{Y_{x}}\omega(\tilde{v}_{1}, \ldots, \tilde{v}_{k-n}, \,\cdot\,, \ldots, \,\cdot\,)
\]
where $df(\tilde{v}_{i}) = v_{i}$. It follows that $\int_{Y/X} d\omega = d \int_{Y/X}\omega$ and $\int_{Y/X} (\pi^{*}\omega \wedge \eta) = \omega \wedge \int_{Y/X} \eta$. In particular, if $Y = X \times F$, then $\int_{X \times F/X} \pi_{X}^{*}\eta \wedge \pi_{F}^{*}\vol_{F} = \eta$.

In the first part about singular homology, we used the opposite convention
	\[\biggl(\int_{Y/X} \omega\biggr)_{x}(v_{1}, \ldots, v_{k-n}) := \int_{Y_{x}}\omega(\,\cdot\,, \ldots, \,\cdot\,, \tilde{v}_{1}, \ldots, \tilde{v}_{k-n})
\]
which differs from the previous one by the sign $(-1)^{n(\abs{\omega}-1)}$. In this case, $\int_{Y/X} d\omega = (-1)^{n} d\int_{Y/X}\omega$ and $\int_{Y/X} (\eta \wedge \pi^{*}\omega = \bigl(\int_{Y/X} \eta\bigr) \wedge \omega$. In particular, if $Y = F \times X$, then $\int_{F \times X/X} \pi_{F}^{*}\vol_{F} \wedge \pi_{X}^{*}\eta = \eta$.

\SkipTocEntry \subsection{Proof of Lemma \ref{LemmaIntForms}}\label{ProofLemmaIntForms}

We have:
\begin{align*}
	\bigl[T_{(M, \hat{u}, \hat{\alpha}, f)}(\omega)\bigr] &\overset{\eqref{DefIndCurr}}= \biggl[ \int_{M} f^{*}\omega \wedge R(\hat{\alpha}) \wedge \Td(\hat{u}) \biggr] = \int_{M} f^{*}\ch(\xi) \cdot \ch(\alpha) \cdot \Td(u) \\
	& \hspace{3pt} = \int_{M} \ch(f^{*}\xi \cdot \alpha) \cdot \Td(u) \overset{\eqref{GysinCh}}= \ch\bigl((p_{M})_{!}(f^{*}\xi \cdot \alpha)\bigr) \in \IIm(\ch)
\end{align*}

\SkipTocEntry \subsection{Proof of Formulas \eqref{PushFWedge} and \eqref{WedgeTodd}}\label{LemmaCurrBundleProof}

We have:
\begin{align*}
	\bigl(i_{*}(\eta \wedge T)\bigr)(\omega) &= (\eta \wedge T)(i^{*}\omega) = T(i^{*}\omega \wedge \eta) = T\bigl(i^{*}(\omega \wedge \pi^{*}\eta)\bigr) \\
	&= (i_{*}T)(\omega \wedge \pi^{*}\eta) = (\pi^{*}\eta \wedge i_{*}T)(\omega)
\end{align*}
and, by calling $\hat{v}$ the orientation of $X$:
\begin{align*}
	\bigl(\eta \wedge \ScT^{0}\Td(\hat{u})\bigr)(\omega) &= T_{\Td(\hat{u})}(\omega \wedge \eta) = \int_{X} \omega \wedge \eta \wedge \Td(\hat{u}) \wedge \Td(\hat{v}) \\
	& = \int_{X} \Td(\hat{u}) \wedge \omega \wedge \eta \wedge \Td(\hat{v}) = T_{\eta}(\Td(\hat{u}) \wedge \omega) \\
	& = \bigl(T_{\eta} \wedge \Td(\hat{u})\bigr)(\omega) = \bigl(\ScT^{\bullet}\eta \wedge \Td(\hat{u})\bigr)(\omega)
\end{align*}

\section{Proofs on Differentiation}\label{AppDiff}

We show the proofs of the propositions about differentiation and homological Gysin map not included in the main text.

\SkipTocEntry \subsection{Proof of Theorem \ref{ThmDiff}}\label{ProofThmDiff}

By choosing a strict representative on $E$, the class $\hat{u} \pmb{\cdot} \hat{\lambda}$ belongs to the image of $i_{*}$, thus also $\Thom(\hat{\alpha}) \pmb{\cdot} \hat{\lambda} = \pi^{*}\hat{\alpha} \pmb{\cdot} (\hat{u} \pmb{\cdot} \hat{\lambda})$. Hence, $(\pi^{*}\hat{\alpha} \cdot \hat{u}) \pmb{\cdot} \hat{\lambda} = i_{*}\hat{\nu}$ for some $\hat{\nu}$. By applying $\pi_{*}$ on both sides, we obtain $\hat{\nu} = \pi_{*}(\pi^{*}\hat{\alpha} \pmb{\cdot} (\hat{u} \pmb{\cdot} \hat{\lambda})) = \hat{\alpha} \pmb{\cdot} \pi_{*}(\hat{u} \pmb{\cdot} \hat{\lambda}) = (-1)^{n(m-\bullet)} \hat{\alpha} \pmb{\cdot} \Thom'(\hat{\lambda})$, proving \eqref{FundThThom}. About \eqref{FundThVectBundle}, we first observe that
\begin{equation}\label{IntWedgeCurr}
	\textstyle \bigl(\int_{E/X} \eta\bigr) \wedge T = \pi_{*}\bigl(\eta \wedge \bigl(T \circ \int_{E/X}\bigr)\bigr)
\end{equation}
as the reader can prove by direct computation. We set $\lambda = [M, \hat{v}, \hat{\beta}, f, T] \in \hat{h}_{l}(X)$. Moreover, we call $\bar{E} := f^{*}E$ the pull-back of $E$, and $\bar{\pi} \colon \bar{E} \to M$ and $\bar{f} \colon \bar{E} \to E$ the corresponding projection and map over $f$. Then:
\begin{align*}
	\textstyle \bigl(\int_{E/X} \hat{\alpha}&\bigr) \pmb{\cdot} [M, \hat{v}, \hat{\beta}, f, T] \\
	& \textstyle \hspace{-25pt} \overset{\eqref{DiffCapPH}}= \hspace{1.5pt} \bigl[ M, \hat{v}, \bigl(f^{*} \int_{E/X} \hat{\alpha}\bigr) \cdot \hat{\beta}, f, R\bigl(\int_{E/X} \hat{\alpha}\bigr) \wedge T \bigr] \\
	& \textstyle \hspace{-22pt} = \hspace{4.5pt} \bigl[ M, \hat{v}, \bigl(\int_{\bar{E}/M} \bar{f}^{*}\hat{\alpha}\bigr) \cdot \hat{\beta}, f, \Td(\hat{u})^{-1} \wedge \bigl(\int_{E/X} R(\hat{\alpha})\bigr) \wedge T \bigr] \\
	& \textstyle \hspace{-22pt} = \hspace{4.5pt} \bigl[ M, \hat{v}, (-1)^{(\abs{\hat{\alpha}}-n)\abs{\hat{\beta}}} \bar{\pi}_{!}(\bar{\pi}^{*}\hat{\beta} \cdot \bar{f}^{*}\hat{\alpha}), f, \bigl(\int_{E/X} R(\hat{\alpha})\bigr) \wedge \bigl(\Td(\hat{u})^{-1} \wedge T\bigr) \bigr] \\
	& \textstyle \hspace{-27pt} \overset{\eqref{IntWedgeCurr}}= \hspace{-1pt} \bigl[ \bar{E}, \hat{v} \times \hat{u}', (-1)^{(\abs{\hat{\alpha}}-n)\abs{\hat{\beta}}} \bar{\pi}^{*}\hat{\beta} \cdot \bar{f}^{*}\hat{\alpha}, f \circ \bar{\pi}, \pi_{*}\bigl(R(\hat{\alpha}) \wedge \bigl(\bigl(\Td(\hat{u})^{-1} \wedge T\bigr) \circ \int_{E/X}\bigr) \bigr) \bigr] \\
	& \textstyle \hspace{-22pt} = \hspace{4pt}\bigl[ \bar{E}, (-1)^{n\abs{M}}\hat{u}' \times \hat{v}, (-1)^{n\abs{\hat{\beta}}} \bar{f}^{*}\hat{\alpha} \cdot \bar{\pi}^{*}\hat{\beta}, \pi \circ \bar{f}, \pi_{*}\bigl(R(\hat{\alpha}) \wedge T \circ \bigl(\Td(\hat{u})^{-1} \wedge \int_{E/X}\bigr) \bigr) \bigr] \\
	& \textstyle \hspace{-25pt} \overset{\eqref{DiffCapPH}}= \hspace{2pt} \pi_{*} \bigl( \hat{\alpha} \pmb{\cdot} \bigl[ \bar{E}, \hat{u}' \times \hat{v}, (-1)^{nl} \bar{\pi}^{*}\hat{\beta}, \bar{f}, T \circ \bigl(\Td(\hat{u})^{-1} \wedge \int_{E/X}\bigr) \bigr] \bigr)
\end{align*}
Let us show that the class multiplied by $\hat{\alpha}$ in the last formula, that we denote by $\hat{\mu}$, is $(-1)^{n(m-l)} \partial_{E/X} \hat{\lambda}$. Because of \eqref{Cond1Diff} and \eqref{Cond2Diff}, this means respectively that $\pi_{*}(\hat{u} \pmb{\cdot} \hat{\mu}) = \hat{\lambda}$ and $R(\hat{\mu}) = R(\hat{\lambda}) \circ \bigl(\Td(\hat{u})^{-1} \wedge \int_{E/X}\bigr)$. Indeed:
\begin{align*}
	\pi_{*}(\hat{u} \pmb{\cdot} \hat{\mu}) &= \textstyle \pi_{*}\bigl[ \bar{E}, \hat{u}' \times \hat{v}, (-1)^{nl} \hat{u} \pmb{\cdot} \bar{\pi}^{*}\hat{\beta}, \bar{f}, R(\hat{u}) \wedge T \circ \bigl(\Td(\hat{u})^{-1} \wedge \int_{E/X}\bigr) \bigr] \\
	&= \textstyle \bigl[ \bar{E}, \hat{v} \times \hat{u}', \bar{\pi}^{*}\hat{\beta} \pmb{\cdot} \hat{u}, \pi \circ \bar{f}, \pi_{*}\bigl(R(\hat{u}) \wedge T \circ \bigl(\Td(\hat{u})^{-1} \wedge \int_{E/X}\bigr) \bigr) \bigr] \\
	&= \textstyle [\bar{E}, \hat{v} \times \hat{u}', \bar{\pi}^{*}\hat{\beta} \pmb{\cdot} \hat{u}, f \circ \bar{\pi}, T] = [M, \hat{v}, \bar{\pi}_{!}(\bar{\pi}^{*}\hat{\beta} \pmb{\cdot} \hat{u}), f, T] \\
	&= [M, \hat{v}, \hat{\beta}, f, T] = \hat{\lambda}
\end{align*}
and $R(\hat{\mu}) = T_{(\bar{E}, \hat{u}' \times \hat{v}, (-1)^{nl} \bar{\pi}^{*}\hat{\beta}, \bar{f})} + \partial\bigl(T \circ \bigl(\Td(\hat{u})^{-1} \wedge \int_{E/X}\bigr) \bigr)$. We have
\begin{align*}
	T_{(\bar{E}, \hat{u}' \times \hat{v}, (-1)^{nl} \bar{\pi}^{*}\hat{\beta}, \bar{f})}(\omega) & = (-1)^{n\abs{\hat{\beta}}} T_{(\bar{E}, \hat{v} \times \hat{u}', \bar{\pi}^{*}\hat{\beta}, \bar{f})}(\omega) \\
	& = (-1)^{n\abs{\hat{\beta}}} \int_{\bar{E}} \bar{f}^{*}\omega \wedge \bar{\pi}^{*}\bigl(R(\hat{\beta}) \wedge \Td(\hat{v}) \wedge f^{*}\Td(\hat{u})^{-1}\bigr) \\
	& = (-1)^{(n-\abs{\omega})\abs{\hat{\beta}}} \int_{M} R(\hat{\beta}) \wedge \Td(\hat{v}) \wedge f^{*}\Td(\hat{u})^{-1} \wedge \int_{\bar{E}/M} \bar{f}^{*}\omega \\
	& = \int_{M} f^{*}\biggl( \Td(\hat{u})^{-1} \wedge \int_{E/M} \omega \biggr) \wedge R(\hat{\beta}) \wedge \Td(\hat{v}) \\
	& = \biggl( T_{(M, \hat{v}, \hat{\beta}, f)} \circ \biggl(\Td(\hat{u})^{-1} \wedge \int_{E/X}\biggr) \biggr)(\omega)
\end{align*}
and $\partial\bigl(T \circ \bigl(\Td(\hat{u})^{-1} \wedge \int_{E/X}\bigr) \bigr) = (\partial T) \circ \bigl(\Td(\hat{u})^{-1} \wedge \int_{E/X}\bigr)$, hence the result follows.

About \eqref{CommDiagThom}, for every $\hat{\alpha} \in \hat{h}^{m+n-k}(E)$ we have:
\begin{align*}
	& \Thom' \circ \hatPD(\hat{\alpha}) = \Thom'[E, \hat{v} \times \hat{u}', \hat{\alpha}, \id_{E}, 0] = (-1)^{n\abs{\hat{\alpha}}} \pi_{*}(\hat{u} \pmb{\cdot} [E, \hat{v} \times \hat{u}', \hat{\alpha}, \id_{E}, 0]) \\
	& \hspace{63pt} = \pi_{*}([E, \hat{v} \times \hat{u}', (-1)^{n\abs{\hat{\alpha}}} \hat{u} \cdot \hat{\alpha}, \id_{E}, 0]) = [E, \hat{v} \times \hat{u}', \hat{\alpha} \cdot \hat{u}, \pi, 0] \\
	& \hatPD \circ i^{*}(\hat{\alpha}) = [X, \hat{v}, i^{*}\hat{\alpha}, \id_{X}, 0] = [X, \hat{v}, i^{*}\hat{\alpha}, \pi \circ i, 0] = [E, \hat{v} \cdot \hat{u}, i_{!}(i^{*}\hat{\alpha}), \pi, 0] \\
	& \hspace{54pt} = [E, \hat{v} \times \hat{u}', \hat{\alpha} \cdot i_{!}(1), \pi, 0] = [E, \hat{v} \times \hat{u}', \hat{\alpha} \cdot \hat{u}, \pi, 0]
\end{align*}
and, for every $\hat{\alpha} \in \hat{k}(X)$:
	\[i_{*}\hatPD(\hat{\alpha}) = [X, \hat{v}, \hat{\alpha}, i] = [E, \hat{v} \cdot \hat{u}', i_{!}\hat{\alpha}, \id_{E}] = \hatPD(i_{!}\hat{\alpha}) = \hatPD(\Thom(\hat{\alpha}))
\]
About \eqref{CommDiagDiff}, given $\hat{\alpha} \in \hat{h}^{m-k}(X)$, we set $\hat{\mu} := \partial_{E/X} \circ \hatPD(\hat{\alpha})$. By definition, it is the unique class that satisfies conditions \eqref{Cond1Diff} and \eqref{Cond2Diff}, which become respectively:
\begin{align*}
	& \Thom'\hat{\mu} = \hatPD(\hat{\alpha}) = [X, \hat{v}, \hat{\alpha}, \id_{X}, 0] \\
	& R(\hat{\mu}) = (-1)^{n(m-k)} \, T_{R(\hat{\alpha})} \circ \biggl( \Td(\hat{u})^{-1} \wedge \int_{E/X} \biggr)
\end{align*}
Let us show that $\hat{\mu} = \hatPD \circ \pi^{*}(\hat{\alpha})$. Indeed:
\begin{align*}
	\Thom' \circ \hatPD \circ \pi^{*}(\hat{\alpha}) &= \Thom'[E, \hat{v} \times \hat{u}', \pi^{*}\hat{\alpha}, \id_{E}, 0] = (-1)^{n(m-k)} [E, \hat{v} \times \hat{u}', \hat{u} \cdot \pi^{*}\hat{\alpha}, \pi, 0] \\
	&= [E, \hat{v} \times \hat{u}', \pi^{*}\hat{\alpha} \cdot \hat{u}, \pi, 0] = [X, \hat{v}, \hat{\alpha} \cdot \pi_{!}(\hat{u}), \id_{X}, 0] = [X, \hat{v}, \hat{\alpha}, \id_{X}, 0]
\end{align*}
and, for any $\omega \in \Omega^{n-k}_{\vcpt}(E; \h_{\R})$:
\begin{align*}
	\bigl(R \circ \hatPD \circ \pi^{*}(\hat{\alpha})\bigr)(\omega) &= T_{(E, \hat{v} \times \hat{u}', \pi^{*}\hat{\alpha}, \id_{E})}(\omega) = \int_{E} \omega \wedge \pi^{*}\bigl(R(\hat{\alpha}) \wedge \Td(\hat{v}) \wedge \Td(\hat{u})^{-1}\bigr) \\
	&= (-1)^{(n-k)(m-k)} \int_{E} \pi^{*}\bigl(R(\hat{\alpha}) \wedge \Td(\hat{v}) \wedge \Td(\hat{u})^{-1}\bigr) \wedge \omega \\
	&= (-1)^{(n-k)(m-k)} \int_{X} R(\hat{\alpha}) \wedge \Td(\hat{v}) \wedge \Td(\hat{u})^{-1} \wedge \biggl( \int_{E/X} \omega \biggr) \\
	&= (-1)^{(n-k)(m-k) + (m-k)k} \int_{X} \biggl( \int_{E/X} \omega \biggr) \wedge R(\hat{\alpha}) \wedge \Td(\hat{u})^{-1} \wedge \Td(\hat{v}) \\	
	&= \biggl((-1)^{n(m-k)} \, T_{R(\hat{\alpha})} \circ \biggl( \Td(\hat{u})^{-1} \wedge \int_{E/X} \biggr)\biggr)(\omega)
\end{align*}
Lastly, for every $\hat{\alpha} \in \hat{h}^{k}_{\vcpt}(E)$:
	\[\textstyle \pi_{*}\hatPD(\hat{\alpha}) = [E, \hat{v} \cdot \hat{u}', \hat{\alpha}, \pi, 0] = [X, \hat{v}, \pi_{!}\hat{\alpha}, \id_{X}, 0] = \hatPD(\pi_{!}\hat{\alpha}) = \hatPD\bigl(\int_{E/X}\hat{\alpha}\bigr) \qedhere
\]

\SkipTocEntry \subsection{Proof of Theorem \ref{PropGysinHom}}\label{ProofPropGysinHom}

We have:
\begin{align*}
	(\varphi_{!} \hat{\alpha}) \pmb{\cdot} \hat{\lambda} & \textstyle \hspace{5pt} = \hspace{5pt} \bigl(\int_{X \times \R^{N}/X} j_{*}\phi_{*}(\pi^{*}\hat{\alpha} \cdot \hat{u})\bigr) \pmb{\cdot} \hat{\lambda} \\
	& \overset{\eqref{FundThVectBundle}}= (-1)^{N(m-l)} (\pi_{N})_{*}\bigl( j_{*}\phi_{*}(\pi^{*}\hat{\alpha} \cdot \hat{u}) \pmb{\cdot} \partial_{X \times \R^{N}/X} \hat{\lambda} \bigr) \\
	& \hspace{5pt} = \hspace{5pt} (-1)^{N(m-l)} (\pi_{N})_{*} j_{*} \phi_{*}\bigl( (\pi^{*}\hat{\alpha} \cdot \hat{u}) \pmb{\cdot} \phi^{*}j^{*}\partial_{X \times \R^{N}/X} \hat{\lambda} \bigr) \\
	& \overset{\eqref{FundThThom}}= (-1)^{N(m-l)+(N-d)(m-l)} (\pi_{N} j \phi i)_{*}(\hat{\alpha} \pmb{\cdot} \Thom'\phi^{*}j^{*}\partial_{X \times \R^{N}/X} \hat{\lambda}) \\
	& \overset{\eqref{GysinMapHom}}= (-1)^{d(m-l)} \varphi_{*}(\hat{\alpha} \pmb{\cdot} \varphi^{!}\hat{\lambda})
\end{align*}
By considering diagrams \eqref{CommDiagThom} and \eqref{CommDiagDiff}, we have:
\begin{align*}
	\varphi^{!} \circ \hatPD &= \Thom' \circ \phi^{*} \circ j^{*} \circ \partial_{X \times \R^{N}/X} \circ \hatPD = \hatPD \circ i^{*} \circ \phi^{*} \circ j^{*} \circ \pi^{*} \\
	&= \hatPD \circ (\pi \circ j \circ \phi \circ i)^{*} = \hatPD \circ (\pi \circ \iota)^{*}(\hat{\alpha}) = \hatPD \circ \varphi^{*}
\end{align*}
Lastly:
\begin{align*}
	\varphi_{*} \circ \hatPD(\hat{\alpha}) &= [Y, \hat{v}, \hat{\alpha}, \varphi, 0] = [X, \hat{u}, \varphi_{!}\hat{\alpha}, \id_{X}, 0] = \hatPD(\varphi_{!}\hat{\alpha}) \qedhere
\end{align*}



\end{document}